\newtheorem{definition}{Definition}[subsection]
\newtheorem{theorem}{Theorem}[section]
\newtheorem{proposition}{Proposition}[subsection]
\newtheorem{lemma}{Lemma}[subsection]
\newtheorem{corollary}{Corollary}[subsection]
\newtheorem{remark}{Remark}[subsection]
\renewcommand{\labelenumi}{(\roman{enumi})}
\newcommand{\gothic}{\mathfrak}
\renewcommand{\bold}[1]{\medskip \noindent {\bf #1 }\nopagebreak}
\long\def\symbolfootnote[#1]#2{\begingroup%
\def\thefootnote{\fnsymbol{footnote}}\footnote[#1]{#2}\endgroup}
\begin{document}

\title{Coarse differentiation and quasi-isometries of a class of solvable Lie groups I}
\date{}
\author{Irine Peng}
\maketitle

\begin{abstract}
This is the first of two papers (the other one being \cite{Pg}) which aim to understand quasi-isometries of a subclass of unimodular split solvable Lie groups.
In the present paper, we show that locally (in a coarse sense), a quasi-isometry between
two groups in this subclass is close to a map that respects their group structures.  \end{abstract}

\tableofcontents

\section{Introduction}
A \emph{$(\kappa, C)$ quasi-isometry} $f$ between metric spaces $X$ and $Y$ is a map $f: X \rightarrow Y$ satisfying
\[ \frac{1}{\kappa} d(p,q) -C \leq d(f(p), f(q)) \leq \kappa d(p,q) + C \] \noindent with the additional property
that there is a number $D$ such that $Y$ is the $D$ neighborhood of $f(X)$.  Two quasi-isometries $f, g$ are
considered to be equivalent if there is a number $E>0$ such that $d(f(p), g(p)) \leq E$ for all $p \in
X$. \smallskip

From \cite{Auslander}, any solvable Lie group $\mathcal{L}$ has the form

\[ 1 \rightarrow \mathcal{U} \rightarrow \mathcal{L} \rightarrow \mathbb{R}^{s}
\rightarrow 1 \]

\noindent where $\mathcal{U}$ largest connected normal nilpotent subgroup of
$\mathcal{L}$, called its \emph{nilradical}, and $\mathbb{R}^{s}$ is the abelianization of
its Cartan subgroup.

In a group $G$, an element $x \in G$ is called \emph{exponentially distorted} if there are numbers $c, \epsilon$ such that for all
$n \in \mathbb{Z}$,
\[ \frac{1}{c} \log(|n| + 1) - \epsilon \leq \| x^{n} \|_{G} \leq c \log(|n|+1) + \epsilon
\]
\noindent where $\|x^{n} \|_{G}$ is the distance between the identity and $x^{n}$ in $G$.

In the case of a connected, simply connected solvable Lie group $G$, Osin showed in \cite{Osin} that the set of
exponentially distorted elements forms a normal subgroup $R_{\emph{exp}}(G)$ inside of
the nilradical of $G$.

Motivated by the Gromov program of classifying groups up to quasi-isometries, we consider, in this two-part paper, quasi-isometries
between connected, simply-connected unimodular solvable Lie group $G$ whose exponential radical coincides with its nilradical and
is a semidirect product between its abelian Cartan subgroup and
its abelian nilradical that is 'irreducible' in some sense. (For example, is not a
direct product with abelian factors).  By applying the techniques introduced by Eskin-Fisher-Whyte in \cite{EFW0},
\cite{EFW1}, and \cite{EFW2}, we are able to show that \medskip

\noindent \textit{(Theorem \ref{Andre-behaviors of phi} in \cite{Pg}(abridged))}
\textit{Let $G$, $G'$ be non-degenerate, unimodular, split abelian-by-abelian solvable Lie groups, and $\phi: G \rightarrow G'$ a $\kappa, C$
quasi-isometry.  Then $\phi$ is bounded distance from a composition of a left translation and a standard
map.}  \smallskip

\noindent Here a standard map is one that respect the factors in the semidirect product
and their group structures. (See definition \ref{standard map}). \newline

Consequently, we are able to see that \newline
\textit{(Corollary \ref{Andre-QIgroup} in \cite{Pg} ) }
\[ \mathcal{QI}(G) = \left( \prod_{[\alpha]} Bilip(V_{[\alpha]}) \right) \rtimes \mbox{Sym}(G)
\]

\noindent Here $V_{[\alpha]}$'s are subspaces of the nilradical, and $Sym(G)$ is a finite group,
analogous to the Weyl group in reductive Lie groups. It reflects the symmetries of $G$. (See section \ref{geometry of G})
\newline

Writing a non-degenerate, unimodular, split abelian-by-abelian solvable group as $G=\mathbf{H} \rtimes_{\varphi} \mathbf{A}$,
where $\mathbf{H}$ is the abelian nilradical and $\mathbf{A}$ an abelian Cartan subgroup.  We can also distinguish groups depending on whether the
action of the Cartan subgroup on the nilradical (via $\varphi$) is diaonalizable or not.
\smallskip

\noindent \textit{(Corollary \ref{Andre-diagORnot} in \cite{Pg})}\textit{ Let $G$, $G'$ be non-degenerate, unimodular, split abelian-by-abelian solvable
Lie groups where actions of their Cartan subgroups on the nilradicals are $\varphi$ and
$\varphi'$ respectively.  If $\varphi$ is diagonalizable and $\varphi'$ isn't, then there is no quasi-isometry
between them.} \newline

When $\varphi$ is diagonalizable, as an application the work by Dymarz \cite{Dy} on quasi-conformal maps on the boundary of
$G$, and a theorem of Mostow that says polycyclic groups are virtually lattices in a
connected, simply connected solvable Lie group, we have \smallskip

\noindent \textit{(Corollary \ref{Andre-showing polycyclic}, \ref{Andre-showing lattice in almost
me} in \cite{Pg})} \textit{In the case that $\varphi$ is diagonalizable, if $\Gamma$ is a finitely generated group quasi-isometric to
$G=\mathbf{H} \rtimes_{\varphi} \mathbf{A}$, then $\Gamma$ is virtually polycyclic, and is virtually a lattice in a unimodular
semidirect product of $\mathbf{H}$ and $\mathbf{A}$. } \medskip

\noindent Note that in the statement above we are not able to determine if the target semidirect product of
$\mathbf{H}$ and $\mathbf{A}$ is actually $G$ because the latter is a semidirect product
of the same factors with some additional conditions, which we are not able to detect at
this stage. \\

All the argument in this paper are local in nature and below is a description of the main result.
\\

Let $G= \mathbf{H} \rtimes_{\varphi} \mathbf{A}$, $G'=\mathbf{H'} \rtimes_{\varphi'} \mathbf{A'}$ be connected,
simply connected non-degenerate unimodular split solvable groups (See section \ref{geometry of G} for definitions).  We say a map
from $G$ to $G'$ is \emph{standard}, if it splits as a product map that respects $\varphi$ and $\varphi'$
(See definition \ref{standard map}). \smallskip

A compact convex set $\Omega \subset \mathbb{R}^{n}$ determines a bounded set $\mathbf{B}(\Omega)$ in $G$ (See section
2.2).  Writing $\rho \Omega$ for the compact convex set obtained by scaling $\Omega$ by $\rho$ from the barycenter of
$\Omega$, we show in this paper that \smallskip

\begin{theorem} \label{exisence of standard maps in small boxes}
Let $G$, $G'$ be non-degenerate, unimodular, split abelian-by-abelian Lie groups, and
$\phi:G \rightarrow G'$ be a $(\kappa, C)$ quasi-isometry.  Given $0< \delta, \eta < \tilde{\eta} < 1$, there exist numbers $L_{0}$, $m > 1$, $\varrho, \hat{\eta} < 1$ depending on $\delta$,
$\eta$, $\tilde{\eta}$ and $\kappa, C$ with the following properties: \smallskip

If $\Omega \subset \mathbf{A}$ is a product of intervals of equal size at least $mL_{0}$,
then a tiling of $\mathbf{B}(\Omega)$ by isometric copies of $\mathbf{B}(\varrho \Omega)$

\[ \mathbf{B}(\Omega)= \bigsqcup_{i \in \mathbf{I}} \mathbf{B}(\omega_{i}) \sqcup \Upsilon \]

\noindent contains a subset $\mathbf{I}_{0}$ of $\mathbf{I}$ with relative measure at
least $1-\nu$ such that

\begin{enumerate}

\item  For every $i \in \mathbf{I}_{0}$, there is a subset $\mathcal{P}^{0}(\omega_{i})$
of $\mathbf{B}(\omega_{i})$ of relative measure at least $1-\nu'$

\item The restriction $\phi|_{\mathcal{P}^{0}(\omega_{i})}$ is within $\hat{\eta}
diam(\mathbf{B}(\omega_{i}))$ Hausdorff neighborhood of a standard map $g_{i} \times
f_{i}$. \end{enumerate}

Here, $\nu$, $\nu'$ and $\hat{\eta}$ all approach zero as $\tilde{\eta}$, $\delta$ go to
zero.  The measure of set $\Upsilon$ is at most $\delta'$ proportion of measure of
$\mathbf{B}(\Omega)$, where $\delta'$ depends on $\delta$ and goes to zero as the latter
approaches zero. \end{theorem}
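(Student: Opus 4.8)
The strategy follows the Eskin--Fisher--Whyte coarse differentiation scheme, adapted to the abelian-by-abelian solvable setting. The underlying geometric picture is that $\mathbf{B}(\Omega)$ fibers over $\Omega \subset \mathbf{A}$, and over each point of $\mathbf{A}$ sits a copy of the nilradical $\mathbf{H}$, which itself decomposes according to the weight spaces $V_{[\alpha]}$ of the $\mathbf{A}$-action. The horocyclic foliations cut out by the various weights give a family of ``height functions'' on $G$, and the key intermediate object is the behavior of $\phi$ along these foliations. I would first set up, for the scales between $L_0$ and $mL_0$, the coarse differentiation argument: iterating the quasi-isometry inequality over a telescoping sequence of intermediate scales forces, by a pigeonhole/averaging argument, the existence of some scale $\varrho\,\mathrm{diam}(\mathbf{B}(\Omega))$ at which $\phi$ is, on most sub-boxes $\mathbf{B}(\omega_i)$, coarsely close to a map that sends horocyclic leaves to horocyclic leaves. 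This is where the parameters $m$, $\varrho$, and the exceptional proportion $\nu$ are produced, and it is the step that forces $\Omega$ to be a product of large intervals of equal size (so that the self-similar rescaling $\Omega \mapsto \varrho\Omega$ makes sense and the tiling is genuinely by isometric copies).

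The second block of the argument is to upgrade ``sends most leaves near leaves'' to the existence of an honest standard map on a large-measure subset $\mathcal{P}^0(\omega_i)$ of each good sub-box. Here I would invoke the rigidity of quasi-isometries of the relevant model spaces: along each weight direction the relevant cross-section is (coarsely) a hyperbolic-type space or a product thereof, and a quasi-isometry that coarsely preserves the horospherical foliation must, by the boundary/shadow arguments of the EFW papers (and, in the diagonalizable case, the quasiconformal analysis of Dymarz \cite{Dy} invoked elsewhere in the program), be close to a product of a map $f_i$ on the $\mathbf{A}$-direction that is affine-like and maps $\omega_i$ into $\mathbf{A}'$, and a map $g_i$ on the $\mathbf{H}$-direction that is a product over the weight spaces of bilipschitz maps $\mathrm{Bilip}(V_{[\alpha]})$. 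Matching $\varphi$ with $\varphi'$ under this correspondence is exactly the ``respects the semidirect product'' condition in Definition \ref{standard map}; the non-degeneracy and unimodularity hypotheses are what make the bookkeeping of weights consistent on both sides. The error $\hat\eta\,\mathrm{diam}(\mathbf{B}(\omega_i))$ is assembled from the additive constant $C$, the coarse-differentiation defect, and the scale $\varrho$, and one checks it tends to $0$ with $\tilde\eta,\delta$.

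The remaining piece is purely combinatorial accounting of measures: the tiling $\mathbf{B}(\Omega) = \bigsqcup_i \mathbf{B}(\omega_i) \sqcup \Upsilon$ leaves a remainder $\Upsilon$ coming from the boundary layer where isometric copies of $\mathbf{B}(\varrho\Omega)$ do not fit; since $\mathbf{B}(\omega_i)$ has diameter comparable to $\varrho$ times that of $\mathbf{B}(\Omega)$ and the box is a product of intervals, $\Upsilon$ has relative measure $O(\varrho)$, giving $\delta'$. Then one intersects the good-index set coming from coarse differentiation with the full-measure conclusions of the rigidity step, using Fubini/Chebyshev to pass from ``average is small'' to ``$1-\nu$ fraction of indices are good'' and ``$1-\nu'$ fraction of each good box is good,'' absorbing everything into $\nu,\nu',\hat\eta \to 0$ as $\tilde\eta,\delta \to 0$. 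The main obstacle I anticipate is the first step: making the coarse differentiation quantitative enough that the single good scale can be taken to be a fixed fraction $\varrho$ of the box size uniformly over all sufficiently large boxes, rather than merely existing somewhere along an unbounded sequence of scales --- this requires carefully tracking how the number of intermediate scales, the telescoping loss, and the box size $mL_0$ interact, and it is the reason for the seemingly baroque quantifier structure ($0<\delta,\eta<\tilde\eta<1$ first, then $L_0,m,\varrho,\hat\eta$ depending on them) in the statement.
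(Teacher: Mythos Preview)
Your outline misidentifies both what coarse differentiation produces and how one passes from that output to a standard map; the missing ingredient is the \emph{quadrilateral} argument, which is the actual mechanism by which horocycle preservation is established.

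Concretely: the coarse differentiation step (your ``first block'') does not yield ``sends horocyclic leaves near horocyclic leaves.'' What it yields---this is Theorem \ref{existence of good boxes} in the paper---is that on most sub-boxes, most \emph{geodesic segments} in $\mathcal{L}(\Omega_j)[m]$ have $\phi$-images within an $\eta$-linear neighbourhood of geodesic segments in $G'$ that are transverse to the root kernels. This is a statement about the $\mathbf{A}$-direction, not the $\mathbf{H}$-direction. From there the paper does two further geometric steps that you have replaced by a black-box rigidity appeal:
\begin{itemize}
\item First, one assembles the good geodesics into $i$-simplices and filled $i$-simplices (Lemma \ref{simplicies made out of good lines}), and shows inductively that $\phi$-images of such configurations lie near simplices on a single flat. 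This upgrades ``geodesics $\to$ geodesics'' to ``$i$-hyperplanes $\to$ $i$-hyperplanes'' and in particular ``flats $\to$ flats'' on a large subset (Lemma \ref{bare minimum for flats to flats}).
\item Second---and this is the step your plan lacks entirely---one uses \emph{quadrilaterals} (Definition \ref{quadrilateral}, Lemmas \ref{orientation of a quadrilateral}--\ref{close to a quadrilateral}): four geodesic segments arranged so that successive endpoints lie on common root-class horocycles. The structure lemma for quadrilaterals forces the approximating geodesics in $G'$ to have the same incidence pattern, hence $\phi$ sends points on a common root-class horocycle to points near a common root-class horocycle. Iterating over enough quadrilaterals with varying edge directions $\vec v$ cuts the target down from $W^{\pm}_{\vec v}$ to a single $V_{[\alpha]}$. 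This, together with flats $\to$ flats, \emph{is} the product structure $g_i\times f_i$.
\end{itemize}

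Your ``second block'' instead invokes boundary/shadow rigidity and Dymarz's quasiconformal analysis. Those tools are not used here; they belong to the global argument in Part~II. At the local scale of a single box one has neither a well-defined boundary nor the two-sided control needed for quasiconformal arguments; the approximating standard map is produced directly by the simplex-and-quadrilateral geometry, not extracted from a rigidity theorem. So as written, your second step does not go through, and the quadrilateral construction is the idea you need to supply.
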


\subsection{Proof outline}
The idea of the proof is as follows.  We employ the technique of `coarse differentiation'
to images of a particular family of geodesics (which fills up the set $\mathbf{B}(\Omega)$) in
$\mathbf{B}(\Omega)$ to obtain the scale $\rho$ on which those quasi-geodesics behave
like certain simple geodesics.  We are also able to obtain a tiling because the group $G$
is unimodular and $\mathbf{B}(\Omega)$ have small boundary area compared to its volume.
We then use the properties of the groups being non-degenerate, unimodular and split
abelian-by-abelian to reach the conclusion on those smaller tiles.


\bold{Acknowledgement} I would like to thank Alex Eskin for his patience and guidance.  I
also owe much to David Fisher for his help and support.

\section{Preliminaries}
In this section, we first describe the geometry of the subclass of unimodular solvable
Lie group mentioned in Introduction, followed by a list of notations that will be used in
the remaining of this paper.

\subsection{Geometry of a certain class of solvable Lie groups} \label{geometry of G}
\bold{Non-degenerate, split abelian-by-abelian solvable Lie groups }
Let $\gothic{g}$ be a (real) solvable Lie algebra, and $\gothic{a}$ be a Cartan
subalgebra.  Then there are finitely many non-zero linear functionals $\alpha_{i}: \gothic{a}
\rightarrow \mathbb{C}$ called \emph{roots}, such that

\[ \gothic{g} = \gothic{a} \oplus \bigoplus_{\alpha_{i}} \gothic{g}_{\alpha_{i}} \]

\noindent where $\gothic{g}_{\alpha_{i}}=\{ x \in \gothic{g}: \forall t \in \gothic{a}, \exists n, \mbox{ such that }
(ad(t)-\alpha_{i}(t)Id)^{n}(x) = 0 \}$, $Id$ is the identity map on $\gothic{g}$, and
$ad: \gothic{g} \rightarrow Der_{\mathbb{R}}(\gothic{g})$ is the adjoint representation.

We say $\gothic{g}$ is \emph{split abelian-by-abelian} if $\gothic{g}$ is a semidirect product
of $\gothic{a}$ and $\bigoplus_{i} \gothic{g}_{\alpha_{i}}$, and both are abelian Lie algebras;
\emph{unimodular} if the the roots sum up to zero; and \emph{non-degenerate} if the roots span $\gothic{a}^{*}$.  In particular,
non-degenerate means that each $\alpha_{i}$ is real-valued, and the number of roots is at least the dimension of
$\gothic{a}$.  Being unimodular is the same as saying that for every $t \in \gothic{a}$, the trace of
$ad(t)$ is zero.  We extend these definitions to a Lie group if its Lie algebra has these
properties. \smallskip

Therefore a connected, simply connected solvable Lie group $G$ that is non-degenerate, split abelian-by-abelian necessary takes the form
$G= \mathbf{H} \rtimes_{\varphi} \mathbf{A}$ such that  \begin{enumerate}
\item both $\mathbf{A}$ and $\mathbf{H}$ are abelian Lie groups.

\item the homomorphism $\varphi: \mathbf{A} \rightarrow Aut(\mathbf{H})$ is injective

\item there are finitely many $\alpha_{i} \in \mathbf{A}^{*} \backslash 0$ which together span
$\mathbf{A}^{*}$, and a decomposition of $\mathbf{H}=\oplus_{i} V_{\alpha_{i}}$

\item there is a basis $\mathcal{B}$ of $\mathbf{H}$ whose intersection with each of
$V_{\alpha_{i}}$ constitute a basis of $V_{\alpha_{i}}$, such that for each $\mathbf{t} \in \mathbf{A}$, $\varphi(\mathbf{t})$ with respect
to $\mathcal{B}$ is a matrix consists of blocks, one for each $V_{\alpha_{i}}$, of the form $e^{\alpha_{i}(\mathbf{t})} N(\alpha_{i}(t))$, where
$N(\alpha_{i}(t))$ is an upper triangular with 1's on the diagonal and whose off-diagonal entries are polynomials of
$\alpha_{i}(t)$. \smallskip
If in addition, $G$ is unimodular, then $\varphi(\mathbf{t})$ has determinant 1 for all
$\mathbf{t} \in \mathbf{A}$.  \end{enumerate}
\noindent The \emph{rank} of a non-degenerate, split abelian-by-abelian group $G$ is
defined to be the dimension of $\mathbf{A}$, and by a result of Cornulier \cite{Cornulier}, if two such groups are quasi-isometric,
then they have the same rank. \smallskip

Let $\triangle$ denotes the roots of $G$.  For each $\alpha \in \triangle$, choose a
basis $\{ e^{\alpha}_{1}, e^{\alpha}_{2}, \cdots e^{\alpha}_{n_{\alpha}} \}$ in $V_{\alpha}$ such that
$\varphi(\mathbf{t})|_{V_{\alpha_{i}}}$ is upper triangular for all $\mathbf{t} \in \mathbf{A}$.  Also fix a basis $\{ E_{j} \}$ in
$\mathbf{A}$ (for example, the duals of a subset of roots), and for each $\mathbf{t} \in \mathbf{A}$, write $\mathbf{t}_{j}$ for its
$E_{j}$ coordinate.  We coordinatize a point $(\sum_{\alpha \in \triangle}
\sum_{j=1}^{n_{\alpha}} x_{j,\alpha} e^{\alpha}_{j})(\mathbf{t}) \in \mathbf{H}
\rtimes_{\varphi} \mathbf{A}$ by the $dim(G)$-tuple of numbers
$((\mathbf{x}_{\alpha})_{\alpha}, (\mathbf{t}_{j})) \in \mathbb{R}^{dim(G)}$, where
$\mathbf{x}_{\alpha}=(x_{1,\alpha}, x_{2, \alpha}, \cdots, x_{dim(V_{\alpha}),\alpha} )$.
 In this coordinate system, a left invariant Riemannian metric at $((\mathbf{x}_{\alpha})_{\alpha}, (\mathbf{t}_{j})_{j})$ is

\[ \sum_{j} d ( \mathbf{t}_{j} )^{2} + \sum_{\alpha \in \triangle}
e^{-2\alpha(\mathbf{t})} \sum_{i=1}^{\alpha_{i}} \left( d x_{i,\alpha}
+ \sum_{\iota=i+1}^{n_{\alpha}} P^{\alpha}_{i,\iota}(\alpha(-\mathbf{t})) dx_{\iota,\alpha} \right)^{2} \]

\noindent where $P^{\alpha}_{i, \iota}$ is a polynomial with no constant term.  We see
that the above Riemannian metric is bilipschitz to the following Finsler metric:

\[ |d \mathbf{t} |  + \sum_{\alpha \in \triangle}
e^{-\alpha(\mathbf{t})} \sum_{i=1}^{n_{\alpha}}  [ 1 + Q_{i,\alpha}(\alpha(-\mathbf{t}))] |d x_{i,\alpha}|  \]

\noindent where $|d \mathbf{t}|$ means $\sum_{j} | d \mathbf{t}_{j} |$, and $Q_{i,\alpha}$ is sum of absolute values of polynomials
with no constant terms.

\begin{remark}
Since we defined our metric to be left-invariant, left multiplication by an element of $G$ is an isometry.  On the other hand, right
multiplication typically distorts distance.  For example, for points $p,q \in
\mathbf{H}$, $\mathbf{t} \in \mathbf{A}$, $d(\mathbf{t}p, \mathbf{t}q)=d(p,q)$, but $d(p
\mathbf{t}, q \mathbf{t})$ usually is some exponential-polynomial multiple of $d(p,q)$.  \end{remark}

Let $\mathit{H}_{s+1}=\mathbb{R}^{s} \rtimes_{\psi} \mathbb{R}$ be a non-unimodular solvable Lie group
such that with respect to bases $\{ e_{i} \}$, $\{ E \}$ of $\mathbb{R}^{s}$ and
$\mathbb{R}$ respectively, we have $\psi(tE)=e^{at} N(t)$, for all $t \in \mathbb{R}$.  Here $a>0$ and $N(t)$ is unipotent matrix
(upper triangular with 1's on the diagonal) with polynomial entries.  By giving a point $(\sum x_{i}e_{i})(tE) \in \mathit{H}^{s+1}$
the coordinate of $(x_{1},x_{2}, \cdots x_{s}, t)$, and argue as above we see that a left-invariant Finsler metric bilipschitz to
a left-invariant Riemannian metric can be given as
\begin{equation} \label{rank1 Finsler}
 | dt | + e^{-a t} \sum_{i} [1 + P_{i}(at)] |dx_{i}| \end{equation}

\noindent where $P_{i}$ is the sum of absolute values of polynomials with no constant terms.
\newline

The following consequence is immediate.

\begin{lemma} \label{QI embedding}
If $G$ is non-degenerate, split abelian-by-abelian, then it can be QI embedded into $\prod_{\alpha \in \triangle}
\mathit{H}_{\mbox{dim}(V_{\alpha})+1}$.  \end{lemma}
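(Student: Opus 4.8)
The plan is to write down the embedding explicitly as an injective Lie group homomorphism and then verify the two quasi-isometry inequalities; the upper one is soft and the lower one carries the content. For each root $\alpha\in\triangle$, property (iv) gives $\varphi(\mathbf{t})|_{V_{\alpha}}=e^{\alpha(\mathbf{t})}N_{\alpha}(\alpha(\mathbf{t}))$ with $N_{\alpha}(u)$ unipotent and polynomial in $u$; since $\varphi$ is a homomorphism and $\alpha(\mathbf{A})=\mathbb{R}$, the map $u\mapsto N_{\alpha}(u)$ is multiplicative, so $\psi_{\alpha}(u):=e^{u}N_{\alpha}(u)$ is a one-parameter group of automorphisms of $V_{\alpha}$ of precisely the shape in the definition of $H_{\dim(V_{\alpha})+1}$ (with $a=1>0$). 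Set $H_{\alpha}:=V_{\alpha}\rtimes_{\psi_{\alpha}}\mathbb{R}\cong H_{\dim(V_{\alpha})+1}$ and define $\Phi_{\alpha}:G\to H_{\alpha}$ by $\Phi_{\alpha}\big(\mathbf{x}(\mathbf{t})\big)=\mathbf{x}_{\alpha}\cdot\alpha(\mathbf{t})$, i.e. project $\mathbf{H}$ onto $V_{\alpha}$ and $\mathbf{A}$ onto $\mathbb{R}$ via $\alpha$. Because $\varphi(\mathbf{t})$ is block-diagonal and $\varphi(\mathbf{t})|_{V_{\alpha}}=\psi_{\alpha}(\alpha(\mathbf{t}))$, each $\Phi_{\alpha}$ is a homomorphism, hence so is $\Phi:=(\Phi_{\alpha})_{\alpha\in\triangle}:G\to\prod_{\alpha}H_{\alpha}$. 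I would first record that $\Phi$ is injective: on the nilradical it is the identity, and on $\mathbf{A}$ it is $\mathbf{t}\mapsto(\alpha(\mathbf{t}))_{\alpha}$, which is injective exactly because the roots span $\mathbf{A}^{*}$ (non-degeneracy).

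Next I would check that $\Phi$ is Lipschitz, which is automatic: a continuous homomorphism between Lie groups with left-invariant Finsler metrics is Lipschitz, since $\Phi\circ L_{g}=L_{\Phi(g)}\circ\Phi$ and left translations are isometries, so the operator norm of $d\Phi$ is constantly $\|d\Phi_{e}\|$; hence $d_{\prod}(\Phi p,\Phi q)\le\|d\Phi_{e}\|\,d_{G}(p,q)$. Equivalently, and more transparently, under $\Phi$ the $V_{\alpha}$-block of the Finsler metric on $\prod_{\alpha}H_{\alpha}$ pulls back to exactly the $V_{\alpha}$-block of the metric on $G$ displayed above, while the base parts satisfy $\sum_{\alpha}|d(\alpha(\mathbf{t}))|\asymp|d\mathbf{t}|$ — again by non-degeneracy — so $\Phi$ is a bi-Lipschitz immersion.

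The real work is the reverse inequality $d_{\prod}(\Phi p,\Phi q)\ge c\,d_{G}(p,q)-C$; by left-invariance it suffices to bound $\|\Phi g\|_{\prod}$ below by a multiple of $\|g\|_{G}$. Write $g=\mathbf{x}(\mathbf{t})$ with $\mathbf{x}=\sum_{\alpha}\mathbf{x}_{\alpha}$. In each rank-one factor the standard ``raise the height, slide horizontally, come back down'' estimate for Finsler metrics of this exponential--polynomial type yields
\[
\|\Phi_{\alpha}g\|_{H_{\alpha}}=\|\mathbf{x}_{\alpha}\cdot\alpha(\mathbf{t})\|_{H_{\alpha}}\ \gtrsim\ \max\{\,|\alpha(\mathbf{t})|,\ \log(2+\|\mathbf{x}_{\alpha}\|)-\alpha(\mathbf{t})\,\}\ \gtrsim\ |\alpha(\mathbf{t})|+\log(2+\|\mathbf{x}_{\alpha}\|),
\]
the first bound by projecting $H_{\alpha}$ onto its $\mathbb{R}$-factor, the second because a path reaching height at most $M$ translates a $V_{\alpha}$-vector by at most (a fixed power of $M$ times) $e^{M}$ times its own length, while having length at least $2M-\alpha(\mathbf{t})$. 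Summing over $\alpha$ and using non-degeneracy once more, now as $\sum_{\alpha}|\alpha(\mathbf{t})|\asymp\|\mathbf{t}\|_{\mathbf{A}}$, gives $\|\Phi g\|_{\prod}=\sum_{\alpha}\|\Phi_{\alpha}g\|_{H_{\alpha}}\gtrsim\|\mathbf{t}\|_{\mathbf{A}}+\sum_{\alpha}\log(2+\|\mathbf{x}_{\alpha}\|)$. To finish I would produce the matching upper bound $\|g\|_{G}\lesssim\|\mathbf{t}\|_{\mathbf{A}}+\sum_{\alpha}\log(2+\|\mathbf{x}_{\alpha}\|)$: first traverse $\mathbf{A}$ from the origin to $\mathbf{t}$, and then, for each $\alpha$ in turn, make an excursion into $\mathbf{A}$ along a direction on which $\alpha$ is positive (one exists since each root is nonzero) to height $\approx\log\|\mathbf{x}_{\alpha}\|$, where the coordinate $\mathbf{x}_{\alpha}$ can be deposited at cost $\lesssim\log(2+\|\mathbf{x}_{\alpha}\|)$.

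The main obstacle is precisely this last matching. The nilradical $\mathbf{H}$ is exponentially distorted both in $G$ and in $\prod_{\alpha}H_{\alpha}$, so the pointwise bi-Lipschitz agreement of the Finsler metrics does not by itself deliver the lower bound — a geodesic of $\prod_{\alpha}H_{\alpha}$ joining two points of $\Phi(G)$ need not stay near $\Phi(G)$ (recall that a horocyclic copy of $\mathbb{R}$ inside three-dimensional $\mathrm{Sol}$ is an isometric immersion and yet exponentially distorted). What reconciles the two distortions is that non-degeneracy makes $\mathbf{t}\mapsto(\alpha(\mathbf{t}))_{\alpha}$ a bi-Lipschitz linear injection, so that the ``cost of moving $\mathbf{x}_{\alpha}$'', namely $\log(2+\|\mathbf{x}_{\alpha}\|)$, agrees whether it is computed inside $G$ or inside the single factor $H_{\alpha}$. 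Making the two displayed estimates rigorous — in particular tracking the unipotent factors $N_{\alpha}$, which only perturb the optimal heights by an additive $O(\log\log)$ and so do not affect the outcome — is the one genuinely computational part of the proof.
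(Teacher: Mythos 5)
Your proof uses the same coordinate projection $\Phi=(\Phi_{\alpha})_{\alpha\in\triangle}$ as the paper and opens with the same two-sided Finsler comparison, which is the entirety of the paper's written argument. You then, correctly in my view, go further: integrating the Finsler inequality along paths shows that the length of any curve in $G$ and the length of its image in $\prod_{\alpha}\mathit{H}_{\dim V_{\alpha}+1}$ are comparable, which gives the Lipschitz upper bound on $d_{\prod}(\Phi p,\Phi q)$ at once, but in the other direction it only shows that $d_{G}(p,q)$ is dominated by the \emph{intrinsic} path metric of the submanifold $\Phi(G)$, not by the ambient distance $d_{\prod}(\Phi p,\Phi q)$, which is what the lower quasi-isometry inequality requires. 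As your horocycle example makes vivid, a bi-Lipschitz or even isometric Finsler immersion can have exponentially distorted image, so this step is not automatic. The matching estimates you supply close the gap: $\|\Phi g\|_{\prod}\gtrsim\|\mathbf{t}\|+\sum_{\alpha}\log(2+\|\mathbf{x}_{\alpha}\|)$, read off the rank-one distance formula (\ref{distance}) and (\ref{property of U}), and $\|g\|_{G}\lesssim\|\mathbf{t}\|+\sum_{\alpha}\log(2+\|\mathbf{x}_{\alpha}\|)$ by an explicit up-slide-down path, both using non-degeneracy (to make $\sum_{\alpha}|\alpha(\cdot)|$ a norm on $\mathbf{A}$, and to give every root a direction of increase), combined with the observation that $\Phi$ is a homomorphism so that left-invariance reduces everything to norms at the identity. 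One small remark: for the upper bound on $\|g\|_{G}$ it is marginally cleaner to deposit each $\mathbf{x}_{\alpha}$ by a round trip from the identity and travel to $\mathbf{t}$ last, using $\|(\prod_{\alpha}\mathbf{x}_{\alpha})\mathbf{t}\|_{G}\leq\sum_{\alpha}\|\mathbf{x}_{\alpha}\|_{G}+\|\mathbf{t}\|_{G}$, rather than visiting $\mathbf{t}$ first and carrying the $\|\mathbf{t}\|$ cost into every excursion; your order still gives the stated bound, only with a worse constant. In short, your write-up supplies the coarse lower bound that the paper's one-line proof leaves implicit, and that extra work is worth keeping.
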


\begin{remark} When $\psi(t)$ is diagonal, $\mathit{H}_{s+1}$ is just the usual hyperbolic space. \end{remark}

\begin{proof}
Because of the following relation

\begin{eqnarray*}
&& \frac{1}{|\triangle|} \sum_{\alpha \in \triangle}  \left( |d \alpha(\mathbf{t})|
+ e^{-\alpha(\mathbf{t})} \sum_{i=1}^{n_{\alpha}}  [ 1 + Q_{i,\alpha}(\alpha(-\mathbf{t}))] |d x_{i,\alpha}| \right) \leq
|d \mathbf{t} | + \sum_{\alpha \in \triangle} e^{-\alpha(\mathbf{t})} \sum_{i=1}^{n_{\alpha}}
[ 1 + Q_{i,\alpha}(\alpha(-\mathbf{t}))] |d x_{i,\alpha}| \\
& \leq & \sum_{\alpha \in \triangle} \left(  |d \alpha(\mathbf{t})|
+ e^{-\alpha(\mathbf{t})} \sum_{i=1}^{n_{\alpha}}  [ 1 + Q_{i,\alpha}(\alpha(-\mathbf{t}))] |d x_{i,\alpha}| \right)
\end{eqnarray*}  \end{proof}

%
%

To understand the geometry of $\mathit{H}_{s+1}$ better, we can assume without loss of generality that $a=1$,
and note that the Finsler metric in equation (\ref{rank1 Finsler}) is quasi-isometric to one given by
$dt + e^{-t} Q(t) d\mathbf{x}$ for some polynomial $Q(t)$.  Since exponential grows
faster than polynomials, for any large positive number $x$, there is a $t_{0}$ such that $e^{-t}Q(t)x \leq 1$
for all $t \geq t_{0}$, and we see that a function q.i. to the metric on $\mathit{H}_{s+1}$ is the following
\begin{equation}\label{distance}
d((\mathbf{x}_{1},t_{1}), (\mathbf{x}_{2},t_{2})) = \left \{ \begin{array}{ll}
                                     |t_{1}-t_{2}| & \mbox{ if $e^{-t_{i}}Q(t_{i}) |\mathbf{x}_{1}-\mathbf{x}_{2}| \leq 1 $ for some $i=1,2$};\\
                               U_{Q}(|\mathbf{x}_{1}-\mathbf{x}_{2}|) - (t_{1}+ t_{2}) & \mbox{ otherwise} \end{array} \right. \end{equation}
\noindent where $U_{Q}(|\mathbf{x}_{1}-\mathbf{x}_{2}|)=t_{0}$ satisfies
\[ e^{-t_{0}} Q(t_{0}) |\mathbf{x}_{1}-\mathbf{x}_{2}| = 1 \]

Furthermore, the following relation
\[ \frac{1}{e^{t}} < \frac{Q(t)}{e^{t}} < \frac{e^{1/2t}}{e^{t}} \mbox{     }  \mbox{ for $t$ sufficiently large } \]

\noindent and the fact that both $e^{-t}$ and $Q(t)e^{-t}$ are decreasing functions when $t$ becomes big enough means that we have
the following inequalities for their inverses:
\begin{equation} \label{property of U}
\ln(x) - C_{Q} \leq    U_{Q}(x)    \leq    2 \ln(x) + C_{Q} \mbox{     } \mbox{ for $x > 1$ } \end{equation}

\noindent for some constant $C$ depends only on the polynomial $Q$. \medskip

Back to the description of $G$, we declare two roots equivalent if they are positive multiples of each other, and write $[\Xi]$ for the equivalence
class containing $\Xi \in \triangle$.  A left translate of $V_{[\Xi]}=\oplus_{\sigma \in [\Xi]} V_{\sigma}$ will be called a
\emph{horocycle of root class $[\Xi]$}.

A left translate of $\mathbf{H}$, or a subset of it, is called a \emph{flat}.  For two
points $p,q \in \mathbf{H}$ with coordinates $(\mathbf{x}_{\alpha})_{\alpha \in
\triangle}$ and $(\mathbf{y}_{\alpha})_{\alpha \in \triangle}$, we compute subsets of $p \mathbf{H}$ and $q \mathbf{H}$ that are within
distance 1 of each other according to the embedded metric in Lemma \ref{QI embedding}, as the $p$ and $q$ translate of the
subset of $\mathbf{A}$:
\[  \bigcap_{\alpha \in \triangle : \ln(|\mathbf{x}_{\alpha}-\mathbf{y}_{\alpha}|) \geq 1}  \alpha^{-1}
[U_{\alpha}(|\mathbf{x}_{\alpha}-\mathbf{y}_{\alpha}|), \infty ]  \]

As the roots sum up to zero in a non-degenerate, unimodular, split
abelian-by-abelian group, the set where two flats come together can be empty, i.e. the two flats have
no intersection.  If it is not empty, then the equation above says that it is an unbounded convex subset of $\mathbf{A}$ bounded by hyperplanes
parallel to root kernels.

\begin{definition} \label{standard map}
Let $G$, $G'$ be non-degenerate, split abelian-by-abelian Lie groups.  A map from $G$ to $G'$ or a subset of them, is called
\emph{standard} if it takes the form $f \times g$, where $g: \mathbf{H} \rightarrow
\mathbf{H}'$ sends foliation by root class horocycles of $G$ to that of $G'$, and $f: \mathbf{A} \rightarrow
\mathbf{A}$ sends foliations by root kernels of $G$ to that of $G'$.\end{definition}

\begin{remark}
Note that when $G$ has at least $rank(G)+1$ many root kernels,
the condition on $f$ means that $f$ is affine, and when $G$ is rank 1, the condition on $f$ is
empty. \end{remark}


\subsection{Notations}

\subsubsection{General remarks about paths, neighborhoods}
\bold{Division of a curve }
The word 'scale' shall mean a number $\rho \in (0,1]$.  We will often examine a quasi-geodesic on different 'scales', and
see if the quasi-geodesic 'on that scale' satisfies certain properties. This roughly means that we subdivide the quasi-geodesic
into subsegments whose lengths are $\rho$ times the length of the original one, and see if each one of them satisfies certain
properties. \smallskip  \newline In practice, however, instead of dealing with 'length', we use 'distance between end points' of a
curve. More precisely, let $\zeta:[a,b] \rightarrow Y$ be  rectifiable curve.
\begin{itemize}
\item Given $r>0$, we can divide $\zeta$ into subsegments whose end points are $r$ apart. \newline More precisely,
$\hat{\mathcal{S}}(\zeta,r) = \{ q_{i} \}_{i=1}^{n_{r}}$, is the set of the dividing points on $\zeta$, where $q_{0}=\zeta(a)$,
$q_{n_{r}}=\zeta(b)$, and
\begin{equation*}
\zeta^{-1}(q_{i+1})= \min \{ t \geq \zeta^{-1}(q_{i}) \mbox{    } |\mbox{     }   d(\zeta(t),q_{i}) = r \}
\end{equation*}

\item Given two points $p, q \in \zeta$, we write $\zeta_{[p,q]}$ for the part of $\zeta$ between $p$ and $q$.
Define $\mathcal{S}(\zeta,r) = \{ \zeta_{[q_{i}, q_{i+1}]} \}$, to be the set of subsegments after division.

\item Let $\mathbf{P}$ be a statement. Define $\mathcal{S}(\zeta,r,\mathbf{P} )=\{\zeta^{i} \in
\mathcal{S}(\zeta,r) | \zeta^{i} \mbox{ satisfies } \mathbf{P} \}$ to be those subsegments satisfying statement $\mathbf{P}$.

\item We write $| \zeta |$ for the distance between end points of $\zeta$, and $\| \zeta \|$ denotes for the length of $\zeta$.
\end{itemize}

\bold{Neighborhoods of a set }
We write $B(p,r)$ for the ball centered at $p$ of radius $r$, and $N_{c}(A)$ for the $c$ neighborhood of the set
$A$.  We also write $d_{H}(A,B)$ for the Hausdorff distance between two sets $A$ and $B$.  If $\Omega \subset \mathbb{R}^{k}$ is a
bounded compact set, and $r \in \mathbb{R}$, we write $r \Omega$ for the bounded compact set that is scaled from $\Omega$ with respect
to the barycenter of $\Omega$. \smallskip

Given a set $X$, a point $x_{0} \in X$, the \emph{$(\eta, C)$ linear neighborhood of $X$ with respect to
$x_{0}$} is the set $\{ y, s.t. \exists \hat{x} \in X, d(y,\hat{x})=d(y,X) \leq \eta d(\hat{x},x_{0}) + C$.
Equivalently it is the set $\bigcup_{x \in X}B(x,\eta d(x,x_{0})+C)$.  By $(\eta, C)$ linear neighborhood of a set $X$, we mean
the $(\eta, C)$ linear neighborhood of $X$ with respect to some $x_{0}\in X$. \smallskip

If a quasi-geodesic $\lambda$ is within $(\eta, C)$ linear (or just $\eta$-linear) neighborhood of a geodesic
segment $\gamma$, where $\eta \ll 1$ and $C \ll \eta |\lambda|$, then we say that $\lambda$
\emph{admits a geodesic approximation} by $\gamma$.

\subsubsection{Notations used in split abelian-by-abelian groups}
Let $G=\mathbf{H} \rtimes \mathbf{A}$ stands for a non-degenerate, split abelian-by-abelian group, and $\triangle$
denotes for its roots.  Fix a point $p \in G$.  We define the following:
\begin{itemize}
\item For $\alpha \in \triangle$ a root, we write $\vec{v}_{\alpha}$ for the dual of
$\alpha_{i}$ of norm 1 with respect to the usual Euclidean metric.  (This is really a function
on root classes.)

\item Given $\vec{v} \in \mathbf{A}$, we define \begin{eqnarray*}
W_{\vec{v}}^{+} &=& \oplus_{\Xi(\vec{v}) >0} V_{\Xi} \\
W_{\vec{v}}^{-} &=& \oplus_{\Xi(\vec{v}) <0} V_{\Xi} \\
W_{\vec{v}}^{0} &=& \oplus_{\Xi(\vec{v})=0} V_{\Xi} \end{eqnarray*}



\item Let $\ell \in \mathbf{A}^{*}$, we define $W_{\ell}^{+}$, $W_{\ell}^{-}$,
$W_{\ell}^{0}$, as $W_{\vec{v}_ {\ell}}^{+}$, $W_{\vec{v}_ {\ell}}^{-}$, $W_{\vec{v}_
{\ell}}^{0}$ respectively, where $\vec{v}_{\ell} \in \mathbf{A}$ is the dual of $\ell$.

\item By \emph{the walls based at p}, we mean the set $p \bigcup_{\Xi} ker(\Xi)$.

\item By a \emph{geodesic segment through p}, we mean a set $p \overline{AB}$,
where $\overline{AB}$ is a directed line segment in $\mathbf{A}$.  By direction of a directed line segment in Euclidean space, we mean
a unit vector with respect to the usual Euclidean metric, and by direction of $p \overline{AB}$ we mean the direction of
$\overline{AB}$.

\item For $i=2,3,.. n-1$, by \emph{$i$-hyperplane through p}, we mean a set
$p S$, where $S \subset \mathbf{A}$ is an $i$-dimensional linear
subspace or an intersection between an $i$-dimensional linear subspace with a convex set.

\item Let $\pi_{A}:G= \mathbf{H}  \rtimes \mathbf{A} \longrightarrow \mathbf{A}$ be the projection
onto the $\mathbf{A}$ factor as $( \mathbf{x}, \mathbf{t} ) \mapsto \mathbf{t}$


\item For each root $\alpha_{i}$, define $\pi_{\alpha_{i}}: G \longrightarrow  V_{\alpha_{i}} \rtimes \langle \vec{v}_{\alpha_{i}} \rangle $ as
$(\mathbf{x}_{1}, \mathbf{x}_{2}, \cdots \mathbf{x}_{|\triangle|}) \mathbf{t} \mapsto (\mathbf{x}_{i},
\alpha_{i}(\mathbf{t})\vec{v}_{\alpha_{i}})$.  We refer to negatively curved spaces
$V_{\alpha_{i}} \rtimes \langle \vec{v}_{\alpha_{i}} \rangle$ or $V_{[\alpha]} \rtimes \langle \vec{v}_{\alpha} \rangle$
as weight (or root) hyperbolic spaces. \end{itemize} \smallskip

Now assume in addition that $G$ is unimodular.  Fix a net $\gothic{n}$ in $G$.  For $\alpha \in \triangle$, let $b(r) \subset V_{\alpha}$
be maximal product of intervals of size $r$, $[0,r]^{\mbox{dim}(V_{\alpha})}$, and since
$\mathbf{H}$ is the direct sum of those $V_{\alpha}$'s, we write $\prod_{\alpha \in \triangle}
b(r_{\alpha})$ for the product of those $b(r_{\alpha})$'s as $\alpha$ ranging over all
roots.  In other words, $\prod_{\alpha \in \triangle} b(r_{\alpha})$ is just product of
intervals in $\mathbf{H}$ where interval length is $r_{\alpha}$ in $V_{\alpha}$.
\smallskip

Let $\Omega \subset \mathbf{A}$ be a convex compact set with non-empty interior, e.g. a
product of intervals or a convex polyhedra.  Without loss of generality assume its barycenter
is the identity of $\mathbf{A}$.  We define the \emph{box associated to $\Omega$}, $\mathbf{B}(\Omega)$,
as the set $\left(  \prod_{j=1}^{|\triangle|} b(e^{\max(\alpha_{j}(\Omega))}) \right) \Omega$.

\begin{remark}
A box $\mathbf{B}(\Omega)$ as defined above is just a union of left translates of $\Omega \subset \mathbf{A}$ by a subset of
$\mathbf{H}$ (product of intervals) whose size is determined by $\Omega$.  The size of the intervals were chosen
so that a large proportion of points in the box $\mathbf{B}(\Omega)$ lie on a quadrilateral (see
Definition \ref{quadrilateral}).  In the definition above we have defined this subset of $\mathbf{H}$ as a
product of intervals, but this is just a choice of convenience so that it is simple to
describe the size of this subset in $\mathbf{H}$ in terms of $\Omega$.\end{remark}

Associate to the box $\mathbf{B}(\Omega)$, we use the following notations:

\begin{itemize}
\item $\mathcal{L}(\Omega)[m]$ (or $\mathcal{L}(\mathbf{B}(\Omega))[m]$) for the set of geodesics in $\mathbf{B}(\Omega)$
whose $\pi_{A}$ images begin and end at points of $\partial \Omega$
such that the ratio between its length and the diameter of $\Omega$
lies in the interval $[1/m, m]$.


\item For $i=2,3,\cdots ,n$, write $\mathcal{L}_{i}(\Omega)[m_{i}]$ (or $\mathcal{L}_{i}(\mathbf{B}(\Omega))[m_{i}]$) for the set of $i$
dimensional hyperplanes in $\mathbf{B}(\Omega)$ such that the ratio between its diameter and the diameter of $\Omega$ lies in the
interval $[1/m_{i}, m_{i}]$.


\item $\mathcal{P}(\Omega)$ (or $\mathcal{P}(\mathbf{B}(\Omega))$) for the set of points in $\mathbf{B}(\Omega)$.


\item  Let $S$ be an element of   $\bigcup_{i=2}^{n} \mathcal{L}_{i}(\Omega) \bigcup \mathcal{L}(\Omega) \bigcup
\mathcal{P}(\Omega)$.  We write $L(S)$, $L_{i}(S)$ for subset of $\mathcal{L}(\Omega)$, $\mathcal{L}_{i}(\Omega)$ contained or
containing $S$, and $P(S)$ for the subset of $\mathcal{P}(\Omega)$ contained in $S$. \end{itemize}

\begin{remark}
As we are interested in a given quasi-isometry $\phi: G \rightarrow G'$ which implicitly implies particular choices of
nets $\mathit{n} \subset G$, $\mathit{n'} \subset G'$, we will primarily consider $\phi$
as a map from $\mathit{n}$ to $\mathit{n'}$\symbolfootnote[3]{But then any two nets are bounded distance apart,
and a bounded modification does not change the quasi-isometry class of $\phi$, so whatever argument we make for $\mathit{n}$ and $\mathit{n'}$
are valid for other choices of nets as well.}.  Let $\hat{p}:G \rightarrow \mathit{n}$ that assigns $x \in G$, a closest net point.  In this way we tend to
think of a set $K \subset G$ not so much as a subset of the Lie group $G$, but as a subset of $\mathit{n}$ via the identification of $K$ and
$\hat{p}(K)$. \smallskip

In particular, the set of hyperplanes and points associated to a box as defined above
would be considered finite sets for us. \end{remark}


We now use boxes to produce a sequence of F\"{o}lner sets.
\begin{lemma} \label{boxes are folner}
Let $G=\mathbf{H} \rtimes \mathbf{A}$ be a non-degenerate, unimodular, split abelian-by-abelian Lie group.  Let $\Omega \subset \mathbf{A}$ be
compact convex with non-empty interior.  Then, $\mathbf{B}(r \Omega)$, $r \rightarrow \infty$ is a F\"{o}lner sequence. The volume ratio between $N_{\epsilon}(\partial (\mathbf{B}(r \Omega)))$
and $\mathbf{B}(r \Omega)$ is $O(\epsilon/ diam( \mathbf{B}(r \Omega))$\symbolfootnote[2]{because
the ratio of volumes of $\partial \Omega$ to $\Omega$ is roughly $\frac{1}{diam(\Omega)}$, and $r diam(\Omega)=diam(r
\Omega)$}.\end{lemma}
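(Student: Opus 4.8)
The plan is to work throughout in the coordinates $((\mathbf{x}_{\alpha})_{\alpha},(\mathbf{t}_{j})_{j})\in\mathbb{R}^{\dim G}$ fixed above, in which $\mathbf{B}(\Omega)$ is literally the Cartesian product $\big(\prod_{\alpha\in\triangle}b(e^{\max\alpha(\Omega)})\big)\times\Omega$ of a product of intervals in $\mathbf{H}$ with $\Omega\subset\mathbf{A}$, and to reduce the lemma to three elementary facts. First, since $G$ is unimodular, $\det\varphi(\mathbf{t})=1$ for every $\mathbf{t}$, so Haar measure on $G$ is exactly Lebesgue measure $d\mathbf{x}\,d\mathbf{t}$ in these coordinates; since the barycenter of $\Omega$ is the identity, $\max\alpha(r\Omega)=r\max\alpha(\Omega)$, and hence
\[ \mathrm{vol}(\mathbf{B}(r\Omega))=r^{\dim\mathbf{A}}\,\mathrm{vol}(\Omega)\prod_{\alpha\in\triangle}e^{n_{\alpha}r\max\alpha(\Omega)}. \]
Second, $\mathrm{diam}(\mathbf{B}(r\Omega))\asymp r$: the lower bound is immediate because $\pi_{\mathbf{A}}$ is $1$-Lipschitz onto $(\mathbf{A},|\cdot|)$ and $\mathrm{diam}(r\Omega)=r\,\mathrm{diam}(\Omega)$, while the upper bound follows by joining two points through a ``translate the $\mathbf{A}$-coordinate, then for each $\alpha$ ascend in the $\vec v_{\alpha}$-direction to the top of the box, slide the $V_{\alpha}$-coordinate across, descend again'' path: each leg costs $O(r)$, the slide costing only $O(1)$ once one has ascended $O(\log r)$ above the top of the box, which absorbs the unipotent polynomial factor $[1+Q_{i,\alpha}]$; equivalently one may cite the quasi-isometric hyperbolic model of Lemma~\ref{QI embedding} together with the logarithmic bound (\ref{property of U}) on $U_{Q}$.

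The third fact is a description of small metric balls. From the Finsler metric $|d\mathbf{t}|+\sum_{\alpha}e^{-\alpha(\mathbf{t})}\sum_{i}[1+Q_{i,\alpha}(\alpha(-\mathbf{t}))]|dx_{i,\alpha}|$ and the trivial bound $[1+Q_{i,\alpha}]\ge1$: any path of length $\le\epsilon$ issuing from a point $p=((\mathbf{x}_{\alpha})_{\alpha},\mathbf{t})$ keeps its $\mathbf{A}$-coordinate within $\epsilon$ of $\mathbf{t}$, hence keeps $e^{-\alpha(\cdot)}\ge e^{-\alpha(\mathbf{t})}e^{-\|\alpha\|\epsilon}$ along it, forcing $|\Delta x_{i,\alpha}|\le\epsilon\,e^{\alpha(\mathbf{t})}e^{\|\alpha\|\epsilon}$; thus for $\epsilon$ bounded
\[ B(p,\epsilon)\ \subseteq\ \{\,|\Delta\mathbf{t}|\le\epsilon\,\}\times\prod_{\alpha,i}\{\,|\Delta x_{i,\alpha}|\le C\epsilon\,e^{\alpha(\mathbf{t})}\,\}. \]
(It is essential to stay on the Riemannian scale here and not invoke the coarse distance formula (\ref{distance}), which governs only large separations.) Writing $P_{r\Omega}=\prod_{\alpha}b(e^{r\max\alpha(\Omega)})$, I would decompose $\partial\mathbf{B}(r\Omega)=\big(P_{r\Omega}\times\partial(r\Omega)\big)\cup\bigcup_{\alpha,i}\{x_{i,\alpha}\in\{0,e^{r\max\alpha(\Omega)}\}\}$. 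The $\epsilon$-collar around the first piece lies, in the $\mathbf{A}$-factor, within $\epsilon$ of $\partial(r\Omega)$ — volume $O(\epsilon r^{\dim\mathbf{A}-1})$ since $\partial(r\Omega)$ has $(\dim\mathbf{A}-1)$-area $O(r^{\dim\mathbf{A}-1})$ — and occupies at most $\lesssim\mathrm{vol}(P_{r\Omega})$ in the $\mathbf{H}$-factor, so its relative volume is $O(\epsilon/r)$.

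For a face of the form $\{x_{i,\alpha}=0\}$ or $\{x_{i,\alpha}=e^{r\max\alpha(\Omega)}\}$, the $\epsilon$-collar consists of points whose coordinate $x_{i,\alpha}$ is within $C\epsilon e^{\alpha(\mathbf{t})}$ of an endpoint of an interval of length $e^{r\max\alpha(\Omega)}$, so its volume is
\[ \lesssim\ \frac{\mathrm{vol}(P_{r\Omega})}{e^{r\max\alpha(\Omega)}}\int_{r\Omega}C\epsilon\,e^{\alpha(\mathbf{t})}\,d\mathbf{t}\ =\ C\epsilon\,\mathrm{vol}(P_{r\Omega})\,r^{\dim\mathbf{A}}\!\int_{\Omega}e^{\,r(\alpha(\mathbf{s})-\max\alpha(\Omega))}\,d\mathbf{s}. \]
The decisive point — beyond the $\partial\Omega/\Omega$ heuristic of the footnote, which only handles the first boundary piece — is the Laplace-type bound $\int_{\Omega}e^{\,r(\alpha(\mathbf{s})-\max\alpha(\Omega))}\,d\mathbf{s}=O(1/r)$, valid because $\psi:=\max\alpha(\Omega)-\alpha$ is a \emph{nonzero} affine function (nonzero since $\alpha\ne0$) that is nonnegative on $\Omega$ and vanishes only on a proper face of $\Omega$; a one-line layer-cake estimate using $\mathrm{vol}(\{\mathbf{s}\in\Omega:\psi(\mathbf{s})<u\})\le\min(C'u,\mathrm{vol}(\Omega))$ gives $\int_{\Omega}e^{-r\psi}\le C'/(r\|\nabla\psi\|)$. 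This turns the naive bound $O(\epsilon)$ into $O(\epsilon/r)$ for the relative volume of each such piece. Summing over the finitely many faces yields $\mathrm{vol}(N_{\epsilon}(\partial\mathbf{B}(r\Omega)))/\mathrm{vol}(\mathbf{B}(r\Omega))=O(\epsilon/r)=O(\epsilon/\mathrm{diam}(\mathbf{B}(r\Omega)))$; and since this relative $\epsilon$-collar shrinks to zero while the $\mathbf{B}(r\Omega)$ exhaust $G$, the $\mathbf{B}(r\Omega)$ form a F\"{o}lner sequence (the relevant boundary being the metric $\epsilon$-boundary, which is what the later covering and tiling arguments use).

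I expect the two places needing genuine care to be: (i) the shape of small balls — one must keep the Riemannian metric and the factors $[1+Q_{i,\alpha}]\ge1$ in view, since the coarse ``ascend and descend'' behaviour of (\ref{distance}) would misleadingly suggest a $V_{\alpha}$-width $\sim e^{2\alpha(\mathbf{t})}$ and would make the faces $\{x_{i,\alpha}=\mathrm{const}\}$ contribute a \emph{constant} relative volume; and (ii) the bookkeeping of how $\max\alpha(r\Omega)$, $\mathrm{diam}(r\Omega)$, $\mathrm{vol}(r\Omega)$ and the box dimensions scale under dilation from the barycenter, together with the Laplace estimate, which is precisely what makes the single gained power of $r$ match $\mathrm{diam}(\mathbf{B}(r\Omega))\asymp r$.
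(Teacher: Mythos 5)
Your proof is correct and uses the same underlying decomposition and key integral as the paper: split $\partial\mathbf{B}(r\Omega)$ into the $\mathbf{A}$-boundary piece $P_{r\Omega}\times\partial(r\Omega)$ and the $\mathbf{H}$-boundary pieces $\{x_{i,\alpha}=\mathrm{const}\}$, observe that unimodularity makes Haar measure equal Lebesgue $d\mathbf{x}\,d\mathbf{t}$ (equivalently, $\prod_{i\ne j}e^{-\alpha_i(\mathbf{t})}=e^{\alpha_j(\mathbf{t})}$), and reduce the nontrivial piece to the integral $\int_{r\Omega}e^{\alpha(\mathbf{t})}\,d\mathbf{t}$. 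The execution differs in two genuine respects. First, the paper computes a Riemannian surface-area term $\bigl|\partial(\prod_j[0,e^{ra_j}])\,(r\Omega)\bigr|$ and implicitly converts area-to-volume ratio into $\epsilon$-collar-to-volume ratio; you instead work with $N_\epsilon(\partial\mathbf{B})$ directly, by characterizing metric $\epsilon$-balls at $(\mathbf{x},\mathbf{t})$ as having $V_\alpha$-width $\sim\epsilon\,e^{\alpha(\mathbf{t})}$ in the coordinates. That is a clean way to make rigorous the passage from area to $\epsilon$-neighborhood that the paper leaves implicit, and your warning about not using the coarse formula (\ref{distance}) for this is apt. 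Second, you estimate $\int_{r\Omega}e^{\alpha(\mathbf{t})}\,d\mathbf{t}$ by substituting $\mathbf{t}=r\mathbf{s}$ and applying a Laplace/layer-cake bound $\int_{\Omega}e^{-r\psi}=O(1/r)$ for the affine $\psi=\max\alpha(\Omega)-\alpha\ge0$, whereas the paper fibers over level sets of $\alpha_j$ to get the bound $(e^{ra_j}-e^{rb_j})\,r^{n-1}\,|Proj_{\ker\alpha_j}(\Omega)|$; these are equivalent and both produce the crucial single extra factor of $1/r$ that is not visible in the footnote's heuristic $|\partial\Omega|/|\Omega|\approx 1/\mathrm{diam}(\Omega)$, which, as you correctly note, only covers the first boundary piece. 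You also spell out $\mathrm{diam}(\mathbf{B}(r\Omega))\asymp r$, which the paper uses without comment (and which is needed to rephrase $O(\epsilon/r)$ as $O(\epsilon/\mathrm{diam}(\mathbf{B}(r\Omega)))$).
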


\begin{proof}
For each root $\alpha_{j}$, write $\alpha_{j}(\Omega)=[b_{j}, a_{j}]$. \smallskip   Since the sum of roots is zero,
the volume element is $\wedge_{j} d \mathbf{x}_{j} \wedge d \mathbf{t}$.
Therefore vol$(\mathbf{B}(r \Omega))=\left( \prod_{j} e^{ra_{j}} \right)r^{n} |\Omega|$. On the other hand, the area of the
boundary is  \begin{equation*}
\left| \partial \left( \prod_{j}[0,e^{ra_{j}}] (r \Omega) \right) \right|
= \underbrace{ \left| \partial \left( \prod_{j}[0,e^{ra_{j}}]
\right) (r \Omega) \right| }_{(1)} + \underbrace{ \left| \left(
\prod_{j}[0,e^{ra_{j}}] \right) \partial \left( r \Omega \right) \right|}_{(2)} \end{equation*}
\noindent We estimate the size of each term:
\begin{equation*}
(2): \left| \left( \prod_{j}[0,e^{ra_{j}}] \right)
\partial \left( r \Omega \right) \right| =
\left( \prod_{j} e^{ra_{j}} \right) r^{n-1} \left| \partial \Omega
\right| \end{equation*}

\begin{eqnarray*}
(1): \left| \partial \left( \prod_{j}[0,e^{ra_{j}}] \right) (r
\Omega) \right| & = & 2 \sum_{j}  \int_{\mathbf{t} \in r\Omega}
\underbrace{ \int_{ \mathbf{x}_{1} \in b(e^{ra_{1}}), \cdots
\mathbf{x}_{i} \in b^{e^{ra_{i}}} } e^{-\alpha_{1}(\mathbf{t})}
d \mathbf{x}_{1} \cdots e^{-\alpha_{i}(\mathbf{t})} d
\mathbf{x}_{i} \cdots }_{i \not= j}  d \mathbf{t} \\
&=& 2 \sum_{j}  \left( \prod_{i \not= j} e^{ra_{i}}
\int_{\mathbf{t} \in r\Omega} e^{\alpha_{j}(\mathbf{t})}
d\mathbf{t} \right)  \\
& \leq &  2 \sum_{j}  \left( \prod_{i \not= j}
e^{ra_{i}} (e^{ra_{j}} - e^{rb_{j}}) \left| Proj_{ker(\alpha_{j})}(r \Omega) \right| \right) \\
&=& 2 \left( \prod_{i} e^{ra_{i}} \right) r^{n-1} \left( \sum_{j}\left| Proj_{ker(\alpha_{j})} (\Omega) \right|( 1-e^{-(ra_{j}-rb_{j})}) \right)\\
& \leq& 2 |\triangle| \left( \prod_{i} e^{ra_{i}} \right) r^{n-1} \max_{j} \left| Proj_{ker(\alpha_{j})}(\Omega) \right| \end{eqnarray*} \end{proof}

\begin{remark} \label{general folner}
The same calculation as above shows that for any set $\tilde{B}$ of the form $\Lambda \rtimes \Omega$,
where $\Lambda \subset \mathbf{H}$, $\Omega \subset \mathbf{A}$, the ratio of volumes of
$N_{\epsilon}(\partial \tilde{B})$ and that of $\tilde{B}$ is $O(\epsilon/
diam(\tilde{B}))$.   \end{remark}

\section{Quasi-geodesics}
The purpose of this section is to prove

\begin{theorem} \label{existence of good boxes}
Let $G, G'$ be non-degenerate, unimodular, split abelian-by-abelian Lie groups, and $\phi: G \rightarrow G'$ a $(\kappa, C)$ quasi-isometry.
Given $0< \delta, \eta< \tilde{\eta} <1 $, there are numbers $L_{0}$, $m> 1$ and $0 < \rho < 1$ depending on
$\delta, \eta$, $\kappa, C$ with the following properties: \smallskip


If $\Omega \subset \mathbf{A}$ is a product of intervals of equal size at least $mL_{0}$,
then a tiling of $\mathbf{B}(\Omega)$ by isometric copies of $\mathbf{B}(\rho \Omega)$
\[ \mathbf{B}(\Omega)=\bigsqcup_{j \in \mathbf{J}} \mathbf{B}(\Omega_{j}) \sqcup \Upsilon  \]

\noindent contains a subset $\mathbf{J}_{0}$ whose measure is at least $1-\vartheta$
times that of $\mathbf{J}$ such that:

\begin{enumerate}
\item For all $j \in \mathbf{J}_{0}$, there is a subset $\mathcal{L}^{0}_{j} \subset
\mathcal{L}(\Omega_{j})[m]$, whose measure is at least $1-\varkappa$ times that of $\mathcal{L}(\Omega_{j})$

\item If $\zeta \in \mathcal{L}^{0}_{j}$, then $\phi(\zeta)$ is within $\eta$-linear neighborhood of a geodesic
segment which makes an angle at least $\sin^{-1}(\tilde{\eta})$ with root kernels. \end{enumerate}

Here $\vartheta$, $\varkappa$ approach zero as $\tilde{\eta} \rightarrow 0$.  The measure of set $\Upsilon$ is at most $\delta'$ proportion of
measure of $\mathbf{B}(\Omega)$, where $\delta'$ depends on $\delta$ and goes to zero as the latter approaches zero. \end{theorem}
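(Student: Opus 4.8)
The plan is to implement the Eskin--Fisher--Whyte ``coarse differentiation'' scheme adapted to the box $\mathbf{B}(\Omega)$. The starting observation is Lemma \ref{QI embedding}: composing $\phi$ with the quasi-isometric embedding $G' \hookrightarrow \prod_{\beta} H_{\dim(V_{\beta})+1}$ and projecting to each factor, every geodesic $\zeta$ through $p \in \mathbf{B}(\Omega)$ maps to a quasi-geodesic in a product of rank-one (exponential-polynomial) negatively curved spaces. In each such factor, a quasi-geodesic coarsely tracks a concatenation of (at most two) ``vertical'' geodesic rays, because of the distance formula (\ref{distance})--(\ref{property of U}); the non-trivial content is to show that at the right \emph{scale} $\rho$ the quasi-geodesic does not oscillate between many such pieces, i.e.\ it admits a geodesic approximation in the sense defined before the statement. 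So first I would set up, for a geodesic $\zeta \in \mathcal{L}(\Omega)[m]$, the notion of being ``$\epsilon$-monotone at scale $r$'' (the images of the pieces $\mathcal{S}(\phi\circ\zeta, r)$ each stay in an $\epsilon$-linear neighborhood of a geodesic, with consistent direction), and prove a pigeonhole/telescoping lemma: since $\phi$ is $(\kappa,C)$ and the total ``defect'' along $\zeta$ is bounded by a function of $\kappa, C$ and $|\zeta|$, there is a scale $\rho = \rho(\delta,\eta,\kappa,C)$ at which a definite proportion (in the appropriate measure on $\mathcal{L}(\Omega)$ and on subsegments) of subsegments are $\eta$-monotone. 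This is the standard EFW coarse differentiation argument; the quantitative bookkeeping — how the proportion $1-\varkappa$ and the scale degrade — is where most of the work lies.

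The second ingredient is the passage from ``monotone pieces'' to ``the limiting geodesic makes a definite angle with the root kernels.'' A geodesic segment in $\mathbf{B}(\Omega)$ whose $\pi_A$-image is nearly parallel to some root kernel $\ker(\alpha)$ is, by the Finsler metric, badly behaved in the $\alpha$-factor: the $e^{-\alpha(\mathbf{t})}$ weight is nearly constant, so that factor contributes almost nothing to coarse differentiation there and the quasi-geodesic has extra freedom. The point is to \emph{exclude} such directions from the good set: I would show that the set of geodesics in $\mathcal{L}(\Omega)[m]$ whose direction lies within $\sin^{-1}(\tilde\eta)$ of \emph{some} root kernel has relative measure $O(\tilde\eta)$ (finitely many kernels, each a hyperplane, a tubular neighborhood in the sphere of directions has measure $O(\tilde\eta)$), and throw those away — this is exactly the source of the ``$\vartheta, \varkappa \to 0$ as $\tilde\eta\to 0$'' dependence. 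For the surviving directions, non-degeneracy (the roots span $\mathbf{A}^*$) guarantees that \emph{every} such geodesic is genuinely coarsely differentiable in at least one — in fact, after discarding a small further proportion, all relevant — weight hyperbolic factors, which upgrades the factor-wise monotone approximations to a single geodesic approximation in $G'$ using that $\phi$ is a quasi-isometry (so the reverse inequality controls how the factor data reassemble).

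The third ingredient is the tiling, which is where unimodularity enters exactly as advertised. By Lemma \ref{boxes are folner} (and Remark \ref{general folner}), $\mathbf{B}(\Omega)$ and the sub-boxes $\mathbf{B}(\rho\Omega)$ are Følner with boundary-to-volume ratio $O(1/\mathrm{diam})$; tiling $\mathbf{B}(\Omega)$ by isometric translates of $\mathbf{B}(\varrho\Omega)$ (translating by a suitable sublattice of $\mathbf{H}\rtimes\mathbf{A}$, using that left translations are isometries) leaves a remainder $\Upsilon$ supported near the boundary, of relative measure $\delta'(\delta)\to 0$ — here $\delta$ controls how small $\rho$ (hence the tiles) is relative to $\Omega$, so that many tiles fit. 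Then a Fubini/Chebyshev argument over the tiling: since a $(1-\varkappa)$-proportion of \emph{all} good geodesics in $\mathbf{B}(\Omega)$ have the good approximation property, and each good geodesic meets the tiles it passes through in (sub)geodesics of $\mathcal{L}(\Omega_j)[m]$, an averaging shows that for all but a $\vartheta$-proportion of tiles $j$, a $(1-\varkappa)$-proportion of $\mathcal{L}(\Omega_j)[m]$ consists of geodesics with geodesic approximations at the scale internal to that tile. I would finish by recording the dependencies of $L_0, m, \rho, \vartheta, \varkappa$ on $\delta, \eta, \tilde\eta, \kappa, C$ and checking $\vartheta,\varkappa\to 0$ as $\tilde\eta\to 0$ and $\mathrm{meas}(\Upsilon)\le \delta'\,\mathrm{meas}(\mathbf{B}(\Omega))$ with $\delta'\to 0$ as $\delta\to 0$.

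**Main obstacle.** The hard part is the coarse differentiation estimate itself: making the pigeonhole over scales quantitative in the \emph{exponential-polynomial} (non-diagonalizable) rank-one factors $H_{s+1}$, where the distance formula (\ref{distance}) has the extra polynomial factor $Q(t)$ and $U_Q$ only satisfies the two-sided estimate (\ref{property of U}) rather than an equality. Controlling how a quasi-geodesic's ``height function'' $t(\cdot)$ behaves — in particular ruling out backtracking on the relevant scale and converting the factor-wise monotone pieces into a single geodesic in $G'$ — while keeping all constants uniform in $\kappa, C$, is the technical core; the tiling and the angle-exclusion steps are comparatively routine given Lemma \ref{boxes are folner} and non-degeneracy.
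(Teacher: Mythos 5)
Your overall strategy---Eskin--Fisher--Whyte style coarse differentiation of quasi-geodesics combined with the F\o{}lner property of boxes (Lemma \ref{boxes are folner}) and a Chebyshev averaging over tiles---is indeed the skeleton of the paper's proof, and your identification of the ``main obstacle'' (quantitative pigeonhole over scales in the exponential-polynomial rank-one factors $H_{s+1}$) is correct. However, two of your intermediate steps deviate from, and one genuinely departs from, what is required.

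First, a structural omission: the paper's coarse differentiation is a \emph{two-stage} pigeonhole, not one. You propose a single notion of ``$\epsilon$-monotone at scale $r$'' and one telescoping argument. The paper instead first runs a pigeonhole on the $\pi_{\mathbf A'}$-projections in $\mathbf A'\cong\mathbb R^{n'}$ to find an \emph{efficient} scale (Lemma \ref{efficiency scale}, Corollary \ref{efficiency scale multiple}), and only then, on the efficient pieces, runs a second pigeonhole in $G'$ itself to find a \emph{$\delta$-monotone} scale (Lemma \ref{monotone scale}, Corollary \ref{monotone scale multiple}); efficiency of the base projection is an input to the monotonicity argument, not a corollary of it. Moreover, the passage from ``most subsegments are monotone'' to ``most subsegments admit a geodesic approximation'' goes through the uniform-point machinery (Definition \ref{uniform set}, Lemmas \ref{how to show weakly monotone} and \ref{big uniform set}) and Proposition \ref{close to being straight}; your sketch compresses this into ``the factor-wise monotone approximations reassemble because $\phi$ is a quasi-isometry,'' which is not enough --- the reassembly is precisely where Proposition \ref{can't move far in R2R3} (induction on $|\triangle|$, via Lemmas \ref{can't move far in H2} and \ref{mixing}) does its work.

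Second, and this is the genuine gap: your angle-exclusion step is aimed at the wrong side. You propose to discard geodesics $\zeta\in\mathcal L(\Omega_j)[m]$ in $G$ whose direction is within $\sin^{-1}(\tilde\eta)$ of some root kernel $\ker\alpha\subset\mathbf A$. But the theorem's angle condition is on the \emph{approximating geodesic segment in $G'$}, relative to the root kernels of $G'$. Since $\phi$ is an arbitrary $(\kappa,C)$ quasi-isometry, there is no a priori correspondence between the direction of $\zeta$ and the direction of the geodesic approximating $\phi(\zeta)$; excluding bad source directions tells you nothing about the target. In the paper, the angle condition is not a separate exclusion at all: it is a byproduct of Proposition \ref{close to being straight}, whose hypothesis requires the monotone quasi-geodesic $\phi(\zeta)$ to avoid a linear neighborhood of the set of walls based at $\phi(\zeta)(0)$ in $G'$, and whose conclusion is that the resulting geodesic approximation makes a definite angle with the root kernels of $G'$. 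The measure estimate that ``most'' monotone subsegments avoid the walls of $G'$ (and hence inherit the angle bound) is where the F\o{}lner estimate and the ratio $L_a/L_0$ enter, as in the proof of Proposition \ref{finite number of qigeodesics close to geodesics}. As written, your exclusion argument would not produce a $\tilde\eta$ angle bound for the approximating geodesics in $G'$, so the theorem's second conclusion would remain unproved.
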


\subsection{Some facts about non-degenerate, split abelian-by-abelian groups }
In this subsection, $G$ denotes for a non-degenerate, split abelian-by-abelian group.  By Lemma \ref{QI embedding}, we can use the
embedded metric on $G$.  We will use the metric property of those $H_{s+1}$ spaces to obtain the following proposition, which basically says
that if a quasi-geodesic in $G$ is long, then its projection in $\mathbf{A}$ has to be long as well.

\begin{proposition} \label{can't move far in R2R3} 
Let $\zeta:[0,L] \rightarrow G$ be a $(\kappa, C)$ quasi-geodesic segment.
Suppose $\{ \pi_{A}(\zeta(t)) \}$ lies in a ball of diameter $s$. Then for
any $p,q \in \zeta$, $d(p,q) \leq \hbar s$, where $\hbar$ is a
constant that depends only on the number of roots. \end{proposition}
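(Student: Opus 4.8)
The plan is to reduce the statement to the rank-one model spaces $H_{s+1}$ via the quasi-isometric embedding of Lemma~\ref{QI embedding}, and then exploit the explicit distance formula \eqref{distance} in each $H_{s+1}$. First I would fix $p=\zeta(t_1)$, $q=\zeta(t_2)$ and consider, for each root $\alpha\in\triangle$, the projection $\pi_\alpha(\zeta)$ into the weight hyperbolic space $V_\alpha\rtimes\langle\vec v_\alpha\rangle\cong H_{\dim V_\alpha+1}$. Since $\phi$ (here just the embedding) is a quasi-isometry, each $\pi_\alpha\circ\zeta$ is a $(\kappa',C')$ quasi-geodesic in $H_{\dim V_\alpha+1}$ with constants depending only on $\kappa,C$ and the number of roots. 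By hypothesis the full $\mathbf A$-projection $\pi_A(\zeta)$ stays in a ball of diameter $s$; since $\alpha(\pi_A(\zeta(t)))=\pi_{\langle\vec v_\alpha\rangle}(\pi_\alpha(\zeta(t)))$ and $\alpha$ is Lipschitz, the ``$t$-coordinate'' of $\pi_\alpha\circ\zeta$ varies by at most $\|\alpha\|\,s$.

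The core estimate is then: \emph{a $(\kappa',C')$ quasi-geodesic in $H_{s+1}$ whose $t$-coordinate has oscillation at most $s_0$ has diameter $O(s_0)$.} I would prove this using the distance formula \eqref{distance}. Let $(\mathbf x_1,\tau_1),(\mathbf x_2,\tau_2)$ be the images of the two endpoints under $\pi_\alpha$. If the endpoints are ``close in the $\mathbf x$-direction'' (the first case of \eqref{distance}) then $d=|\tau_1-\tau_2|\le s_0$ and we are done. Otherwise $d=U_Q(|\mathbf x_1-\mathbf x_2|)-(\tau_1+\tau_2)$; here the point is that a quasi-geodesic joining two points with $U_Q(|\mathbf x_1-\mathbf x_2|)$ large must \emph{rise} close to height $U_Q(|\mathbf x_1-\mathbf x_2|)/2$ (this is the standard ``the geodesic goes up then comes down'' behavior of the model, visible directly from \eqref{distance}: the length of any path is at least the height gained, and the distance forces the path to reach that height), so the oscillation bound $s_0$ on the $t$-coordinate forces $U_Q(|\mathbf x_1-\mathbf x_2|)$ to be $O(s_0)$, whence $d=O(s_0)$ as well. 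Combining over all roots, $d(p,q)$ in the embedded metric $\sum_\alpha d_{H_\alpha}$ is at most $(\text{const depending on }|\triangle|)\cdot s$, which by Lemma~\ref{QI embedding} controls $d(p,q)$ in $G$ up to another such constant.

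The main obstacle is the rank-one claim, and specifically making precise that a quasi-geodesic cannot ``sneak'' between two far-apart $\mathbf x$-values while keeping its $t$-coordinate in a bounded window. The clean way is a length/distance comparison: if $\zeta$ restricted to $[t_1,t_2]$ stays in the slab $\{|\tau-\tau_0|\le s_0/2\}$ of $H_{s+1}$, then within that slab the Finsler metric \eqref{rank1 Finsler} is bilipschitz (with constant $e^{O(s_0)}$) to a scaled Euclidean metric on the $\mathbf x$-coordinates plus the $t$-coordinate, so $\|\zeta\|\ge e^{-O(s_0)}|\mathbf x_1-\mathbf x_2|$; but $\|\zeta\|\le \kappa' d(p,q)+C'\le \kappa'(|\tau_1-\tau_2|+\text{something bounded})$, and chasing this through together with \eqref{property of U} yields $|\mathbf x_1-\mathbf x_2|\le e^{O(s_0)}$, hence $U_Q(|\mathbf x_1-\mathbf x_2|)=O(s_0)$ by \eqref{property of U}. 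One has to be a little careful that the quasi-geodesic need not literally stay in the slab, but it can leave it only on subsegments whose $\pi_A$-projection is controlled, so a covering/subdivision argument handles the general case; the dependence of $\hbar$ on only $|\triangle|$ comes out because all the implied constants depend only on the $\kappa,C$ and the number of roots, the polynomials $Q$ being absorbed into \eqref{property of U}.
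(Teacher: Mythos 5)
Your proposal breaks at the very first reduction: the claim that each $\pi_\alpha\circ\zeta$ is a $(\kappa',C')$ quasi-geodesic in the weight hyperbolic space $H_{\dim V_\alpha+1}$ is false. Lemma~\ref{QI embedding} says $G$ embeds quasi-isometrically into the \emph{product} $\prod_\alpha H_{\dim V_\alpha+1}$ (with the $\ell^1$-type sum metric), so $d_G(p,q)\asymp\sum_\alpha d^\alpha(\pi_\alpha(p),\pi_\alpha(q))$. Each factor map $\pi_\alpha$ is only $1$-Lipschitz, not bi-Lipschitz: two points that are far apart in $G$ may project to nearby points in one factor, with the distance living in the other factors. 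Consequently $\pi_\alpha\circ\zeta$ has no lower quasi-geodesic bound, and it need not even satisfy the weaker ``bounded backtracking'' inequality $\sum_j d^\alpha(\pi_\alpha\zeta(i_j),\pi_\alpha\zeta(i_{j+1}))\le 2\kappa\, d^\alpha(\pi_\alpha\zeta(i_0),\pi_\alpha\zeta(i_n))$ that the paper's Lemma~\ref{can't move far in H2} actually takes as hypothesis. Your ``core estimate'' in rank one is essentially that lemma, but you cannot invoke it factor by factor.

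The paper's proof is built precisely to sidestep this. It writes the backtracking ratio for $\zeta$ in $G$ as a convex combination (equation~\eqref{alpha big}) of the ratio for a single root $\alpha_1$ and the ratio for the remaining roots, and then applies the elementary Lemma~\ref{mixing}: whichever side has the larger coefficient also has the larger total displacement. This yields two cases. If the single-root side dominates, that factor's ratio is $\le 4\kappa$, Lemma~\ref{can't move far in H2} bounds its displacement, and since it dominates the rest, the total is bounded. If no single root dominates, then for some root the complementary sum dominates, its ratio is $\le 4\kappa$, and the inductive hypothesis (on $|\triangle|-1$ roots) applies; again the dominant side bounds the rest. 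That interplay — you only get the $4\kappa$ bound on the \emph{dominant} side, and then use dominance to control the other — is the genuine content of the proposition. Without Lemma~\ref{mixing} or something equivalent, the slab/covering argument you sketch in the last paragraph has nothing to hang on: it implicitly assumes each $\pi_\alpha\circ\zeta$ inherits quasi-geodesic behavior, which is the very thing that fails.
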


\begin{corollary} \label{can't move far}(assumptions as in Proposition \ref{can't move far in R2R3})
If there are two points $p,q$ on $\zeta$ such that $d(p,q) > \hbar
s$, then there must be a point $r \in [\zeta^{-1}(p),\zeta^{-1}(q)]$
such that $d(\pi_{A}(p),\pi_{A}(\zeta(r)))>s$. \end{corollary}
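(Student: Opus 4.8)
The plan is to argue by contraposition, feeding a subsegment of $\zeta$ into Proposition~\ref{can't move far in R2R3}. Assume the negation of the desired conclusion: for every $r\in[\zeta^{-1}(p),\zeta^{-1}(q)]$ (say $\zeta^{-1}(p)\le\zeta^{-1}(q)$) one has $d(\pi_{A}(p),\pi_{A}(\zeta(r)))\le s$. I will show this forces $d(p,q)\le\hbar s$, contradicting the hypothesis $d(p,q)>\hbar s$, and hence such an $r$ with $d(\pi_{A}(p),\pi_{A}(\zeta(r)))>s$ must exist.

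First I would restrict attention to the sub-path $\zeta':=\zeta|_{[\zeta^{-1}(p),\zeta^{-1}(q)]}$, reparametrised on a compact interval. Since the two-sided inequalities defining a $(\kappa,C)$ quasi-geodesic are required for \emph{all} pairs of parameters — in particular for pairs lying in the subinterval $[\zeta^{-1}(p),\zeta^{-1}(q)]$ — the path $\zeta'$ is again a $(\kappa,C)$ quasi-geodesic segment, with no degradation of the constants. The standing assumption says precisely that $\{\pi_{A}(\zeta'(t))\}\subset \overline{B}(\pi_{A}(p),s)$, a ball in $\mathbf{A}$ whose diameter is at most $2s$ (equivalently, recentring at the midpoint of the $\pi_{A}$-image, one may take the bounding ball to have diameter $\le s$). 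Applying Proposition~\ref{can't move far in R2R3} to $\zeta'$, with the role of ``$s$'' there played by the diameter of this bounding ball, yields $d(p,q)\le \hbar s$ (the constant $\hbar$ there being determined only up to a factor depending on the number of roots, so the harmless factor of $2$ from ``radius versus diameter'' is absorbed). This contradicts $d(p,q)>\hbar s$, completing the argument.

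There is no genuine obstacle here; the corollary is a one-line logical consequence of the proposition. The only two points that deserve a word of care are (i) that passing to a subsegment of a $(\kappa,C)$ quasi-geodesic preserves the quasi-geodesy constants — so the proposition genuinely applies to $\zeta'$ — and (ii) the bookkeeping of the constant when one bounds the $\pi_{A}$-image of $\zeta'$ by a ball centred at $\pi_{A}(p)$ rather than at the barycentre of that image; this only affects $\hbar$ by a universal factor and is immaterial.
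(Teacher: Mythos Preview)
Your proof is correct and matches the paper's treatment: the corollary is stated there without proof as an immediate contrapositive of Proposition~\ref{can't move far in R2R3} applied to the subsegment $\zeta|_{[\zeta^{-1}(p),\zeta^{-1}(q)]}$, exactly as you do. One small slip: the parenthetical ``recentring at the midpoint \ldots\ diameter $\le s$'' is not right (a set contained in a ball of radius $s$ has diameter at most $2s$, not $s$), but you already handle this correctly in remark~(ii) by absorbing the factor of~$2$ into $\hbar$, which is how the paper uses the statement as well.
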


To prove Proposition \ref{can't move far in R2R3}, we need the following two lemmas whose verifications can be found in the
Appendix.\\



\noindent In $\mathit{H}_{n'+1}=\mathbb{R}^{n'} \rtimes \mathbb{R}$, we write $h$ for the projection onto the $\mathbb{R}$ factor.

\begin{lemma} \label{can't move far in H2}
Let $\eta:[a,b] \rightarrow \mathit{H}_{n'+1}$ be a continuous path such that \begin{itemize}
\item The image of $h \circ \eta$ is contained in an interval of length no bigger than
$s$, where $s > \kappa(C_{\mathit{H}_{n'+1}})^{2} ( > 2) $.  Here $C_{\mathit{H}_{n'+1}}$ is a constant depending only on
$\mathit{H}_{n'+1}$ (as in equation (\ref{property of U})).

\item whenever $i_{1} \leq i_{2} \leq ... i_{n} \in [a,b]$,
\begin{equation*} \frac{\sum_{j} d(\eta(i_{j}), \eta(i_{j+1}))}{d(\eta(i_{1}),\eta(i_{n}))} \leq 2 \kappa \end{equation*}  \end{itemize}
\noindent Then, for any two points $p,q \in \eta([a,b])$, $d(p,q)\leq \hat{C}(2\kappa) s$, where $\hat{C}$ depends only on
$C_{\mathit{H}_{n'+1}}$. \end{lemma}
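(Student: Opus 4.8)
The plan is to argue directly from the explicit coarse distance formula (\ref{distance}) on $\mathit{H}_{n'+1}$ and the two‑sided logarithmic estimate (\ref{property of U}) for $U_{Q}$. First I would reduce to the case that $p$ and $q$ are the endpoints $\eta(a),\eta(b)$: any two points of $\eta([a,b])$ are the endpoints of a subpath of $\eta$, and a subpath inherits both hypotheses (the subdivision condition restricted to subintervals, and confinement of $h\circ\eta$ to an interval of length $\le s$). Since left translations are isometries of $\mathit{H}_{n'+1}$, I may assume the slab containing $h\circ\eta$ is $[T_{0},T_{0}+s]$, with $T_{0}=T_{0}(\mathit{H}_{n'+1})$ fixed large enough that the asymptotics (\ref{distance}) and (\ref{property of U}) are valid there. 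Then I split into the two cases of (\ref{distance}): in the first branch $d(p,q)=|h(p)-h(q)|\le s$, which already gives the conclusion, so the real content is the second branch, where $d(p,q)=U_{Q}(|\mathbf{x}_{p}-\mathbf{x}_{q}|)-(h(p)+h(q))\le 2\ln|\mathbf{x}_{p}-\mathbf{x}_{q}|+C_{Q}-2T_{0}$ by (\ref{property of U}). Thus everything reduces to bounding $|\mathbf{x}_{p}-\mathbf{x}_{q}|$: a path trapped in a thin slab cannot drift far in the $\mathbb{R}^{n'}$‑directions.

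To bound $|\mathbf{x}_{p}-\mathbf{x}_{q}|$ I would subdivide $[a,b]$ by a first–exit rule: set $i_{1}=a$ and let $i_{j+1}$ be the first $t>i_{j}$ with $d(\eta(i_{j}),\eta(t))=2s$, or $i_{j+1}=b$ if no such $t$ exists (the final piece). Continuity of $\eta$ and of the metric makes this well defined, and finiteness of $\mathrm{diam}(\eta([a,b]))$ together with the subdivision hypothesis forces termination after finitely many steps $a=i_{1}<\dots<i_{n}=b$. Two estimates then combine. First, each piece $\eta([i_{j},i_{j+1}])$ lies in the $2s$‑ball about $\eta(i_{j})$, a point of height at most $T_{0}+s$; feeding this into both branches of (\ref{distance}) and applying (\ref{property of U}) shows the $\mathbb{R}^{n'}$‑coordinate moves by at most $R$ along the piece, where $\ln R\le c_{1}s$ for a constant $c_{1}$ depending only on $\mathit{H}_{n'+1}$; hence $|\mathbf{x}_{p}-\mathbf{x}_{q}|\le\sum_{j}|\mathbf{x}_{i_{j}}-\mathbf{x}_{i_{j+1}}|\le(n-1)R$ by the triangle inequality in $\mathbb{R}^{n'}$. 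Second, applying the subdivision hypothesis to $i_{1},\dots,i_{n}$ itself gives $(n-2)(2s)\le\sum_{j}d(\eta(i_{j}),\eta(i_{j+1}))\le 2\kappa\,d(p,q)$, so $n-1\le\kappa\,d(p,q)/s+1$.

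Combining these, $d(p,q)\le 2\ln\!\big((n-1)R\big)+C_{Q}-2T_{0}\le 2\ln d(p,q)+\Theta$, where (after dispatching the trivial subcase $\kappa\,d(p,q)/s\le 1$, in which $d(p,q)\le s$) one can bound $\Theta\le(\ln(2\kappa)+c_{2})s$ with $c_{2}$ depending only on $\mathit{H}_{n'+1}$; this is where the hypothesis $s>\kappa(C_{\mathit{H}_{n'+1}})^{2}(>2)$ is used, to keep us in the asymptotic regime and to fold the stray constants and logarithms into a term linear in $s$. Finally, since $2\ln x\le\tfrac12 x+2$ for all $x>0$, the inequality $d(p,q)\le 2\ln d(p,q)+\Theta$ bootstraps to $d(p,q)\le 4+2\Theta\le\hat{C}(2\kappa)\,s$, with $\hat{C}$ a function of its argument determined by $C_{\mathit{H}_{n'+1}}$ alone, which is the claim.

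The main obstacle I anticipate is bookkeeping rather than a conceptual difficulty: the right–hand side of (\ref{distance}) is only quasi‑isometric to the genuine left‑invariant metric (and is not even continuous at the interface of the two branches), so the first–exit subdivision and the ``ball of radius $2s$'' step must be run with the honest metric and the quasi‑isometry constant of (\ref{distance}) carried through, and one must track carefully which auxiliary constants ($T_{0}$, $c_{1}$, $c_{2}$, the quasi‑isometry constant) depend only on $\mathit{H}_{n'+1}$ versus on $\kappa$ in order to land on exactly the asserted shape $\hat{C}(2\kappa)\,s$. A softer alternative is to use that $\mathit{H}_{n'+1}$ is Gromov hyperbolic: the hypotheses make $\eta$, reparametrized by arclength, a $(2\kappa,0)$‑quasigeodesic, so by the Morse lemma $\eta$ stays in a bounded neighborhood of an actual geodesic joining its endpoints, and one then bounds the length of a geodesic confined to a slab of width $\sim s$ using the explicit description of geodesics in $\mathit{H}_{n'+1}$ (which rise, travel horizontally near the top, then descend). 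This gives the same estimate but imports hyperbolicity and the Morse lemma as black boxes.
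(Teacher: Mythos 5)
Your argument is correct and uses essentially the same mechanism as the paper's proof: dispose of the first branch of (\ref{distance}) trivially, and in the second branch subdivide the path at a scale comparable to $s$, bound the total horizontal drift in $\mathbb{R}^{n'}$ by the triangle inequality, and pit the subdivision hypothesis against the logarithmic growth of $U_{Q}$ from (\ref{property of U}). The only real difference is bookkeeping: the paper chooses its cut points so that $U(|x_{j}-x_{j+1}|)=4s$ and extracts an absolute bound $n\le 20\kappa$ on the number of pieces before multiplying by the per-piece diameter, whereas you cut by first exit from a $2s$-ball and then close the self-referential inequality $d(p,q)\le 2\ln d(p,q)+\Theta$ by a bootstrap; after noting that $\ln(2\kappa)\le 2\kappa$ so your constant can be taken linear in $2\kappa$ as the later inductive step requires, the two are interchangeable.
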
 \begin{proof} see Appendix \end{proof}

\begin{lemma} \label{mixing} Let $a,b \geq 0$, $A,B >0$.  Suppose $\frac{a+b}{A+B}= c_{\alpha}\frac{a}{A} + c_{\beta}\frac{b}{B}$,
with $c_{\alpha}+ c_{\beta}=1$.  Suppose $c_{\alpha} \geq c_{\beta}$, then $A \geq B$.
\end{lemma}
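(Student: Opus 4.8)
\textbf{Proof proposal for Lemma \ref{mixing}.} The plan is simply to solve explicitly for the weights $c_\alpha,c_\beta$ that the hypothesized identity forces, and then read off the conclusion. Since the mediant $\frac{a+b}{A+B}$ always lies (weakly) between $\frac{a}{A}$ and $\frac{b}{B}$, there is a unique convex combination expressing it in terms of the two, and I would pin it down directly. Writing $c_\beta = 1-c_\alpha$ and rearranging $\frac{a+b}{A+B} = c_\alpha\frac{a}{A} + (1-c_\alpha)\frac{b}{B}$, I would isolate
\[ c_\alpha \;=\; \frac{\ \frac{a+b}{A+B}-\frac{b}{B}\ }{\ \frac{a}{A}-\frac{b}{B}\ }, \]
which is legitimate provided $\frac{a}{A}\neq\frac{b}{B}$, i.e. $aB\neq bA$.

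The computation is then immediate. For the numerator, $\frac{a+b}{A+B}-\frac{b}{B} = \frac{B(a+b)-b(A+B)}{B(A+B)} = \frac{aB-bA}{B(A+B)}$, and for the denominator, $\frac{a}{A}-\frac{b}{B} = \frac{aB-bA}{AB}$. The common factor $aB-bA$ cancels, giving $c_\alpha = \frac{AB}{B(A+B)} = \frac{A}{A+B}$, and hence $c_\beta = 1-c_\alpha = \frac{B}{A+B}$. Now the hypothesis $c_\alpha \geq c_\beta$ reads $\frac{A}{A+B} \geq \frac{B}{A+B}$, and since $A+B>0$ this is exactly $A\geq B$, which is the claim.

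Finally I would dispose of the degenerate case $aB = bA$ (equivalently $\frac{a}{A}=\frac{b}{B}$, which in particular covers $a=b=0$): here $\frac{a+b}{A+B}$ already equals the common value $\frac{a}{A}=\frac{b}{B}$, so the identity holds for \emph{every} pair of weights summing to $1$ and places no constraint relating $A$ and $B$. So the lemma is to be read with the standing assumption $aB\neq bA$ — which is what holds in the intended application, where $\frac{a}{A}$ and $\frac{b}{B}$ are genuinely distinct slopes — and under that assumption the one-line computation above is the entire argument. There is essentially no obstacle here; the only point requiring care is avoiding division by zero, i.e. keeping the degenerate case explicitly to one side.
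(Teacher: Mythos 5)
Your proof is correct, and it takes a genuinely cleaner route than the paper's. You observe that the equation in the hypothesis pins down $c_\alpha$ and $c_\beta$ uniquely (when $\frac{a}{A}\neq\frac{b}{B}$) and you simply solve for them: the algebra yields $c_\alpha=\frac{A}{A+B}$, $c_\beta=\frac{B}{A+B}$, whence $c_\alpha\geq c_\beta$ is literally $A\geq B$. This makes the mediant identity $\frac{a+b}{A+B}=\frac{A}{A+B}\cdot\frac{a}{A}+\frac{B}{A+B}\cdot\frac{b}{B}$ the whole story. The paper instead forms the ratio $\frac{c_\alpha}{c_\beta}=\frac{|\frac{b}{B}-\frac{a+b}{A+B}|}{|\frac{a}{A}-\frac{a+b}{A+B}|}$ (which, incidentally, silently assumes $c_\alpha,c_\beta\geq 0$ so that the absolute values come out as the ratio itself), turns $c_\alpha\geq c_\beta$ into an inequality between those two distances, and then does a two-case argument by contradiction with substitutions $b=c_1 a$, $B=c_2 A$. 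Your closed-form computation is shorter, avoids the case split, and makes the conclusion immediate; it also exposes that the weights are automatically in $(0,1)$, so no sign assumption is needed. On the degenerate case $\frac{a}{A}=\frac{b}{B}$: you are right that it must be excluded (as you note, then any pair $(c_\alpha,c_\beta)$ works and the conclusion fails, e.g. $a=A=1$, $b=B=2$, $c_\alpha=0.9$), and you flag it explicitly. The paper's disposal of it via ``the claim is clear if $c_\alpha=1$'' is not actually sound as written, and its ratio formula has a zero denominator there, so your treatment is the more careful of the two; in the paper's applications the weights are taken to be the canonical ones, which makes the degenerate case moot.
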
 \begin{proof} see Appendix \end{proof}

\begin{proof} \textit{of Proposition} \ref{can't move far in R2R3}


We proceed by induction on the number of roots.  The base step where there is just one
root is Lemma \ref{can't move far in H2}.  Since $\zeta$ is a $(\kappa,c)$ quasi-geodesic, for any $i_{0} \leq i_{1} \leq i_{2} \leq i_{3} .... i_{n} \in [0,L]$, we must have
\begin{equation} \label{weak efficient}
\frac{\sum_{j} d(\zeta(i_{j}),\zeta(i_{j+1}))}{d(\zeta(i_{0}),\zeta(i_{n}))} \leq 2 \kappa \end{equation}

\noindent We recall from Lemma \ref{QI embedding} that $d(\cdot,\cdot)= \sum_{l=1}^{|\triangle|}d^{\alpha_{l}}(\pi_{\alpha_{l}}(\cdot),\pi_{\alpha_{l}}(\cdot))$,
and proceed to simplify equation(\ref{weak efficient}) by writing $d^{\alpha_{l}}(\pi_{\alpha_{l}}(\zeta(i_{j})),\pi_{\alpha_{l}} \zeta(i_{j+1}))$ as $d^{\alpha_{l}}_{j}$, and
$d^{\alpha_{l}}(\pi_{\alpha_{l}}(\zeta(i_{0})),\pi_{\alpha_{l}}(\zeta(i_{n})))$ as $d^{\alpha_{l}}$.

Now equation (\ref{weak efficient}) becomes

\begin{equation*}
\frac{\sum_{j} \left( d^{\alpha_{1}}_{j} + d^{\alpha_{2}}_{j} +
\cdots d^{\alpha_{|\triangle|}}_{j} \right) }{ d^{\alpha_{1}} +
d^{\alpha_{2}}+ \cdots d^{\alpha_{|\triangle|}} } \leq 2 \kappa
\end{equation*}

\begin{itemize}
\item Suppose for some weight, let's say $\alpha_{1}$, we have

\begin{equation} \label{alpha big}
\frac{\sum_{j}d^{\alpha_{1}}_{j} + \sum_{j} ( d^{\alpha_{2}}_{j}+ d^{\alpha_{3}}_{j} \cdots + d^{\alpha_{|\triangle|}}_{j} ) }
{d^{\alpha_{1}} + ( d^{\alpha_{2}} + d^{\alpha_{3}}+ \cdots + d^{\alpha_{|\triangle|}}) }= c_{\alpha}
\frac{\sum_{j}d^{\alpha_{1}}_{j}}{d^{\alpha_{1}}} + c_{\beta}
\frac{\sum_{j} \left( d^{\alpha_{2}}_{j}+ d^{\alpha_{3}}_{j} \cdots
+ d^{\alpha_{|\triangle|}}_{j} \right) }{ \left(  d^{\alpha_{2}} +
d^{\alpha_{3}}+ \cdots + d^{\alpha_{|\triangle|}} \right)}
\end{equation}

\noindent with $c_{\alpha}+c_{\beta}=1$, and $c_{\alpha} \geq c_{\beta}$.  Therefore
\begin{itemize}
\item $c_{\alpha} \geq 1/2$.  Since equation (\ref{alpha big}) is bounded above by $2\kappa$, we now have an upper bound for
the first term:

\begin{equation*} \frac{1}{2} \frac{\sum_{j}d^{\alpha_{1}}_{j}}{d^{\alpha_{1}}} \leq 2 \kappa \end{equation*}
That is,  $\{ \pi_{\alpha}(\zeta(i_{j})) \}$ are points whose heights in the $\alpha$ weight hyperbolic space lie in an interval
of width no bigger than $s$ (because $\pi_{A}(\zeta(i_{j}))$ lies in a ball of diameter $s$), and
\begin{equation*}
\frac{\sum_{j} d^{\alpha_{1}}(\pi_{\alpha_{1}}(\zeta(i_{j})),\pi_{\alpha_{1}}(\zeta(i_{j+1})))}
{d^{\alpha_{1}}(\pi_{\alpha}(\zeta(i_{0})),\pi_{\alpha}(\zeta(i_{n})))} \leq 4 \kappa \end{equation*}

By Lemma \ref{can't move far in H2}, $d^{\alpha}(\pi_{\alpha}(\zeta(i_{0})),\pi_{\alpha}(\zeta(i_{n}))) \leq \hat{C} (4\kappa)s$
\smallskip

\item Since $c_{\alpha} \geq c_{\beta}$, Lemma \ref{mixing} says $d^{\alpha_{1}} \geq \sum_{l=2}^{|\triangle|} d^{\alpha_{l}}$, which
makes $d(\zeta(i_{0}),\zeta(i_{n})) =d^{\alpha_{1}} + \sum_{l=2}^{|\triangle|} d^{\alpha_{l}} \leq 2 \hat{C} (4\kappa)s = 2^{2} \hat{C} (2\kappa) s$
\end{itemize}

\item If the first possibility doesn't occur, then for every weight
$\alpha_{i'}$, we must have
\begin{equation}\label{beta is big}
\frac{\sum_{j}(d^{\alpha_{1}}_{j}+ d^{\alpha_{2}}_{j} \cdots d^{\alpha^{|\triangle|}}_{j}) }{d^{\alpha_{1}} + d^{\alpha_{2}} + \cdots
d^{\alpha_{|\triangle|}}}= c_{\alpha_{i'}} \frac{\sum_{j}d^{\alpha_{i'}}_{j}}{d^{\alpha_{i'}}} + c_{\beta_{i'}}
\frac{\sum_{j} \left( d^{\alpha_{1}}_{j} + d^{\alpha_{2}}_{j} \cdots d^{\alpha_{i'-1}}_{j} + d^{\alpha_{i'+1}}_{j} \cdots
d^{\alpha_{|\triangle|}}_{j} \right) }{\sum_{l \not= i'} d^{\alpha_{l}} } \end{equation}

\noindent with $c_{\alpha_{i'}}, c_{\beta_{i'}} \geq 0$, $c_{\alpha_{i'}}+c_{\beta_{i'}}=1$,
BUT $c_{\alpha_{i'}} \leq c_{\beta_{i'}}$.  We fix such an $i'$.  Then
\begin{itemize}
\item $c_{\beta_{i'}} \geq 1/2$.  Since the equation (\ref{beta is big}) is bounded above
by $2\kappa$, we obtain an upper bound for the second term on the
right hand side:
\begin{equation*}
\frac{\sum_{j} (d^{\alpha_{1}}_{j} + d^{\alpha_{2}}_{j} \cdots
d^{\alpha_{i'-1}}_{j} + d^{\alpha_{i'+1}}_{j} \cdots
d^{\alpha_{|\triangle|}}_{j}) }{\sum_{l \not= i'} d^{\alpha_{l}}} \leq 4
\kappa \end{equation*}

By inductive hypothesis, \begin{equation*} \sum_{l \not= i'}
d^{\alpha_{l}}(\pi_{\alpha_{l}}(\zeta(i_{0})), \pi_{\alpha_{l}}(\zeta(i_{n}))) =\sum_{l \not= i'}
d^{\alpha_{l}} \leq 2^{2(|\triangle|-2)} \hat{C} (4 \kappa) s \end{equation*}

\item Finally, since $c_{\alpha_{i'}} \leq c_{\beta_{i'}}$, Lemma \ref{mixing} says
$d^{\alpha_{i'}} \leq \sum_{l \not= i'} d^{\alpha_{l}}$ which means $d(\zeta(i_{0}),\zeta(i_{n}))=d^{\alpha_{i'}}+ \sum{l \not= i'}
d^{\alpha_{l}} \leq 2 2^{2(|\triangle|-2)} \hat{C} (4\kappa)s = 2^{2(|\triangle|-1)} \hat{C} (2\kappa)s$  \end{itemize}
\end{itemize} \end{proof}

\subsection{Efficient scale}
This subsection is based on definition 4.5 and lemma 4.6 in \cite{EFW0},
where $\epsilon$-efficiency was defined.  Here we note the consequence of an efficient segment in
a non-degenerate, split abelian-by-abelian group.

\begin{definition} \emph{($\epsilon$-efficient at scale $\tilde{r}$)}
Let $Y$ be a metric space, and $\lambda:[0,L] \rightarrow Y$ a rectifiable curve. We say that $\lambda$ is $\epsilon$-efficient at scale $\tilde{r}$,
$0< \tilde{r} \leq 1$ if   \begin{equation*}
\sum_{j} d(p_{j}, p_{j+1}) \leq (1+ \epsilon) d(\lambda(0),\lambda(L)), \mbox{  where   } \{p_{j}\}
=\hat{\mathcal{S}}(\lambda,\tilde{r}d(\lambda(0),\lambda(L)))  \end{equation*} \end{definition}

\begin{remark} Note that being efficient at scale $r$ does necessarily not imply efficient at all sales $\tilde{r} < r$. \end{remark}

Efficiency provides with us the closest description of being `straight' in $\mathbb{R}^{n}$, whose meaning is made precise by the following lemma.

\begin{lemma} \label{efficient in Rn}
If $\lambda:[a,b] \rightarrow \mathbb{R}^{n}$ is $\epsilon$-efficient at scale $r$, then $d_{H}(\lambda,
\overline{\lambda(a)\lambda(b)}) \leq  (r+1.5\epsilon^{1/4}) d(\lambda(a),\lambda(b))$ \end{lemma}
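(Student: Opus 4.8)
The plan is to show that an $\epsilon$-efficient curve at scale $r$ cannot wander far from the chord $\overline{\lambda(a)\lambda(b)}$, by a two-part estimate: first control how far a single subdivision point $p_j$ can be from the chord, then control how far a point in the interior of a subsegment $\lambda_{[p_j,p_{j+1}]}$ can stray from $p_j$. Write $D = d(\lambda(a),\lambda(b))$, let $\{p_j\}_{j=0}^N = \hat{\mathcal{S}}(\lambda, rD)$ be the subdivision points, so $d(p_j,p_{j+1}) = rD$ for $j < N-1$ and $d(p_{N-1},p_N) \le rD$, and by hypothesis $\sum_j d(p_j,p_{j+1}) \le (1+\epsilon)D$.

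First I would bound the deviation of the polygonal path on $\{p_j\}$. Set up coordinates so that the chord lies along the first axis, with $\lambda(a)$ at the origin and $\lambda(b) = (D,0,\dots,0)$. For the signed progress along the chord, $\sum_j \big(\pi_1(p_{j+1}) - \pi_1(p_j)\big) = D$ telescopes, while $\sum_j |p_{j+1}-p_j| \le (1+\epsilon)D$; hence $\sum_j \big(|p_{j+1}-p_j| - (\pi_1(p_{j+1})-\pi_1(p_j))\big) \le \epsilon D$, and each summand is nonnegative up to the possibility of backtracking. Using $|v| - v_1 \ge \tfrac{1}{2}\,\frac{|v_\perp|^2}{|v|}$ for a vector $v=(v_1,v_\perp)$ with $|v|$ not too small compared to $v_1$ (the elementary inequality $\sqrt{v_1^2 + |v_\perp|^2} \ge v_1 + \tfrac{|v_\perp|^2}{2|v|}$), together with $|p_{j+1}-p_j| \le rD$, one gets $\sum_j |p_{j+1}-p_j,\perp|^2 \le 2 r D \cdot \epsilon D = 2r\epsilon D^2$ (after separating out a bounded number of backtracking terms, each of size $\le rD$, which contribute $O(\sqrt{r\epsilon})D$ or are absorbed). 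By Cauchy–Schwarz the transverse displacement accumulated over any initial run of segments is at most $\sqrt{N}\cdot\sqrt{2r\epsilon}\,D$, and since $N \le (1+\epsilon)D/(rD) \lesssim 1/r$, this is $\lesssim \sqrt{\epsilon}\,D$; combined with the fact that the longitudinal coordinate $\pi_1(p_j)$ stays within $[0,D]$ up to $O(\epsilon D)$, each $p_j$ is within $O\!\big((r + \sqrt{\epsilon})D\big)$ of the chord — and a slightly more careful bookkeeping of the constants yields the $1.5\epsilon^{1/4}$ shape once one also pays the $r D$ for the length of a single subsegment.

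Second, for a point $x$ lying on the subsegment $\lambda_{[p_j,p_{j+1}]}$, I would bound $d(x, p_j)$ by the length of that piece of $\lambda$; but efficiency at scale $r$ only controls the polygonal sum, not the length of an individual subsegment, so instead I bound $d(x,p_j) \le d(x, p_{j+1}) + d(p_j,p_{j+1})$ is useless directly — rather, one notes $x$ lies within the ball of radius $d(p_j,p_{j+1}) \le rD$ of $p_j$ only if the subsegment itself does not backtrack, which need not hold. The clean fix is to apply the subdivision bound recursively: re-subdivide $\lambda_{[p_j,p_{j+1}]}$ at scale $r$ of its own endpoint-distance and observe that $\epsilon$-efficiency of $\lambda$ at scale $r$ forces each subsegment to be $O(\sqrt{\epsilon})$-efficient at scale $r$ on average (the excess length is shared among $\sim 1/r$ subsegments), so all but an $O(\sqrt\epsilon)$-fraction are genuinely efficient and the argument of the first step applies to them; the remaining exceptional subsegments have endpoint-distance $\le rD$ and total transverse contribution $O(\sqrt\epsilon D)$. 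Putting the two estimates together gives $d_H(\lambda, \overline{\lambda(a)\lambda(b)}) \le (r + 1.5\epsilon^{1/4})D$.

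The main obstacle is the second step: passing from control of the \emph{polygonal} sum to control of \emph{every point} of $\lambda$, including points deep inside a single subsegment that may itself be very long and wiggly. The $\epsilon^{1/4}$ (rather than $\epsilon^{1/2}$) in the statement is exactly the price of this averaging argument — one loses a square root twice, once in Cauchy–Schwarz over the $\sim 1/r$ subsegments and once in isolating the $O(\sqrt\epsilon)$-fraction of non-efficient subsegments — so the bookkeeping of how the efficiency defect distributes across scales is where the real care is needed; everything else is the elementary Euclidean inequality $\sqrt{v_1^2+|v_\perp|^2}\ge v_1 + |v_\perp|^2/(2|v|)$ plus telescoping.
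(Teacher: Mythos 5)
Your first step---projecting onto the chord and using the elementary inequality $\sqrt{v_1^2 + |v_\perp|^2} \ge v_1 + |v_\perp|^2/(2|v|)$ together with Cauchy--Schwarz to bound the transverse drift of the subdivision points $p_j$---is a legitimate alternative to what the paper does, and is in fact sharper: it gives $O(\sqrt{\epsilon})\,D$ rather than $O(\epsilon^{1/4})D$. The paper instead applies, for each $j$, an elementary plane-geometry estimate (Lemma~\ref{triangle in Rn}) to the triangle with vertices $p_0, p_j, p_N$: since $d(p_0,p_j)+d(p_j,p_N) \le \sum_i d(p_i,p_{i+1}) \le (1+\epsilon)D$, that lemma immediately bounds the altitude from $p_j$ to $\overline{p_0 p_N}$ by $1.5\,\epsilon^{1/4}D$.

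Your second step, however, contains the real gap, and it stems from a misreading of the definition of the subdivision. You write that a point $x$ deep inside $\lambda_{[p_j,p_{j+1}]}$ ``lies within the ball of radius $d(p_j,p_{j+1})\le rD$ of $p_j$ only if the subsegment itself does not backtrack, which need not hold,'' and then propose a recursive re-subdivision with a second Cauchy--Schwarz loss to control these interior points. But $\hat{\mathcal{S}}(\lambda, rD)$ is defined by first exit times: $p_{j+1}$ is the \emph{first} point along $\lambda$ past $p_j$ at distance $rD$ from $p_j$. Consequently every point of $\lambda_{[p_j,p_{j+1}]}$ lies in $B(p_j, rD)$ by construction, with no hypothesis about backtracking. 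The ``main obstacle'' you identify therefore does not exist, the recursive argument is unnecessary, and the $\epsilon^{1/4}$ in the statement has nothing to do with an averaging loss over scales---it is just the (non-optimal but sufficient) constant coming out of the triangle lemma. Once one uses the first-exit property, $d(x,\overline{\lambda(a)\lambda(b)}) \le d(x,p_j) + d(p_j,\overline{\lambda(a)\lambda(b)}) \le rD + 1.5\epsilon^{1/4}D$, exactly as in the paper.

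A second, smaller omission: $d_H$ is a two-sided Hausdorff distance, but your proposal only argues that points of $\lambda$ are near the chord, not that points of the chord are near $\lambda$. The paper handles the converse direction by observing that the orthogonal projections $\tilde{p}_j$ of the $p_j$ onto $\overline{p_0 p_N}$ satisfy $d(\tilde{p}_j,\tilde{p}_{j+1}) \le rD$, $\tilde{p}_0 = p_0$, $\tilde{p}_N = p_N$, so the segments $\overline{\tilde{p}_j\tilde{p}_{j+1}}$ cover the chord, and then every chord point is within $\frac{1}{2}rD + 1.5\epsilon^{1/4}D$ of some $p_j \in \lambda$. You would need an analogous argument to finish.
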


\begin{proof}
Let $m=r d(\lambda(a), \lambda(b))$, and $\{p_{j}\}_{j=0}^{N}=\mathcal{S}(\lambda,m)$ so
that $d(p_{0},p_{N})=d(\lambda(a),\lambda(b))=L$.  Let $h_{\overline{p_{0}p_{N}}}$ be the orthogonal projection of
$\lambda$ onto $\overline{p_{0}p_{N}}$, $\tilde{p}_{i} =h_{\overline{pq}}(p_{i})$, so $d(\tilde{p}_{j},\tilde{p}_{j+1})
\leq d(p_{j},p_{j+1})=m$. Since $\tilde{p}_{0}=p_{0}$, $\tilde{p}_{N}=p_{N}$, $\bigcup_{i=0}^{N-1}
\overline{\tilde{p}_{i}\tilde{p}_{i+1}}=\overline{p_{0}p_{N}}$, and Lemma \ref{triangle in Rn} in the Appendix gives that $d(p_{j},\tilde{p}_{j}) \leq
1.5 \epsilon^{1/4}L$. So if $\dot{p} \in \lambda$, let $p_{j}$ be the closest point in $\mathcal{S}(\lambda,m)$, we then have
$d(\dot{p}, \overline{p_{0}p_{N}}) \leq d(\dot{p},p_{j}) + d(p_{j},\overline{p_{0}p_{N}}) \leq m + 1.5 \epsilon^{1/4}L$.
Similarly for $\ddot{p} \in \overline{p_{0}p_{N}}$, there is a $j$ such that $\ddot{p} \in \overline{\tilde{p}_{j}\tilde{p}_{j+1}}$,
with $d(\ddot{p},\tilde{p}_{j}) \leq d(\ddot{p},\tilde{p}_{j+1}$, then $d(\ddot{p},\lambda) \leq d(\ddot{p},\tilde{p}_{j}) +
d(\tilde{p}_{j},\lambda) \leq \frac{1}{2}m + 1.5 \epsilon^{1/4}L$. \end{proof}

The purpose of this subsection is to prove the following lemma which roughly says that given a $\epsilon$, if
a path is sufficiently long, then it is $\epsilon$-efficient on some scale.

\begin{lemma} \label{efficiency scale}
Let $G$ be a non-degenerate, split abelian-by-abelian group.  Take any $N \gg 2$, $L_{stop} \geq (2\kappa)C$,
$0 <\epsilon < 1$.  If $\tilde{\lambda}:[0,L] \rightarrow G$ is $(\kappa, C)$ quasi-geodesic
satisfying\footnote{this long expressions really just says that $L$ has to be
sufficiently big with respect to given $L_{stop}$, $\epsilon$ and $N$.}
\[ \frac{L_{stop}}{\left( \frac{1}{2}\epsilon^{1/4} \right)^{\frac{ \hbar (2\kappa)^{2} N +
\epsilon}{\epsilon}}}  \leq 2\kappa L  \] \noindent then there is a
scale $0< \rho_{J} \leq 1$ such that
\begin{equation*}
\frac{| \mathcal{S}(\lambda, \rho_{J}|\lambda|, \mbox{ not }
\epsilon \mbox{efficient at scale } \frac{1}{2}\epsilon^{1/4}) |}{|
\mathcal{S}(\lambda, \rho_{J}|\lambda|) |} \leq \frac{1}{N}
\end{equation*} \noindent where $\lambda=\pi_{A}(\tilde{\lambda})$, and $\frac{1}{2}\epsilon^{1/4}
\rho_{J}|\lambda| \geq L_{stop}$. \end{lemma}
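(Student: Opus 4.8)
The plan is to prove Lemma \ref{efficiency scale} by a pigeonhole / iterated-refinement argument on a nested sequence of scales, closely following Eskin--Fisher--Whyte's original efficiency-scale lemma but adapted to the projected curve $\lambda = \pi_A(\tilde\lambda)$ in $\mathbf{A} \cong \mathbb{R}^n$. First I would set up the sequence of scales: starting from the full curve at scale $1$, I subdivide into pieces at scale $\tfrac12\epsilon^{1/4}$, then subdivide each of those again at scale $\tfrac12\epsilon^{1/4}$ relative to its own endpoint distance, and so on, producing scales $\rho_k = (\tfrac12\epsilon^{1/4})^k$. At each stage I look at the collection of subsegments of $\lambda$ obtained at that scale and ask what proportion of them (weighted by the measure coming from the original parametrization, or equivalently counted) fail to be $\epsilon$-efficient at scale $\tfrac12\epsilon^{1/4}$. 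The claim is that there must be \emph{some} stage $J$ at which this proportion is at most $1/N$; if not, I derive a contradiction with the length bound.

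The key mechanism is that inefficiency at consecutive scales \emph{compounds multiplicatively in length}. Concretely, if a segment $\eta$ of $\lambda$ at some scale is \emph{not} $\epsilon$-efficient at scale $\tfrac12\epsilon^{1/4}$, then by definition its subdivision points at that finer scale satisfy $\sum_j d(p_j,p_{j+1}) > (1+\epsilon)\,|\eta|$, so the total length accounted for by the children of $\eta$ exceeds $(1+\epsilon)|\eta|$. If at \emph{every} one of the first $J$ stages at least a $1/N$ fraction of segments are inefficient, then, tracking how much total length is forced, the length $\|\lambda\|$ must be at least roughly $(1 + \epsilon/N)^{J}\,|\lambda|$ --- more precisely one gets a factor of the form $(1+\epsilon)^{\#\text{inefficient ancestors}}$ along the inefficient branches, and the $1/N$-density hypothesis at each stage forces enough such branches that the aggregate length grows like a fixed power of $(1/2)\epsilon^{1/4}$ raised to an exponent proportional to $J$. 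Here is where Proposition \ref{can't move far in R2R3} enters in the reverse direction: the length of $\tilde\lambda$ is controlled by $\kappa$ times $|\tilde\lambda|$ (quasi-geodesic), and $|\tilde\lambda| \le \hbar\,|\lambda| \cdot(\text{something})$ is \emph{not} what we want --- rather we need that a lower bound on $\|\lambda\|$ translates to a lower bound forcing $L$ large, using $\|\tilde\lambda\| \le 2\kappa L$ (efficiency of the quasi-geodesic at the coarsest level) and the relation $d(p,q) \le \hbar s$ bounding how $\mathbf{A}$-projection diameter controls $G$-distance. So I would: (i) fix $J$ to be the largest integer with $\tfrac12\epsilon^{1/4}\rho_J|\lambda| \ge L_{stop}$, i.e. $J \approx \frac{\hbar(2\kappa)^2 N + \epsilon}{\epsilon}$ up to the stated expression; (ii) assume for contradiction that at every stage $k = 0,\dots,J$ more than a $1/N$ fraction of the scale-$\rho_k$ segments are inefficient; (iii) bound $\|\lambda\| \le 2\kappa L$ from the quasi-geodesic property applied through $\pi_A$ (which is $1$-Lipschitz); and (iv) show the compounding gives $\|\lambda\| \gtrsim (\tfrac12\epsilon^{1/4})^{-cJ}|\lambda|$ with the right constant $c$ tied to $\hbar(2\kappa)^2$, contradicting the hypothesized inequality on $L$. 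The stopping condition $\tfrac12\epsilon^{1/4}\rho_J|\lambda| \ge L_{stop}$ guarantees every subdivision we perform is at a scale above $L_{stop} \ge (2\kappa)C$, so the additive constant $C$ never swamps the multiplicative gain --- this is why $L_{stop}$ appears.

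The main obstacle --- and the step requiring the most care --- is the bookkeeping in step (iv): making the compounding estimate precise when the $1/N$ fraction of inefficient segments can be \emph{different segments at each stage}, so the inefficiency does not simply stack along a single fixed branch. The right way to handle this is to sum length over the whole tree level by level: if $S_k$ denotes the total length captured by endpoints of scale-$\rho_k$ segments, then inefficiency of a $1/N$ fraction of parents gives $S_{k+1} \ge S_k + \epsilon \cdot (\text{length in inefficient parents}) \ge S_k(1 + \epsilon/N)$ once one checks that inefficient segments, being a $1/N$-fraction, still carry at least a $1/N$-fraction of the total length (this needs the segments to have comparable sizes, which holds because at each stage every child has endpoint-distance exactly $\tfrac12\epsilon^{1/4}$ times its parent's, modulo the one short leftover piece per parent, whose contribution is absorbed using $\rho_J|\lambda| \ge L_{stop}/(\tfrac12\epsilon^{1/4})$). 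Iterating, $S_J \ge (1+\epsilon/N)^J |\lambda| \ge \exp\!\big(\tfrac{\epsilon J}{2N}\big)|\lambda|$, and since $\|\lambda\| \ge S_J$ while $\|\lambda\| \le \|\tilde\lambda\| \le 2\kappa L$, one gets $\exp(\tfrac{\epsilon J}{2N}) \le 2\kappa L / |\lambda|$; plugging in $J \approx \frac{\hbar(2\kappa)^2 N}{\epsilon}$ and rearranging yields exactly the displayed lower bound $L_{stop}/(\tfrac12\epsilon^{1/4})^{(\hbar(2\kappa)^2 N + \epsilon)/\epsilon} \le 2\kappa L$ being \emph{violated}, the desired contradiction. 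Finally, having found the good stage $J$, the scale $\rho_J = (\tfrac12\epsilon^{1/4})^J$ is the claimed $\rho_J$, the density conclusion is exactly the negation we assumed away, and $\tfrac12\epsilon^{1/4}\rho_J|\lambda| \ge L_{stop}$ holds by the choice of $J$; it only remains to note $\rho_J \le 1$ since $\tfrac12\epsilon^{1/4} < 1$.
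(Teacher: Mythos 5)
Your overall strategy matches the paper's proof exactly: iterated subdivision at scale $\tfrac12\epsilon^{1/4}$, the observation that lack of efficiency forces the polygonal length at the next finer scale to grow by a factor of $(1+\epsilon)$ on the inefficient pieces, Proposition~\ref{can't move far in R2R3} as the source of the constant $\hbar(2\kappa)^2$, the stopping scale $L_{stop}$ to keep the additive constant $C$ under control, and a pigeonhole (equivalently, your contradiction) over the $D$ levels. Whether you sum additively (the paper's $L_{i+1}\geq L_i+\epsilon\Omega_i$, telescoped) or compound multiplicatively (your $S_{k+1}\geq S_k(1+\epsilon/N)$, which is the same inequality under the standing assumption $\delta_k>1/N$) is an equivalent bookkeeping choice, and your observation that all segments at a fixed level have equal endpoint distance $r_k$, so count proportion equals length proportion, is the right justification for it.

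There is, however, a gap in how you close the contradiction, and it is tied to a mis-statement of what Proposition~\ref{can't move far in R2R3} is doing. You iterate to $S_J\geq(1+\epsilon/N)^J\,|\lambda|$ and compare against $S_J\leq 2\kappa L$, concluding $(1+\epsilon/N)^J\leq 2\kappa L/|\lambda|$; but the hypothesis of the lemma is a lower bound on $L$, which gives you no control on the ratio $2\kappa L/|\lambda|$ from above, so this inequality is not violated and no contradiction results. (Your intermediate claim that $\|\tilde\lambda\|\leq 2\kappa L$ is also false for a general quasi-geodesic; what is true, and what one needs, is that the \emph{polygonal} length $S_D=n_Dr_D$ is at most $2\kappa L$, because each piece has parameter length at least $(r_D-C)/\kappa\geq r_D/(2\kappa)$.) The role of Proposition~\ref{can't move far in R2R3} in the paper is not to bound $\|\tilde\lambda\|$ or to convert ``a lower bound on $\|\lambda\|$'' into a lower bound on $L$; it is to bound the parameter length of each piece from \emph{above} by $(\hbar r_b)(2\kappa)$, hence the number of pieces $n_b$ from \emph{below} by $L/((\hbar r_b)(2\kappa))$, hence every level's polygonal length $L_b=n_br_b$ from below by $L/(2\kappa\hbar)=\hat c\cdot 2\kappa L$ with $\hat c=1/(\hbar(2\kappa)^2)$. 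Replacing $S_0=|\lambda|$ by $S_0=L_0\geq\hat c\cdot 2\kappa L$ makes your argument close: $2\kappa L\geq S_D\geq(1+\epsilon/N)^D\hat c\cdot 2\kappa L$ forces $(1+\epsilon/N)^D\leq 1/\hat c=\hbar(2\kappa)^2$, i.e.\ $D\lesssim N\ln(\hbar(2\kappa)^2)/\epsilon$, which contradicts $D\geq N\hbar(2\kappa)^2/\epsilon$ coming from the hypothesis on $L$. Also note your intermediate estimate ``$\|\lambda\|\gtrsim(\tfrac12\epsilon^{1/4})^{-cJ}|\lambda|$'' is not what the compounding gives (and is inconsistent with your own final $\exp(\epsilon J/2N)$ estimate); the gain per level is the modest factor $(1+\epsilon/N)$, not a power of $\tfrac12\epsilon^{1/4}$.
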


\begin{proof}
The idea of the proof is as follows: if a segment is not efficient, then by subdividing and adding up the distance between consecutive
pairs of points in the subdivision, the sum exceeds the distance between end points of the original segment by a fixed proportion.  In other words, lack of
efficiency increases length. However this cannot happen at every scale (bigger than $\frac{C}{d(\lambda(0), \lambda(L))}$, where $C$
is the additive constant of the quasi-geodesics), because to every subdivision, the sum of distance between successive pairs of points is bounded above
by the length of the curve.  We now proceed with the proof. \medskip

First note that the condition on $L$ in relation to $\epsilon$, $L$
and $N$ is the same as

\begin{equation} \label{Lstop}
\frac{\ln(L_{stop})-\ln(2\kappa L)} {\ln \left(\frac{\epsilon^{1/4}}{2} \right)} -1 \geq \frac{\hbar (2\kappa)^{2}}{\epsilon}N \end{equation}

If $\lambda$ $\epsilon$-efficient at scale $\frac{1}{2} \epsilon^{1/4} |\lambda|$, we can take $r_{J}=|\lambda|$,
$\rho_{J}=\frac{r_{J}}{|\lambda|}=1$ and we are done. Otherwise, let $\{\tilde{p}^{0}_{j}\}_{j=0}^{n_{0}} \subset \{\tilde{p}^{1}_{j}
\}_{j=0}^{n_{1}} \subset \{\tilde{p}^{2}_{j} \}_{j=0}^{n_{2}} \cdots \subset \{ \tilde{p}^{D}_{j} \}_{j=0}^{n_{D}}$ be an increasing sets
of points on $\tilde{\lambda}$ such that

\begin{enumerate}
\item $r_{0}=\frac{1}{2}\epsilon^{1/4}|\lambda|$,  $r_{b}=\frac{1}{2} \epsilon^{1/4} r_{b-1}$, $r_{D}=L_{stop}$
\item $\{p^{b}_{j} \}=\hat{\mathcal{S}}(\lambda,r_{b})$, where $p^{b}_{j}=\pi_{A}(\tilde{p}^{b}_{j})$  \end{enumerate} \medskip

We note here that for each $b$ between $0$ and $D$, $\lambda_{[p^{b}_{j}, p^{b}_{j+1}]}$ lies in a ball of diameter $r_{b}$, we must have
$d(\tilde{p}^{b}_{j}, \tilde{ p}^{b}_{j+1}) \leq \bar{h} r_{b}$ by Proposition \ref{can't move far in R2R3}. Therefore

\begin{equation*}
\left| \tilde{\lambda}^{-1}(\tilde{p}^{b}_{j}) - \tilde{\lambda}^{-1}(\tilde{p}^{b}_{j+1}) \right| \leq (\hbar r_{b}) (2 \kappa) \end{equation*}

\noindent and

\begin{equation*}
\left|  \{ [\tilde{\lambda}^{-1}(\tilde{p}^{b}_{j}), \tilde{\lambda}^{-1}(\tilde{p}^{b}_{j+1}) ] \} \right| \geq \frac{L}{( \hbar r_{b})(2\kappa)} \end{equation*}

\noindent Thus if we denote $\sum_{j=0}^{n_{b}-1} d(p^{b}_{j},p^{b}_{j+1})$ by $L_{b}$,

\begin{equation}\label{lower bound}
\frac{L_{b}}{|\lambda|}=\frac{\sum_{j} d(p^{b}_{j},p^{b}_{j+1})}{|\lambda|} \geq \frac{L}{( \hbar
r_{b})(2\kappa)} r_{b} \frac{1}{|\tilde{\lambda}|} \geq \frac{L}{\hbar 2 \kappa} \frac{1}{2 \kappa L} = \frac{1}{\hbar
(2\kappa)^{2}}=\hat{c} \end{equation}  \noindent which we note is a lower bound that depends only on $\kappa$ and the group $G$.
\medskip

Let $E$ be an integer between $0$ and $D$. By construction, for any
$p^{E}_{j}$, $p^{E}_{j+1}$, there are $s_{1}$, $s_{2}$ such that
$p^{E+1}_{s1}=p^{E}_{j}$, $p^{E+1}_{s2}=p^{E}_{j+1}$. Then
\begin{equation*} \sum_{i=s1}^{s2-1} d(p^{E+1}_{i},p^{E+1}_{i+1})
\geq d(p^{E}_{j},p^{E}_{j+1}) \end{equation*}

If however the segment of $\lambda_{[p^{E}_{j},p^{E}_{j+1}]}$ is not
$\epsilon$-efficient on scale $\frac{1}{2}\epsilon^{1/4}$, then
\begin{equation*}
\sum_{i=s1}^{s2-1} d(p^{E+1}_{i},p^{E+1}_{i+1}) \geq (1+ \epsilon)
d(p^{E}_{j},p^{E}_{j+1}) = d(p^{E}_{j},p^{E}_{j+1}) + \epsilon
d(p^{E}_{j},p^{E}_{j+1}) \end{equation*} This means
\begin{equation*}
\sum_{i=0}^{n_{E+1}-1} d(p^{E+1}_{i},p^{E+1}_{i+1}) \geq
\sum_{j=0}^{n_{E}-1} d(p^{E}_{j},p^{E}_{j+1}) + \epsilon \sum_{l \in
B_{E}} d(p^{E}_{l},p^{E}_{l+1}) \end{equation*} where $B_{E}$ are
those integer $j$ between $0$ and $n_{E}-1$ such that
$\lambda_{[p^{E}_{j},p^{E}_{j+1}]}$ is not $\epsilon$-efficient on
scale $\frac{1}{2}\epsilon^{1/4}$. Denote $\sum_{l \in B_{E}}
d(p^{E}_{l},p^{E}_{l+1})$ by $\Omega_{E}$ the above says
\begin{equation*}
L_{E+1} \geq L_{E} + (\epsilon) \Omega_{E}
\end{equation*}

Hence
\begin{eqnarray*}
|\lambda| \geq L_{D} &\geq& L_{D-1} +(\epsilon) \Omega_{D-1} \\
& \geq&  L_{D-2} +(\epsilon)\Omega_{D-2} + (\epsilon) \Omega_{D-1}\\
& \geq& L_{D-3} + (\epsilon)\Omega_{D-3} +
(\epsilon)\Omega_{D-2} + (\epsilon) \Omega_{D-1} \\
& \geq& \cdots \\
&\geq& L_{0} + (\epsilon)\sum_{j=1}^{D} \Omega_{D-j} \geq
(\epsilon)\sum_{j=1}^{D} \Omega_{D-j}
\end{eqnarray*}


Dividing both sides by $|\lambda|$, and let $\delta_{i}(\lambda)=
\frac{\Omega_{i}}{L_{i}}$ be the proportion of elements in $S(\lambda, r_{j})$ that are
not $\epsilon$-efficient at scale $1/2 \epsilon^{1/4}$, we have by equation (\ref{lower bound})
\begin{equation}\label{efficient scale sum}
\frac{1}{\epsilon} \geq \sum_{i=0}^{D-1} \frac{\Omega_{i}}{|\lambda|} = \sum_{i=0}^{D-1}
\frac{\Omega_{i}}{L_{i}} \frac{L_{i}}{|\lambda|} \geq \hat{c} \sum_{i=0}^{D-1} \delta_{i}(\lambda)  \end{equation}

\noindent Since we stop at $r_{D}=L_{stop}$,
\begin{eqnarray*}
\left( \frac{1}{2} \epsilon^{1/4} \right)^{D+1} |\lambda| & =& L_{stop} \\
D = \frac{\ln(L_{stop}) - \ln(|\lambda|)}{\ln \left( \frac{1}{2}\epsilon^{1/4} \right)}-1 & \geq & \frac{\ln(L_{stop}) -
\ln(2\kappa L)}{\ln \left( \frac{1}{2}\epsilon^{1/4}\right)} \geq \frac{1}{\epsilon \hat{c} }N \end{eqnarray*}

\noindent where we used equations (\ref{Lstop}) and (\ref{lower bound}) in the last inequality. \smallskip

\noindent The right hand side of (\ref{efficient scale sum}) has at least $\frac{1}{\epsilon  \hat{c} }N$ terms, so for some $0 \leq J
\leq D$, $\delta_{J}(\lambda) \leq \frac{1}{N}$, which means the proportion of segments in $\mathcal{S}(\lambda, r_{J})$ that are not
$\epsilon$-efficient at scale $\frac{1}{2} \epsilon^{1/4}$ is at most $\frac{1}{N}$. The desired
$\rho_{J}=\frac{r_{J}}{|\lambda|}=\left(\frac{1}{2}\epsilon^{1/4} \right)^{J+1}$ \end{proof}

\begin{corollary} \label{efficiency scale multiple}
Let $G$ be a non-degenerate, split abelian-by-abelian group.  Take any $2 \ll N_{0} < N$, $L_{stop} \geq (2\kappa)C$,
$0 <\epsilon < 1$, and let $\mathcal{F}=\{ \tilde{\lambda}_{i} \}$ be a finite set of
$(\kappa, C)$ quasi-geodesics.  If every element of $\mathcal{F}$, $\tilde{\lambda}_{i}:[0,L_{i}] \rightarrow G$ satisfies
\[ \frac{L_{stop}}{\left( \frac{1}{2}\epsilon^{1/4} \right)^{\frac{ \hbar (2\kappa)^{2} N +
\epsilon}{\epsilon}}}  \leq 2\kappa L_{i}  \]
\noindent then there is a scale $0< \rho_{J} \leq 1$ and a subset $\mathcal{F}_{0}$ such
that  \begin{enumerate}
\item $|\mathcal{F}_{0}| \geq (1-\frac{N_{0}}{N}) |\mathcal{F}|$

\item for every $\tilde{\lambda}_{i} \in \mathcal{F}_{0}$,
\begin{equation*}
\frac{| \mathcal{S}(\lambda_{i}, \rho_{J}|\lambda_{i}|, \mbox{ not } \epsilon \mbox{efficient at scale }
\frac{1}{2}\epsilon^{1/4}) |}{| \mathcal{S}(\lambda_{i}, \rho_{J}|\lambda_{i}|) |} \leq \frac{1}{N_{0}} \end{equation*}
\noindent where $\lambda_{i}=\pi_{A}(\tilde{\lambda}_{i})$, and
$\frac{1}{2}\epsilon^{1/4} \rho_{J}|\lambda_{i}| \geq L_{stop}$.  \end{enumerate} \end{corollary}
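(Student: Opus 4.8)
The plan is to run the single-curve argument of Lemma \ref{efficiency scale} in parallel over all the curves in $\mathcal{F}$, but with a single common dyadic chain of scales $r_0 > r_1 > \cdots > r_D$ (the powers $(\tfrac12\epsilon^{1/4})^{b}$ times the relevant length) and then use a double-counting / averaging argument to extract one scale index $J$ that is simultaneously good for most of the curves. First I would fix the chain of scales and, for each $i$ and each scale index $b$, form the subdivisions $\hat{\mathcal{S}}(\lambda_i, r_b |\lambda_i|)$ exactly as in the proof of Lemma \ref{efficiency scale}; let $\delta_b(\lambda_i)$ denote the proportion of subsegments of $\lambda_i$ at scale $r_b|\lambda_i|$ that are not $\epsilon$-efficient at scale $\tfrac12\epsilon^{1/4}$. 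The length-versus-inefficiency inequality \eqref{efficient scale sum} from Lemma \ref{efficiency scale}, which only used Proposition \ref{can't move far in R2R3} and the quasi-geodesic inequality, applies verbatim to each curve:
\[ \hat{c} \sum_{b=0}^{D-1} \delta_b(\lambda_i) \leq \frac{1}{\epsilon}. \]

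Next I would sum this over all $i$ with $\tilde{\lambda}_i \in \mathcal{F}$ and divide by $|\mathcal{F}|$, obtaining
\[ \hat{c} \sum_{b=0}^{D-1} \left( \frac{1}{|\mathcal{F}|} \sum_{i} \delta_b(\lambda_i) \right) \leq \frac{1}{\epsilon}, \]
so the average over $b$ of the quantity $\overline{\delta}_b := \frac{1}{|\mathcal{F}|}\sum_i \delta_b(\lambda_i)$ is at most $\frac{1}{\epsilon \hat{c} D}$. By the hypothesis on the $L_i$ (which, as in the computation of $D$ in Lemma \ref{efficiency scale}, forces $D \geq \frac{1}{\epsilon \hat{c}} N$), there must exist an index $J$ with $\overline{\delta}_J \leq \frac{1}{N}$, and the stopping condition $r_D = L_{stop}$ gives $\tfrac12\epsilon^{1/4}\rho_J|\lambda_i| \geq L_{stop}$ for this $J$ as before, with $\rho_J = (\tfrac12\epsilon^{1/4})^{J+1}$. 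Finally, for this fixed $J$, apply Markov's inequality to the nonnegative numbers $\delta_J(\lambda_i)$, whose average over $i$ is $\overline{\delta}_J \leq \frac{1}{N}$: the set $\mathcal{F}_0$ of those $i$ for which $\delta_J(\lambda_i) > \frac{1}{N_0}$ has size at most $\frac{N_0}{N}|\mathcal{F}|$, so its complement $\mathcal{F}_0$ has at least $(1 - \frac{N_0}{N})|\mathcal{F}|$ elements, and every curve in it satisfies the desired bound at scale $\rho_J$.

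The only delicate point is that the chain of scales must be common to all curves, which is why one works with the \emph{relative} scales $\rho_b = (\tfrac12\epsilon^{1/4})^{b}$ rather than absolute lengths; since the inequality \eqref{efficient scale sum} is scale-invariant and the lower bound $\hat{c}$ on $L_b/|\lambda|$ depends only on $\kappa$ and $G$ (not on $i$), this causes no trouble, and the number of available scales $D$ is the same for every curve because the stopping length $L_{stop}$ and the hypothesis on $L_i$ are uniform. The main (mild) obstacle is simply bookkeeping the two-stage averaging correctly — first over $b$ to locate $J$, then over $i$ via Markov at that fixed $J$ — and checking that the quantitative hypothesis on the $L_i$ is exactly what is needed to guarantee $D$ is large enough for the first averaging step to produce $\overline{\delta}_J \leq \frac1N$.
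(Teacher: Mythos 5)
Your proposal is correct and follows essentially the same route as the paper's proof: apply the single-curve inequality (equation \ref{efficient scale sum}) from Lemma \ref{efficiency scale} to each $\lambda_i$, average over $i$, pigeonhole over the common chain of at least $\frac{1}{\epsilon\hat c}N$ scale indices to find $J$ with $\overline{\delta}_J \leq 1/N$, and then apply the Chebyshev/Markov inequality at that fixed $J$ to define $\mathcal{F}_0$. (Minor typo: near the end you call both the bad set and its complement $\mathcal{F}_0$.)
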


\begin{proof}
We apply Lemma \ref{efficiency scale} to each element of $\mathcal{F}$ and stop at
equation (\ref{efficient scale sum}).  That is, for every $\tilde{\lambda}_{j} \in
\mathcal{F}$, we have
\begin{equation*}
\frac{1}{\epsilon} \geq \hat{c} \sum_{i=0}^{D-1} \delta_{i}(\lambda_{j})  \end{equation*} \noindent therefore
\begin{equation}\label{efficien scale sum multiple}
\frac{1}{\epsilon} = \frac{1}{|\mathcal{F}|} \sum_{\tilde{\lambda}_{j} \in \mathcal{F}}
\frac{1}{\epsilon} \geq  \frac{\hat{c}}{|\mathcal{F}|} \sum_{\tilde{\lambda}_{j} \in \mathcal{F}} \sum_{i=0}^{D-1}
\delta_{i}(\lambda_{j}) = \hat{c} \sum_{i=0}^{D-1}  \frac{1}{|\mathcal{F}|} \sum_{\tilde{\lambda}_{j} \in \mathcal{F}}
\delta_{i}(\lambda_{j})  \end{equation}

\noindent For the same reason as in Lemma \ref{efficiency scale}, the right hand side of
equation (\ref{efficien scale sum multiple}) has at least $\frac{1}{\epsilon  \hat{c} }N$ terms,
so for some $0 \leq J \leq D$

\[ \frac{1}{N}  \geq \frac{1}{|\mathcal{F}|} \sum_{\tilde{\lambda}_{j} \in \mathcal{F}} \delta_{J}(\lambda_{j})  \]
\noindent Let $\mathcal{F}_{b}$ be those $\tilde{\lambda}_{j} \in \mathcal{F}$ whose
$\delta_{J}$ value is more than $\frac{1}{N_{0}}$.  Applying Chebyshev inequality we see
that
\[ \frac{1}{N} \geq \left| \mathcal{F}_{b} \right| \frac{1}{N_{0}} \frac{1}{|\mathcal{F}|}  \]

\noindent the claim is obtained by setting $\mathcal{F}_{0}$ as the complement of $\mathcal{F}_{b}$.  \end{proof}

The following lemma says that given an efficient segment, most subsegments of length
sufficiently larger than the efficient scale are efficient.

\begin{lemma}\label{lots of efficient segments}
Let $\lambda$ be a rectifiable curve in a metric space Y whose end
points are $L$ apart. Suppose $\lambda$ is $\epsilon$-efficient at
scale $\frac{1}{2} \epsilon^{1/4}$. Let $\{q_{i}\}$ be a subdivision
of $\lambda$ such that for some $r_{s}, r_{b} \in [\epsilon^{1/4}L,
L]$, the distance between successive subdivision points satisfies
$r_{s} \leq d_{Y}(q_{i},q_{i+1}) \leq r_{b}$. Then provided
$\frac{r_{b}}{r_{s}} \epsilon^{1/2} \ll 1$, at least $\epsilon^{1/2}
\frac{r_{b}}{r_{s}}$ proportion of the subsegments $\{
\lambda_{[q_{i},q_{i+1}]} \}$ are $\epsilon^{1/2}$ efficient at
scale $\frac{1}{2} \epsilon^{1/4}$. \end{lemma}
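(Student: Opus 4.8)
The plan is to exploit the same "length-accounting" mechanism used in Lemma \ref{efficiency scale}, but now applied at the single scale $\frac{1}{2}\epsilon^{1/4}$ relative to each subsegment $\lambda_{[q_i,q_{i+1}]}$. First I would set up the comparison between two subdivisions of $\lambda$: the coarse subdivision $\{q_i\}$ with mesh between $r_s$ and $r_b$, and its refinement $\{\tilde q_i\}$ obtained by further subdividing each $\lambda_{[q_i,q_{i+1}]}$ at scale $\frac{1}{2}\epsilon^{1/4}\, d(q_i,q_{i+1})$. The point of the hypothesis $r_s,r_b\in[\epsilon^{1/4}L,L]$ is that the refinement scale on each piece is at most $\frac12\epsilon^{1/4}\cdot L$ and at least $\frac12\epsilon^{1/4}\cdot\epsilon^{1/4}L=\frac12\epsilon^{1/2}L$, which is still a legitimate (non-degenerate) scale; and the whole refinement $\{\tilde q_i\}$ is a refinement of a subdivision of $\lambda$ whose mesh is $\le \frac12\epsilon^{1/4}L$, so the $\epsilon$-efficiency of $\lambda$ at scale $\frac12\epsilon^{1/4}$ applies to control the total refined length.

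Next I would do the accounting. Write $L_{\text{coarse}}=\sum_i d(q_i,q_{i+1})$ and $L_{\text{fine}}=\sum$ (distances along the refinement). On the one hand, since the refinement $\{\tilde q_i\}$ arises from a subdivision of $\lambda$ with mesh $\le \frac12\epsilon^{1/4}L$, efficiency of $\lambda$ gives $L_{\text{fine}}\le (1+\epsilon)L$. On the other hand, $L_{\text{coarse}}\ge L$ trivially (triangle inequality between the endpoints), so $L_{\text{fine}}\le (1+\epsilon)L_{\text{coarse}}$. Now for each coarse piece that is \emph{not} $\epsilon^{1/2}$-efficient at scale $\frac12\epsilon^{1/4}$, the refinement of that piece has total length at least $(1+\epsilon^{1/2})\,d(q_i,q_{i+1})\ge (1+\epsilon^{1/2})\,r_s$, i.e.\ it contributes an excess of at least $\epsilon^{1/2} r_s$ over $d(q_i,q_{i+1})$. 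Summing the excesses over the bad pieces and comparing with the total allowed excess $L_{\text{fine}}-L_{\text{coarse}}\le \epsilon L_{\text{coarse}}$ gives
\[
(\#\text{bad pieces})\cdot \epsilon^{1/2} r_s \;\le\; \epsilon\, L_{\text{coarse}} \;\le\; \epsilon\cdot(\#\text{pieces})\cdot r_b .
\]
Rearranging, the proportion of bad pieces is at most $\epsilon^{1/2}\, r_b/r_s$; equivalently the proportion of $\epsilon^{1/2}$-efficient pieces is at least $1-\epsilon^{1/2} r_b/r_s$. (The lemma as stated phrases this as "at least $\epsilon^{1/2} r_b/r_s$ proportion are efficient," which is the weaker, cleaner bound one can also read off once $\epsilon^{1/2} r_b/r_s\ll 1$; I would state and prove the stronger $1-\epsilon^{1/2}r_b/r_s$ and remark it implies what is claimed, or simply match the paper's phrasing — the inequality chain above yields both.)

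The main obstacle — really the only subtle point — is the bookkeeping that guarantees the refinement $\{\tilde q_i\}$ is genuinely a refinement of \emph{some} subdivision of $\lambda$ to which the hypothesis "$\epsilon$-efficient at scale $\frac12\epsilon^{1/4}$" directly applies, since $\epsilon$-efficiency is defined via $\hat{\mathcal S}(\lambda,\frac12\epsilon^{1/4}L)$ with a specific greedy choice of division points, and the $q_i$ need not be those points. I would handle this by the standard observation that refining any subdivision only increases the sum of consecutive distances, while the efficiency hypothesis gives an \emph{upper} bound $(1+\epsilon)L$ on the sum over the canonical division $\hat{\mathcal S}(\lambda,\frac12\epsilon^{1/4}L)$; since the common refinement of $\{\tilde q_i\}$ and $\hat{\mathcal S}(\lambda,\frac12\epsilon^{1/4}L)$ has sum that is both $\ge L_{\text{fine}}$ and, by Lemma-\ref{efficient in Rn}-type monotonicity together with efficiency, still $\le (1+\epsilon)L$ up to the harmless fact that all mesh sizes involved are $\le \frac12\epsilon^{1/4}L$ — one checks that passing to a common refinement of two subdivisions each of mesh $\le \frac12\epsilon^{1/4}L$ does not destroy the $(1+\epsilon)$ bound because efficiency at a given scale controls all finer subdivisions in the relevant direction. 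Making this last sentence precise (that efficiency at scale $\frac12\epsilon^{1/4}$ bounds the length of \emph{every} refinement of the canonical division, not just that division itself) is the one place requiring care; everything else is the arithmetic above.
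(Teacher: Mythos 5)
Your proof takes essentially the same route as the paper's: refine each coarse piece $\lambda_{[q_i,q_{i+1}]}$ at scale $\tfrac12\epsilon^{1/4}$ relative to that piece, use the $\epsilon$-efficiency of $\lambda$ to bound the total excess of the fine sum over $L$ by $\epsilon L$, note that every piece failing $\epsilon^{1/2}$-efficiency contributes an excess of at least $\epsilon^{1/2}r_s$, and divide by a lower bound on the number of coarse pieces ($\ge L/r_b$). The arithmetic matches and you land on the same bound $\epsilon^{1/2}r_b/r_s$ for the bad fraction. You also correctly observe that what is actually proven is the stronger statement that at least $1-\epsilon^{1/2}r_b/r_s$ of the pieces are efficient, of which the lemma's phrasing is a weak consequence.

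The subtlety you flag at the end — whether the $\epsilon$-efficiency hypothesis, which by definition is a statement about the single canonical subdivision $\hat{\mathcal S}(\lambda,\tfrac12\epsilon^{1/4}L)$, can be applied to the union refinement $\{p_j\}=\bigcup_i\hat{\mathcal S}(\lambda_{[q_i,q_{i+1}]},\tfrac12\epsilon^{1/4}d(q_i,q_{i+1}))$ — is a genuine issue, and you are right that it is the only non-routine point. The paper's proof applies the efficiency inequality directly to $\{p_j\}$ without comment, so you are not departing from the paper; you are being more explicit about a step the paper silently elides. However, your proposed resolution is not valid as stated: you write that ``efficiency at a given scale controls all finer subdivisions in the relevant direction,'' but the paper itself remarks, just after the definition, that efficiency at scale $r$ does \emph{not} imply efficiency at finer scales. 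A subdivision of smaller mesh is not a refinement of $\hat{\mathcal S}(\lambda,\tfrac12\epsilon^{1/4}L)$, and passing to a common refinement only gives sums that are $\ge$ each of the two totals, which pushes the inequality in the wrong direction. So the gap you correctly identified is not closed by the argument you sketch; it would need either a stronger hypothesis (e.g.\ efficiency at the relevant finer scales) or a different comparison. Since the paper's own proof has the identical unaddressed step, this is a shared issue rather than a flaw unique to your write-up, but the fix you gesture at should not be presented as if it were routine.
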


\begin{proof}
Let $\{ p_{j} \}=\bigcup_{i}
\hat{\mathcal{S}}(\lambda_{[q_{i},q_{i+1}]},\frac{1}{2}\epsilon^{1/4}d(q_{i},q_{i+1}))$.
Then $\{ q_{j}\} \subset \{p_{j} \}$. Write $q_{0}=p_{n_{0}}$,
$q_{1}=p_{n_{0}+n_{1}}$, $q_{2}=p_{n_{0}+n_{1}+n_{2}}$ etc. With
this notation we can write

\begin{equation*}
\sum_{i=0}^{N-1} d(p_{i},p_{i+1})= \sum_{j} \sum_{i=n_{0}+n_{1}
\cdots + n_{j}}^{n_{0}+n_{1} \cdots +n_{j+1} -1} d(p_{i},p_{i+1})
\end{equation*}

For each $j$, write $q_{j}=p_{s1}$, $q_{j+1}=p_{s2}$.  Then \textbf{EITHER}
\begin{equation*}
d(q_{j}, q_{j+1}) \leq \sum_{i=0}^{s2-1} d(p_{i},p_{i+1}) \leq (1+
\epsilon^{1/2}) d(p_{s1},p_{s2})=(1+ \epsilon^{1/2})
d(q_{j},q_{j+1}) \end{equation*} \textbf{OR}
\begin{equation*} \sum_{i=s1}^{s2-1} d(p_{i},p_{i+1}) > (1+ \epsilon^{1/2})
d(p_{s1},p_{s2})= (1+ \epsilon^{1/2}) d(q_{j},q_{j+1})
\end{equation*} in which case we denote the set of all such $q_{j}$'s as
$\mathcal{B}$. Note that the cardinality of the coarser division
points $|\{ q_{i} \}| \geq \frac{L}{r_{b}}$.

$\lambda$ being efficient means
\begin{equation*}
\epsilon d(p_{0},p_{N}) \geq  \left( \sum_{j} \sum_{i=n_{0}+n_{1} \cdots + n_{j}}^{n_{0}+n_{1}
\cdots +n_{j+1} -1} d(p_{i},p_{i+1}) \right) - d(p_{0},p_{N})  \end{equation*} Hence

\begin{eqnarray*}
\epsilon L= \epsilon d(p_{0},p_{N})  &\geq &  \left( \sum_{j} \sum_{i=n_{0}+n_{1} \cdots + n_{j}}^{n_{0}+n_{1}
\cdots +n_{j+1} -1} d(p_{i},p_{i+1}) \right) - d(p_{0},p_{N}) \\
&\geq & \sum_{j} \left( \sum_{i=n_{0}+n_{1} \cdots + n_{j}}^{n_{0}+n_{1} \cdots +n_{j+1} -1} d(p_{i},p_{i+1}) - d(q_{j},q_{j+1}) \right) \\
& \geq& \sum_{q_{j} \in \mathcal{B}} \epsilon^{1/2} d(q_{j},q_{j+1}) \geq \epsilon^{1/2} |\mathcal{B}|r_{s} \end{eqnarray*}
\noindent therefore $ | \mathcal{B} | \leq \epsilon^{1/2} \frac{L}{r_{s}} $, giving us a bound on $\flat_{r}$, the
proportion of $\mathcal{B}$, as \begin{equation*}
\flat_{r}=\frac{|\mathcal{B}|}{ |\{q_{j} \}|} \leq \frac{\epsilon^{1/2}\frac{L}{r_{s}}}{\frac{L}{r_{b}}}
=\epsilon^{1/2}\frac{r_{b}}{r_{s}}\end{equation*} \end{proof}

\subsection{Monotone scale}
\begin{definition} \emph{($\delta$-monotone)}
Let $G$ be a split abelian-by-abelian group, and $\zeta:[0,L] \rightarrow G$ a $(\kappa,C)$ quasi-geodesic segment such that there exists a line segment
$\overline{AB} \in \mathbf{A}$ satisfying $d_{H}(\pi_{A}(\zeta), \overline{AB}) \leq \epsilon |\pi_{A}(\zeta)|$,
for some $0 \leq \epsilon < 1$.  Let $h_{\overline{AB}}:\pi_{A}(\zeta)
\rightarrow \overline{AB}$ be the map that sends every point of $\pi_{A}(\zeta)$ to the closest point on $\overline{AB}$ by
orthogonal projection.  We say that $\zeta$ is  \begin{itemize}
\item $\delta$-monotone, if $ 1 > \delta \gg 2\hbar\epsilon$ and
\[ h_{\overline{AB}}(\pi_{A} \circ \zeta(t_{1}))=h_{\overline{AB}}(\pi_{A} \circ \zeta(t_{2}))
\Longrightarrow d(\zeta(t_{1}),\zeta(t_{2})) \leq \delta d(\zeta(0), \zeta(L)) \]

\item $(\nu,C_{1})$ weakly monotone if for $1 > \nu \gg  2\epsilon \hbar (2\kappa)^{2}$, $t_{1} > t_{2}$
\[h_{\overline{AB}}(\pi_{A} \circ \zeta(t_{1}))=h_{\overline{AB}}(\pi_{A} \circ \zeta(t_{2}))
\Longrightarrow d(\zeta(t_{1}), \zeta(t_{2})) \leq \nu d(\zeta(t_{1}), \zeta(0)) + C_{1} \]

\noindent Note that the definition of weakly monotone is not
symmetrical to both end points: it's biased towards the starting point $\zeta(0)$. \end{itemize} \end{definition}

The following says that in the case of a non-degenerate group, a monotone quasi-geodesic is close to a geodesic segment.
\begin{proposition} \label{close to being straight}
Let $G$ be a non-degenerate, split abelian-by-abelian group, and $\lambda:[0,L] \rightarrow G$ a $(\kappa, C)$
quasi-geodesic whose $\pi_{A}$ image is $\epsilon$-efficient.  Suppose that with respect to $\lambda(0)$, $\lambda$ lies outside of the
$\frac{3}{\delta d(\lambda(0), \lambda(L))}$-linear $+C$ neighborhood of the set of walls based at $\lambda(0)$.
Then \begin{enumerate}
\item $\lambda$ is within $O(\delta L)$ \footnote{ $O(\delta L)$ here can be taken as $2
|\triangle|( \hbar \sqrt{\delta^{2} + 4 \epsilon^{2}} |\overline{AB}| + \delta d(\zeta(0), \zeta(L)))$ } Hausdorff neighborhood of a
straight geodesic segment when $\lambda$ is $\delta$ monotone.

\item $\lambda$ is in $|\triangle| \eta$-linear $+O(1)$ \footnote{ the constant
$O(1)$ can be taken as $|\triangle|( \hbar \sqrt{\delta^{2} + 4 \epsilon^{2}}|\overline{AB}| + C_{1} )$}  neighborhood of a straight
geodesic when $\lambda$ is $(\eta, C_{1})$ weakly monotone.  Recall that $\triangle$ is the set of roots of $G$. \end{enumerate}\end{proposition}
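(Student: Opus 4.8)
The plan is to reduce the statement to an estimate purely in the abelian group $\mathbf{A}$ together with control of the "vertical" displacement in each weight-hyperbolic space, then combine these using the QI embedding of Lemma \ref{QI embedding}. First I would apply Lemma \ref{efficient in Rn} to $\pi_{A}(\lambda)$: since its $\pi_{A}$ image is $\epsilon$-efficient, it lies in a $(r + 1.5\epsilon^{1/4})$-proportion Hausdorff neighborhood of the segment $\overline{AB} = \overline{\pi_{A}\lambda(0)\,\pi_{A}\lambda(L)}$, so up to absorbing $\epsilon$-terms we may treat $\pi_{A}(\lambda)$ as essentially running along $\overline{AB}$. The geodesic segment we will approximate by is $\lambda(0)\overline{AB}$, i.e.\ the left translate by $\lambda(0)$ of the directed segment $\overline{AB}$ in $\mathbf{A}$.

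The core of the argument is bounding, for a point $\lambda(t)$, the distance to the point of $\lambda(0)\overline{AB}$ lying at the same $h_{\overline{AB}}$-height. Write $\pi_{A}(\lambda(t))$ and note that $h_{\overline{AB}}(\pi_{A}\lambda(t))$ determines a point $\gamma(t)$ on $\lambda(0)\overline{AB}$; monotonicity is exactly the tool that controls points of $\lambda$ sharing an $h_{\overline{AB}}$-value. In the $\delta$-monotone case, if $t_{1},t_{2}$ have the same projection then $d(\lambda(t_{1}),\lambda(t_{2}))\le \delta d(\lambda(0),\lambda(L))$; combined with the efficiency estimate $d_{H}(\pi_{A}\lambda,\overline{AB})\le \epsilon|\overline{AB}|$ and the bilipschitz Finsler description of the metric, one gets that $\lambda(t)$ is within $O(\hbar\sqrt{\delta^{2}+4\epsilon^{2}}|\overline{AB}| + \delta d(\lambda(0),\lambda(L)))$ of $\gamma(t)$, summing the $|\triangle|$ contributions (one per root, via $\pi_{\alpha}$) and using Proposition \ref{can't move far in R2R3} to convert the small $\pi_{A}$-displacement into a bound on the true displacement. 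For the $(\eta,C_{1})$ weakly monotone case the same scheme applies but the bound $d(\lambda(t_{1}),\lambda(t_{2}))\le \eta d(\lambda(t_{1}),\lambda(0)) + C_{1}$ is linear in the distance to the basepoint, which is why the conclusion there is a linear (rather than uniform) neighborhood, with the stated additive constant $|\triangle|(\hbar\sqrt{\delta^{2}+4\epsilon^{2}}|\overline{AB}|+C_{1})$.

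The hypothesis that $\lambda$ stays outside the $\frac{3}{\delta d(\lambda(0),\lambda(L))}$-linear $+C$ neighborhood of the walls based at $\lambda(0)$ is what rules out the degenerate behavior: near a wall the metric on the relevant $V_{\alpha}$-factor can contract drastically, so that two points with very different $h_{\overline{AB}}$-heights become close and the projection $h_{\overline{AB}}$ no longer faithfully records position along $\lambda$; staying away from the walls means each $\pi_{\alpha}(\lambda)$ is genuinely moving "downward then upward" in its hyperbolic space in a controlled way, so a height coordinate along $\overline{AB}$ parametrizes $\lambda$ coarsely monotonically. I expect the main obstacle to be precisely the bookkeeping here: one must show that the orthogonal projection $h_{\overline{AB}}$ composed with $\pi_{A}\circ\lambda$ is, away from the walls and using efficiency, coarsely monotone as a function of $t$, so that "same height" really does force "close in $t$" up to the monotonicity error — and then carefully track how the per-root errors $d^{\alpha}$ add up under the embedded metric without the constants blowing up with $L$. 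Once that monotone-reparametrization fact is in hand, the two conclusions follow by feeding the respective monotonicity inequalities into the triangle inequality $d(\lambda(t),\lambda(0)\overline{AB})\le d(\lambda(t),\lambda(t'))+d(\lambda(t'),\gamma(t))$ where $t'$ is chosen with $h_{\overline{AB}}(\pi_{A}\lambda(t'))$ matching the target point on the segment.
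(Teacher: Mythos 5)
The overall skeleton you propose matches the paper's: discretize by the height function $h_{\overline{AB}}$, use efficiency together with Proposition \ref{can't move far in R2R3} to bound the displacement of $\lambda$ over each height increment, and feed in the monotonicity hypothesis to control the back-and-forth on equal-height sets. However there is a genuine gap in how you pass from these \emph{step-wise displacement bounds} to the desired \emph{Hausdorff/linear-neighborhood} conclusion. Your final triangle inequality $d(\lambda(t),\gamma(t))\le d(\lambda(t),\lambda(t'))+d(\lambda(t'),\gamma(t))$ is circular: to bound the second term you would already need to know that $\lambda$ sits close to the approximating geodesic at time $t'$, which is exactly what is being proved. Monotonicity and Proposition \ref{can't move far in R2R3} only give a bound $\Upsilon$ on $d(\lambda(t_{j-1}),\lambda(t_j))$ between consecutive discretization points; in principle these per-step errors could accumulate linearly, which would destroy both conclusions.

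What the paper uses to close this loop, and what your proposal omits entirely, is Lemma \ref{lemma A1-2}: for each root $\Xi$ one looks at the points $\{\pi_{\Xi}(\lambda(t_j))\}_j$ in the weight hyperbolic space $\mathit{H}_{n_\Xi+1}$; each is within $\Upsilon$ of the vertical geodesic through its predecessor, and the heights increase by at least $h_0>2$ per step. The key point is that in a hyperbolic weight space the horizontal errors from earlier steps are exponentially suppressed as the height increases, so the sum $\sum_j e^{d_j + (j-k)h_0}$ in Lemma \ref{lemma A1-2} converges and one gets a \emph{single} vertical geodesic $\gamma_0$ within $2\Upsilon$ (resp.\ linearly) of all the $\pi_{\Xi}(\lambda(t_j))$. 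This geometric-series mechanism is precisely what upgrades a step-wise bound to a uniform one, and it is also the precise role of the wall-avoidance hypothesis (which you describe only heuristically): staying outside the $\frac{3}{\delta d(\lambda(0),\lambda(L))}$-linear $+C$ neighborhood of the walls based at $\lambda(0)$ is what guarantees $h(\pi_{\Xi}\circ\lambda(t_j))-h(\pi_{\Xi}\circ\lambda(t_{j-1}))>2$ for every root $\Xi$, which is the hypothesis $h_0>2$ (resp.\ $2C_1\le h_0$) that makes Lemma \ref{lemma A1-2} go through. Without this lemma, or some equivalent accumulation estimate in each root hyperbolic space, the proposed proof does not reach the stated Hausdorff (or linear-neighborhood) bound.
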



Note that a monotone path is efficient by definition.  So being close to a geodesic
segment is the same as asking that the movement of the path along $\mathbf{H}$ direction
is not too big.  We will prove Proposition \ref{close to being straight} by using the observation that for a
monotone path in $G$, admitting a geodesic approximation is the same as saying that for any root $\alpha \in \triangle$,
its $\pi_{\alpha}$ image admits a (vertical) geodesic approximations.  The next lemma sets out one
scenario where we have (vertical) geodesic approximation in $\mathit{H}_{n+1} = \mathbb{R}^{n} \rtimes_{\psi} \mathbb{R}$
Recall that $\psi(t)$ is $e^{t}N(t)$, where $N(t)$ is a nilpotent matrix with polynomial entries.  We coordinatize
points in $\mathit{H}_{n+1}$ as $(x, t)$, where $x \in \mathbb{R}^{n}$, and $t \in
\mathbb{R}$.  Let $h$ denotes for the projection $(x,t) \mapsto t$.

\begin{lemma}\label{lemma A1-2}
Let $\{p_{i} \}_{i=-s}^{t}$, where $s, t \in \mathbb{Z}^{+}$, be points in
$\mathit{H}_{n+1}$ such that for some $h_{0} >2$, $h(p_{j})=h(p_{j-1})+h_{0}$, $\forall j$.  For $i > 0$, let
$d_{i}$ denote the distance between $p_{i}$ and the vertical geodesic passing through $p_{i-1}$; for
$i <0$, let $d_{i}$ denote for the distance between $p_{i}$ and the vertical geodesic passing $p_{i+1}$.

\begin{enumerate}
\item If for all $j$, $d_{j} \leq r$, and $2r \ll h_{0}$, then there is a geodesic $\gamma_{0}$ such that
$d(\gamma_{0}, p_{j}) \leq 2 r$, for all $j$.

\item If for all $j$, $d_{j} \leq \eta |j| + C_{1}$, where $\eta \ll 1$ and $2C_{1} \leq h_{0}$, then there is a
geodesic $\gamma_{0}$ such that $d(\gamma_{0}, p_{j}) \leq 2 \eta |j| + 2C_{1}$. \end{enumerate} \end{lemma}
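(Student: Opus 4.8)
The plan is to exploit the explicit description of the metric on $\mathit{H}_{n+1}$ given in equation (\ref{distance}), namely that $d((\mathbf{x}_1,t_1),(\mathbf{x}_2,t_2))$ is essentially $|t_1-t_2|$ when the two points are ``vertically aligned enough'' and otherwise is $U_Q(|\mathbf{x}_1-\mathbf{x}_2|)-(t_1+t_2)$, together with the bound (\ref{property of U}) that $U_Q$ is, up to additive constants, between $\ln$ and $2\ln$. The geometric content is the standard ``branching geodesic'' picture of a negatively curved space: the distance of a point $p_i$ from the vertical geodesic through $p_{i-1}$ controls how far below the common ``level'' the two downward vertical rays stay together, so if $d_i$ is small relative to $h_0$ the rays through $p_{i-1}$ and $p_i$ fellow-travel down to a definite depth, and then the $d_i$'s accumulate in a telescoping fashion as $i$ ranges over the index set. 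I would set up the argument for the two halves $i\ge 0$ and $i\le 0$ symmetrically (swapping the role of ``predecessor'' and ``successor''), and take $\gamma_0$ to be the vertical geodesic through the ``lowest'' accumulated horizontal position.

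First I would record the basic fellow-traveller estimate in $\mathit{H}_{n+1}$: if $p$ and $q$ have $h(p)=h(q)=\tau$ and the distance from $q$ to the vertical geodesic through $p$ is $d$, then the vertical geodesics downward from $p$ and from $q$ stay within bounded distance of each other once $\tau$ has decreased by roughly $U_Q(|\mathbf{x}_p-\mathbf{x}_q|)$, and moreover $|\mathbf{x}_p-\mathbf{x}_q|$ is comparable to $e^{\tau'}$ where $\tau'$ is the ``divergence height''; this is immediate from (\ref{distance}) and (\ref{property of U}). Next, applying this inductively along $p_0,p_1,\dots,p_t$: since $h(p_j)=h(p_0)+jh_0$ and $d_j\le r$ (case (i)) with $2r\ll h_0$, each consecutive pair diverges only at a height well below $h(p_{j-1})$, so the horizontal coordinates $\mathbf{x}_{p_j}$ all lie within a horizontal ball whose $U_Q$-size is at most about $r$; taking $\gamma_0$ vertical through the appropriate limiting horizontal point gives $d(\gamma_0,p_j)\le 2r$ after using that the factor $2$ absorbs the additive constants from (\ref{property of U}) and the ``$2r\ll h_0$'' slack. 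For case (ii), the same telescoping is run with $d_j\le \eta|j|+C_1$: the partial ``horizontal drift'' after $|j|$ steps is governed by $\sum$ of terms each contributing $U_Q$-size $\eta|i|+C_1$, which under $\eta\ll 1$ and $2C_1\le h_0$ still sums (in the $U_Q\sim\ln$ scale, so drifts combine sub-additively) to something of $U_Q$-size at most $2\eta|j|+2C_1$, whence $d(\gamma_0,p_j)\le 2\eta|j|+2C_1$.

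The main obstacle I anticipate is bookkeeping the interaction between the \emph{additive} constants $C_Q$ in (\ref{property of U}) and the factor-of-$2$ discrepancy between $\ln$ and $2\ln$: one must check that the hypotheses ``$2r\ll h_0$'' and ``$2C_1\le h_0$'' really are exactly what is needed so that these constants get absorbed into the clean bounds $2r$ and $2\eta|j|+2C_1$, rather than producing an extra additive error. A secondary subtlety is that the $\mathbf{x}$-coordinates are only a \emph{coarse} bilipschitz model (the true metric in (\ref{rank1 Finsler}) has the polynomial factor $Q(t)$ and the unipotent distortion), so I would phrase everything in terms of the q.i.-model distance (\ref{distance}) and note at the end that passing back to the genuine left-invariant metric only changes constants, consistent with the statement being proved up to the implicit constants. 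Since the lemma is explicitly deferred to the Appendix in the excerpt, I expect this is exactly the level of detail the Appendix proof carries out.
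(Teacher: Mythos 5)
Your outline reproduces the paper's proof quite faithfully: the paper constructs two downward vertical geodesics, $\gamma^{+}$ through $p_{0}$ controlling the indices $j\geq 0$ and $\gamma^{-}$ through the limit point $(x_{-\infty},0)$ controlling $j\leq 0$, computes
\[
d(p_{k},\gamma^{\pm})\leq 2\ln\Bigl(\sum_{j} e^{d_{j}+(j-k)h_{0}}\Bigr)
\]
directly from (\ref{distance}) and (\ref{property of U}), and then joins $\gamma^{+}$ and $\gamma^{-}$ at a height no higher than $h(p_{0})+2r$ (respectively $h(p_{0})+2C_{1}$) to get $\gamma_{0}$. Your identification of the two sources of slack --- the additive constant $C_{Q}$ and the factor between $\ln$ and $2\ln$ in (\ref{property of U}), plus the passage from the Finsler metric to the q.i.~model --- is exactly the bookkeeping the paper relies on; the hypotheses $2r\ll h_{0}$ and $2C_{1}\leq h_{0}$ are what make the geometric series $\sum_{m\geq 0}e^{-mh_{0}}$ negligible.

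One phrase you would need to fix when writing the argument out: it is not the case that the horizontal coordinates $\mathbf{x}_{p_{j}}$ lie in a horizontal ball of $U_{Q}$-size about $r$. From $\ln|x_{j}-x_{j-1}|-jh_{0}\leq d_{j}$ one only obtains $|x_{j}-x_{j-1}|\leq e^{d_{j}+jh_{0}}$, so the Euclidean spread of the $\mathbf{x}_{p_{j}}$ grows exponentially in $j$ and the $U_{Q}$-radius of their horizontal ball grows like $jh_{0}+2r$, not $r$. What is actually bounded is the distance from $p_{k}$ to $\gamma^{+}$, namely $U_{Q}(|x_{k}-x_{0}|)-kh_{0}$: subtracting the height $h(p_{k})=kh_{0}$ exactly cancels the exponential growth and is precisely why the sum telescopes into something of order $r$ (or $\eta k+C_{1}$). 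That cancellation is the crux of the paper's computation, and your middle step as written skips over it; with it restored your argument coincides with the paper's.
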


\begin{proof}
We first produce geodesic $\gamma^{+}$ and $\gamma^{-}$ that stay close to $\{ p_{i}, i \geq 0 \}$ and
$\{ p_{i}, i \leq 0 \}$ respectively. Then we show that $\gamma^{+}$ and $\gamma^{-}$ meet at some $p_{j}$,
$j \geq 0$ and set $\gamma_{0}$ to be the union between $(\gamma^{+} \cap \gamma^{-})$ and
$\gamma^{-} - (\gamma^{-} \cap \gamma^{+})$. \smallskip

Write $p_{j}=(x_{j},t_{j})$.  We can assume without the loss of generality that $p_{0}=(0,0)$.
Note that the distance between a point $(x_{1},t_{1})$ and the vertical geodesic passing through $(x_{2},t_{2})$
is $U(|x_{1}-x_{2}|)-t_{2}$ by equation (\ref{distance}).

\begin{itemize}
\item Then, by equation (\ref{property of U}), for $j >0$,
\begin{equation*}
\ln |x_{j}-x_{j-1}| - j h_{0} \leq d_{j} \end{equation*} \noindent Hence for all $k \geq 0$,
\begin{equation*}
| x_{k} | \leq \sum_{j=1}^{k} |x_{j} - x_{j-1}| \leq \sum_{j} e^{d_{j} + j h_{0}} \end{equation*}

\noindent Let $\gamma^{+}$ be the geodesic passing through $p_{0}$.
Then for $k \geq 0$,  \begin{equation*}
d(p_{k}, \gamma^{+}) \leq 2\ln \left( \sum_{j}^{k} e^{d_{j}+ j h_{0}} \right) - 2 k h_{0}
= 2 \ln \left( \sum_{j=1}^{k}  e^{d_{j}+(j-k) h_{0}} \right) \end{equation*}

\item For $j < 0 $, again by equation (\ref{property of U})
\begin{equation*} \ln |x_{j+1} - x_{j}| - j h_{0} \leq d_{j} \end{equation*} \noindent Hence
\begin{equation*}
| x_{j+1} - x_{j} | \leq e^{d_{j}+ j h_{0}} \end{equation*} Note that under the assumptions of (i) or
(ii), $x_{-\infty}=\lim_{j \rightarrow -\infty}x_{j}$ exists. So for all $k<0 $,
\begin{equation*}
|x_{k} - x_{-\infty} | \leq \sum_{j=k-1}^{\infty} e^{d_{j} + j h_{0}} \end{equation*}
\noindent Let $\gamma^{-}$ be the vertical geodesic passing through $(x_{-\infty},0)$. Then for $k <0$,
\begin{equation*}
d(p_{k}, \gamma^{-}) \leq 2 \ln \left( \sum_{k-1}^{-\infty} e^{d_{j}+j h_{0}} \right)-2 k h_{0}
= 2 \ln \left( \sum_{j=k-1}^{-\infty} e^{d_{j}+(j-k)h_{0}} \right) \end{equation*} \end{itemize}

\begin{enumerate}
\item In this case, $d(p_{k}, \gamma^{+}) \leq 2r$ for all $k \geq
0$; $d(p_{k'}, \gamma^{-}) \leq 2r$ for all $k' \leq 0$. In
particular, $d(p_{0}, \gamma^{-}) \leq 2r$. Since $\gamma^{+} \ni
p_{0}$, the height at which $\gamma^{+}$ and $\gamma^{-}$ come
together is at most $h(p_{0}) + 2r < h(p_{1})$ by assumption,
therefore $\gamma_{0}$ as defined above satisfies the required
condition.

\item In this case, $d(p_{k}, \gamma^{+}) \leq (2 \eta) k + 2C_{1}$
for $k \geq 0$; $d(p_{k}, \gamma^{-}) \leq (2 \eta) (-k) + 2C_{1}$
for $k \leq 0$. In particular, $d(p_{0}, \gamma^{-}) \leq 2C_{1}$,
so the height at which $\gamma^{+}$ and $\gamma^{-}$ come together
occurs no higher than $h(p_{0})+ 2C_{1}$.  Since $p_{0} \in
\gamma^{+}$, $\gamma_{0}$ therefore satisfies the required
condition. \end{enumerate} \end{proof}

We now proceed to prove Proposition \ref{close to being straight} by showing that if a
path is monotone, then for any root $\alpha$, its $\pi_{\alpha}$ image satisfies the
hypothesis of Lemma \ref{lemma A1-2}.  \begin{proof} \textit{of Proposition \ref{close to being
straight}}

Set \begin{itemize} \item $s=\delta |\overline{AB}|$
\item $t_{j}= \max \{ t \mbox{    }| \mbox{    } h_{\overline{AB}} \circ \pi_{A} \circ \lambda(t)=js \}$

\item $t^{'}_{j}= \min\{ t \in [t_{j-1},t_{j}]  \mbox{     }| \mbox{   }
h_{\overline{AB}} \circ \pi_{A} \circ \lambda(t) = js\}$  \end{itemize}

Therefore for $t \in [t_{j-1},t^{'}_{j}]$, we must have
$h_{\overline{AB}} \circ \pi_{A} \circ \lambda(t) \in [(j-1)s, js]$.  Since
$d(\pi_{A}(\lambda),\overline{AB}) \leq \epsilon \overline{AB}$, the set $\{ \pi_{A} (\lambda(t)), t \in
[t_{j-1},t^{'}_{j}] \}$ lies in a ball of diameter at most $\tilde{s}=\sqrt{s^{2} +
(2 \epsilon |AB|)^{2}}=\sqrt{\delta^{2}+4 \epsilon^{2}}|AB|$, which means
$d(\lambda(t_{j-1}),\lambda(t^{'}_{j})) \leq \hbar \tilde{s}$ by Proposition \ref{can't move far in R2R3}.

\begin{enumerate}
\item  In the case that $\lambda$ is $\delta$ monotone,
\begin{equation*} h_{\overline{AB}} ( \pi_{A} \circ
\lambda(t_{j})) = h_{\overline{AB}} ( \pi_{A} \circ \lambda(t'_{j}))
\Longrightarrow d(\lambda(t_{j}), \lambda(t'_{j})) \leq \delta
d(\zeta(0), \zeta(L)) \end{equation*}

\item If $\lambda$ is $(\eta,C_{1})$ weakly monotone
\begin{equation*} h_{\overline{AB}} ( \pi_{A} \circ \lambda(t_{j})) =
h_{\overline{AB}} ( \pi_{A} \circ \lambda(t^{'}_{j}))
\Longrightarrow d(\zeta(t_{j}), \zeta(t'_{j})) \leq \eta
d(\zeta(t_{j}), \zeta(0)) + C_{1} \end{equation*} \end{enumerate}

\noindent Therefore \[ d(\lambda(t_{j-1}),\lambda(t_{j})) \leq
d(\lambda(t_{j-1}),\lambda(t^{'}_{j})) +
d(\lambda(t^{'}_{j}),\lambda(t_{j})) \leq   \Upsilon \]

\noindent where $\Upsilon= \hbar \tilde{s} + \delta d(\zeta(0),
\zeta(L)) $ when $\lambda$ is $\delta$-monotone; and
\newline $\Upsilon = \hbar \tilde{s} + \eta d(\zeta(t_{j}), \zeta(0))
+ C_{1}$ when $\lambda$ is $(\eta,C_{1})$ weakly monotone. \medskip

By assumption, $\lambda$ lies outside of $\frac{3}{C}$-linear $+C$
neighborhood of the set of walls based at $\lambda(0)$. Since
$d_{H}(\pi_{A}(\lambda), \overline{AB}) \leq \epsilon
|\overline{AB}|$, $h(\pi_{\Xi} \circ \lambda(t_{j})) - h(\pi_{\Xi}
\circ \lambda(t_{j-1})) >2$ for any root $\Xi$.   The claims now
follow from application of Lemma \ref{lemma A1-2} to $\{
\pi_{\Xi}(\lambda(t_{j})) \}_{j}$ in the $\Xi$ weight hyperbolic
space for each root $\Xi$. \end{proof}

We now prove the main lemma in this subsection which roughly says that given $\delta >0$,
a sufficiently long quasi-geodesics whose $\pi_{A}$ image is $\epsilon$-efficient, is
$\delta$-monotone at some scale.

\begin{lemma} \label{monotone scale}
Let $G$ be a non-degenerate, split abelian-by-abelian group.  For any $N \gg 2$, $L_{a} \geq 2\kappa (C)$,
$0< \delta <1$, and $\epsilon>0$, if $\zeta:[0,L] \rightarrow G$ is a $(\kappa,C)$ quasi-geodesic satisfying

\begin{enumerate}
\item $\pi_{A} \circ \zeta$ is $\epsilon$-efficient at scale $\frac{1}{2}\epsilon^{\frac{1}{4}}$, where
$\epsilon \leq \min \{ \left( \frac{\delta}{2\hbar} \right)^{4}, \left( \frac{\delta}{3.01 \hbar}\right)^{8}, (0.01)^{8} \}$

\item \footnote{this long expressions just says that $L$ is sufficiently big with respect to given data. }
\[ \frac{\frac{2L_{a}}{3 \epsilon^{1/8}}} {(\delta)^{\frac{(2\kappa)^{2}\hbar (2N)}{(1-\epsilon^{1/2}\hbar)\delta}}}
\leq 2\kappa L \]  \end{enumerate}

\noindent then there are scales $\rho_{I+1} < \rho_{I} \ll 1$ such
that for $i=I,I+1$,
\begin{equation*} \frac{ |\mathcal{S}(\zeta,\rho_{i}L, \mathbf{P})| } {| \mathcal{S}(\zeta,\rho_{i}L)|} \leq \frac{1}{N} \end{equation*}

\noindent where $\mathbf{P}$ is the statement 'either not $\delta$-monotone, or is monotone but of opposite direction to the $\delta$-monotone
segment in $\mathcal{S}(\zeta,\rho_{i-1}L)$ to which it is a subset of. \end{lemma}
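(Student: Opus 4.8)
The plan is to mimic the structure of Lemma \ref{efficiency scale}: we want a ``summing across scales'' argument where failure of $\delta$-monotonicity forces an increase in some monotonic quantity, and since that quantity is bounded, failure cannot happen at too many scales. First I would fix the geodesic approximation: since $\pi_A \circ \zeta$ is $\epsilon$-efficient at scale $\tfrac12 \epsilon^{1/4}$, Lemma \ref{efficient in Rn} gives a line segment $\overline{AB}$ in $\mathbf{A}$ with $d_H(\pi_A(\zeta), \overline{AB}) \le (\tfrac12\epsilon^{1/4} + 1.5\epsilon^{1/16})|\pi_A(\zeta)|$, and the hypothesis $\epsilon \le (\delta/3.01\hbar)^8$ (etc.) is exactly what is needed so that $2\hbar\epsilon' \ll \delta$ where $\epsilon'$ is this Hausdorff constant; this makes $\delta$-monotonicity a meaningful notion along $\zeta$. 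Then define, for scales $r_b = (\tfrac12\epsilon^{1/8})^b \cdot (\text{something})$ descending to $L_a$, the subdivisions $\{p^b_j\} = \hat{\mathcal{S}}(\zeta, r_b)$, and measure, as in Lemma \ref{efficiency scale}, both the total ``$\mathbf{A}$-length'' $L_b = \sum_j d(\pi_A(p^b_j),\pi_A(p^b_{j+1}))$ and a second quantity tracking the $\mathbf{H}$-displacement of return points.

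Next I would carry out the core dichotomy at each scale. Suppose a segment $\zeta_{[p^b_j, p^b_{j+1}]}$ at scale $b$ fails statement $\mathbf{P}$ at the next finer scale — i.e.\ it is $\delta$-monotone (having passed the efficiency filter, we may assume its $\pi_A$-image is efficient by Lemma \ref{lots of efficient segments}) — then essentially all of its finer subsegments are $\delta$-monotone in a consistent direction, and by Proposition \ref{close to being straight} it admits a geodesic approximation, so its $\pi_A$-image does not ``double back'' and its $\mathbf{H}$-displacement is controlled by $O(\delta)$ times its length. Conversely, if a finer subsegment fails to be $\delta$-monotone, then there are parameters $t_1 < t_2$ with $h_{\overline{AB}}(\pi_A \zeta(t_1)) = h_{\overline{AB}}(\pi_A \zeta(t_2))$ but $d(\zeta(t_1),\zeta(t_2)) > \delta \, d(\zeta(0),\zeta(L))$; this forces, via Proposition \ref{can't move far in R2R3} applied in the reverse direction (Corollary \ref{can't move far}), that $\pi_A$ actually travels a definite extra distance, so $L_{b+1} \ge L_b + (\text{const}\cdot\delta)\Omega_b$, where $\Omega_b$ is the total length of non-monotone-or-wrong-direction segments at scale $b$. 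The ``wrong direction'' case is handled the same way: if a finer subsegment is monotone but points opposite to the coarser one containing it, the $\pi_A$-length again has to grow. Summing the recursion $L_{b+1} \ge L_b + c\delta\,\Omega_b$ from the top scale down to $L_a$ and dividing by $|\pi_A(\zeta)|$ gives $\tfrac1{c\delta} \ge \sum_b \delta_b(\zeta)$ where $\delta_b$ is the bad proportion at scale $b$; the hypothesis on $L$ (rewritten as a logarithmic inequality exactly as in equation \eqref{Lstop}) guarantees the number of scales $D$ between the top and $L_a$ is at least $\tfrac{2N}{c\delta}$, so by pigeonhole two \emph{consecutive} scales $I, I{+}1$ have $\delta_I(\zeta), \delta_{I+1}(\zeta) \le 1/N$. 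Taking two consecutive scales (rather than one) is necessary because the statement $\mathbf{P}$ at scale $\rho_i$ refers back to the $\delta$-monotone segment at scale $\rho_{i-1}$, so we need the coarser scale $\rho_I$ itself to be mostly good in order to make sense of the ``opposite direction'' clause at scale $\rho_{I+1}$.

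The main obstacle I expect is the bookkeeping around the ``wrong direction'' clause and the interaction between the two scales $\rho_I$ and $\rho_{I+1}$: one must check that a subsegment at scale $\rho_{I+1}$ that is $\delta$-monotone with a direction agreeing with \emph{its own} coarser parent but disagreeing with the global $\overline{AB}$ still contributes to the length defect, and that the constants ($c$, the powers of $\epsilon$, the relation $\nu \gg 2\epsilon\hbar(2\kappa)^2$) all line up so that Proposition \ref{close to being straight} is genuinely applicable on the good subsegments. A secondary technical point is verifying that the escape-from-walls hypothesis in Proposition \ref{close to being straight} is not needed here (or is supplied separately), since the present lemma as stated does not assume $\zeta$ avoids the walls — so I would phrase the monotonicity conclusion purely in terms of the $\pi_A$-displacement behavior (``does not double back'', consistent direction) and defer the genuine geodesic-approximation conclusion to wherever the wall hypothesis is available. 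The remaining estimates — rewriting hypothesis (ii) as a clean logarithmic lower bound on $D$, and the Chebyshev-type passage from an average bound to a bound holding at a specific pair of consecutive scales — are routine and parallel to Lemma \ref{efficiency scale} and Corollary \ref{efficiency scale multiple}.
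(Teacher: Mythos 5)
Your proposal follows essentially the same route as the paper's proof: nested subdivisions at geometrically decreasing scales, a length-gain mechanism from failure of $\delta$-monotonicity via Proposition~\ref{can't move far in R2R3} / Corollary~\ref{can't move far}, the refinement from Lemma~\ref{lots of efficient segments} to ensure efficiency of $\pi_A$-images at each intermediate scale, and a pigeonhole argument over the number of scales $D$ to extract two \emph{consecutive} good scales, with the reason you give for needing consecutive scales being the correct one. Two small corrections. First, the inter-scale ratio in this lemma is $\delta$ (i.e.\ $L_{i+1}=\delta L_i$), not $\frac12\epsilon^{1/8}$; the latter is the ratio from Lemma~\ref{efficiency scale}, and using it here would not match hypothesis~(ii), whose lower bound on $L$ is phrased in powers of $\delta$ precisely so that $D \geq 2N\cdot\frac{(2\kappa)^2\hbar}{\delta(1-\epsilon^{1/2}\hbar)}$ drops out cleanly. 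Second, Proposition~\ref{close to being straight} does not appear in this proof at all, so the wall-avoidance concern you raised is moot; your instinct to phrase everything in terms of $\pi_A$-displacement and to defer the geodesic-approximation conclusion is exactly what the paper does, and what actually occurs in the ``$\delta$-monotone but wrong orientation'' case is that each wrong-orientation finer subsegment contributes at least $L_{i+2}/\hbar$ of extra $\pi_A$-length directly, with no appeal to straightness.
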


\begin{proof}
The idea of the proof is similar to that of Lemma \ref{efficiency scale}.  Suppose the $\pi_{A}$ image of a segment is efficient but
the segment itself fails to be $\delta$ monotone. Then we can find two points whose $\pi_{A}$ images are close to each other, but the
distance between the two points is very large. By Proposition \ref{can't move far in R2R3}, this means there must be some point in
between those two points whose $\pi_{A}$ image is far away from the $\pi_{A}$ images of those two points. This means that after a
subdivision to the $\pi_{A}$ of the segment, the sum of the distance between consecutive points exceeds the distance of its end points by
some pre-determined amount. In other words, not monotone gains length. But the length of the $\pi_{A}$ image is bounded, a
quasi-geodesic cannot fail to be monotone at smaller and smaller scales. \\


First we note that the conditions on $L$ in relation to $L_{a}$, $\epsilon$, $\delta$ and $N$ is the same as

\begin{equation}\label{Larret}
\frac{\ln \left( \frac{2}{3 \epsilon^{1/8}} \frac{L_{a}}{2\kappa L} \right) }{\ln(\delta)} \geq
\frac{(2\kappa)^{2}\hbar}{(1-\epsilon^{1/2}\hbar)\delta} (2N) \end{equation}  \medskip

\noindent The conditions on $\epsilon$ means that we have
\begin{enumerate} \renewcommand{\labelenumi}{(\Roman{enumi})}
\item  $2 \epsilon^{1/4} \hbar \ll \delta$
\item  $\epsilon^{1/4} \leq 0.01 \epsilon^{1/8}$
\item $3.01 \epsilon^{1/8} \leq \frac{\delta}{\hbar}$ \end{enumerate}

Write $L_{a}=d(\pi_{A} \circ \zeta(0), \pi_{A} \circ \zeta(L))$.  If
$\zeta:[0,L] \rightarrow G$ itself is $\delta$ monotone, we are
done. Otherwise let $\{p_{j}^{0}\}_{j=0}^{n_{0}} \subset \{p_{j}^{1}
\}_{j=0}^{n_{1}} \subset \{p_{j}^{2} \}_{j=0}^{n_{2}} \subset \cdots
\subset \{ p_{j}^{D} \}_{j=0}^{n_{D}}$ be an increasing sets of
points on $\zeta$ such that

\begin{enumerate}

\item $\{ p_{j}^{0} \}= \hat{\mathcal{S}}(\zeta,L_{1})$,
$L_{1}=\delta d(\zeta(0), \zeta(L))$

\item For $i \geq 1$, $\{p_{j}^{i} \}_{j=0}^{n_{i}}=\hat{\mathcal{S}}(\zeta,
L_{i+1})$, where $L_{i+1}=\delta L_{i}$. Note $L_{i+1}< L_{i}$.

\item $1.5(\epsilon^{1/2})^{1/4} L_{D}=L_{a}$ \end{enumerate} \medskip

Let $0 \leq i \leq D$.  $\{ \pi_{A}(p^{i}_{j}) \}_{j=0}^{n_{j}}$ is
a subdivision of $\pi_{A}(\zeta)$. The distance between consecutive
points satisfies $\frac{L_{i+1}}{\hbar} \leq d(\pi_{A}(p^{i}_{j}),
\pi_{A}(p^{i}_{j+1})) \leq L_{i+1}$. We also have
$\frac{L_{i}}{\hbar}, L_{i} \in [\epsilon^{1/4}L_{a},L_{a}]$.
 Therefore by Lemma \ref{lots of efficient segments}, there is a subset $\mathcal{G}_{i} \subset \{
 \pi_{A}(\zeta)_{[\pi_{A}(p^{i}_{j}),\pi_{A}(p^{i}_{j+1})]} \}$, with
$|\mathcal{G}_{i} | \geq (1-\epsilon^{1/2}\hbar) |\{
\pi_{A}(\zeta)_{[\pi_{A}(p^{i}_{j}),\pi_{A}(p^{i}_{j+1})]} \}|$,
such that whenever
$\pi_{A}(\zeta)_{[\pi_{A}(p^{i}_{j'}),\pi_{A}(p^{i}_{j'+1})]}  \in
\mathcal{G}_{i}$, it is $\epsilon^{1/2}$ efficient at scale
$\frac{1}{2}\epsilon^{1/4}$. \bigskip

We define the following : \begin{itemize}
\item $\mathcal{C}_{i}=\{1 \leq j \leq n_{i} \mbox{  } | \mbox{  }
\pi_{A}(\zeta)_{[\pi_{A}(p_{j}^{i}),\pi_{A}(p_{j+1}^{i})]} \in \mathcal{G}_{i} \}$ is the set of subsegments produced by $\{p^{i}_{j}\}$
whose $\pi_{A}$ images are $\epsilon^{1/2}$ efficient at scale $\frac{1}{2} \epsilon^{1/4}$.

\item $\mathcal{NC}_{i}=\{ 1 \leq j \leq n_{i} \mbox{  } | \mbox{  }  j \not \in \mathcal{C}_{i} \}$, is those
subsegments whose $\pi_{A}$ images are not $\epsilon^{1/2}$ efficient at scale $\frac{1}{2} \epsilon^{1/4}$.
Note $\frac{|\mathcal{C}_{i}|}{|\mathcal{C}_{i}| + |\mathcal{NC}_{i}|} \geq 1-\epsilon^{1/2}\hbar$

\item $\mathcal{B}_{i}=\{1 \leq j \leq n_{i} \mbox{ }  | \mbox{  } j \in \mathcal{C}_{i}, \mbox{  } \zeta_{[(p_{j}^{i}),
p_{j+1}^{i}]} \mbox{ \textit{is not} } \delta \mbox{ \textit{monotone}} \} $ is those segments whose $\pi_{A}$
images are $\epsilon^{1/2}$ efficient at scale $\frac{1}{2} \epsilon^{1/4}$ but fails to be $\delta$ monotone.

\item $\flat_{i}=\frac{|\mathcal{B}_{i}|}{|\mathcal{C}_{i}|}$ be the proportion of
subsegments that are $\epsilon^{1/2}$-efficient at scale $1/2 \epsilon^{1/4}$ but fails
to be $\delta$ monotone.

\item For $J \in \mathcal{C}_{i} - \mathcal{B}_{i}$, \newline
$\Psi_{i+1,J} = \{  j' \in \mathcal{C}_{i+1} -\mathcal{B}_{i+1} \mbox{ }  | \mbox{ } \zeta_{[p_{j'}^{i+1},p_{j'+1}^{i+1}]} \subset
\zeta_{[p_{J}^{i},p_{J+1}^{i}]},\newline \mbox{ \textit{but those two have opposite orientations} } \}$ are basically those
subsegments produced by $\{p^{i+1}_{j} \}_{j=0}^{n_{i+1}}$ that are $\delta$ monotone and belong to a $\delta$ monotone subsegment
produced by $\{p^{i}_{j}\}_{j=0}^{n_{i}}$ but their orientations do not agree.

\item $\mathcal{R}_{i+1}=\bigcup_{J \in \mathcal{C}_{i}-\mathcal{B}_{i}} \Psi_{i+1,J}$

\item $\natural_{i+1}=\frac{|\mathcal{R}_{i+1}|}{|\mathcal{C}_{i+1}|}$ be the proportion
of subsegments that are $\epsilon^{1/2}$-efficient at scale $1/2 \epsilon^{1/4}$ and
$\delta$ monotone but of wrong orientation.

\item Write $\hat{L}=(1-\epsilon^{1/2}\hbar)L$ and note that
$|\mathcal{C}_{i} | \geq \frac{\hat{L}}{2 \kappa L_{i+1}}$
\end{itemize} \bigskip

Since $\zeta$ is not $\delta$-monotone, there are two points
$t_{1},t_{2} \in [0,L]$ such that

\begin{enumerate}
\item $h_{\overline{\pi_{A} \circ \zeta(0) \pi_{A} \circ
\zeta(L)}}(\pi_{A} \circ \zeta(t_{1}))=h_{\overline{\pi_{A} \circ
\zeta(0) \pi_{A} \circ \zeta(L)}}(\pi_{A} \circ \zeta(t_{2}))$. This
means \begin{equation}\label{A} d(\pi_{A} \circ \zeta(t_{1}),\pi_{A}
\circ \zeta(t_{2})) \leq 4 \epsilon^{1/4} L_{a}
\end{equation} because $\pi_{A} \circ \zeta$ is
$\epsilon$-efficient on scale $\frac{1}{2} \epsilon^{1/4}$, which
means the Hausdorff distance between $\pi_{A} \circ \zeta$ and
$\overline{\pi_{A} \circ \zeta(0) \pi_{A} \circ \zeta(L)}$ is at
most $2 \epsilon^{1/4}L_{a}$. \medskip

AND

\item  $d(\zeta(t_{1}),\zeta(t_{2})) \geq  \delta d(\zeta(0),
\zeta(L))$.   By Proposition \ref{can't move far in R2R3}, this
means $\exists t \in [t_{1},t_{2}]$ such that
\begin{equation}\label{B}
d(\pi_{A} \circ \zeta(t),\pi_{A} \circ \zeta(t_{i})) \geq
 \frac{\delta}{\hbar} d(\zeta(0), \zeta(L)) \end{equation}
 for $i=1,2$ in light of(\ref{A}) \end{enumerate}

\noindent Equations (\ref{A}) and(\ref{B}) together means
\begin{equation*}
\sum_{j=0}^{n_{0}} d(\pi_{A}(p_{j}^{0})b, \pi_{A}(p_{j+1}^{0})) -
d(\pi_{A}(p_{0}^{0}),\pi_{A}(p_{n_{0}}^{0}))  \geq  \left(2
\frac{\delta}{\hbar}d(\zeta(0), \zeta(L)) - 4 \epsilon^{1/4}L_{a}
\right) \end{equation*}

i.e.  \begin{equation*} \sum_{j=0}^{n_{0}} d(\pi_{A}(p_{j}^{0})b,
\pi_{A}(p_{j+1}^{0})) - L_{a}  \geq  \left( \frac{2
\delta}{\hbar}-4\epsilon^{1/4} \right)L_{a}  \gg 0
\end{equation*}

\noindent where we used property (I) for the last inequality and recalled that $L_{a}=d(\pi_{A}(p^{0}_{0}), \pi_{A}(p^{0}_{n_{0}}))$. \medskip

Now for each $D \geq i \geq 1$, $ 1 \leq j \leq n_{i}$,
\begin{itemize}

\item EITHER $j \in \mathcal{B}_{i}$.  In this case
$\pi_{A} \circ \zeta_{[p_{j}^{i}, p_{j+1}^{i}]}$ is $\epsilon^{1/2}$
efficient at scale $\frac{1}{2} \epsilon^{1/4}$ but not
$\delta$-monotone.

Then there are two points $t_{1},t_{2} \in
[\zeta^{-1}(p_{j}^{i}),\zeta^{-1}(p_{j+1}^{i})]$ such that

\begin{enumerate}
\item \begin{equation*}
h_{\overline{\pi_{A}(p_{j}^{i}),\pi_{A}(p_{j+1}^{i})}}( \pi_{A}
\circ \zeta(t_{1})) =
h_{\overline{\pi_{A}(p_{j}^{i}),\pi_{A}(p_{j+1}^{i})}}( \pi_{A}
\circ \zeta(t_{2})) \end{equation*}

This means that \begin{equation*} d(\pi_{A} \circ
\zeta(t_{1}),\pi_{A} \circ \zeta(t_{2})) \leq 2 \left(\frac{3}{2}(
\epsilon^{\frac{1}{2}})^{\frac{1}{4}} d(\pi_{A}(p^{i}_{j}),
\pi_{A}(p^{i}_{j+1})) + \frac{1}{2}
\epsilon^{1/4}d(\pi_{A}(p^{i}_{j}), \pi_{A}(p^{i}_{j+1})) \right)
\end{equation*}  by Lemma \ref{efficient in Rn}. \smallskip

Therefore by property (II) in the hypothesis

\begin{equation}\label{AA} d(\pi_{A} \circ \zeta(t_{1}), \pi_{A} \circ \zeta(t_{2})) \leq
3.01\epsilon^{1/8}L_{i+1} \end{equation}
\medskip

AND

\item $d(\zeta(t_{1}), \zeta(t_{2})) > \delta L_{i+1}$. By Proposition \ref{can't move far in R2R3},
this means $\exists t \in [t_{1},t_{2}]$ such that
\begin{equation}\label{BB}
d(\pi_{A}(\zeta(t)),\pi_{A}(\zeta(t_{i}))) \geq \frac{\delta}{\hbar} L_{i+1}
\end{equation} for $i=1,2$ in light of (\ref{AA}) \end{enumerate}

say $p_{j}^{i}=p_{s1_{j}}^{i+1}$, $p_{j+1}^{i}=p_{s2_{j}}^{i+2}$,
then (\ref{AA}) together with \ref{BB} imply that
\begin{eqnarray*}
\sum_{t=s1_{j}}^{s2_{j}-1} d(\pi_{A}
(p_{t}^{i+1}),\pi_{A}(p_{t+1}^{i+1})) & \geq &
d(\pi_{A}(p_{j}^{i}),\pi_{A}(p_{j+1}^{i})) + \left( 2\frac{\delta
L_{i+1}}{\hbar} \right) - 3.01 \epsilon^{1/8}L_{i+1} \\
& \geq & d(\pi_{A}(p_{j}^{i}),\pi_{A}(p_{j+1}^{i})) + H_{i+1} \delta
L_{i+1} \end{eqnarray*} \noindent  where we have set constants
$H_{i+1}$ to satisfy

\begin{equation} \label{property of Hi}
\frac{2 \delta}{\hbar} - 3.01 \epsilon^{1/8} \geq H_{i+1} \delta
\end{equation} \smallskip

Summing over all $j \in \mathcal{B}_{i}$ we have

\begin{eqnarray*}
\sum_{j \in \mathcal{B}_{i}} \sum_{t=s1_{j}}^{s2_{j}-1} d(\pi_{A}
(p_{t}^{i+1}),\pi_{A}(p_{t+1}^{i+1})) & \geq & \sum_{j \in
\mathcal{B}_{i}} d(\pi_{A}(p_{j}^{i}),\pi_{A}(p_{j+1}^{i})) + |
\mathcal{B}_{i} | H_{i+1} \delta L_{i+1} \\
& \geq &  \sum_{j \in \mathcal{B}_{i}}
d(\pi_{A}(p_{j}^{i}),\pi_{A}(p_{j+1}^{i})) + \flat_{i}
|\mathcal{C}_{i} | H_{i+1} \delta L_{i+1} \\ & \geq & \sum_{j \in
\mathcal{B}_{i}} d(\pi_{A}(p_{j}^{i}),\pi_{A}(p_{j+1}^{i})) +
\flat_{i} \frac{\hat{L}}{2 \kappa  L_{i+1}} H_{i+1} \delta L_{i+1}
\end{eqnarray*}

That is, \begin{equation}\label{those in C}
\sum_{j \in \mathcal{B}_{i}} \sum_{t=s1_{j}}^{s2_{j}-1} d(\pi_{A}
(p_{t}^{i+1}),\pi_{A}(p_{t+1}^{i+1}))  \geq \sum_{j \in
\mathcal{B}_{i}} d(\pi_{A}(p_{j}^{i}),\pi_{A}(p_{j+1}^{i})) +
 \frac{\flat_{i}\hat{L}}{2 \kappa} H_{i+1} \delta \end{equation}

\item  OR $j \in \mathcal{C}_{i} - \mathcal{B}_{i}$.  In this case
$\zeta_{[p_{j}^{i},p_{j+1}^{i}]}$ is $\delta$-monotone. Write
$p_{j}^{i}=p_{s1_{j}}^{i+1}$, $p_{j+1}^{i}=p_{s2_{j}}^{i+1}$, then

\begin{equation*}
\sum_{t=s1_{j}}^{s2_{j}-1}
d(\pi_{A}(p_{t}^{i+1}),\pi_{A}(p_{t}^{i+1})) - \sum_{z \in
\Psi_{i+1,j}} \frac{L_{i+2}}{\hbar} \geq
d(\pi_{A}(p_{j}^{i}),\pi_{A}(p_{j+1}^{i})) \end{equation*}
\smallskip Summing over all $j \in \mathcal{C}_{i} - \mathcal{B}_{i}$
we have

\begin{eqnarray*}
\sum_{j \in \mathcal{C}_{i} - \mathcal{B}_{i}}
\sum_{t=s1_{j}}^{s2_{j}-1}
d(\pi_{A}(p_{t}^{i+1}),\pi_{A}(p_{t}^{i+1})) & \geq & \sum_{j \in
\mathcal{C}_{i}-\mathcal{B}_{i}}d(\pi_{A}(p_{j}^{i}),\pi_{A}(p_{j+1}^{i}))
+ \sum_{j \in \mathcal{C}_{i} - \mathcal{B}_{i}} \sum_{z \in
\Psi_{i+1,j}} \frac{L_{i+2}}{\hbar} \\
& =&  \sum_{j \in
\mathcal{C}_{i}-\mathcal{B}_{i}}d(\pi_{A}(p_{j}^{i}),\pi_{A}(p_{j+1}^{i}))+
|\mathcal{R}_{i+1}| \frac{L_{i+2}}{\hbar} \\
& \geq & \sum_{j \in
\mathcal{C}_{i}-\mathcal{B}_{i}}d(\pi_{A}(p_{j}^{i}),\pi_{A}(p_{j+1}^{i}))+
\natural_{i+1} |\mathcal{C}_{i+1}| \frac{L_{i+2}}{\hbar} \\
& \geq & \sum_{j \in
\mathcal{C}_{i}-\mathcal{B}_{i}}d(\pi_{A}(p_{j}^{i}),\pi_{A}(p_{j+1}^{i}))
+ \natural_{i+1} \frac{\hat{L}}{2 \kappa
L_{i+2}}\frac{L_{i+2}}{\hbar}\end{eqnarray*}

That is,
\begin{equation}\label{those in C-B}
\sum_{j \in \mathcal{C}_{i} - \mathcal{B}_{i}}
\sum_{t=s1_{j}}^{s2_{j}-1}
d(\pi_{A}(p_{t}^{i+1}),\pi_{A}(p_{t}^{i+1}))  \geq  \sum_{j \in
\mathcal{C}_{i}-\mathcal{B}_{i}}d(\pi_{A}(p_{j}^{i}),\pi_{A}(p_{j+1}^{i}))+
 \frac{\natural_{i+1}\hat{L}}{2 \kappa \hbar} \end{equation}

\item OR $j \in \mathcal{NC}_{i}$. Write $p_{j}^{i}=p_{s1_{j}}^{i+1}$,
$p_{j+1}^{i}=p_{s2_{j}}^{i+1}$, then

\begin{equation*}
\sum_{t=s1_{j}}^{s2_{j}-1}
d(\pi_{A}(p_{t}^{i+1}),\pi_{A}(p_{t}^{i+1})) \geq
d(\pi_{A}(p_{j}^{i}),\pi_{A}(p_{j+1}^{i})) \end{equation*}
\smallskip Summing over all $j \in \mathcal{NC}_{i}$ we have

\begin{equation}\label{those in NC}
\sum_{j \in \mathcal{NC}_{i}} \sum_{t=s1_{j}}^{s2_{j}-1}
d(\pi_{A}(p_{t}^{i+1}),\pi_{A}(p_{t}^{i+1})) \geq \sum_{j \in
\mathcal{NC}_{i}} d(\pi_{A}(p_{j}^{i}),\pi_{A}(p_{j+1}^{i}))
\end{equation} \end{itemize}

Putting (\ref{those in C}), (\ref{those in C-B}) and (\ref{those in
NC}) together, we have
\begin{eqnarray*} \sum_{1 \leq w \leq n_{i+1}}
d(\pi_{A}(p_{w}^{i+1}),\pi_{A}(p_{w+1}^{i+1})) & \geq & \sum_{ 1
\leq z \leq n_{i}} d(\pi_{A}(p_{z}^{i}),\pi_{A}(p_{z+1}^{i})) +
\frac{\flat_{i}\hat{L}}{2 \kappa} H_{i+1} \delta +
\frac{\natural_{i+1} \hat{L}}{2 \kappa} \end{eqnarray*} \bigskip

By equation (\ref{property of Hi}), $H_{i}$ satisfies
\[ \frac{2 \delta }{\hbar} - 3.01 \epsilon^{1/8}  \geq H_{i+1} \delta  \] by property (III)
in the hypothesis on $\epsilon$ and $\delta$, $3.01 \epsilon^{1/8}
\leq \frac{\delta}{ \hbar}$, so we can take $H_{i}=\frac{1}{\hbar}$,
hence

\begin{equation}\label{gaining length}
\sum_{1 \leq w \leq n_{i+1} }
d(\pi_{A}(p_{w}^{i+1}),\pi_{A}(p_{w+1}^{i+1})) \geq  \sum_{1 \leq z
\leq n_{i}} d(\pi_{A}(p_{z}^{i}),\pi_{A}(p_{z+1}^{i})) + \flat_{i}
\frac{\delta \hat{L}}{2\kappa \hbar} + \natural_{i+1} \frac{\hat{L}}{2 \kappa} \end{equation}\\

Write $\lambda_{i}= \sum_{ 1 \leq j \leq n_{i} }
d(\pi_{A}(p_{j}^{i}),\pi_{A}(p_{j+1}^{i}))$, and using (\ref{gaining
length}) we have

\begin{eqnarray*}
2 \kappa L \geq \lambda_{D}-L_{a} = \left(\sum_{i=0}^{D-1}
\lambda_{i+1}-\lambda_{i} \right) + \lambda_{0}-L_{a} & \geq &
\sum_{i=0}^{D-1} \lambda_{i+1}-\lambda_{i} \\
& \geq & \sum_{i=0}^{D-1} \frac{\flat_{i} \delta
\hat{L}}{2\kappa \hbar} + \frac{\natural_{i+1} \hat{L}}{2 \kappa} \\
&=& \frac{\delta \hat{L}}{2\kappa \hbar} \sum_{i=0}^{D-1} \flat_{i}
+ \frac{\hat{L}}{2 \kappa} \sum_{i=0}^{D-1} \natural_{i+1}
\end{eqnarray*} Divide both sides by $\hat{L}=(1 - \epsilon^{1/2}\hbar)L$

\begin{equation}\label{sum of scales for monotone}
\frac{2 \kappa}{1-\epsilon^{1/2}\hbar}  \geq \frac{\delta }{2\kappa
\hbar} \sum_{i=0}^{D-1} \flat_{i}  +  \frac{1}{2 \kappa}
\sum_{i=0}^{D-1} \natural_{i+1} \geq \frac{\delta }{2\kappa \hbar}
\sum_{i=0}^{D-1} (\flat_{i} + \natural_{i+1}) \end{equation}

\noindent Since $L_{i+1}=\delta L_{i}$ for $i \geq 1$, $L_{1}=\delta
d(\zeta(0), \zeta(L))$, $L_{i} = \delta^{i} d(\zeta(0), \zeta(L))$.
The condition on $L_{D}$ means
\begin{eqnarray*}
1.5 (\epsilon^{1/2})^{1/4} L_{D} &=& L_{a} \\
1.5 \epsilon^{1/8} \delta^{D} d(\zeta(0),
\zeta(L))=L_{a}\\
\frac{2}{3 \epsilon^{1/8}}
\frac{L_{a}}{d(\zeta(0),\zeta(L))} &=& \delta^{D} \\
D &=& \frac{\ln(\frac{2}{3\epsilon^{1/8}}
\frac{L_{a}}{d(\zeta(0), \zeta(L))} ) }{\ln(\delta) }
\end{eqnarray*}

\noindent By equation(\ref{Larret}), we have $D \geq 2N
\frac{(2\kappa)^{2} \hbar}{\delta (1-\epsilon^{1/2}\hbar)}$.  So
equation (\ref{sum of scales for monotone}) implies that for some $1
\leq I \leq D-1$ we must have
\begin{equation*} \flat_{I-1} + \natural_{I-1} \leq \frac{1}{N}, \mbox{        }  \flat_{I} + \natural_{I} \leq \frac{1}{N}
\end{equation*} Recall that $\flat_{I'}$ is the proportion of efficient
subsegments produced by $\{p^{I'}_{j} \}_{j=0}^{n_{I'}}$ that are
not monotone, and $\natural_{I'}$ is the proportion that are
monotone but of the wrong orientation.  The desired
$\rho_{I}=\frac{L_{I}}{L}$, $\rho_{I+1}=\frac{L_{I+1}}{L}$
\end{proof}

\begin{corollary} \label{monotone scale multiple}
Let $G$ be a non-degenerate, split abelian-by-abelian group.  Take any $2 \ll N_{0} < N/2 $, $L_{0} \geq 2\kappa (C)$,
$0< \delta <1$, and $\epsilon>0$, and let $\mathcal{F}=\{ \zeta_{j} \}$ be a finite set of $(\kappa, C)$ quasi-geodesics.
If every element of $\mathcal{F}$, $\zeta_{j}: [0,L_{j}] \rightarrow G$ satisfies the
following:
\begin{enumerate}
\item $\pi_{A} \circ \zeta_{j}$ is $\epsilon$-efficient at scale $\frac{1}{2}\epsilon^{\frac{1}{4}}$,
where $\epsilon \leq \min \{ \left( \frac{\delta}{2\hbar} \right)^{4}, \left( \frac{\delta}{3.01 \hbar}\right)^{8}, (0.01)^{8} \}$

\item \[ \frac{\frac{2L_{0}}{3 \epsilon^{1/8}}} {(\delta)^{\frac{(2\kappa)^{2}\hbar (2N)}{(1-\epsilon^{1/2}\hbar)\delta}}}
\leq 2\kappa L_{j} \]  \end{enumerate}

\noindent then there are scales $\rho_{I+1} < \rho_{I} \ll 1$, and a subset $\mathcal{F}_{0} \subset \mathcal{F}$ such
that \begin{enumerate}
\item $|\mathcal{F}_{0}| \geq \left( 1-\frac{2N_{0}}{N} \right) |\mathcal{F}|$

\item for every $\zeta \in \mathcal{F}_{0}$, and $i=I,I+1$,
\begin{equation*} \frac{ |\mathcal{S}(\zeta,\rho_{i}L, \mathbf{P})| } {| \mathcal{S}(\zeta,\rho_{i}L)|} \leq \frac{1}{N} \end{equation*}

\noindent where $\mathbf{P}$ is the statement 'either not $\delta$-monotone, or is monotone but of opposite direction to the $\delta$-monotone
segment in $\mathcal{S}(\zeta,\rho_{i-1}L)$ to which it is a subset of. \end{enumerate} \end{corollary}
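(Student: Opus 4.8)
The plan is to run the scale-by-scale argument of Lemma \ref{monotone scale} on the whole family $\mathcal{F}$ simultaneously, in exactly the way Corollary \ref{efficiency scale multiple} was deduced from Lemma \ref{efficiency scale}: carry out the \emph{length-gain} estimate for each $\zeta_{j}$, stop just before the pigeonhole step, average the resulting inequalities over $\mathcal{F}$, and only then extract a single pair of scales together with a large subfamily using Markov's inequality.

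Concretely, I would apply the proof of Lemma \ref{monotone scale} to each $\zeta_{j}\in\mathcal{F}$ and halt at inequality (\ref{sum of scales for monotone}). Hypothesis (1) is precisely assumption (1) of that lemma, and hypothesis (2) guarantees, for every $j$, that the common number $D$ of intermediate scales satisfies $D\geq 2N\,\frac{(2\kappa)^{2}\hbar}{(1-\epsilon^{1/2}\hbar)\delta}$, so for each $j$ one gets
\begin{equation*}
\frac{(2\kappa)^{2}\hbar}{(1-\epsilon^{1/2}\hbar)\delta}\;\geq\;\sum_{i=0}^{D-1}\bigl(\flat_{i}(\zeta_{j})+\natural_{i+1}(\zeta_{j})\bigr),
\end{equation*}
where $\flat_{i}(\zeta_{j})$ is the proportion of $\epsilon^{1/2}$-efficient subsegments of $\zeta_{j}$ at the $i$-th scale that fail to be $\delta$-monotone and $\natural_{i+1}(\zeta_{j})$ is the proportion of $\delta$-monotone subsegments at the next scale whose orientation disagrees with that of their parent. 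Averaging over $j$ and exchanging the order of summation,
\begin{equation*}
\frac{(2\kappa)^{2}\hbar}{(1-\epsilon^{1/2}\hbar)\delta}\;\geq\;\sum_{i=0}^{D-1}\;\frac{1}{|\mathcal{F}|}\sum_{\zeta_{j}\in\mathcal{F}}\bigl(\flat_{i}(\zeta_{j})+\natural_{i+1}(\zeta_{j})\bigr).
\end{equation*}
The right-hand side is a sum of at least $2N\,\frac{(2\kappa)^{2}\hbar}{(1-\epsilon^{1/2}\hbar)\delta}$ nonnegative terms whose total is at most $\frac{(2\kappa)^{2}\hbar}{(1-\epsilon^{1/2}\hbar)\delta}$, so the number of indices at which a term exceeds the relevant threshold is strictly less than half of all indices, and hence there are two \emph{consecutive} indices $I$, $I+1$ at each of which the $\mathcal{F}$-average of $\flat+\natural$ is small. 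At each of these two scales, Markov's inequality confines the $\zeta_{j}$ whose own value of $\flat+\natural$ is too large to a subfamily of relative size at most $\tfrac{N_{0}}{N}$; taking $\mathcal{F}_{0}$ to be those $\zeta_{j}$ controlled at \emph{both} scales gives $|\mathcal{F}_{0}|\geq(1-\tfrac{2N_{0}}{N})|\mathcal{F}|$, which is meaningful precisely because $N_{0}<N/2$. Finally, translating the bound on $\flat_{i}+\natural_{i}$ into the asserted bound on the proportion of subsegments of $\mathcal{S}(\zeta,\rho_{i}L)$ satisfying $\mathbf{P}$ — the non-$\epsilon^{1/2}$-efficient subsegments contribute only the harmless extra term $\epsilon^{1/2}\hbar$, by the hypothesis on $\epsilon$ — and setting $\rho_{I}=L_{I}/L$, $\rho_{I+1}=L_{I+1}/L$ as in Lemma \ref{monotone scale} yields the conclusion.

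I do not expect any serious obstacle: the substantive analysis (failure of $\delta$-monotonicity forces a definite gain in length, via Proposition \ref{can't move far in R2R3}) is already packaged inside Lemma \ref{monotone scale}, and what remains is averaging and counting. The one point requiring care is that the statement demands two \emph{consecutive} good scales rather than a single one; this is why the pigeonhole must be run with a factor-$2$ surplus of scales — hence the $2N$ in the lower bound on $L_{j}$ in hypothesis (2) — and why the exceptional subfamily loses $\tfrac{2N_{0}}{N}$ rather than $\tfrac{N_{0}}{N}$, forcing the hypothesis $N_{0}<N/2$ in place of the $N_{0}<N$ that sufficed for Corollary \ref{efficiency scale multiple}.
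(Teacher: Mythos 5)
Your proposal reproduces the paper's own argument exactly: run Lemma \ref{monotone scale} on each $\zeta_{j}$ up to inequality (\ref{sum of scales for monotone}), average over $\mathcal{F}$ and exchange summation order, pigeonhole the $D$ scale-indexed terms to locate two consecutive scales at which the $\mathcal{F}$-averaged $\flat+\natural$ is at most $1/N$, and then apply Chebyshev at both scales to carve out $\mathcal{F}_{0}$ of relative measure at least $1-2N_{0}/N$, precisely as Corollary \ref{efficiency scale multiple} is deduced from Lemma \ref{efficiency scale}. The one small slip is the claim that ``strictly fewer than half of the indices are bad'' by itself forces two consecutive good ones --- this fails when the good indices are, say, $\{0,2,4\}\subset\{0,1,2,3,4\}$; the clean pigeonhole partitions $\{0,\dots,D-1\}$ into $\lfloor D/2\rfloor$ consecutive pairs and bounds the minimum pair total by $1/N$ --- but the paper is equally terse at this step and your argument is otherwise sound.
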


\begin{proof}
We apply Lemma \ref{monotone scale} to each element of $\gothic{G}$ since its elements are all $(\kappa, C)$
quasi-geodesics.  We arrive at equation (\ref{sum of scales for monotone}) for each element of
$\mathcal{F}$.  That is,

\begin{equation*}
\frac{2 \kappa}{1-\epsilon^{1/2}\hbar}  \geq  \frac{\delta }{2\kappa \hbar}
\sum_{i=0}^{D-1} (\flat_{i}(\zeta_{j}) + \natural_{i+1}(\zeta_{j})) \end{equation*}

\noindent therefore

\[ \frac{2 \kappa}{1-\epsilon^{1/2}\hbar} \geq \frac{1}{|\mathcal{F}|} \sum_{\zeta_{j} \in \mathcal{F}} \frac{\delta }{2\kappa \hbar}
\sum_{i=0}^{D-1} (\flat_{i}(\zeta_{j}) + \natural_{i+1}(\zeta_{j})) = \frac{\delta }{2\kappa \hbar}
\sum_{i=0}^{D-1} \frac{1}{|\mathcal{F}|} \sum_{\zeta_{j} \in \mathcal{F}} (\flat_{i}(\zeta_{j}) + \natural_{i+1}(\zeta_{j})) \]

\noindent Counting the number of terms on the right hand side means that for some $1 \leq I \leq
D-1$,
\begin{eqnarray*}
\frac{1}{|\mathcal{F}|} \sum_{\zeta_{j} \in \mathcal{F}} (\flat_{I-1}(\zeta_{j}) + \natural_{I-1}(\zeta_{j}))
& \leq & \frac{1}{N} \\
\frac{1}{|\mathcal{F}|} \sum_{\zeta_{j} \in \mathcal{F}} (\flat_{I}(\zeta_{j}) + \natural_{I}(\zeta_{j}))
& \leq & \frac{1}{N} \end{eqnarray*}

Let $\mathcal{F}_{b}$ consist of those $\zeta$ whose $\flat_{I}+ \natural_{I}$ or $\flat_{I-1}+
\natural_{I-1}$ values is more than $\frac{1}{N_{0}}$.  The desired claim is obtained
after applying Chebyshev inequality and setting $\mathcal{F}_{0}$ as the complement of
$\mathcal{F}_{b}$, and $\rho_{I}=\delta^{I}$, $\rho_{I+1}=\delta^{I+1}$.  \end{proof}

\subsection{Occurrence of weakly monotone segments}
In the previous subsection we showed the existence of a $\delta$-monotone scale.  In this
subsection, we write $G$ for a non-degenerate, split abelian-by-abelian group, and
we will see that by chaining a lot of $\delta$-monotone segments together, we
end up with a path that is weakly monotone.

\begin{definition}\label{uniform set}
Let $G$ be a non-degenerate, split abelian-by-abelian group.  Let $\zeta:[0,L] \rightarrow G$ be a $(\kappa,C)$ quasi-geodesic
segment that is $\delta$-monotone. Suppose for some $L_{s} \gg 2\kappa  C$,
\[ \frac{|\mathcal{S}(\zeta,L_{s},\mathbf{P})|}{|\mathcal{S}(\zeta,L_{s})|}\leq \frac{1}{N}  \]

\noindent where $\mathbf{P}$ is the statement "not $\delta$ monotone, or is $\delta$ monotone but with opposite
orientation from $\zeta$". \smallskip

\noindent For a point $x \in \zeta$, define
\begin{equation*}
P(x,\zeta,T) = | B(x,T) \cap \Delta|, \mbox{ where } \Delta = \bigcup_{\lambda \in \mathcal{S}(\zeta,L_{s},\mathbf{P})} \lambda \end{equation*}

\noindent We say $x$ is (M) \emph{uniform along $\zeta$} if for
all $T \geq 0 $, \[ P(x,\zeta,T) \leq M \left( \frac{|\mathcal{S}(\zeta,L_{s},\mathbf{P})|}{|\mathcal{S}(\zeta,L_{s})|}
\right) T \] where $M$ satisfies $\frac{M}{N} \ll 1$. \end{definition}

The main lemma of this subsection is:

\begin{lemma} \label{how to show weakly monotone}
Let $\zeta$ be a $\delta$-monotone $(\kappa, C)$ quasi-geodesic in $G$. Suppose $x \in \zeta$ is a uniform point,
with $P(x,\zeta,T) \leq \nu T$, $\nu \ll 1$. Then $\zeta$ consider as a $(\kappa,C)$ quasi-geodesic leaving $x$ at $T=0$
is $(\nu (1+ \hbar), 2\kappa L_{s} )$ weakly monotone. \end{lemma}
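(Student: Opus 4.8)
The plan is to verify the defining inequality of $(\nu(1+\hbar),2\kappa L_{s})$-weak monotonicity directly. Reparametrize $\zeta$ so that $x=\zeta(0)$ (the two one-sided sub-quasi-geodesics issuing from $x$ are handled identically). Fix $t_{1}>t_{2}\ge 0$ with $h_{\overline{AB}}(\pi_{A}\zeta(t_{1}))=h_{\overline{AB}}(\pi_{A}\zeta(t_{2}))=:c$, and set $D:=d(\zeta(t_{1}),\zeta(t_{2}))$, $R:=d(\zeta(t_{1}),x)$. If $D\le 2\kappa L_{s}$ there is nothing to prove, so assume $D>2\kappa L_{s}$. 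I will show (Step 1) that a definite fraction of the scale-$L_{s}$ subsegments of $\zeta$ lying inside $\zeta|_{[t_{2},t_{1}]}$ are ``bad'', i.e.\ belong to $\mathcal{S}(\zeta,L_{s},\mathbf{P})$, and (Step 2) that this forces $\Delta=\bigcup_{\lambda\in\mathcal{S}(\zeta,L_{s},\mathbf{P})}\lambda$ to have large measure in a ball about $x$ of radius $O(R)$, contradicting uniformity unless $D$ is small.

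Step 1 is the geometric core. Let $\mu$ be a scale-$L_{s}$ subsegment of $\zeta$ contained in $\zeta|_{[t_{2},t_{1}]}$; there are at least $D/L_{s}-2$ of them. If $\mu\notin\mathcal{S}(\zeta,L_{s},\mathbf{P})$, then $\mu$ is $\delta$-monotone with orientation consistent with that of $\zeta$. Since the endpoints of $\mu$ are exactly $L_{s}$ apart, Proposition~\ref{can't move far in R2R3} forbids $\pi_{A}\mu$ from lying in a ball of diameter $L_{s}/\hbar$, so $\mathrm{diam}(\pi_{A}\mu)\ge L_{s}/\hbar$; because $\mu$ is efficient at the relevant scale, $\pi_{A}\mu$ hugs its chord to within a tiny multiple of $L_{s}$, so that chord has length $\gtrsim L_{s}/\hbar$; and ``consistent orientation'' forces the chord to make a small angle with $\overline{AB}$. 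Hence $h_{\overline{AB}}\circ\pi_{A}$ \emph{increases} across $\mu$ by at least a fixed fraction of $L_{s}/\hbar$. On a bad subsegment $h_{\overline{AB}}\circ\pi_{A}$ changes by at most $L_{s}$ in absolute value. As the total increment of $h_{\overline{AB}}\circ\pi_{A}$ along $\zeta|_{[t_{2},t_{1}]}$ equals $c-c=0$ up to the $\le 2L_{s}$ contributed by the at most two partial terminal subsegments, comparing the number $g$ of good and $b$ of bad subsegments gives $g\lesssim\hbar\,b$, hence $b\ge c_{0}D/L_{s}$ for an explicit $c_{0}=c_{0}(\hbar)$.

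Step 2 converts this into a measure statement. The $b$ bad subsegments are pairwise disjoint pieces of $\zeta$, each of length $\ge L_{s}$, so the portion of $\Delta$ lying on $\zeta|_{[t_{2},t_{1}]}$ has measure $\ge bL_{s}\ge c_{0}D$. On the other hand $\zeta|_{[0,t_{1}]}$ has length at most $2\kappa R+C\le 2\kappa R+L_{s}$ (its endpoints being $R$ apart, and $L_{s}\gg 2\kappa C$), so this portion of $\Delta$ lies in $B(x,2\kappa R+L_{s})$; since $x$ is uniform, its measure is at most $P(x,\zeta,2\kappa R+L_{s})\le\nu(2\kappa R+L_{s})$. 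Combining, $c_{0}D\le\nu(2\kappa R+L_{s})$; as $\nu\ll 1$ one absorbs the $\nu L_{s}$ term into $2\kappa L_{s}$ and the constant $2\kappa/c_{0}$ into $1+\hbar$ (recalling that the $\hbar$ of Proposition~\ref{can't move far in R2R3} already carries the dependence on $\kappa$), obtaining $D\le\nu(1+\hbar)R+2\kappa L_{s}$. Finally $\nu(1+\hbar)\gg 2\epsilon\hbar(2\kappa)^{2}$, since $\nu$ dominates the bad-segment density while $\epsilon$ was chosen much smaller, so the side condition in the definition of weakly monotone is met.

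The main obstacle is the claim in Step 1 that a good subsegment genuinely raises $h_{\overline{AB}}$ by a definite amount: this requires unwinding the meaning of ``orientation consistent with $\zeta$'' down to ``the chord of $\mu$ is nearly parallel to $\overline{AB}$'', and combining it with the fact — via Proposition~\ref{can't move far in R2R3} together with efficiency — that the $\pi_{A}$-image of a subsegment cannot be much shorter than its endpoint distance. Everything else is bookkeeping with the quasi-isometry constants, the only care needed being to keep the additive error at the level $2\kappa L_{s}$ (coming from the at most two partial terminal subsegments and the coarse-net identifications).
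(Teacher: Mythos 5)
Your proof takes essentially the same route as the paper's: between two points of equal $h$-value, each good scale-$L_{s}$ subsegment advances $h$ by roughly $L_{s}/\hbar$ while a bad one can retreat by at most $L_{s}$, so since the net change is zero a proportion $\gtrsim 1/(1+\hbar)$ of the subsegments must lie in $\mathcal{S}(\zeta,L_{s},\mathbf{P})$, and the uniformity hypothesis then caps their total measure near $x$, forcing the separation bound. The only point worth flagging is a constant slack in your absorption step at the end of Step 2: with $c_{0}\approx 1/(1+\hbar)$ one has $2\kappa/c_{0}\approx 2\kappa(1+\hbar)\not\le 1+\hbar$, so an extra factor of $\kappa$ survives; but the paper's own version, which bounds $t_{2}-t_{1}\le(1+\hbar)\nu t_{2}+2\kappa L_{s}$ in the time parameter and leaves implicit the $\kappa$-dependent conversion to the distance inequality that the definition of weak monotonicity actually demands, carries the same slack, so this does not distinguish your argument from the original.
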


\begin{proof}
let $h$ denote the projection of $\pi_{A}(\zeta)$ onto the straight line joining the end points of $\pi_{A}(\zeta)$.

Then up to time $T$, provided $T > L_{s}$, at most $\nu$ proportion of segments in $\mathcal{S}(\zeta,L_{s})$ belong
to $\mathcal{S}(\zeta,L_{s}, \mathbf{P})$, and at least $1-\nu$ proportion are $\delta$ monotone.  Therefore
\begin{equation} \label{moving linearly}
h(\zeta(T))-h(x) \geq \frac{(1-\nu)T}{\hbar} - \nu T =(1-\nu - \hbar \nu)\frac{T}{\hbar} \end{equation} so
$\pi_{A}(\zeta)$ moves at a linear rate. \smallskip

For any $\hat{s}>0$, let $t_{1}, t_{2}$ be the smallest and largest number $t$ such that $h(\zeta(t))=\hat{s}$.
Let $b$ denotes the proportion of $\mathcal{S}(\zeta, L_{s})$ in between $\zeta(t_{1})$ and $\zeta(t_{2})$ that
belongs to $\mathcal{S}(\zeta,L_{s},\mathbf{P})$. Either $\zeta(t_{2})-\zeta(t_{1}) \leq L_{s}$, in which
case $t_{2}-t_{1} \leq 2\kappa L_{s}$; OR $\zeta(t_{2})-\zeta(t_{1}) > L_{s}$, in which case we have
\begin{equation*}
0= h(\zeta(t_{2}))-h(\zeta(t_{1})) \geq \frac{(1-b)(t_{2}-t_{1})}{\hbar} - b (t_{2}-t_{1}) \end{equation*}
\noindent which means $ b > \frac{1}{1+ \hbar} $.  On the other hand, we also know that $b(t_{2}-t_{1}) \leq \nu t_{2}$,
therefore \begin{equation*}
t_{2}-t_{1} \leq \frac{\nu t_{2}}{b} \leq (1+ \hbar) \nu t_{2}
\end{equation*}

\noindent That is, whenever $h(t_{2})=h(t_{1})$, we must have
$t_{2}-t_{1} \leq (1+ \hbar) \nu t_{2} + 2\kappa L_{s}$. \end{proof}

The following lemma provides us with abundant supply of uniform points.
\begin{lemma}\label{big uniform set}
Let $\zeta$ be a $\delta$-monotone $(\kappa, C)$ quasi-geodesic in $G$.  Then the proportion of
non-$M$ uniform points in $\hat{\mathcal{S}}(\zeta,L_{s})$ is at most $\frac{2}{M}$. \end{lemma}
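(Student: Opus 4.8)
The plan is to bound the number of \emph{bad} (non-$M$-uniform) points in $\hat{\mathcal{S}}(\zeta,L_s)$ by a double-counting/averaging argument over the ``bad'' segments $\Delta=\bigcup_{\lambda\in\mathcal{S}(\zeta,L_s,\mathbf{P})}\lambda$. Write $n=|\hat{\mathcal{S}}(\zeta,L_s)|$ and let $\theta=\frac{|\mathcal{S}(\zeta,L_s,\mathbf{P})|}{|\mathcal{S}(\zeta,L_s)|}\le\frac1N$. A point $x\in\hat{\mathcal S}(\zeta,L_s)$ is non-$M$-uniform precisely when there is some radius $T=T(x)\ge 0$ with $P(x,\zeta,T)>M\theta T$; since $P(x,\zeta,\cdot)$ only increases in $T$ and jumps by amounts comparable to $L_s$ at the bad-segment scale, one may assume $T(x)$ is a multiple of $L_s$, i.e. $T(x)=k_x L_s$ for some positive integer $k_x$, and then the defining inequality reads (in units of the $L_s$-scale subdivision) ``the ball of combinatorial radius $k_x$ around $x$ contains more than $M\theta k_x$ bad subdivision segments.''

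The key step is a maximal-function / Vitali-type covering argument along the curve, which I would carry out combinatorially on the index set $\{0,1,\dots,n\}$ of $\hat{\mathcal S}(\zeta,L_s)$ with the bad segments marking a subset $\mathcal{P}$ of size $\le\theta n$. For each bad point $x$ pick the witnessing interval $I_x$ of combinatorial length $2k_x$ centered at $x$ with $|I_x\cap\mathcal P|>M\theta k_x\ge \tfrac{M\theta}{2}|I_x|$. Apply the Vitali covering lemma to the collection $\{I_x\}$ to extract a disjoint subcollection $\{I_{x_j}\}$ whose triples cover all the $I_x$, hence cover all bad points. Then
\[
\#\{\text{bad points}\}\ \le\ \sum_j 3|I_{x_j}|\ \le\ \sum_j \frac{6}{M\theta}\,|I_{x_j}\cap\mathcal P|\ \le\ \frac{6}{M\theta}\,|\mathcal P|\ \le\ \frac{6}{M\theta}\,\theta n\ =\ \frac{6}{M}\,n,
\]
using disjointness of the $I_{x_j}$ in the third inequality. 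This already gives the bound with constant $6/M$ rather than $2/M$; to recover the stated $2/M$ I would use the sharper form of the one-dimensional Vitali lemma (on an interval, every cover by intervals admits a subcover of multiplicity $\le 2$, so one can replace the factor $3$ by essentially $1$ at the cost of a factor $2$ from the two-colouring), or simply absorb the universal constant — the paper only needs that the proportion of non-uniform points is $O(1/M)$ with $M/N\ll1$, so I would either state it with an absolute constant or tune $M$. The honest version: the multiplicity-$2$ selection on the line gives $\#\{\text{bad}\}\le \frac{2}{M\theta}|\mathcal P|\le \frac{2}{M}n$.

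The main obstacle is the discretization at the start: making precise that it suffices to test the uniformity inequality $P(x,\zeta,T)\le M\theta T$ only at radii $T$ that are integer multiples of $L_s$, and that the ``number of bad segments within $B(x,kL_s)$'' is comparable (up to a fixed constant depending only on $\kappa,C$ and $\hbar$, via Proposition \ref{can't move far in R2R3}) to the combinatorial count of bad indices within distance $k$ of $x$ along the subdivision. This comparison is where one spends the $(\kappa,C)$-quasigeodesic hypothesis: a metric ball of radius $kL_s$ meets a bounded-by-$\kappa$-independent-of-$k$ number of $L_s$-segments beyond the combinatorial ones, because along a quasigeodesic the parameter and the metric are biLipschitz-comparable at scales $\gg \kappa C$, and $L_s\gg 2\kappa C$ by hypothesis. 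Once that comparison is in hand the covering argument is routine, so I would isolate it as the one place requiring care and keep the Vitali step short.
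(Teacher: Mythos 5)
Your proposal is correct and follows essentially the same route as the paper: for each non-$M$-uniform point you take the witnessing interval on which the uniformity inequality fails, then run a one-dimensional Vitali covering argument with multiplicity~$2$ to get $\sum|I_{x_j}\cap\Delta|\le 2|\Delta|$, which divided through gives the $2/M$ bound. The discretization-to-multiples-of-$L_s$ issue you flag is not something the paper dwells on (it works directly with metric intervals along the quasi-geodesic), but the core covering argument is identical.
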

\begin{proof}
Write $\Delta$ as the union of all segments in $\mathcal{S}(\zeta,
L_{s}, \mathbf{P})$, $N$ as the measure of the union of all the
segments in $ \mathcal{S}(\zeta,L_{s})$, and $\mu=\frac{|\Delta
|}{N}$. For every non-uniform point $x$, we can find an interval
$I_{x}$, such that \[ |I_{x} \cap \Delta| \geq M \mu |I_{x}| \]

\noindent Then the collection of all such interval $\{I_{x}\}$ forms
a cover for the set of non-uniform points.  Choose a subcover so
that $\sum |I_{x} \cap \Delta| \leq 2 |\Delta|$. Then
\[ \left| \bigcup I_{x} \right| \leq \sum |I_{x}| \leq \sum \frac{1}{M \mu} \left| I_{x} \cap
\Delta \right| \leq \frac{2 |\Delta|}{M \mu} \]  \noindent now divide both sides
by $N$. \end{proof}

\begin{remark} \label{general uniform}
Let $\zeta$ be a quasi-geodesic segment that lies within $(\nu,c)$-linear neighborhood of a
geodesic segment, where $c \ll \nu |\zeta|$.  In light of Lemma \ref{how to show weakly monotone}, we may call a
point $p \in \zeta$ as a $\nu$-uniform point if the subsegments of $\zeta$ of length $\gg \nu |\zeta|$, viewed as
quasi-geodesics starting from $p$, lies in $(\nu, c')$-linear neighborhood of geodesic
segments for some $c' \ll \nu |\zeta|$.  \end{remark}

\begin{remark}
By abuse of notation, from now on, when we say a point $p$ is $M$ uniform with respect to a
quasi-geodesic segment for some $M \gg 1$, we mean definition \ref{uniform set}; if we
say $p$ is $\nu$ uniform, where $\nu < 1$, we mean remark \ref{general uniform}.  \end{remark}

\subsection{Proof of Theorem \ref{existence of good boxes}}

So far our results from previous sections only require the group to be non-degenerate and
split abelian-by-abelian.  From now on, we will require all our groups to be unimodular.


\begin{proposition}\label{finite number of qigeodesics close to geodesics}
Let $G, G'$ be non-degenerate, unimodular, split abelian-by-abelian Lie groups, and $\phi: G \rightarrow G'$ be a $(\kappa, C)$
quasi-isometry.  Then, to any $0< \delta, \eta< \tilde{\eta} <1 $, there are numbers $L_{0}$, $m>1$, and $0< \rho_{s} < \rho_{b'} < \rho_{b} \leq 1$
depending on $\delta, \eta$, and $\kappa, C$, with the following properties: \smallskip

If $\Omega \subset \mathbf{A}$ is a product of intervals of equal size at least $mL_{0}$, by writing
\begin{itemize}
\item $\mathcal{P}=\phi ( \mathcal{P}(\Omega))$ as the $\phi$ images of points in
$\mathbf{B}(\Omega)$,
\item $\mathbf{L}=\bigcup_{\zeta \in \phi(\mathcal{L}(\Omega)[m])} \mathcal{S}(\zeta, \hbar \rho_{s} \rho_{b} |\zeta|)$
as the union of subsegments obtained by dividing each $\zeta \in \mathcal{L}(\Omega)[m]$ at scale $\hbar
\rho_{s}\rho_{b}$. \end{itemize}  Then,
\begin{enumerate}
\item there is a subset $\mathbf{L}_{0} \subset \mathbf{L}$, with $|\mathbf{L}_{0}| \geq (1-\tilde{q}) |\mathbf{L}|$,
\item there is also a subset $\tilde{\mathcal{P}} \subset \mathcal{P}$, with $|\tilde{\mathcal{P}}| \geq (1-Q)|\mathcal{P}|$
\end{enumerate}

\noindent such that for every $p \in \tilde{\mathcal{P}}$, amongst all elements in $\mathbf{L}$ containing $p$, at least $1-\tilde{Q}$
proportion of them belong to $\mathbf{L}_{0}$, and of those, a further $1-\hat{Q}$ proportion admit geodesic approximation.  That is,
if $\gamma$ is in this set, then it is within $(\eta, (\delta+ \frac{ \rho_{b'}}{\rho_{b}} ) |\gamma|)$-linear neighborhood of a
geodesic segment that makes an angle of at least $\sin^{-1}(\tilde{\eta})$ with root kernel directions.  Here $\tilde{\eta}$, $\tilde{q}$,
$Q, \tilde{Q}, \hat{Q} \rightarrow 0$ as $\eta$, $\delta$, $\tilde{\eta}$ approach zero. \end{proposition}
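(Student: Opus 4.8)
The plan is to assemble Proposition \ref{finite number of qigeodesics close to geodesics} by composing the three scale-selection results already proved — Corollary \ref{efficiency scale multiple} (efficient scale), Corollary \ref{monotone scale multiple} (monotone scale), and the chaining argument of Lemmas \ref{how to show weakly monotone}, \ref{big uniform set} (weak monotonicity and uniform points) — and then applying Proposition \ref{close to being straight} to conclude geodesic approximation, with a final counting step to pass from a statement about quasi-geodesics to a statement about their subsegments and the points lying on them. Throughout I work on the net level as in the remark, so all the relevant families are finite.

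First I would fix the input parameters. Given $\delta, \eta < \tilde\eta$, choose $\epsilon$ small enough to satisfy the hypotheses of Lemma \ref{monotone scale} (i.e. $\epsilon \le \min\{(\delta/2\hbar)^4, (\delta/3.01\hbar)^8, (0.01)^8\}$) and also small enough that the $\epsilon$-efficiency it produces feeds Proposition \ref{close to being straight}; choose $N$ large (with an auxiliary $N_0$) so that the exceptional proportions $N_0/N$ coming out of the two corollaries are as small as desired. The family $\mathcal{F}$ on which I run the corollaries is $\phi(\mathcal{L}(\Omega)[m])$, whose elements are $(\kappa,C)$ quasi-geodesics because $\phi$ is a quasi-isometry and the original geodesics are long (the lower length bound comes from $\Omega$ having sidelength at least $mL_0$, which forces the length hypotheses in both corollaries once $L_0$ and $m$ are chosen appropriately in terms of $L_{stop}\approx 2\kappa C$, $\epsilon$, $N$). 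Applying Corollary \ref{efficiency scale multiple} gives a scale $\rho_{b}$ (up to the $\hbar$, $\tfrac12\epsilon^{1/4}$ bookkeeping) and a large subfamily on which $\pi_A$ of each quasi-geodesic is $\epsilon$-efficient at that scale; restricting to this subfamily and applying Corollary \ref{monotone scale multiple} produces two nested scales $\rho_I, \rho_{I+1}$ — these become $\rho_b$ (or $\rho_{b'}$) and $\rho_s$ up to constants — and a further large subfamily $\mathcal{F}_0$ on which, at both scales, all but a $1/N$ proportion of subsegments are $\delta$-monotone and consistently oriented.

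Next, on each $\zeta\in\mathcal{F}_0$ I view the scale-$\rho_b$ subsegments as $\delta$-monotone pieces; Lemma \ref{big uniform set} says all but a $2/M$ proportion of the division points are $M$-uniform, and Lemma \ref{how to show weakly monotone} then upgrades such a piece, rooted at a uniform point, to a $(\nu(1+\hbar), 2\kappa L_s)$-weakly monotone quasi-geodesic with $\nu \approx M/N$ small. Now Proposition \ref{close to being straight}(2) applies — provided the subsegment lies outside the prescribed linear neighborhood of the walls based at its initial point, which is exactly the place where the angle-with-root-kernels condition $\sin^{-1}(\tilde\eta)$ must be extracted: a subsegment whose $\pi_A$-direction makes too small an angle with some root kernel is the one that can fail this, and those get thrown into the exceptional set, whose proportion is controlled by $\tilde\eta$ (this is where $\tilde q, \tilde Q, \hat Q \to 0$ as $\tilde\eta\to 0$ enters). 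The conclusion of Proposition \ref{close to being straight} gives the claimed $(\eta, (\delta + \rho_{b'}/\rho_b)|\gamma|)$-linear neighborhood of a geodesic segment: the $|\triangle|\eta$ from the proposition is absorbed into $\eta$ by rescaling the input, and the additive term collects $\hbar\sqrt{\delta^2+4\epsilon^2}|\overline{AB}|$ plus $C_1 = 2\kappa L_s \approx 2\kappa\rho_s\rho_b|\zeta|$, bounded by $(\delta + \rho_{b'}/\rho_b)|\gamma|$ after the constants are tracked.

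Finally I would do the double-counting to get the statement in its point-and-subsegment form. Set $\mathbf{L}_0$ to be the subsegments (at scale $\hbar\rho_s\rho_b$) of quasi-geodesics in $\mathcal{F}_0$ that sit inside a $\delta$-monotone scale-$\rho_b$ piece, are rooted at (or pass through a region dominated by) uniform points, and meet the angle condition — each excluded category contributes a proportion tending to $0$, so $|\mathbf{L}_0|\ge (1-\tilde q)|\mathbf{L}|$. For the points: a point $p\in\mathcal{P}(\Omega)$ lies on many elements of $\mathcal{L}(\Omega)[m]$ by the box construction (a large proportion of box points lie on the relevant quadrilaterals), so a Fubini/Chebyshev argument over the incidence relation between points and subsegments shows that all but a $Q$-proportion of points $p$ have the property that all but a $\tilde Q$-proportion of the $\mathbf{L}$-elements through $p$ lie in $\mathbf{L}_0$, and of those a further $1-\hat Q$ admit geodesic approximation. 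The main obstacle, and the step I would spend the most care on, is the bookkeeping that simultaneously (i) keeps the three exceptional proportions under control through three successive restrictions of the family, (ii) correctly propagates the relationship among the scales $\rho_s < \rho_{b'} < \rho_b$ and the factor $\hbar$ so that the additive linear-neighborhood constant really is bounded by $(\delta + \rho_{b'}/\rho_b)|\gamma|$, and (iii) cleanly isolates the wall/angle condition so that the final $\sin^{-1}(\tilde\eta)$ bound holds uniformly; everything else is a routine assembly of the quoted lemmas.
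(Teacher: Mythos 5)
Your proposal follows the same path as the paper: apply Corollary \ref{efficiency scale multiple} to $\mathcal{F}=\phi(\mathcal{L}(\Omega)[m])$ to select the efficient scale, then Corollary \ref{monotone scale multiple} to select the monotone scales, then use Lemma \ref{big uniform set} and Lemma \ref{how to show weakly monotone} to pass to weakly monotone segments at uniform points, invoke Proposition \ref{close to being straight} for the geodesic approximation, and finish with the Fubini/Chebyshev double count over the point--subsegment incidence relation. You in fact make explicit a step the paper leaves implicit (the final application of Proposition \ref{close to being straight}), but otherwise the decomposition, the choice of lemmas, and the counting argument are the same.
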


\begin{proof}
Recall that $G=\mathbf{H} \rtimes \mathbf{A}$, $G' = \mathbf{H}' \rtimes \mathbf{A}'$, where $\mathbf{H}'$, $\mathbf{H}$,
$\mathbf{A}'$, $\mathbf{A}$ are all are abelian, and $\triangle'$ is the set of roots of $G'$.  First, we choose the following constants:
\begin{itemize}
\item $N \gg dim(\mathbf{A}')$ such that $\frac{|\triangle'|}{\sqrt{N}}(1+\hbar) < \eta$,
\item $m = N^{1/3}$

\item $\epsilon \leq \tilde{\eta} \min \{ \left( \frac{\delta}{2\hbar} \right)^{4},
\left( \frac{\delta}{3.01 \hbar} \right)^{8}, (0.01)^{8} \}$
\item $L_{a} \geq \frac{3}{\varepsilon}$

\item $L_{stop}$ such that  \begin{equation*}
\frac{\frac{2L_{a}}{3 \epsilon^{1/8}}} {(\delta)^{\frac{(2\kappa)^{2}\hbar (2N)}{(1-\epsilon^{1/2}\hbar)\delta}}} \leq 2\kappa L_{stop} \end{equation*}

\item $L_{0}$ such that \begin{equation*}
\frac{L_{stop}}{\left( \frac{1}{2}\epsilon^{1/4} \right)^{\frac{ \hbar (2\kappa)^{2} N + \epsilon}{\epsilon}}}  \leq 2\kappa L_{0} \end{equation*}

\item $M = \eta \sqrt{N}$.  \end{itemize}

Let $\Omega \subset \mathbf{A}$ be a product of intervals of equal size at least $m
L_{0}$.  We will build $\tilde{\mathcal{P}}$ from the $\phi$ images of $\mathcal{P}(\Omega) =
\bigcup_{\zeta \in \mathcal{L}(\Omega)[m]} \mathcal{P}(\zeta)$.

Let $\mathcal{F}= \phi(\mathcal{L}(\Omega)[m])$, and apply Lemma \ref{efficien scale sum multiple} to $\mathcal{F}$ and $N_{0}=\sqrt{N}$ to obtain
a scale $\rho_{J}$ and subset $\mathcal{F}_{0}$ such that
\[ \left|  \bigcup_{\zeta \in \mathcal{F}_{0}} \mathcal{S}(\pi_{A}(\zeta), \rho_{J}|\pi_{A} \circ \zeta|,
\mbox{$\epsilon$-efficient at scale $1/2 \epsilon^{1/4}$}) \right| \geq \left( 1-\frac{1}{\sqrt{N}} \right)^{2}
\left| \bigcup_{\zeta \in \mathcal{F}} \mathcal{S}(\pi_{A}(\zeta), \rho_{J}|\pi_{A} \circ \zeta|) \right| \]

Write $\gothic{M}$ for the union of $\mathcal{S}(\pi_{A}(\zeta), \rho_{J}|\pi_{A} \circ
\zeta)$ as $\zeta$ ranges over $\mathcal{F}$, and $\gothic{M}_{0}$ for the subset of
$\gothic{M}$ that are $\epsilon$-efficient at scale $1/2 \epsilon^{1/4}$. The above
equation says $|\gothic{M}_{0}| \geq (1-1/\sqrt{N})^{2} |\gothic{M}|$.

Each element of $\gothic{M}_{0}$ is the $\pi_{A}$ image of a subsegment of $\phi(\mathcal{L}(\Omega)[m])$.  Let
$\mathcal{G}$ be the $\pi_{A}$ pre-images of $\gothic{M}_{0}$.  That is, $\zeta \in
\mathcal{G}$ means $\pi_{A}(\zeta) \in \gothic{M}_{0}$.  We now apply Lemma \ref{monotone scale multiple} to
$\mathcal{G}$, and again taking $N_{0}=\sqrt{N}$, to obtain scales $\rho_{I}$, $\rho_{I+1}$ and a subset $\mathcal{G}_{0}$ such that

\[ \left| \bigcup_{\gamma \in \mathcal{G}_{0}} \mathcal{S}(\gamma, \rho_{I} |\gamma|,
\mbox{$\delta$-monotone} ) \right| \geq \left( 1-\frac{1}{\sqrt{N}} \right)^{2}
\left| \bigcup_{\gamma \in \mathcal{G}} \mathcal{S}(\gamma, \rho_{I}|\gamma|) \right| \]

In other words, setting $\mathbf{L}$ as the union of $\mathcal{S}(\zeta, \hbar \rho_{I}
\rho_{J} |\zeta|)$ where $\zeta$ ranges over all $\mathcal{F}$, we have obtained a subset
$\mathbf{L}_{g}$ whose measure is at least $(1-1/\sqrt{N})^{4}$ that of $\mathbf{L}$, and
each element in $\mathbf{L}_{g}$ is $\delta$ monotone.

Recall that $\mathcal{P}=\phi(\mathcal{P}(\Omega))=\bigcup_{\zeta \in \mathbf{L}}
\mathcal{P}(\zeta)$, and for those $p \in \mathcal{P}(\Omega)$ such that $d(p, \partial
\mathbf{B}(\Omega)) \geq L_{a}/(2\kappa )$, the intersection between the union of elements in $\mathcal{F}$
and $B(\phi(p), L_{a})$ has full measure.  Since $\mathbf{B}(\Omega)$ has small boundary
area compared to its volume, and the ratio of $L_{a}$ to $L_{0}$ is a function of $\delta$ that goes to zero as $\delta$
approaches zero, we have a subset $\mathbf{L}_{0} \subset \mathbf{L}_{g}$ with relative
measure at least $1-\vartheta$ whose elements make an angle at least $\sin^{-1}(\tilde{\eta})$ with root
kernels.  Here $\vartheta$ goes to zero as $\tilde{\eta}$ and $\delta$ approach zero.

Let $\mathcal{P}_{g} \subset \mathcal{P}$ be those images coming form a point in
$\mathcal{P}(\Omega)$ at least $L_{a}/(2\kappa)$ away from $\partial \mathbf{B}(\Omega)$.
We will extract those points of $\mathcal{P}_{g}$ that are $M$ uniform with respect to at
least $s$ proportion of those elements in $\mathbf{L}_{0}$ that contain it.  We will then
choose $s$ appropriately so that the relative proportion of $\mathcal{P} -
\mathcal{P}_{0}$ is small and depends on our input data.

To begin, we note that the incident relation between $\mathcal{P}_{g}$ and $\mathbf{L}_{0}$ is symmetrical.  Moreover we know that
for any two points in $\mathcal{P}_{g}$, the ratio of numbers of elements in $\mathbf{L}_{0}$ containing each of them is bounded by
$2^{dim(\mathbf{A})}$\symbolfootnote[2]{because $\Omega$ is a product of intervals}.  For any two elements of $\mathbf{L}$, the ratio of numbers of
points in $\mathcal{P}_{g}$ lying on each of them is bounded by $m$.

For $p \in \mathcal{P}_{g}$ (resp. $\zeta \in \mathbf{L}_{0}$) write $\mathbf{Y}(p)$ (resp. $\mathcal{P}(\zeta)$) for the set of
elements in $\mathbf{L}_{0}$ (resp. $\mathcal{P}_{g}$) incident with $p$ (resp. $\zeta$).  Let $\mathcal{BP} \subset \mathcal{P}_{g}$
consisting of points that fails to be $M$-uniform with respect to at least $s$ proportion of elements in $\mathbf{Y}(p)$.

We know that for $\zeta \in \mathbf{L}_{0}$, the proportion of non $M$-uniform points is at most $\frac{2}{M}$.
Let $\chi$ denote for the characteristic function of the subset of $\{(p,\zeta): p \in \mathcal{P}_{g}, \zeta \in \mathbf{L}_{0},
p \in \zeta \}$ consisting of pairs $(p, \zeta)$ such that $p$ fails to be $M$-uniform
of $\zeta$.  Then, starting from
\[  \sum_{p \in \mathcal{P}_{g}} \mbox{      } \sum_{\zeta \in \mathbf{Y}(p)} \chi
= \sum_{\zeta \in \mathbf{L}_{0}} \mbox{        }   \sum_{p \in \zeta } \chi \]

\noindent we have

\[  s \left| \mathbf{Y}(p) \right|_{\min} |\mathcal{BP}| \leq  \sum_{p \in \mathcal{BP}} \mbox{       } \sum_{\zeta \in \mathbf{Y}(p)} \chi
= \sum_{p \in \mathcal{P}_{g}} \mbox{        }    \sum_{\zeta \in \mathbf{Y}(p)} \chi  \] \noindent and

\[ \sum_{\zeta \in \mathbf{L}_{0}} \mbox{       }   \sum_{p \in \zeta } \chi \leq \sum_{\zeta \in \mathbf{L}_{0}} \frac{2}{M} \left| \mathcal{P}(\zeta) \right|
\leq \frac{2}{M} \left| \mathbf{L}_{0}  \right| \left| \mathcal{P}(\zeta) \right|_{\max} \]

\noindent Therefore $|\mathcal{BP}| \leq \frac{2/M}{s} 2^{dim(\mathbf{A})} m |\mathcal{P}_{g}|$.  By choosing
$s=\frac{2}{N^{1/6}} 2^{dim(\mathbf{A})}m$, we have $\frac{|\mathcal{BP}|}{|\mathcal{P}|}
\leq  \frac{\tilde{\eta}}{N^{1/3}}$.  Setting $\mathcal{P}_{0}$ as
$\mathcal{P}_{g}-\mathcal{BP}$.  The desired claim now follows after Lemma \ref{how to show weakly
monotone}.    \end{proof}

We can now prove Theorem \ref{existence of good boxes}.

\begin{proof} \textit{Theorem \ref{existence of good boxes}}
We apply Proposition \ref{finite number of qigeodesics close to geodesics} to $\mathcal{L}(\Omega)$ to obtain two scales:
$\varrho_{1}=\hbar \rho_{s} \rho_{b'}$, $\varrho_{2}= \hbar \rho_{s} \rho_{b}$, and a subset $\tilde{\mathbf{L}}_{0} \subset
\mathbf{L}=\bigcup_{\gamma \in \mathcal{L}(\Omega)} \mathcal{S}(\phi(\gamma), \varrho_{2} |\phi(\gamma)|)$, such that if
$\zeta \in \tilde{\mathbf{L}}_{0}$, then $\zeta$ is within $(\eta, \frac{\varrho_{1}}{\varrho_{2}}|\zeta|)$-linear neighborhood of a
geodesic segment that makes an angle of at least $\sin^{-1}(\eta)$ with root kernel directions.

For each $\gamma \in \mathcal{L}(\Omega)$, the pre-images of $\mathcal{S}(\phi(\gamma), \varrho_{2} |\phi(\gamma)|)$ under
$\phi$ are subsegments $C_{\gamma}=\{\gamma_{i} \}$ whose union is $\gamma$, whose lengths lie between
$\frac{\varrho_{2}}{2\kappa}|\gamma|$ and $2\kappa \varrho_{2}|\gamma|$.  Furthermore, the subset $C^{0}_{\gamma}=\{
\zeta \in C_{\gamma}: \phi(\zeta) \in \tilde{\mathbf{L}}_{0} \}$ has large measure.  If for some $\gamma_{i} \in C^{0}_{\gamma}$, an
element $\zeta \in \mathcal{S}(\gamma, \frac{\varrho_{2}}{2\kappa }|\gamma|)$ satisfies $|\zeta \cap \gamma_{i}| \geq
(1-\frac{\varrho_{1}}{\varrho_{2} 2\kappa}) |\zeta|$, then $\phi(\zeta)$ is within $(\eta, \frac{\varrho_{1}}{\varrho_{2}}|\phi(\zeta)|)$-linear
neighborhood of another geodesic segment.  Since $|C^{0}_{\gamma}| \geq (1-\tilde{q}(\eta)) |C_{\gamma}|$, the subset
$D^{0}_{\gamma}=\{ \zeta \in \mathcal{S}(\gamma, \frac{\varrho_{2}}{2\kappa}): |\zeta \cap \gamma_{i}| \geq
(1-\frac{\varrho_{1}}{\varrho_{2} 2\kappa}) |\zeta|, \mbox{ for some } \gamma_{i} \in C^{0}_{\gamma} \}$ has relative measure of at least
$1-\dot{Q}(\eta)$, where $\dot{Q}(\eta) \rightarrow 0$ as $\eta \rightarrow 0$.

We now tile $\mathbf{B}(\Omega)$ by $\mathbf{B}(\frac{\varrho_{2}}{2\kappa} \Omega)$:
\[ \mathbf{B}(\Omega) = \bigsqcup_{j \in \mathbf{J}} \mathbf{B}(\Omega_{j}) \sqcup \Upsilon \]

\noindent where $\Omega_{j}=\frac{\varrho_{2}}{2\kappa} \Omega$.  Note that the union of $\bigsqcup_{j} \mathcal{L}(\Omega_{j})$ with
the subset of $\mathcal{L}(\Omega)$ consists of elements lying in $\Upsilon$ is $\mathcal{L}(\Omega)$.

Set $\mathcal{L}_{0}=\bigcup_{\gamma \in \mathcal{L}(\Omega)} D^{0}_{\gamma}$.  The
`favourable' boxes are going to be those tiling boxes that have most of their geodesics
belonging to $\mathcal{L}_{0}$.  That is, we set $\mathbf{J}_{0} =\{ j \in \mathbf{J}:
|\mathcal{L}(\Omega_{j}) \cap (\mathcal{L}(\Omega)-\mathcal{L}_{0})| \leq \tilde{q}^{1/2}
|\mathcal{L}(\Omega_{j})|\}$. \smallskip

Then,
\begin{equation*}
\tilde{q}^{1/2} |\mathcal{L}(\Omega_{j})| |\mathbf{J}-\mathbf{J}_{0}| \leq \sum_{j \in \mathbf{J}-\mathbf{J}_{0}} \tilde{q}^{1/2}
|\mathcal{L}(\Omega_{j})|  \leq \sum_{j \in \mathbf{J}} \sum_{\zeta \in \mathcal{L}(\Omega_{j})}
\chi_{\mathcal{L}(\Omega)-\mathcal{L}_{0}} = |\mathcal{L}-\mathcal{L}_{0}| \leq \theta |\mathcal{L}|
\end{equation*} \noindent Hence
\begin{equation*}
|\mathbf{J}-\mathbf{J}_{0}| \leq \tilde{q}^{1/2} \frac{|\mathcal{L}(\Omega)|}{|\mathcal{L}(\Omega_{j})|} \leq
\tilde{q}^{1/2}  |\mathbf{J}| O(\varrho_{2}) \end{equation*} \end{proof}

\section{Inside of a box}
In this section we explore the consequences of having geodesic
approximations to a large percentage of geodesic segments in a box,
and extend Theorem \ref{existence of good boxes} to the following: \smallskip

\noindent \textbf{Theorem  \ref{exisence of standard maps in small boxes}}
\textit{Let $G$, $G'$ be non-degenerate, unimodular, split abelian-by-abelian Lie groups, and
$\phi:G \rightarrow G'$ be a $(\kappa, C)$ quasi-isometry.  Given $0< \delta, \eta < \tilde{\eta} < 1$, there exist numbers $L_{0}$, $m > 1$, $\varrho, \hat{\eta} < 1$ depending on $\delta$,
$\eta$, $\tilde{\eta}$ and $\kappa, C$ with the following properties:} \smallskip

\textit{If $\Omega \subset \mathbf{A}$ is a product of intervals of equal size at least $mL_{0}$,
then a tiling of $\mathbf{B}(\Omega)$ by isometric copies of $\mathbf{B}(\varrho
\Omega)$}

\[ \mathbf{B}(\Omega)= \bigsqcup_{i \in \mathbf{I}} \mathbf{B}(\omega_{i}) \sqcup \Upsilon \]

\noindent \textit{contains a subset $\mathbf{I}_{0}$ of $\mathbf{I}$ with relative measure at
least $1-\nu$ such that}

\begin{enumerate}

\item  \textit{For every $i \in \mathbf{I}_{0}$, there is a subset $\mathcal{P}^{0}(\omega_{i}) \subset
\mathcal{P}(\omega_{i})$ of relative measure at least $1-\nu'$}

\item \textit{The restriction $\phi|_{\mathcal{P}^{0}(\omega_{i})}$ is within $\hat{\eta}
diam(\mathbf{B}(\omega_{i}))$ Hausdorff neighborhood of a standard map $g_{i} \times
f_{i}$.} \end{enumerate}

\textit{Here, $\nu$, $\nu'$ and $\hat{\eta}$ all approach zero as $\tilde{\eta}$, $\delta$ go to
zero.}

\subsection{Geometry of flats}

We now observe those geometric properties of non-degenerate, unimodular, split abelian-by-abelian groups relevant to Theorem
\ref{exisence of standard maps in small boxes}.  Specifically, this subsection explores some implications of our knowledge that a large percentage of geodesics in a box admit geodesic approximations to
its $\phi$ images.  \smallskip

\begin{lemma} \label{how to tell they are on the same flat}
Let $G$ be a non-degenerate, split abelian-by-abelian group, and $\gamma$, $\zeta$ are geodesic segments in $G$ making an angle of at least
$\sin^{-1}(\tilde{\eta})$ with root kernels such that for some $\tilde{\eta} \ll \eta < 1$, $d_{H}(\gamma, \zeta)=\eta (|\gamma|+
|\zeta|)$.  Then, $\gamma$ and $\zeta$ lie on a common flat for all but $\frac{\eta}{\tilde{\eta}}$ proportion of their lengths.
\end{lemma}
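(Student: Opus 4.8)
The plan is to project the whole picture into the root hyperbolic spaces $H_{\alpha}=V_{\alpha}\rtimes\langle\vec v_{\alpha}\rangle$ and read off the conclusion one root at a time. Write $\gamma(s)=(\mathbf{x}^{p},\mathbf{t}^{p}+s\vec v)$ for $s\in[0,|\gamma|]$ and $\zeta(s)=(\mathbf{x}^{q},\mathbf{t}^{q}+s\vec w)$ for $s\in[0,|\zeta|]$ in the coordinates of Section \ref{geometry of G}, with $\vec v,\vec w$ unit vectors in $\mathbf{A}$; the angle hypothesis says precisely that $|\alpha(\vec v)|\ge\tilde\eta\|\alpha\|$ and $|\alpha(\vec w)|\ge\tilde\eta\|\alpha\|$ for every root $\alpha$. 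Working with the embedded metric of Lemma \ref{QI embedding}, under which each $\pi_{\alpha}$ distorts distances by at most a bounded factor, I would first record that $\pi_{\alpha}(\gamma)$ is the \emph{vertical} geodesic segment in $H_{\alpha}$ with $V_{\alpha}$-coordinate $\mathbf{x}^{p}_{\alpha}$, sweeping out a height interval whose bottom I call $A^{\gamma}_{\alpha}$ (and analogously $\pi_{\alpha}(\zeta)$, with bottom $A^{\zeta}_{\alpha}$); and that $d_{H}\big(\pi_{\alpha}(\gamma),\pi_{\alpha}(\zeta)\big)\le\epsilon_{0}$ for every $\alpha$, where $\epsilon_{0}=O\big(\eta(|\gamma|+|\zeta|)\big)$.

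The heart of the matter is the estimate: for every root $\alpha$,
\[ U_{\alpha}\big(|\mathbf{x}^{p}_{\alpha}-\mathbf{x}^{q}_{\alpha}|\big)\ \le\ \min\big(A^{\gamma}_{\alpha},A^{\zeta}_{\alpha}\big)+O(\epsilon_{0}), \]
that is, in each root hyperbolic space the two vertical geodesics through $\mathbf{x}^{p}_{\alpha}$ and $\mathbf{x}^{q}_{\alpha}$ have already merged at the lowest height either segment visits. To prove it I would fix $\alpha$: the lowest point $(\mathbf{x}^{p}_{\alpha},A^{\gamma}_{\alpha})$ of $\pi_{\alpha}(\gamma)$ lies within $\epsilon_{0}$ of $\pi_{\alpha}(\zeta)$, hence within $\epsilon_{0}$ of the \emph{entire} vertical geodesic through $\mathbf{x}^{q}_{\alpha}$; but the distance of a point at height $t$ from that geodesic equals, up to a bounded error and by \eqref{distance}--\eqref{property of U}, the quantity $\big(U_{\alpha}(|\mathbf{x}^{p}_{\alpha}-\mathbf{x}^{q}_{\alpha}|)-t\big)_{+}$, so taking $t=A^{\gamma}_{\alpha}$ gives $U_{\alpha}(|\mathbf{x}^{p}_{\alpha}-\mathbf{x}^{q}_{\alpha}|)\le A^{\gamma}_{\alpha}+O(\epsilon_{0})$; the symmetric argument gives $\le A^{\zeta}_{\alpha}+O(\epsilon_{0})$, and roots with $\mathbf{x}^{p}_{\alpha}=\mathbf{x}^{q}_{\alpha}$ are vacuous. (One could instead invoke Gromov-hyperbolicity of $H_{\alpha}$: two $\epsilon_{0}$-Hausdorff-close geodesic segments fellow-travel down to within $O(\epsilon_{0})$ of their lower endpoints, and fellow-travelling below a height forces that height to lie above the merge height of the two vertical geodesics.)

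With the estimate in hand, I would finish by bookkeeping. Take $F$ to be the flat carrying $\gamma$, namely the translate of $\mathbf{A}$ through $\mathbf{x}^{p}$, so that $\gamma\subset F$ outright. By the description in Section \ref{geometry of G} of where two such flats come within bounded distance, $\zeta(s)$ lies within bounded distance of $F$ as soon as $\alpha\big(\pi_{A}(\zeta(s))\big)\ge U_{\alpha}(|\mathbf{x}^{p}_{\alpha}-\mathbf{x}^{q}_{\alpha}|)$ for every root $\alpha$. Now $s\mapsto\alpha(\pi_{A}(\zeta(s)))$ is affine with slope $\alpha(\vec w)$, $|\alpha(\vec w)|\ge\tilde\eta\|\alpha\|$, and its minimum over $[0,|\zeta|]$ is $A^{\zeta}_{\alpha}$; since by the estimate the threshold $U_{\alpha}(|\mathbf{x}^{p}_{\alpha}-\mathbf{x}^{q}_{\alpha}|)$ exceeds $A^{\zeta}_{\alpha}$ by at most $O(\epsilon_{0})$, the set of $s$ failing the inequality for that $\alpha$ is an interval at one end of $\zeta$ of length at most $O(\epsilon_{0})/(\tilde\eta\|\alpha\|)=O\big(\tfrac{\eta}{\tilde\eta}(|\gamma|+|\zeta|)\big)$. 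Taking the union over the finitely many roots, $\zeta(s)$ — and hence the relevant portion of $\zeta$ together with all of $\gamma$ — lies on the common flat $F$ for all $s$ outside a set of total length $O\big(\tfrac{\eta}{\tilde\eta}(|\gamma|+|\zeta|)\big)$, which is the assertion. (Additive constants are harmless because the lemma is applied with $|\gamma|,|\zeta|$ large, and one may assume $|\gamma|\asymp|\zeta|$, the only case used; the implied constant can be absorbed into $\eta$.)

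The step I expect to be the real obstacle is the displayed estimate: converting the single hypothesis ``$d_{H}(\gamma,\zeta)$ small'' into the sharp statement that the $\mathbf{H}$-coordinates $\mathbf{x}^{p}_{\alpha}$ and $\mathbf{x}^{q}_{\alpha}$ agree closely enough that the associated vertical geodesics have merged beneath the whole height range the segments sweep out. This is where the hyperbolic metric \eqref{distance} must be handled carefully — including attention to its being only quasi-isometric to the true metric — and it is the only place the transversality (angle) hypothesis is used, via the passage from a bounded height-defect to a bounded-over-$\tilde\eta$ length-defect.
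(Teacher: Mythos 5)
Your argument is correct and rests on the same core mechanism as the paper's: project $\gamma$ and $\zeta$ into each root hyperbolic factor $H_{\alpha}$, use the angle hypothesis to pass between lengths in $G$ and lengths in $H_{\alpha}$, and exploit the geometry of vertical geodesics (merge height $U_{\alpha}$) to turn a Hausdorff-distance bound into a proportion bound. The paper simply runs this as a one-paragraph contrapositive — if some $\pi_{\alpha}(\gamma)$ and $\pi_{\alpha}(\zeta)$ disagree for more than $\frac{\eta}{\tilde{\eta}}$ of their lengths, then $d_{H}(\pi_{\alpha}(\gamma),\pi_{\alpha}(\zeta)) > \frac{\eta}{\tilde{\eta}}(|\pi_{\alpha}(\gamma)|+|\pi_{\alpha}(\zeta)|) \geq \eta(|\gamma|+|\zeta|)$, contradicting $d_{H}(\gamma,\zeta) \geq d_{H}(\pi_{\alpha}(\gamma),\pi_{\alpha}(\zeta))$ — whereas you unwind the same geometry forwards via the explicit merge-height estimate $U_{\alpha} \leq \min(A^{\gamma}_{\alpha},A^{\zeta}_{\alpha})+O(\epsilon_0)$; the ``obstacle'' you flagged at the end is precisely what the paper's terse implication silently invokes.
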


\begin{proof}
If not, then there is a root $\alpha$ such that $\pi_{\alpha}(\gamma)$ and
$\pi_{\alpha}(\zeta)$ disagrees for more than $\frac{\eta}{\tilde{\eta}}$ of their
length.  But this means that
\[ d_{H}(\pi_{\alpha}(\gamma), \pi_{\alpha}(\zeta)) > \frac{\eta}{\tilde{\eta}}
\left( \left| \pi_{\alpha}(\gamma) \right| + \left| \pi_{\alpha}(\zeta) \right| \right) \geq \eta \left( |\gamma| + |\zeta| \right) \]

\noindent which is a contradiction because $d_{H}(\gamma, \zeta) \geq
d_{H}(\pi_{\alpha}(\gamma), \pi_{\alpha}(\zeta))$.  \end{proof}

\begin{definition}
Let $G$ be a non-degenerate, split abelian-by-abelian Lie group.  We define the following
objects in $G$.
\begin{enumerate}
\item A \textbf{2-simplex} $\Delta$ is a set of three geodesic segments that intersect pair-wisely.  This includes the degenerate case of
a geodesic segment and two subsegments of it.  Elements of $\Delta$ are called \textbf{edges} of $\Delta$\symbolfootnote[2]{A 2-simplex is just a triangle.  The term `2-simplex' is used here only because it
is more convenient to describe inductive argument later on}.

\item A \textbf{filled 2-simplex} $\tilde{\Delta}$ is a set of 2-simplicies $\{ \Delta \} \cup \{ \delta_{i}\}$ such that for every
$i$, every two edges of $\delta_{i}$ are subsegments of two edges of $\Delta$. The edges of $\Delta$ are called \textbf{faces} or
\textbf{edges} of $\tilde{\Delta}$.

\begin{figure}[h]
  \centering
  \includegraphics{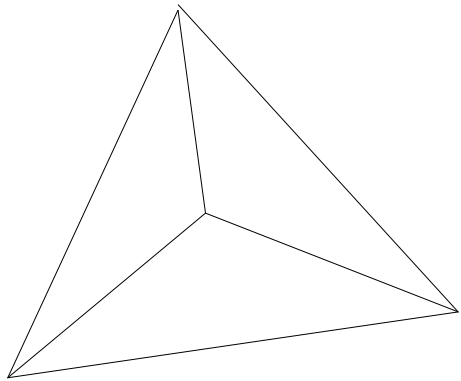}
  \caption{The big triangle together with the three little ones inside qualifies as (degenerate) 3-simplex}
\end{figure}

For $I \geq 3$, we define
\item A \textbf{I-simplex} $\Delta$ as a set of $I+1$ many filled $I-1$-simplicies such that they intersect pair-wisely at their at
their $I-2$ faces.  This includes the degenerate case of a set of $I+1$ many $I-1$-simplicies.  Elements of $\Delta$ are called
\textbf{($I-1$)-face}s of $\Delta$.

\item A \textbf{filled I-simplex} $\tilde{\Delta}$ is a collection of $I$-simplicies $\{\Delta \} \cup \{ \delta_{i} \}$ such that for
every $i$, $I$ many faces of $\delta_{i}$ are subsets of $I$ many faces of $\Delta$.  \textbf{Faces of $\tilde{\Delta}$} refers to
faces of $\Delta$.

\begin{figure}[h]
  \centering
  \includegraphics{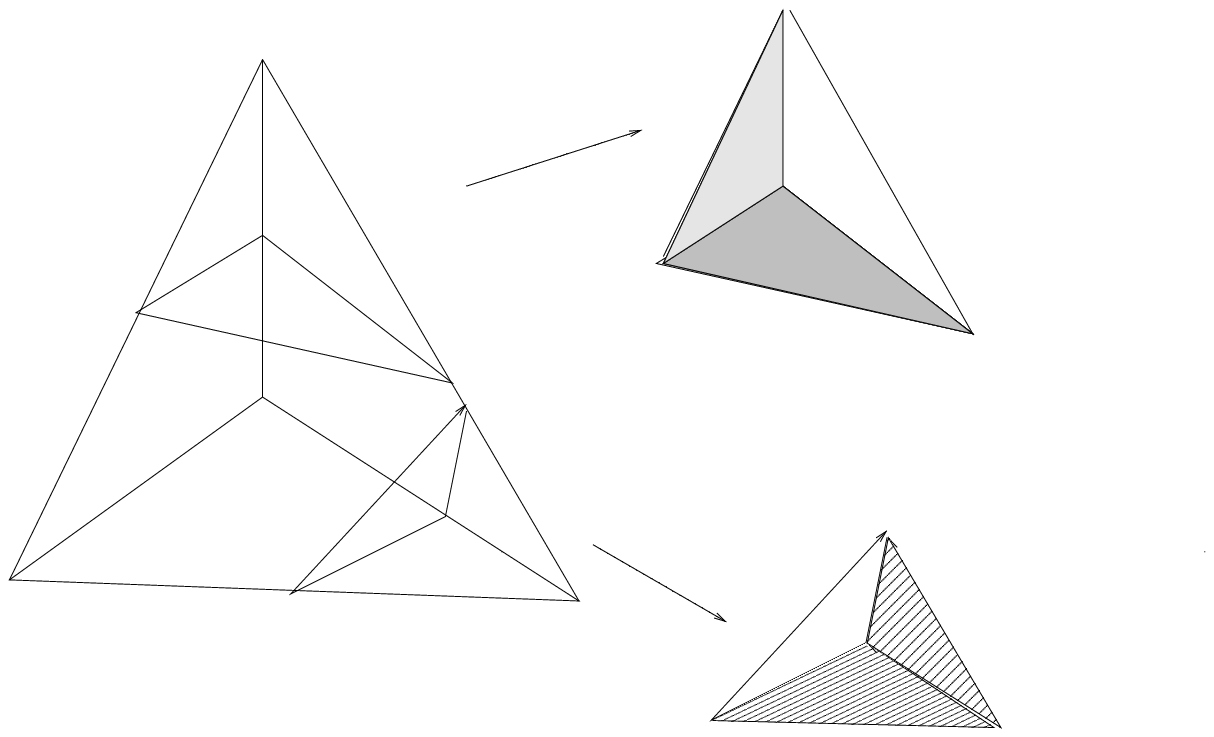}
  \caption{The big tetrahedron together with the two shaded ones qualifies as a filled 3-simplex.  Note
that a filled simplex cannot contain a degenerate simplex of higher
dimension.} \end{figure}
\end{enumerate} \end{definition}

If the faces of a simplex behaves well under the quasi-isometry $\phi$, that is, if $\phi$ images of those faces
admit approximations by hyperplanes of appropriate dimensions, then we can approximate $\phi$ image of the simplex.  This is the content
of the next lemma, which deals with one instance where simplex approximations of a quasi-simplex (image of a simplex under a
quasi-isometry) is possible. \medskip

\begin{lemma} \label{simplicies made out of good lines}
Let $G$ be a non-degenerate, split abelian-by-abelian Lie group and $\mathcal{B}$ a family of geodesic segments such that
\begin{itemize}
\item $\max \{ |\zeta|, \zeta \in \mathcal{B}\}=M \ll \infty$

\item For $\zeta \in \mathcal{B}$, $\phi(\zeta)$ is within $\eta |\phi(\zeta)|$ Hausdorff neighborhood of another geodesic
segment $\hat{\zeta}$.  We call $\hat{\zeta}$ a geodesic approximation of $\phi(\zeta)$.

\item For some $\tilde{\eta} \gg \eta$, the direction of any two geodesic approximation makes an angle
of at most $\sin^{-1}(\eta)$, and their angles each makes an angle at least $\sin^{-1}(\tilde{\eta}))$ root kernels.

\item If $\zeta, \gamma \in \mathcal{B}$, $\gamma \subset \zeta$, and let $\hat{\zeta}$, $\hat{\gamma}$ be geodesic approximations
of $\phi(\zeta)$ and $\phi(\gamma)$. Then there is a subsegment $\tilde{\zeta} \subset \hat{\zeta}$ such that $d_{H}(\tilde{\zeta},
\hat{\gamma}) \leq 2\eta |\phi(\gamma)|$.  \end{itemize}

Then for $I \leq n$, the $\phi$ images of any $I$-simplex or filled
$I$-simplex made out of elements of $\mathcal{B}$ is within $O(\eta
M)$ Hausdorff neighborhood of another simplex or filled simplex of
the same dimension lying on a flat.\end{lemma}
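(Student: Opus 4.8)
The plan is to induct on $I$ from the base case $I=2$ up to $I=n$, proving the claim for $I$-simplices and filled $I$-simplices simultaneously; all implied constants are allowed to depend on $n$ and on the number of roots, and the dependence on $\tilde\eta$ enters only through Lemma~\ref{how to tell they are on the same flat}. I will use three structural facts. First, by the coordinate description of $G'$ in Section~\ref{geometry of G}, flats of $G'$ are translates of a fixed abelian subgroup, so two flats sharing a point coincide, and a geodesic segment making an angle at least $\sin^{-1}(\tilde\eta)$ with the root kernels lies on exactly one flat; in particular each geodesic approximation $\hat\zeta$ ($\zeta\in\mathcal B$) determines a unique flat, and two flats of $G'$ each within $O(\eta M)$ of such a $\hat\zeta$ are equal. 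Second, iterating the fourth hypothesis along a chain $\gamma_1\subset\gamma_2\subset\cdots\subset\gamma_k$ of nested elements of $\mathcal B$ shows $\hat\gamma_1$ lies within $O(k\eta M)$ of a subsegment of $\hat\gamma_k$; inside a fixed (filled) $I$-simplex any such chain has depth $k=O(n)$, so this is $O(\eta M)$. Third, on a single flat (a Euclidean space) nearly parallel geodesic segments that meet are mutually $O(\eta\cdot\text{length})$-Hausdorff close, and a (filled) simplex lying on a flat is determined up to $O(\rho)$ by $O(\rho)$-approximations of its faces.

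\emph{Base case $I=2$.} Let $\Delta=\{\zeta_1,\zeta_2,\zeta_3\}$ be a $2$-simplex with edges in $\mathcal B$. Since each $\zeta_i$ lies on a flat of $G$ and any two flats sharing a point coincide, $\Delta$ lies on one flat $F\subset G$; as the $\zeta_i$ are nearly parallel and meet pairwise, on the Euclidean space $F$ they are mutually $O(\eta M)$-close, hence so are the $\phi(\zeta_i)$ and hence so are the $\hat\zeta_i$. Applying Lemma~\ref{how to tell they are on the same flat} to each pair $\hat\zeta_i,\hat\zeta_j$ and invoking fact one, after an $O(\eta M)$ adjustment of the $\hat\zeta_i$ all three lie on a single flat $\hat F\subset G'$. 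Projecting the adjusted $\hat\zeta_i$ onto $\hat F$ and restoring the three incidences (each costing a further $O(\eta M)$ since $\hat F$ is Euclidean) yields a $2$-simplex on $\hat F$ within $O(\eta M)$ of $\phi(\Delta)$. For a filled $2$-simplex $\tilde\Delta=\{\Delta\}\cup\{\delta_i\}$ one first treats $\Delta$, obtaining $\hat F$; since each edge of $\delta_i$ is a subsegment of an edge $\zeta_j$ of $\Delta$, fact two puts its approximation $O(\eta M)$-close to a subsegment of $\hat\zeta_j\subset\hat F$, and the same projection-and-restoration step places $\phi(\tilde\Delta)$ within $O(\eta M)$ of a filled $2$-simplex on $\hat F$.

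\emph{Inductive step.} Assume the claim through dimension $I$ and let $\Delta$ be an $(I+1)$-simplex or filled $(I+1)$-simplex with $I+1\le n$; write $\tilde\Delta_0,\dots,\tilde\Delta_{I+1}$ for the constituent filled $I$-simplices, meeting pairwise along $(I-1)$-faces (and, in the filled case, let $\{\delta_i\}$ be the extra nested $(I+1)$-simplices). By induction $\phi(\tilde\Delta_k)$ lies within $O(\eta M)$ of a filled $I$-simplex on a flat $F_k\subset G'$. A face common to $\tilde\Delta_k$ and $\tilde\Delta_\ell$ is a filled $(I-1)$-simplex; its $\phi$-image is $O(\eta M)$-close to a filled $(I-1)$-simplex lying on a flat that, containing geodesic segments transverse to the root kernels, is unique and therefore $O(\eta M)$-close to both $F_k$ and $F_\ell$, so by fact one $F_k=F_\ell$. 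As the adjacency graph of the $\tilde\Delta_k$ is connected, all the $F_k$ equal a single flat $F$. Finally assemble on $F$: place each approximating filled $I$-simplex on $F$, then, working through the at most $\binom{n+2}{2}$ pairs of faces, shift each pair of near-coincident $(I-1)$-faces so they genuinely coincide — by rigidity (fact three) each shift is $O(\eta M)$, and since the number of shifts is bounded in terms of $n$ the aggregate perturbation stays $O(\eta M)$; in the filled case the nested $\delta_i$ are fitted onto $F$ the same way, their faces being controlled against the faces of $\Delta$ via fact two. This places $\phi(\Delta)$ within $O(\eta M)$ of a (filled) $(I+1)$-simplex on $F$, closing the induction.

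\emph{Main obstacle.} The subtle step is the assembly at the end of the inductive step: producing from the mutually compatible $O(\eta M)$-approximations of all the faces one honest simplex lying on the common flat, i.e.\ forcing every near-incidence to hold exactly while keeping the aggregate perturbation $O(\eta M)$. This rests on the Euclidean rigidity of flats and on the combinatorics being bounded purely in terms of $n$; a secondary point needing care is the base case, where one must combine the near-parallelism hypothesis, Lemma~\ref{how to tell they are on the same flat}, and the coset structure of flats to collapse the various possible $2$-simplices and filled $2$-simplices onto a single flat.
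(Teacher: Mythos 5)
Your outline has the right overall shape (induct on $I$, do simplices and filled simplices together), and your treatment of the \emph{filled} cases --- invoking Lemma~\ref{how to tell they are on the same flat}, using that the set where two flats come together is convex, and controlling the nested $\delta_i$ via the fourth hypothesis --- does match what the paper does. But your unfilled-simplex steps rest on a "fact one" that is false, and that falsehood hides exactly the step that makes this lemma nontrivial. In $G'$ a flat is a left coset of $\mathbf{A}$, determined by its $\mathbf{H}'$-coordinate $\mathbf{x}=(\mathbf{x}_\alpha)_\alpha$; the distance between the flats through $\mathbf{x}$ and $\mathbf{x}'$ at height $\mathbf{t}$ is $\sum_\alpha e^{-\alpha(\mathbf{t})}[1+Q_\alpha]\,|\mathbf{x}_\alpha-\mathbf{x}'_\alpha|$, so for a fixed bounded segment $\hat\zeta$ there is an open set of $\mathbf{x}'$ giving flats that all stay within $O(\eta M)$ of $\hat\zeta$ over its whole range. "Close to $\hat\zeta$" therefore does \emph{not} determine the flat, Lemma~\ref{how to tell they are on the same flat} only says two nearby segments lie on a common flat \emph{up to} a $\tfrac{\eta}{\tilde\eta}$-proportion of their lengths, and your conclusion $F_k=F_\ell$ in the inductive step simply does not follow. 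This is precisely what the paper's forking-$Y$ argument handles: for each root $\Xi$ it looks at $\pi_\Xi(\check\delta_i),\pi_\Xi(\check\delta_j)$ in the weight hyperbolic space, observes that if the flats fail to agree beyond the allowed proportion the picture is a fork, and rules the fork out using the \emph{third} face $\check\delta_k$, which shares one face with $\check\delta_i$ and another with $\check\delta_j$. Your proposal has no analogue of this step, and the bound $O(\eta M)$ cannot be recovered without it.

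The base case has a related problem. You claim the three edges of a $2$-simplex are mutually $O(\eta M)$-Hausdorff close (first in $G$, then push to $G'$) and apply Lemma~\ref{how to tell they are on the same flat} to each pair of $\hat\zeta_i$. That is false in the degenerate configuration the definition explicitly allows: if $\zeta_1=[0,10]$, $\zeta_2=[0,5]$, $\zeta_3=[5,10]$ on a common geodesic, then $\hat\zeta_2$ and $\hat\zeta_3$ are at Hausdorff distance comparable to their full lengths, and the lemma's hypothesis fails. You also slip from "the $\hat\zeta_i$ are nearly parallel in $G'$" (the stated hypothesis) to "the $\zeta_i$ are nearly parallel in $G$", which is not assumed and which a quasi-isometry has no reason to preserve. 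The paper's base case avoids both issues by working in the weight hyperbolic spaces: it projects the three $\hat\zeta_i$ under each $\pi_\Xi$, classifies the finitely many tree-like configurations three geodesics in $\mathit{H}_{n_\Xi+1}$ can form, and reads off the $V_\Xi$-coordinate of the target flat from the median of each configuration. Both the base and the inductive unfilled-simplex steps need to be rebuilt along those lines; the strategy of "near flats coincide, then shift faces pairwise on a Euclidean flat" is not available here.
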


\begin{proof}\noindent
We prove the claims by induction on $I$, starting with a 2-simplex,
then filled 2-simplex followed by 3-simplex, filled 3-simplex etc.
\medskip

\noindent \underline{\textbf{Base step.}}

\bold{2-simplex } Fix three geodesic approximations for $\phi$
images of edges of $\Delta$, and for each weight $\Xi$, look at the
images of those geodesic approximations under $\pi_{\Xi}$. There are
six possible configurations shown in figure~\ref{fig:6config} below.  To specify a
2-simplex on a flat that is close to these three geodesics, it is enough to specify the
root space coordinates of this flat, and this is given by the root space coordinate of
the dotted line in each configuration.


\begin{figure}[h]
  \centering
  \includegraphics{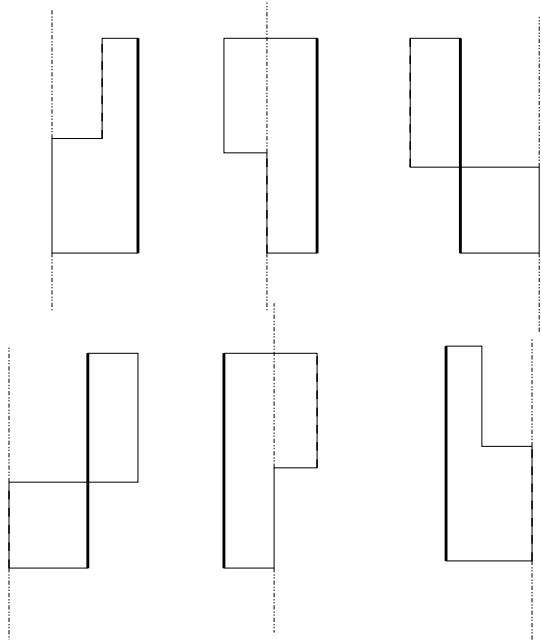}
  \caption{The six configurations in the Base step, 2-simplex case of proof to Lemma \ref{simplicies made out of good lines}.
  The dotted line is the image of a 2-simplex close to the quasi-2-simplex. }
  \label{fig:6config}
\end{figure}

\bold{filled 2-simplex.}  Let $\tilde{\Delta}=\{ \Delta \} \cup \{
\delta_{i} \}_{i}$, and $\hat{\Delta}$, $\hat{\delta}_{i}$'s denote for
the 2-simplex approximation of $\phi(\Delta)$, $\phi(\delta_{i})$'s,
as given by \textbf{2-simplex} case above.  Then for every two edges of
$\hat{\delta}_{i}$, there are subsegments of two edges of
$\hat{\Delta}$ such that each pair satisfy the hypothesis of Lemma
\ref{how to tell they are on the same flat}. This means the flats
housing $\hat{\Delta}$ and $\hat{\delta}_{i}$ must come together
(because the conclusion of Lemma \ref{how to tell they are on the same flat} says that they lie on a common flat).
Since the the set where two flats come together is convex, we conclude
therefore that there is a 2-simplex $\grave{\delta}_{i}$ lying on
the flat that houses $\hat{\Delta}$ such that
$d_{H}(\hat{\delta}_{i}, \grave{\delta}_{i}) \leq \eta M$, and two
edges of $\grave{\delta}_{i}$ are subsegments of two edges of
$\hat{\Delta}$. Then $\check{\Delta}=\{\hat{\Delta}\} \cup
\{\grave{\delta}_{i} \}_{i}$ has the desired property. \medskip

\noindent \underline{\textbf{Induction step.}}

\bold{$I$-simplex } Let $\Delta=\{ \tilde{\delta}_{i} \}_{i=0}^{I}$
where each $\tilde{\delta}_{i}$ is a filled $I-1$-simplex, and
$\check{\delta}_{i}$ be their filled $I-1$ simplex approximations as
yielded by the inductive hypothesis.  Then we know for each weight
$\Xi$, $\pi_{\Xi}(\check{\delta}_{i})$ is a vertical geodesic
segment, and for any $\check{\delta}_{i}, \check{\delta}_{j}$,
$\pi_{\Xi}(\check{\delta}_{i})$, $\pi_{\Xi}(\check{\delta}_{j})$
come together at some subsegment.  If modulo
$\frac{\eta}{\tilde{\eta}}$ proportion of the ends, $\pi_{\Xi}(\check{\delta})$'s do not lie on a common vertical geodesic
segment, then the relationship between
$\pi_{\Xi}(\check{\delta}_{i})$, $\pi_{\Xi}(\check{\delta}_{j})$ is
that of a forking $Y$, see Figure~\ref{fig:fork} below.

\begin{figure}[h]
  \centering
  \includegraphics{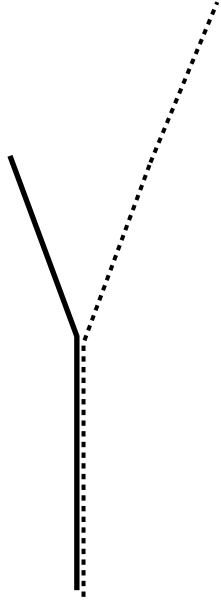}
  \caption{Inductive step, $I$-simplex case in the proof of Lemma \ref{simplicies made out of good lines}:
  the solid and dotted lines represent $\pi_{\Xi}(\check{\delta}_{i})$ and
    $\pi_{\Xi}(\check{\delta}_{j})$. }
  \label{fig:fork}
    \end{figure}

But this contradicts the existence of another $\check{\delta}_{k}$
that shares a face with $\check{\delta}_{i}$ and another face with
$\check{\delta}_{j}$.  So modulo the $\frac{\eta}{\tilde{\eta}}$
proportion of their ends, $\pi_{\Xi}(\check{\delta}_{i})$,
$\pi_{\Xi}(\check{\delta}_{j})$ must lie on a common vertical
geodesic.  The same argument applied to every other weights means
that we can translate each $\check{\delta}_{i}$ to
$\grave{\delta}_{i}$ so that $\grave{\delta}_{i}$,
$\grave{\delta}_{j}$ share a common face. The collection of all
$\grave{\delta}_{i}$'s forms our desired $\hat{\Delta}$ $I$-simplex. \footnote{The
inductive step is not a replacement of the 2-simplex case in the Base step because here,
faces intersects at filled simplex of dimension $I-2$, which has diameter compatible to
that of the diameter of the $I$-simplicies of concern, whereas in the 2-simplex case, the
pair-wise intersection of edges consist of just one point for each pair, so the same forking argument
wouldn't work there. }

\bold{filled $I$-simplex }  Let $\tilde{\Delta}=\{ \Delta \} \cup \{
\delta_{i} \}$ where each of $\Delta$ and $\delta_{i}$ is a
$I$-simplex , and let $\hat{\Delta}$ and $\hat{\delta}_{i}$'s denote
for $I$-simplex approximations of $\phi(\Delta)$ and
$\phi(\delta_{i})$'s as yielded above.  Then for every $I$ faces of
$\hat{\delta}_{i}$ there are $I$ many corresponding faces of
$\hat{\Delta}$ to which they are a subset of, and this means the
corresponding subsegments of edges of faces of $\hat{\Delta}$ and
the edges of faces of $\hat{\delta}_{i}$ satisfy the hypothesis of
Lemma \ref{how to tell they are on the same flat}, so they lie on a
common flat.  This means the flats housing $\hat{\Delta}$ and
$\hat{\delta}_{i}$ respectively must come together and since the set
where two flats come together is a convex set, we conclude therefore
that there is a $I$-simplex $\grave{\delta}_{i}$ in the flat
containing $\hat{\Delta}$ such that $d_{H}(\grave{\delta}_{i},
\hat{\delta}_{i}) \leq \eta M$, and $\grave{\delta}$ share $I$ of
its faces with faces of $\hat{\Delta}$.  Then $\check{\Delta}=\{
\hat{\Delta} \} \cup \{ \grave{\delta}_{i} \}$ has the desired
property. \end{proof} \medskip

\begin{definition} \label{quadrilateral}
Let $\eta < 1$. A \boldmath $\eta$ \unboldmath
\textbf{quadrilateral} \boldmath $Q=\{\mathbf{T}_{i} \}_{i=0}^{3}$
\unboldmath in $G$ is a set of 4 oriented geodesic segments
$\mathbf{T}_{i}$'s satisfying the following:

\begin{enumerate}
\item $\exists \vec{v} \in \mathbf{A}$ for which $W_{\vec{v}}^{0}=\{0\}$ such that
the directions of $\mathbf{T}_{i}$'s are all parallel to $\vec{v}$

\item $\forall i$, $|\mathbf{T}_{i}| > 2 \eta \sum_{j=0}^{3}
|\mathbf{T}_{j}|$

\item for all $i$, \begin{itemize} \item $d(e_{i},b_{i+1}) \leq \eta (|\mathbf{T}_{i}|
+ |\mathbf{T}_{i+1}|)$, \item  $d(b_{i}, e_{i+1}) \geq
(|\mathbf{T}_{i}| + |\mathbf{T}_{i+1}|)$  \end{itemize} where
$b_{i}, e_{i}$ are the beginning and end points of $\mathbf{T}_{i}$.
\end{enumerate} We will often refer to $\mathbf{T}_{i}$'s as edges of $Q$, and write diam$(Q)$ for
the maximum length of its edges. \end{definition}

\textbf{Example} Suppose the rank of $G$ is 1.  Let $V_{+},V_{-}$ denote for the two root
class horocycles based at the identity element.  Let $x \in V_{+}$, $y \in V_{-}$, and
the word $x y x^{-1} y^{-1}$ represents a loop in $\mathbf{H}=V_{+} \oplus V_{-}$.  If we
replace $x$ by $t \tilde{x} t^{-1}$, and $y$ by $t^{-1}\tilde{y} t$ for some small $\tilde{x}
\in V_{+}$ and $\tilde{y} \in V_{-}$, we obtain a loop representing a quadrilateral.
Note that the same construction works if $G$ is rank 1 and non-unimodular, as long as
there are two root classes.

\begin{remark} The first requirement of a quadrilateral means a quadrilateral exists in the subgroup $\langle \vec{v}
\rangle \ltimes \mathbf{H}$ (or a left translate of it).  Since $\vec{v}$ does not act trivially on any proper subspace,
quadrilaterals exist when rank of $G$ is 2 or higher for the same reason that they exist rank $1$ spaces as illustrated
by the previous example. \end{remark}

\begin{lemma} \label{orientation of a quadrilateral}
Let $Q=\{\mathbf{T}_{i}\}_{i=0}^{3}$ be a $\eta$ quadrilateral. Then
the direction of $\mathbf{T}_{i}$ and $\mathbf{T}_{i+2}$ are
positive multiple of each other, and that of $\mathbf{T}_{i}$ and
$\mathbf{T}_{i+1}$ are negative multiple of each other. \end{lemma}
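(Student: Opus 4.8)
The plan is to prove the equivalent statement that \emph{consecutive} edges point in opposite directions; the assertion about $\mathbf{T}_i$ and $\mathbf{T}_{i+2}$ is then automatic, since a cyclic word of four signs in which every adjacent pair disagrees must alternate, so $\mathbf{T}_i$ and $\mathbf{T}_{i+2}$ receive the same sign. Write each edge as $\mathbf{T}_i = p_i\overline{A_iB_i}$. Left multiplication by $p_i$ does not change the $\mathbf{H}$-coordinate along an $\mathbf{A}$-coset, so $\mathbf{T}_i$ has constant $\mathbf{H}$-coordinate, its direction is $\sigma_i\,\vec{v}/|\vec{v}|$ for some $\sigma_i\in\{\pm1\}$, and, since $W_{\vec{v}}^0=\{0\}$ forces $\alpha(\vec{v})\neq 0$ for every root $\alpha$, each projection $\pi_\alpha(\mathbf{T}_i)$ is a genuine vertical geodesic segment in the weight hyperbolic space $V_\alpha\rtimes\langle\vec{v}_\alpha\rangle$, running toward the contracting end exactly when $\sigma_i\,\alpha(\vec{v})>0$. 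I would argue by contradiction: suppose $\mathbf{T}_i$ and $\mathbf{T}_{i+1}$ point in the same direction, i.e.\ $\sigma_i=\sigma_{i+1}$.

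Next I would translate the constraints of Definition \ref{quadrilateral} into the weight hyperbolic spaces using the embedded metric $d_G\asymp\sum_{\alpha\in\triangle}d^\alpha\circ\pi_\alpha$ of Lemma \ref{QI embedding} and the distance formula $(\ref{distance})$ and $(\ref{property of U})$. Because $\mathbf{T}_i$ and $\mathbf{T}_{i+1}$ have the same direction in $\mathbf{A}$, in each $V_\alpha\rtimes\langle\vec{v}_\alpha\rangle$ the vertical segments $\pi_\alpha(\mathbf{T}_i)$ and $\pi_\alpha(\mathbf{T}_{i+1})$ are co-oriented, so the short connector $\overline{e_i b_{i+1}}$ sits at the top of one and the foot of the other; the smallness of $d(e_i,b_{i+1})$ then pins down $\log|x_i^\alpha-x_{i+1}^\alpha|$ relative to the connector height via $(\ref{distance})$, and feeding this back in bounds $d^\alpha(\pi_\alpha(b_i),\pi_\alpha(e_{i+1}))$ for every root. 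On the other hand, the gap conditions $d(b_j,e_{j+1})\ge|\mathbf{T}_j|+|\mathbf{T}_{j+1}|$ applied around the cycle, together with the smallness of the other connectors, force $d(b_i,e_{i+1})$ and the heights $\alpha(\bar e_i)$ to be large in a coordinated way; invoking unimodularity, i.e.\ $\sum_\alpha\alpha=0$ (so that the heights across roots are balanced and cannot all run off in one direction), one finds that the per-root bounds just obtained add up to $d(b_i,e_{i+1})<|\mathbf{T}_i|+|\mathbf{T}_{i+1}|$, contradicting the gap condition for the pair $(i,i+1)$. Hence $\sigma_i=-\sigma_{i+1}$ for every $i$, which is the lemma. (Alternatively one can phrase the contradiction through efficiency: $(\ref{distance})$ combined with the triangle inequality makes the concatenation $\mathbf{T}_i*\overline{e_ib_{i+1}}*\mathbf{T}_{i+1}$ a $(1+\eta)$-efficient path from $b_i$ to $e_{i+1}$, and a co-oriented pair of long vertical segments in the weight spaces is incompatible with this.)

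The step I expect to be the main obstacle is exactly this final bookkeeping inside the weight hyperbolic spaces. One controls $\log|x_i^\alpha-x_{i+1}^\alpha|$ only relative to the connector heights $\alpha(\bar e_i)$, and those heights are not known a priori, so whether a given $d^\alpha$ is evaluated on the ``$|t_1-t_2|$'' branch or on the ``$U(\cdot)-(t_1+t_2)$'' branch of $(\ref{distance})$ varies from root to root. The resolution is to average over $\triangle$, using $\sum_\alpha\alpha=0$ and the lower bound $|\mathbf{T}_j|>2\eta\sum_k|\mathbf{T}_k|$ from Definition \ref{quadrilateral} to guarantee that a definite proportion of the roots lands in the branch that yields the deficit; turning this into a clean numerical inequality, with the multiplicative constant coming from the embedding of Lemma \ref{QI embedding} and from $(\ref{property of U})$, is where the real work lies.
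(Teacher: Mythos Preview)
Your proposed mechanism for the contradiction is backwards. If $\mathbf{T}_i$ and $\mathbf{T}_{i+1}$ are \emph{co}-oriented, then the $\pi_{\mathbf{A}}$ images of $b_i$ and $e_{i+1}$ differ by essentially $|\mathbf{T}_i|+|\mathbf{T}_{i+1}|$ along $\vec v$ (you go up $|\mathbf{T}_i|$, cross the short connector, then go up another $|\mathbf{T}_{i+1}|$). So $d(b_i,e_{i+1})\ge |\mathbf{T}_i|+|\mathbf{T}_{i+1}|-O(\eta)$ already from the height difference, and the gap condition for the pair $(i,i+1)$ is \emph{satisfied}, not violated. The same remark kills the ``efficiency'' variant: two co-oriented vertical segments stacked end-to-end are a perfectly efficient path in each weight hyperbolic space. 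In short, a single adjacent pair cannot see the obstruction, and no amount of per-root bookkeeping or averaging via $\sum_\alpha\alpha=0$ will manufacture the inequality $d(b_i,e_{i+1})<|\mathbf{T}_i|+|\mathbf{T}_{i+1}|$ you are aiming for. (Unimodularity, incidentally, plays no role in this lemma.)

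What the paper actually uses is the \emph{global} constraint that the quadrilateral closes up: the word $\mathbf{T}_1 U_1 \mathbf{T}_2 V_1 \mathbf{T}_3 U_2 \mathbf{T}_4 V_2$ is trivial in $G$. Projecting to $\mathbf{A}$ gives $|\sum_i T_i|\le \eta\sum_i|T_i|$, which already rules out all four signs agreeing. The remaining non-alternating patterns (e.g.\ $(+,+,-,-)$ or $(-,+,-,-)$) are then eliminated one by one by rewriting the trivial word as a product of conjugates of the small connectors and reading off the $W^{\pm}_{\vec v}$ coordinates; the divergence conditions pin down which of $W^{+}_{\vec v}$, $W^{-}_{\vec v}$ each connector must hit nontrivially, and in the bad sign patterns one conjugate produces a coordinate of size $e^{|T_j|}$ that nothing else can cancel. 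The argument is short once you write the word down; the point is that the contradiction lives in the $\mathbf{H}$-coordinates of the \emph{closed} loop, not in any single gap condition.
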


\begin{proof} There are 16 possibilities to the relationship among
directions of all the $\mathbf{T}_{i}$'s (being positive or negative
multiples of each other).  One checks that only the combination
stated above is allowed.  An argument is given in the Appendix.
\end{proof} \medskip

Let $A(t)$ be a 1-parameter matrix consisting of blocks of the form $e^{\alpha t}N(t)$ where $\alpha \not= 0$, $N(t)$
a nilpotent matrix with polynomial entries, and $\mathbb{R} \ltimes_{A} \mathbb{R}^{m}$ be a
semidirect product for which $r \in \mathbb{R}$ acts on $\mathbb{R}^{m}$ by linear map $A(r)$.  Write an element of
$\mathbb{R} \ltimes_{A} \mathbb{R}^{m}$ as $(r, \mathbf{x})$, where $r \in \mathbb{R}$, $\mathbf{x} \in \mathbb{R}^{m}$, and $W^{+}$
(resp. $W^{-}$) for the direct sum of positive (resp. negative ) eigenspaces of $A$.

\begin{lemma} \label{quadrilateral word in rank 1}
In $\mathbb{R} \ltimes_{A} \mathbb{R}^{m}$, suppose for some $\eta \ll 1$, we have $r_{0},r_{1},r_{2},r_{3}>0$, $u_{0}, u_{2} \in W^{+}$,
$u_{1}, u_{3} \in W^{-}$ satisfying
\begin{itemize}
\item  $d(u_{j},e) \leq \eta (r_{j} + r_{j+1})$, $\forall j$
\item $r_{j} > 2 \eta \sum_{\iota=0}^{3} r_{\iota}$, $\forall j$
\item The word $(r_{0},0)u_{0}(-r_{1},0)u_{1}(r_{2},0)u_{2}(-r_{3},0)u_{3}$ is
trivial. \end{itemize} Then $|r_{i} - r_{i+1}| \leq d(e,u_{i+1}) + d(e,u_{i+3})$.
In particular this implies that the sizes of $r_{i}$'s are equal up
to an error of at most $\eta \sum_{i=0}^{3} r_{i}$. \end{lemma}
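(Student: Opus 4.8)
The plan is to extract from the triviality of the word two equations — one scalar and one vectorial — and then analyze the vectorial one weight space at a time using the rank-one geometry encoded in equations (\ref{distance}) and (\ref{property of U}). First I would multiply the word out in coordinates. Using the group law $(r,\mathbf{x})(r',\mathbf{x}')=(r+r',\,\mathbf{x}+A(r)\mathbf{x}')$ and writing the partial alternating sums $s_0=r_0$, $s_1=r_0-r_1$, $s_2=r_0-r_1+r_2$, $s_3=r_0-r_1+r_2-r_3$, a direct computation shows that $(r_0,0)u_0(-r_1,0)u_1(r_2,0)u_2(-r_3,0)u_3$ equals $\bigl(s_3,\ A(s_0)u_0+A(s_1)u_1+A(s_2)u_2+A(s_3)u_3\bigr)$. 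Triviality of the word therefore yields the scalar identity $s_3=r_0-r_1+r_2-r_3=0$ (equivalently $r_0-r_1=r_3-r_2$ and $r_1-r_2=r_0-r_3$) together with the vector identity $A(s_0)u_0+A(s_1)u_1+A(s_2)u_2+u_3=0$ in $\mathbb{R}^m$.

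Next I would split the vector identity along $W^+$ and $W^-$. Both are $A$-invariant, and by hypothesis $u_0,u_2\in W^+$ while $u_1,u_3\in W^-$, so the identity decouples into $A(s_0)u_0+A(s_2)u_2=0$ on $W^+$ and $A(s_1)u_1+u_3=0$ on $W^-$; equivalently $A(r_2-r_1)u_2=-u_0$ and $A(r_0-r_1)u_1=-u_3$. The point of this splitting is that each relation now lives in a direct sum of weight spaces whose weights all have the same sign, i.e. in a product of weight‑hyperbolic factors $\mathit{H}_{s+1}$ in the sense of Lemma \ref{QI embedding}, where we may use the metric description (\ref{distance})–(\ref{property of U}).

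The crux is the following estimate, to be applied to each of the two relations: if $u,v$ lie in a direct sum of weight spaces on which $A$ acts with weights all of one sign and $A(c)u=-v$, then $|c|\le d(e,u)+d(e,v)$. I would prove it one weight block at a time — $c$ is the same exponent in every block, so if $u\neq 0$ it suffices to examine a block in which $u$ has a nonzero component (then so does $v$). In such a block $A(c)$ is $e^{\alpha c}$ times a unipotent polynomial matrix, whence $\log|v|=\alpha c+\log|u|+O(\log(1+|c|))$; combining this with the coarse description $d(e,w)\asymp U_Q(|w|)\asymp\log^+|w|$ and with the fact that the translation $A(c)$ shifts the relevant height in $\mathit{H}_{s+1}$ by exactly $|c|$ pins $|c|$ below $d(e,u)+d(e,v)$. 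Geometrically: the two vertical geodesics through $u$ and through $v$ fellow‑travel only above height $\asymp\log|u-v|$, while $A(c)u=-v$ shifts one into the other by a vertical amount $|c|$, so $|c|$ cannot exceed the total descent $d(e,u)+d(e,v)$ that the jogs provide. This step is where non-degeneracy (the genuinely hyperbolic behaviour of each factor) is used, and the clean inequality without an additive constant relies on the jogs not being negligibly small — which is the role of the lower bound $d(b_i,e_{i+1})\ge|\mathbf{T}_i|+|\mathbf{T}_{i+1}|$ in Definition \ref{quadrilateral}; this is the bookkeeping point I expect to need the most care.

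Finally I would assemble. The estimate applied to the $W^-$ relation gives $|r_0-r_1|\le d(e,u_1)+d(e,u_3)$, and applied to the $W^+$ relation gives $|r_1-r_2|\le d(e,u_0)+d(e,u_2)$. The scalar identities $r_0-r_1=r_3-r_2$ and $r_1-r_2=r_0-r_3$ from the first step then promote these to all four cyclic inequalities $|r_i-r_{i+1}|\le d(e,u_{i+1})+d(e,u_{i+3})$ with indices read modulo $4$. Substituting the hypothesis $d(u_j,e)\le\eta(r_j+r_{j+1})$ and again reading indices cyclically, each right-hand side is at most $\eta(r_{i+1}+r_{i+2})+\eta(r_{i+3}+r_i)=\eta\sum_{k=0}^{3}r_k$, which is the asserted bound and shows that the $r_i$ agree up to an error of $\eta\sum_{k}r_k$.
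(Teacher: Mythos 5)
Your proposal is correct and follows the same route as the paper's proof: extract the scalar constraint $r_0-r_1+r_2-r_3=0$ from the $\mathbb{R}$-coordinate, rewrite the vector part as a sum of $A(s_j)u_j$ (the paper phrases this as a product of conjugates $(s_j,0)u_j(-s_j,0)$, which is the same thing), split into $W^+$ and $W^-$ components, and read off the desired estimates by comparing the sizes of $u_j$ and $A(c)u_j$ inside each weight-hyperbolic factor. The only cosmetic difference is that the paper cyclically permutes the base point of the word to harvest all four inequalities, whereas you derive the remaining two directly from the scalar identity; the two bookkeeping schemes are interchangeable and yield the same conclusion.
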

\begin{proof} See Appendix \end{proof}

\begin{lemma}\label{structure of a quadrilateral}
Let $Q=\{\mathbf{T}_{i}\}_{i=0}^{3}$ be a $\eta$ quadrilateral. Then

\begin{enumerate}
\item $|\mathbf{T}_{i}| -|\mathbf{T}_{j}| \leq \eta \left( \sum_{i=0}^{3} |\mathbf{T}_{i}| \right)$, $\forall i, j$

\item $\forall i$, $| \pi_{\vec{v}} \circ \Pi_{\vec{v}}(e_{i}) -
\pi_{\vec{v}} \circ \Pi_{\vec{v}}(b_{i-1}) | \leq d(e_{i},b_{i+1}) +
d(e_{i+2},b_{i+3})$

\item $\{ \Pi_{\vec{v}}(b_{i}), \Pi_{\vec{v}}(e_{i+1}), \Pi_{\vec{v}}(b_{i+2}), \Pi_{\vec{v}}(e_{i+3}) \}$ are within $\eta
\left( \sum_{i=0}^{3} |\mathbf{T}_{i}| \right)$ neighborhood of a
coset of $W^{+}_{\vec{v}}$ (or $W^{-}_{\vec{v}}$) if $i=0$(mod 2),
and of a coset of $W^{-}_{\vec{v}}$ (or $W^{+}_{\vec{v}}$)
otherwise.  \end{enumerate} \end{lemma}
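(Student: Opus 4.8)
The plan is to reduce everything to the rank-one group attached to $\vec v$ and then invoke Lemma \ref{quadrilateral word in rank 1}. Since $W^{0}_{\vec v}=\{0\}$ we have $\mathbf H=W^{+}_{\vec v}\oplus W^{-}_{\vec v}$, and the subgroup $\langle\vec v\rangle\ltimes\mathbf H$ is exactly of the form $\mathbb R\ltimes_{A}\mathbb R^{m}$ appearing there, with $A(t)=\varphi(t\vec v)$ and $W^{\pm}_{\vec v}$ its positive/negative eigenspaces. First I would push the quadrilateral forward by $\Pi_{\vec v}$. Because each edge $\mathbf T_{i}$ is parallel to $\vec v$, its image $\Pi_{\vec v}(\mathbf T_{i})$ is again a vertical geodesic segment of the same length $r_{i}:=|\mathbf T_{i}|$; the map $\Pi_{\vec v}$ does not increase distance, so the endpoint estimates $d(e_{i},b_{i+1})\le\eta(|\mathbf T_{i}|+|\mathbf T_{i+1}|)$ survive, and by Lemma \ref{orientation of a quadrilateral} the orientations of the $\Pi_{\vec v}(\mathbf T_{i})$ alternate. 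Thus in $\mathbb R\ltimes_{A}\mathbb R^{m}$ we have a closed loop built from four vertical segments of lengths $r_{i}$, alternating in orientation, whose consecutive corners are $\eta(r_{i}+r_{i+1})$-close.

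Next I would close this loop into a word. Writing $w_{i}=\Pi_{\vec v}(e_{i})^{-1}\Pi_{\vec v}(b_{i+1})$ for the ``jump'' at the $i$-th corner, we have $d(w_{i},e)\le\eta(r_{i}+r_{i+1})$ and
\[ (r_{0},0)\,w_{0}\,(-r_{1},0)\,w_{1}\,(r_{2},0)\,w_{2}\,(-r_{3},0)\,w_{3}=e .\]
To bring this into the form required by Lemma \ref{quadrilateral word in rank 1} I would decompose $w_{i}=(\sigma_{i},0)\cdot(0,v_{i})$ with $\sigma_{i}\in\mathbb R$ and $v_{i}=v_{i}^{+}+v_{i}^{-}\in W^{+}_{\vec v}\oplus W^{-}_{\vec v}$, and make three observations. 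The $\vec v$-components $\sigma_{i}$, each of size $O(\eta\,\mathrm{diam}(Q))$, commute past the neighbouring vertical segments and can be absorbed into the edge lengths, replacing $r_{i}$ by $\tilde r_{i}$ with $|\tilde r_{i}-r_{i}|=O(\eta\,\mathrm{diam}(Q))$. Because the orientations alternate, the corners alternate between ``tops'' (large $\vec v$-coordinate) and ``bottoms'' (small $\vec v$-coordinate): the jumps $w_{0},w_{2}$ occur at tops and $w_{1},w_{3}$ at bottoms. Using the exponential contraction of $A(t)$ on $W^{+}_{\vec v}$ as $t\to+\infty$ and on $W^{-}_{\vec v}$ as $t\to-\infty$ (the Finsler description of Section~\ref{geometry of G}, together with equation (\ref{property of U})), a jump of \emph{metric} size $O(\eta\,\mathrm{diam}(Q))$ at a top has exponentially small Euclidean $W^{-}$-component, and one at a bottom has exponentially small Euclidean $W^{+}$-component; carrying these ``wrong-parity'' pieces back to a common height keeps them of metric size $O(\eta\,\mathrm{diam}(Q))$, so they may be discarded into the error term. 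What remains is a word $(\tilde r_{0},0)u_{0}(-\tilde r_{1},0)u_{1}(\tilde r_{2},0)u_{2}(-\tilde r_{3},0)u_{3}=e$, valid up to an error of size $O(\eta\,\mathrm{diam}(Q))$, with $u_{0},u_{2}\in W^{+}_{\vec v}$, $u_{1},u_{3}\in W^{-}_{\vec v}$ and $d(u_{i},e)\le\eta(r_{i}+r_{i+1})$, so Lemma \ref{quadrilateral word in rank 1} applies.

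From here the three conclusions follow. Part (i) is the ``in particular'' statement of Lemma \ref{quadrilateral word in rank 1}, after absorbing the $O(\eta\,\mathrm{diam}(Q))$ discrepancy between the $\tilde r_{i}$ and the $r_{i}$. Part (ii) is the main inequality $|\tilde r_{i}-\tilde r_{i+1}|\le d(e,u_{i+1})+d(e,u_{i+3})$ of that lemma, read back through $\Pi_{\vec v}$: the left-hand side $|\pi_{\vec v}\!\circ\!\Pi_{\vec v}(e_{i})-\pi_{\vec v}\!\circ\!\Pi_{\vec v}(b_{i-1})|$ is a difference of $\vec v$-heights which for a vertical quadrilateral equals $|\tilde r_{i}-\tilde r_{i+1}|$ up to the jump sizes, and $d(e,u_{i+1}),d(e,u_{i+3})$ are controlled by $d(e_{i},b_{i+1})$ and $d(e_{i+2},b_{i+3})$. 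For part (iii), consecutive corners of the same parity---say the bottoms $\{\Pi_{\vec v}(b_{i}),\Pi_{\vec v}(e_{i+1}),\Pi_{\vec v}(b_{i+2}),\Pi_{\vec v}(e_{i+3})\}$ when $i$ is even---are obtained from one another by ascending a vertical segment, applying a jump at a top, and descending a vertical segment of nearly equal length; by the estimates above each such displacement has metric $W^{-}_{\vec v}$-component $O(\eta\,\mathrm{diam}(Q))$, while the $\vec v$-parts and the residual $W^{+}_{\vec v}$-discrepancies cancel around the nearly trivial loop to within $O(\eta\,\mathrm{diam}(Q))$. Hence the four corners lie within $O(\eta\sum_{i}|\mathbf T_{i}|)$ of a single coset of $W^{+}_{\vec v}$; reversing the orientation convention of the edges through that corner interchanges the roles of $W^{+}_{\vec v}$ and $W^{-}_{\vec v}$, which is the stated dichotomy.

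The main obstacle is the middle step: making precise that the ``wrong-parity'' components of the small corner jumps stay negligible after being transported along the vertical edges. This needs the quantitative exponential behaviour of $A(t)=\varphi(t\vec v)$ on $W^{\pm}_{\vec v}$, together with careful bookkeeping so that every accumulated error is of the form (constant)$\cdot\eta\cdot\mathrm{diam}(Q)$ rather than something growing with $\mathrm{diam}(Q)$. Once the word has been put into the standard shape, parts (i)--(iii) are essentially a transcription of Lemma \ref{quadrilateral word in rank 1}.
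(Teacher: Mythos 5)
Your approach is essentially the paper's: modify/project the quadrilateral into the rank-one subgroup $\langle\vec v\rangle\ltimes\mathbf H$, use the alternating orientations (Lemma \ref{orientation of a quadrilateral}) and the divergence requirement to conclude the corner jumps lie in $W^{\pm}_{\vec v}$, and then read off all three claims from Lemma \ref{quadrilateral word in rank 1}. The paper compresses the ``absorb the $\mathbf A$-part and the wrong-parity $\mathbf H$-part of each jump'' step into a single sentence about modifying the $\mathbf T_i$'s by at most $\eta\sum_j|\mathbf T_j|$, whereas you spell out the exponential-contraction bookkeeping that makes that absorption legitimate; the underlying argument is the same.
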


\begin{proof}
Modifying $\mathbf{T}_{i}$'s by an amount of at most $\eta \sum_{j}
|\mathbf{T}_{j}|$, we can assume $\pi_{A}(e_{i})=\pi_{A}(b_{i+1})$
for all $i$.  Furthermore, the divergent assumption between $b_{i}$
and $e_{i+1}$ means that $e_{i}^{-1}(b_{i+1}) \in W^{+}_{\vec{v}}$
(resp. $W^{-}_{\vec{v}}$) if the direction of $\mathbf{T}_{i}$ is
positive (resp. negative) multiples of $\vec{v}$.  The result now
follows from Lemma \ref{quadrilateral word in rank 1}.\end{proof} \medskip

A schematic illustration for a quadrilateral with the correct
orientation and lengths for its edges is given in Figure~\ref{fig:quadrilateral} below.
\begin{figure}[htbp]
    \begin{center}

    \input{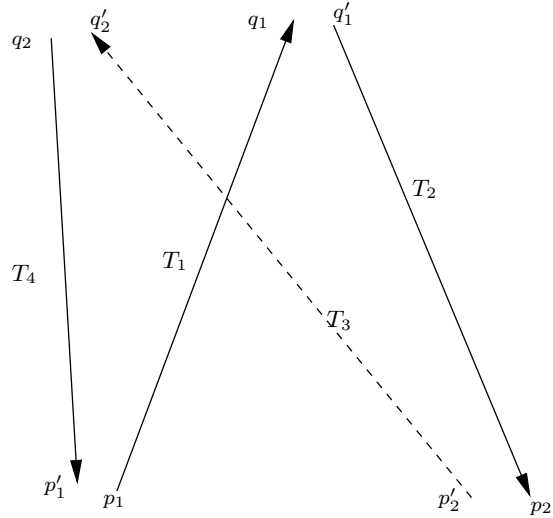}
    \caption{A schematic illustration of a quadrilateral}
    \label{fig:quadrilateral}
    \end{center}
\end{figure}

\begin{lemma} \label{close to a quadrilateral}
Let $Q=\{ \gamma_{j} \}_{j=0}^{3}$ be a $0$-quadrilateral in $G$,
such that each $\gamma_{j}$ is properly contained in a geodesic
segment $\tilde{\gamma}_{j}$, whose $\phi$ image is within $\eta
|\tilde{\gamma}|$ neighborhood of another geodesic segment whose
direction is parallel to $\vec{v}_{j} \in \mathbf{A}$ with
$W^{0}_{\vec{v}_{j}}=\{0\}$.  Suppose further that each
$|\gamma_{j}| > 2 \eta \sum_{\iota} |\tilde{\gamma}_{\iota}|$. Then,
there is a $\hat{\eta}$$(=\max\{ \eta
\frac{\tilde{\gamma}_{j}}{\gamma_{j}}\}_{j})$-quadrilateral
$\hat{Q}$ satisfying $d_{H}(\phi(Q), \hat{Q}) \leq \hat{\eta}
diam(Q)$. \end{lemma}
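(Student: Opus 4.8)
The plan is to push each edge of $Q$ through $\phi$, replace its image by a genuine geodesic subsegment, verify that the four resulting segments satisfy almost all of Definition~\ref{quadrilateral}, and then show their directions may be taken along one line. First I would extract the geodesic approximations. By hypothesis $\phi(\tilde\gamma_j)$ lies in the $\eta|\tilde\gamma_j|$-Hausdorff neighbourhood of a geodesic segment $\hat{\tilde\gamma}_j$ whose direction is parallel to $\vec v_j$, with $W^0_{\vec v_j}=\{0\}$. Since $\gamma_j$ is properly contained in $\tilde\gamma_j$, I orthogonally project $\phi(b_j),\phi(e_j)$ onto $\hat{\tilde\gamma}_j$ and take $\hat\gamma_j$ to be the subsegment between the two images. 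Then $d_H(\phi(\gamma_j),\hat\gamma_j)\le\eta|\tilde\gamma_j|\le\hat\eta|\gamma_j|\le\hat\eta\,\mathrm{diam}(Q)$, and the endpoints $\hat b_j,\hat e_j$ of $\hat\gamma_j$ lie within $\eta|\tilde\gamma_j|$ of $\phi(b_j),\phi(e_j)$. Because $Q$ is a $0$-quadrilateral, Lemma~\ref{structure of a quadrilateral}(1) with $\eta=0$ forces $|\gamma_0|=|\gamma_1|=|\gamma_2|=|\gamma_3|$, so the $(\kappa,C)$ bounds make the four lengths $|\hat\gamma_j|$ mutually comparable; with $\hat\eta$ small this gives Definition~\ref{quadrilateral}(2) for the $\hat\gamma_j$.

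Next I would check the two parts of Definition~\ref{quadrilateral}(3). Since $Q$ is a $0$-quadrilateral, $e_j=b_{j+1}$, hence $\phi(e_j)=\phi(b_{j+1})$ and $d(\hat e_j,\hat b_{j+1})\le\eta(|\tilde\gamma_j|+|\tilde\gamma_{j+1}|)\le\hat\eta(|\gamma_j|+|\gamma_{j+1}|)$; thus $\hat\gamma_0,\hat\gamma_1,\hat\gamma_2,\hat\gamma_3$ form, in cyclic order, a closed loop in $G'$ whose corner gaps are of the size required by the first inequality. The divergence inequality $d(\hat b_j,\hat e_{j+1})\ge|\hat\gamma_j|+|\hat\gamma_{j+1}|$ I transport from $d(b_j,e_{j+1})\ge|\gamma_j|+|\gamma_{j+1}|$ through the lower quasi-isometry bound and the endpoint estimates above, using that in a $0$-quadrilateral of $G$ this divergence occurs in the $\mathbf H$-direction (the $\pi_A$-projections of $b_j$ and $e_{j+1}$ are close, by Lemma~\ref{orientation of a quadrilateral}) and is exponential in the edge length, so it survives the factor $1/\kappa$.

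The crux is to see that $\vec v_0,\vec v_2$ (resp.\ $\vec v_1,\vec v_3$) may be taken to be positive (resp.\ negative) multiples of a single $\vec v$ with $W^0_{\vec v}=\{0\}$, so that the $\hat\gamma_j$ lie in one left coset of $\langle\vec v\rangle\ltimes\mathbf H'\cong\mathbb R\ltimes_A\mathbf H'$ and Definition~\ref{quadrilateral}(1) holds. Writing $\hat u_j$ for the unit direction of $\hat\gamma_j$, the closed-loop property gives $\sum_j|\hat\gamma_j|\hat u_j\approx 0$, i.e.\ $\hat u_0+\hat u_1+\hat u_2+\hat u_3\approx 0$ since the lengths are nearly equal; in rank $\ge 2$ this alone does not pin the directions down, and the hyperbolic geometry of the weight spaces must be used. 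Applying $\pi_{\alpha'}$ for each root $\alpha'\in\triangle'$ (note $\alpha'(\vec v_j)\ne 0$, since $W^0_{\vec v_j}=\{0\}$) sends the loop to a geodesic quadrilateral in the weight hyperbolic space, whose four sides are much longer than its corner gaps. By uniform thinness of geodesic quadrilaterals there, the loop either has its sides alternate in vertical direction as one goes around---the orientation pattern of Lemma~\ref{orientation of a quadrilateral}---or it degenerates to a bigon between two opposite corners, forcing those corners to almost coincide; the latter contradicts $d(\phi(b_1),\phi(b_3))\ge\tfrac1\kappa(|\gamma_1|+|\gamma_2|)-C$, which comes from the quadrilateral inequality $d(b_1,b_3)=d(b_1,e_2)\ge|\gamma_1|+|\gamma_2|$. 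Hence the alternating pattern holds in every weight space, which forces $\hat u_0=-\hat u_1=\hat u_2=-\hat u_3$ up to the $O(\hat\eta)$ errors, giving a common line $\langle\vec v\rangle$ with $W^0_{\vec v}=W^0_{\vec v_0}=\{0\}$. I expect this step to be the main obstacle; once collinearity is known, Lemma~\ref{quadrilateral word in rank 1} applied inside $\mathbb R\ltimes_A\mathbf H'$ also re-derives the near-equality of the edge lengths.

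Finally I would assemble $\hat Q$: with all $\hat\gamma_j$ parallel to $\pm\vec v$ and the corner gaps at most $\hat\eta$ times the adjacent lengths, translate each $\hat\gamma_j$ by at most $\hat\eta\,\mathrm{diam}(Q)$ so that consecutive segments meet exactly; a small translation keeps a geodesic segment a geodesic segment in direction $\pm\vec v$ and keeps it within $\hat\eta\,\mathrm{diam}(Q)$ of $\phi(\gamma_j)$. The family $\hat Q=\{\hat\gamma_j\}$ then satisfies Definition~\ref{quadrilateral}(1)--(3), so it is an $\hat\eta$-quadrilateral (with the standard understanding that the $\kappa,C$-dependent constants are absorbed into $\hat\eta$), and $d_H(\phi(Q),\hat Q)\le\hat\eta\,\mathrm{diam}(Q)$ by the first and last steps.
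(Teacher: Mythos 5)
Your plan diverges from the paper's at exactly the point you flag as ``the main obstacle,'' and the argument you give there has a real gap. You try to force the four approximating directions $\vec v_j$ onto a single line by projecting the closed loop to each weight hyperbolic space and observing that the signs $\operatorname{sign}(\alpha'(\vec v_j))$ must alternate around the loop; you then assert that alternation ``in every weight space'' forces $\hat u_0=-\hat u_1=\hat u_2=-\hat u_3$. That inference is false: sign--alternation under every root only says that $\vec v_0$ and $\vec v_1$ lie in opposite Weyl chambers, not that they are antipodal. For instance with $\triangle'=\{(1,0),(0,1),(-1,-1)\}$ the vectors $\vec v_0=(1,0.1)$ and $\vec v_1=(-1,-0.5)$ satisfy $\operatorname{sign}(\alpha'(\vec v_0))=-\operatorname{sign}(\alpha'(\vec v_1))$ for all three roots, yet $\vec v_0\neq-\vec v_1$. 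To make your route work you would have to push the length-matching of Lemma~\ref{quadrilateral word in rank 1} through every $\pi_{\alpha'}$ \emph{before} knowing collinearity — i.e.\ derive $|\alpha'(\vec v_0)|\,|\hat\gamma_0|\approx|\alpha'(\vec v_1)|\,|\hat\gamma_1|$ for all $\alpha'$ and then use that $\triangle'$ spans $\mathbf A'^{*}$ — but as written you invoke that lemma only after collinearity is assumed, so the argument is circular at precisely the hard point.

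The paper avoids this entirely by using the hypothesis you never really exploit: that each $\gamma_j$ is \emph{properly} contained in $\tilde\gamma_j$. Because $Q$ is a $0$-quadrilateral, $\tilde\gamma_j$ and $\tilde\gamma_{j+1}$ overlap in a geodesic subsegment of positive length, so $\phi(\tilde\gamma_j)$ and $\phi(\tilde\gamma_{j+1})$ coincide along that stretch, and hence their geodesic approximations $\tilde T_j$, $\tilde T_{j+1}$ stay within $O(\eta\cdot\text{length})$ of one another along it. Two geodesic segments of $G'$ that remain this close over a definite fraction of their length must make an angle at most $\sin^{-1}(\eta)$. Chaining consecutive pairs gives that all four directions lie within $O(\eta)$ of a single $\vec v$ with $W^0_{\vec v}=\{0\}$, after which one translates each $\tilde T_j$ by $O(\eta\sum_\iota|\tilde T_\iota|)$ to make them literally parallel and takes $\mathbf T_j\subset\tilde T_j$ nearest $\phi(\gamma_j)$. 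This is shorter and sidesteps the delicate closed-loop analysis. Your divergence estimate in condition (iii) also needs the same care: a bare $\tfrac1\kappa$ lower bound on $d(\phi(b_j),\phi(e_{j+1}))$ does not beat $|\hat\gamma_j|+|\hat\gamma_{j+1}|$, which can be as large as $\kappa(|\gamma_j|+|\gamma_{j+1}|)+O(1)$; you gesture at the exponential separation in $\mathbf H$ but do not carry the estimate through, whereas once the $\mathbf T_j$'s are genuinely parallel and on overlapping lines this condition is checked directly in the $\langle\vec v\rangle\ltimes\mathbf H'$ slice.
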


\begin{proof}
For each $j$, let $\tilde{T}_{j}$ be an geodesic approximation of
$\phi(\tilde{\gamma}_{j})$.  Since $Q$ is a $0$-quadrilateral,
$\tilde{\gamma}_{j} \cap \tilde{\gamma}_{j+1}$ is a geodesic segment
with positive length, therefore $\angle (\vec{v}_{j}, \vec{v}_{j+1})
\leq \sin^{-1}(\eta)$, and $d(\tilde{T}_{j}, \tilde{T}_{j+1}) \leq
\eta (|\tilde{T}_{j}| + |\tilde{T}_{j+1}| )$.  By moving each
$\tilde{T}_{j}$ by an amount at most $\sum_{\iota}
|\tilde{T}_{\iota}|$, we can assume the directions of
$\tilde{T}_{j}$'s are all parallel to some $\vec{v}$ with
$W^{0}_{\vec{v}}=\{0\}$, and $\tilde{T}_{j} \cap \tilde{T}_{j+1}$ is
a geodesic segment of positive length.  Let $\mathbf{T}_{j} \subset
\tilde{T}_{j}$ be the subsegment closest to $\phi(\gamma_{j})$. Then
$\hat{Q}=\{ \mathbf{T}_{j} \}$ is a $\hat{\eta}$-quadrilateral.
\end{proof}

%

\subsection{Averaging}
In this subsection, we put together some of the observations in the
last two subsections to show that if a large percentage of geodesic
segments in a box admit geodesic approximations to their $\phi$
images, then for $i \geq 2$, a large percentage of $i$-hyperplanes
in the box also admit $i$-hyperplane approximations to their $\phi$
images. In particular, there is a large subset of flats in the box
whose $\phi$ images are close flats. \\

The following averaging lemma that will be used repeatedly for the remaining of this section.

\begin{lemma} \label{ping-pong averaging}
Let $(A, \mu_{\alpha})$,$(B,\mu_{\beta})$ be two finite measure space, and $\sim$ is a symmetric relation between them.
For $a \in A$, write $B_{a}=\{ b \in B, b \sim a\}$ as the subset of $B$ consisted of elements related to $a$, and
$A_{b}$, for $b \in B$, as the subset of elements of $A$ related to $b$.  \smallskip
Suppose $\frac{\mu_{\beta}(B_{a})}{\mu_{\beta}(B_{a'})} \leq M_{A}$ for any $a, a' \in A$, and
$\frac{\mu_{\alpha}(A_{b})}{\mu_{\alpha}(A_{b'})} \leq M_{B}$ for any $b, b' \in B$. \smallskip

If for some $s \leq \frac{1}{M_{A}M_{B}}$, $A_{s} \subset A$ with $\mu_{\alpha}(A_{s}) \leq s \mu_{\alpha}(A)$,
then the subset $B^{s,t}=\{ b \in B: \mu_{\alpha}(A_{b} \cap A_{s}) \geq t \mu_{\alpha}(A_{b}) \}$, satisfies
$\mu_{\beta}(B^{s,t}) \leq \frac{s}{t}M_{A}M_{B} \mu_{\beta}(B)$. \end{lemma}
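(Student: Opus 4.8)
The plan is to run a double-counting (Fubini) argument on the relation $\sim$, feed in the two comparability hypotheses to replace suprema by averages, and close with a Chebyshev-type estimate coming from the definition of $B^{s,t}$. Concretely, I would regard $\sim$ as a measurable subset $R\subset A\times B$ whose $a$-slice is $B_a$ and whose $b$-slice is $A_b$, and record the elementary identity, valid for any measurable $A'\subset A$,
\[ \int_{A'} \mu_\beta(B_a)\, d\mu_\alpha(a) \;=\; \int_B \mu_\alpha(A_b\cap A')\, d\mu_\beta(b), \]
which is just Fubini applied to the restriction of $\mu_\alpha\times\mu_\beta$ to $R$. Taking $A'=A$ gives the symmetric ``total incidence'' $T:=\int_A \mu_\beta(B_a)\,d\mu_\alpha = \int_B \mu_\alpha(A_b)\,d\mu_\beta$.

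Next I would extract the consequences of the ratio bounds. Write $\bar d_A=\sup_a\mu_\beta(B_a)$, $\underline d_A=\inf_a\mu_\beta(B_a)$, and similarly $\bar d_B=\sup_b\mu_\alpha(A_b)$, $\underline d_B=\inf_b\mu_\alpha(A_b)$; the hypotheses read $\bar d_A\le M_A\underline d_A$ and $\bar d_B\le M_B\underline d_B$ (and, to be meaningful, they force all four quantities to be finite and positive, which I would note at the outset so the divisions below are legitimate). Since an infimum is at most an average and an average at most a supremum, $T\le\mu_\beta(B)\,\bar d_B\le M_B\,\mu_\beta(B)\,\underline d_B$ while $T\ge\mu_\alpha(A)\,\underline d_A\ge (1/M_A)\,\mu_\alpha(A)\,\bar d_A$, so
\[ \mu_\alpha(A)\,\bar d_A \;\le\; M_A M_B\,\mu_\beta(B)\,\underline d_B . \]

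For the Chebyshev step: if $b\in B^{s,t}$ then $\mu_\alpha(A_b\cap A_s)\ge t\,\mu_\alpha(A_b)\ge t\,\underline d_B$, so
\begin{align*}
t\,\underline d_B\,\mu_\beta(B^{s,t}) &\le \int_{B^{s,t}}\mu_\alpha(A_b\cap A_s)\,d\mu_\beta(b) \le \int_B \mu_\alpha(A_b\cap A_s)\,d\mu_\beta(b) \\
&= \int_{A_s}\mu_\beta(B_a)\,d\mu_\alpha(a) \le \mu_\alpha(A_s)\,\bar d_A \le s\,\mu_\alpha(A)\,\bar d_A ,
\end{align*}
using the Fubini identity with $A'=A_s$ in the middle equality and $\mu_\alpha(A_s)\le s\mu_\alpha(A)$ at the end. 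Combining this with the previous display and cancelling $\underline d_B$ yields $\mu_\beta(B^{s,t})\le \tfrac{s}{t}M_AM_B\,\mu_\beta(B)$, which is the assertion; the hypothesis $s\le 1/(M_AM_B)$ is not needed for this inequality but guarantees it is non-vacuous, giving $\mu_\beta(B^{s,t})\le \mu_\beta(B)/t$.

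The only delicate point is the Fubini/measurability bookkeeping — checking that $R$ is measurable and that $a\mapsto\mu_\beta(B_a)$ and $b\mapsto\mu_\alpha(A_b\cap A')$ are measurable with the claimed integral identity. In every application of this lemma $A$ and $B$ are finite sets equipped with (weighted) counting measure, where all of this is automatic and ``Fubini'' is merely interchange of two finite sums, so this obstacle is purely cosmetic; I would simply state the identity in that generality and remark that it specializes to the finite case we use.
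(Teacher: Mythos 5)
Your proof is correct and follows essentially the same route as the paper's: Fubini on the incidence set $R\subset A\times B$, conversion of the ratio hypotheses into a comparison between $\sup_a\mu_\beta(B_a)$ and $\inf_b\mu_\alpha(A_b)$ via the total incidence $T=\mu(R)$, and then a Chebyshev estimate on $B^{s,t}$. Your packaging of the two intermediate bounds into the single display $\mu_\alpha(A)\,\bar d_A\le M_AM_B\,\mu_\beta(B)\,\underline d_B$, and your remark that the hypothesis $s\le 1/(M_AM_B)$ is not actually used, are minor stylistic differences, not a different argument.
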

\begin{proof} See Appendix. \end{proof}

\begin{remark} \label{comment on ping-pong}
Lemma \ref{ping-pong averaging} will often be used to show that for subset $A_{0} \subset
A$ of relative large measure, the subset of $B$ consisting of elements $b \in B$ such
that the measure of $A_{b} \cap A_{0}$ is large relative to that of $A_{b}$, is large.  \end{remark}

%

\begin{lemma} \label{lines of big angle}
Let $\mu$ be a $\mathbb{O}(k+1)$-invariant measure on $\mathbb{O}(k+1)/
\mathbb{O}(k)$.  Let $\{ e_{\iota} \}_{\iota=1}^{k+1}$ be an orthonormal basis of $\mathbb{R}^{k+1}$ and $H_{j}$ be linear span
of $\{ e_{\iota} \}_{\iota \not= j}$, $M_{k}$ as the common value of $d(H_{j}, H_{j'})$ in $\mathbb{O}(k+1)/\mathbb{O}(k)$.
Suppose for some $\upsilon \ll 1$, $\Omega$ is a subset with $\mu(\Omega) \geq (1-\upsilon) \mu(\mathbb{O}(k+1)/\mathbb{O}(k)$.
Then we can find $k+1$ points $x_{i} \in \Omega$ such that $d(x_{i},x_{j}) \geq M_{k}-W(\upsilon)$, where $W(\upsilon)
\rightarrow 0 $ as $\upsilon \rightarrow 0$.\end{lemma}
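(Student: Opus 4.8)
The plan is to reduce the lemma to two elementary observations. First, since $\{e_\iota\}$ is orthonormal and $\mathbb{O}(k+1)$ acts transitively on ordered orthonormal bases while preserving the metric $d$, the number $d(H_j,H_{j'})$ is the same for all distinct $j,j'$, so $M_k$ is well defined; moreover $M_k$ is exactly the common pairwise distance between the $k+1$ members of \emph{any} orthonormal frame of $\mathbb{R}^{k+1}$ (each frame being an $\mathbb{O}(k+1)$-image of the standard one), and, after identifying $H_j$ with its pole $e_j\in\mathbb{O}(k+1)/\mathbb{O}(k)$, the points $x_i$ we must produce are nothing but an almost-orthonormal frame lying inside $\Omega$. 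Second, a subset of $\mathbb{O}(k+1)/\mathbb{O}(k)$ of nearly full measure must contain such an almost-orthonormal frame; granting this in the quantitative form below, the tuple it produces has all pairwise distances $\ge M_k-W(\upsilon)$, which is the assertion.

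For the second observation I would argue by a soft measure estimate. Work in $Y:=(\mathbb{O}(k+1)/\mathbb{O}(k))^{k+1}$ with the product measure $\mu^{\otimes(k+1)}$, normalized so that $\mu$ is a probability measure, and for $\varepsilon>0$ put $U_\varepsilon:=\{(x_1,\dots,x_{k+1})\in Y:\ d(x_i,x_j)>M_k-\varepsilon\text{ for all }i\ne j\}$. This set is open, and it is nonempty since it contains the standard orthonormal frame; because $\mu$ is $\mathbb{O}(k+1)$-invariant and nonzero its support is all of $\mathbb{O}(k+1)/\mathbb{O}(k)$, hence $\mu^{\otimes(k+1)}(U_\varepsilon)=:c_\varepsilon>0$ for every $\varepsilon>0$. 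Now suppose $\mu(\Omega)\ge1-\upsilon$. If $\Omega^{k+1}\cap U_\varepsilon\ne\emptyset$ we are done for that $\varepsilon$; otherwise $U_\varepsilon\subseteq\bigcup_{i=1}^{k+1}\{x\in Y:\ x_i\notin\Omega\}$, forcing $c_\varepsilon\le(k+1)\upsilon$. Consequently, as soon as $\upsilon<c_\varepsilon/(k+1)$ the set $\Omega$ contains a $(k+1)$-tuple with all pairwise distances $>M_k-\varepsilon$, and taking $W(\upsilon):=\inf\{\varepsilon>0:\upsilon<c_\varepsilon/(k+1)\}$ yields a function with $W(\upsilon)\to0$ as $\upsilon\to0$. (One can even get $W(\upsilon)=0$ for $\upsilon<1/(k+1)$: push the invariant probability measure $\lambda$ on the space $\mathcal{F}\cong\mathbb{O}(k+1)$ of ordered orthonormal frames forward by the $i$-th coordinate map $p_i$, note $(p_i)_*\lambda$ is the normalized $\mu$ by uniqueness of the invariant measure, so $\lambda\bigl(\bigcap_i p_i^{-1}(\Omega)\bigr)\ge1-(k+1)\upsilon>0$ and there is an honest orthonormal frame inside $\Omega$; this refinement is not needed here.)

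The only step that needs more than bookkeeping is the identification of the relevant invariant measures with the ones one expects: that the $\mathbb{O}(k+1)$-invariant measure $\mu$ has full support (its support is a nonempty closed invariant subset of the homogeneous space) and, for the refinement, that $(p_i)_*\lambda$ equals the normalized $\mu$ by uniqueness of the invariant probability measure on $\mathbb{O}(k+1)/\mathbb{O}(k)$. Once these are granted the proof is just the union bound above, so I do not expect any genuine obstacle.
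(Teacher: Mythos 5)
Your proof is correct, and it takes a genuinely different and more careful route than the paper's. The paper's own argument works in the \emph{two}-fold product $\bigl(\mathbb{O}(k+1)/\mathbb{O}(k)\bigr)^{2}$: it observes $\mu\times\mu(\Omega\times\Omega)\geq(1-\upsilon)^{2}$ and then asserts that failure of the conclusion would force $\Omega$ into a ball of radius $M_{k}$, contradicting the measure bound. As written this is terse and does not obviously deliver a $(k{+}1)$-tuple of mutually far points (containment in a single ball only controls diameter, not the existence of $k{+}1$ almost-orthogonal poles), so the paper's proof relies on the reader to fill in a packing or greedy-selection step. Your version works instead in the $(k{+}1)$-fold product $Y=\bigl(\mathbb{O}(k+1)/\mathbb{O}(k)\bigr)^{k+1}$, introduces the open set $U_{\varepsilon}$ of $\varepsilon$-nearly orthonormal tuples, uses invariance and full support of $\mu$ to get $c_{\varepsilon}:=\mu^{\otimes(k+1)}(U_{\varepsilon})>0$, and closes with the union bound $c_{\varepsilon}\leq(k+1)\upsilon$ when $\Omega^{k+1}\cap U_{\varepsilon}=\emptyset$. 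This is a complete and transparent argument, and the frame-bundle refinement you note (pushing the invariant probability on $\mathbb{O}(k+1)$ forward by the coordinate maps to show one can even take $W(\upsilon)=0$ once $\upsilon<1/(k+1)$) is a genuine strengthening that the paper does not observe. Both proofs rest on the same underlying heuristic --- a near-full-measure subset of the sphere must contain an almost-orthonormal frame --- but yours makes the quantitative mechanism explicit where the paper's is only sketched.
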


\begin{proof}
Equip $\mathbb{O}(k+1)/\mathbb{O}(k) \times \mathbb{O}(k+1)/ \mathbb{O}(k)$ with the product measure $\mu \times \mu$.  Then
the relative measure of $\Omega \times \Omega$ is at least $(1-\upsilon)^{2}$.  If the claim was not true, then $\Omega$
is contained in a ball of radius $M_{k}$, and this would create a contradiction to the measure of $\Omega \times \Omega$
when $\upsilon$ is sufficiently small. \end{proof}

\begin{lemma} \label{bare minimum for flats to flats}
Let $G=\mathbf{H} \rtimes \mathbf{A}$, $G'=\mathbf{H}' \rtimes \mathbf{A}'$ be non-degenerate, unimodular, split
abelian-by-abelian Lie groups, and $\phi: G \rightarrow G'$ be a $(\kappa, C)$ quasi-isometry.  Let
$\Omega \subset \mathbf{A}$ be a product of intervals of equal size.  Inside of the box $\mathbf{B}(\Omega) \subset G$,
suppose for some $\eta < 1$ there is a subset $\mathcal{L}^{0} \subset
\mathcal{L}(\Omega)[m]$, where $m \rightarrow \infty$ as $\eta \rightarrow 0$, and
$|\mathcal{L}^{0}|\geq (1-F_{1})|\mathcal{L}(\Omega)[m]|$, where $F_{1}$ is a function of $\eta$ and
approaches zero as $\eta \rightarrow 0$, such that for every $\mathit{l} \in \mathcal{L}^{0}$,

\begin{enumerate}
\item $\phi(\mathit{l})$ is within $\eta |\mathit{l}|$ Hausdorff neighborhood of a geodesic segment that makes an angle at least $\sin^{-1}(\tilde{\eta})$ with root kernel directions.
Here $\tilde{\eta}$ depends on $\eta$ and approaches zero as $\eta \rightarrow 0$.

\item For each $\mathit{l} \in \mathcal{L}^{0}$, the proportion of $\eta$ uniform points
is at least $1-F_{1}$. \end{enumerate}  Then, for $i=2,3, \cdots dim(\mathbf{A})$, there
are subsets $\mathcal{L}^{0}_{i} \subset \mathcal{L}_{i}(\Omega)[m]$ and functions $F_{i}$,
together with a subset $\mathcal{P}^{0} \subset \mathcal{P}(\Omega)$ and a function $F_{0}$ that satisfy the
the following properties.

\begin{enumerate}
\item $F_{i}$'s and $F_{0}$ are functions of $\eta$ and approach zero as $\eta \rightarrow 0$.

\item If $S \in \mathcal{L}_{i}^{0}$, then $\phi(S)$ is within $\eta \mbox{diam}(\mathbf{B}(\Omega))$ Hausdorff neighborhood of a
$i$-dimensional hyperplane.


\item For every $p \in \mathcal{P}^{0}$,
\[
\left| L(p) \cap \mathcal{L}(\Omega)[m] \cap \mathcal{L}^{0} \right|   \geq  (1-F_{1})^{2}
\left| L(p) \cap \mathcal{L}(\Omega)[m] \right| \] \noindent and $i=2,3, \cdots$,
\[
\left| L_{i}(p) \cap \mathcal{L}_{i}(\Omega)[m] \cap \mathcal{L} \right| \geq (1-F_{i})^{2}
\left| L_{i}(p) \cap \mathcal{L}_{i}(\Omega)[m] \right| \]

\item  The relative measure of $\mathcal{L}_{i}^{0}$ and $\mathcal{P}^{0}$ in
$\mathcal{L}_{i}(\Omega)[m]$ and $\mathcal{P}(\Omega)$ are at least $1-F_{i}$ and
$1-F_{0}$ respectively.  \end{enumerate} \end{lemma}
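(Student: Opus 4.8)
The plan is to argue by induction on $i$, from $i=2$ up to $\dim(\mathbf{A})$, building the sets $\mathcal{L}^0_i$, $\mathcal{P}^0$ and the error functions $F_i,F_0$ simultaneously. Two tools do all the work. The \emph{combinatorial} tool is the averaging Lemma \ref{ping-pong averaging}, applied over and over to the incidence relations in the chain $\text{point}\subset\text{geodesic}\subset 2\text{-hyperplane}\subset\cdots\subset i\text{-hyperplane}$; because $\Omega$ is a product of intervals of equal size, for a fixed pair of dimensions the numbers of incident objects are comparable up to a constant depending only on $\dim(\mathbf{A})$ and $m$, so the ratio hypotheses of Lemma \ref{ping-pong averaging} hold at every stage. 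The \emph{geometric} tool is Lemma \ref{simplicies made out of good lines}: once a hyperplane $S$ is known to contain a non-degenerate (filled) $i$-simplex whose edges and sub-edges are "good" geodesics with compatible geodesic approximations, that lemma forces $\phi(S)$ into an $O(\eta)\,\mbox{diam}(\mathbf{B}(\Omega))$-neighborhood of an $i$-dimensional flat piece. Throughout, hypothesis 1 supplies the fact that all geodesic approximations avoid root-kernel directions by an angle $\geq\sin^{-1}(\tilde\eta)$, which is exactly what Lemma \ref{how to tell they are on the same flat} and Lemma \ref{simplicies made out of good lines} need, and hypothesis 2 (abundance of $\eta$-uniform points, in the sense of Remark \ref{general uniform}) is what lets us restrict to subsegments without losing control.

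For the base case $i=2$ I would proceed in this order. First apply Lemma \ref{ping-pong averaging} with $A=\mathcal{L}(\Omega)[m]$, $B=\mathcal{L}_2(\Omega)[m]$, $\sim$ the incidence relation, and $A_s=\mathcal{L}(\Omega)[m]\setminus\mathcal{L}^0$ (relative measure $\leq F_1$), choosing the threshold $\sqrt{F_1}$, to obtain a set $\mathcal{L}'_2\subset\mathcal{L}_2(\Omega)[m]$ of relative measure $\geq 1-O(\sqrt{F_1})$ consisting of $2$-hyperplanes that contain at least a $(1-\sqrt{F_1})$-proportion of good geodesics; a second averaging step against hypothesis 2 restricts to those $S$ in which, moreover, most geodesics carry a large proportion of $\eta$-uniform points. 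Next, for each such $S$: since the bad geodesics fill only a small proportion of $S$, a Fubini argument over a ruling of $S$ produces a full-measure family of directions along which $S$ is ruled by good geodesics, and Lemma \ref{lines of big angle} (applied to the sphere of directions in $S$, i.e. to $\mathbb{O}(2)/\mathbb{O}(1)$, resp. $\mathbb{O}(k+1)/\mathbb{O}(k)$ at the inductive stage) lets me pick two (resp. $k+1$) such directions spread far enough apart to span a non-degenerate $2$-simplex (resp. $i$-simplex); the uniform-point hypothesis then lets me thicken this to a \emph{filled} simplex whose sub-edges are again good and whose nested geodesic approximations are $2\eta$-compatible, so the last bullet of Lemma \ref{simplicies made out of good lines} is satisfied. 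Applying Lemma \ref{simplicies made out of good lines} gives $\phi(S)$ within $O(\eta)\,\mbox{diam}(\mathbf{B}(\Omega))$ of a $2$-flat; set $\mathcal{L}^0_2$ to be the set of $S$ for which this works and $F_2$ a fixed power of $F_1$ times the geometric constants. Conclusion 2 and the measure bound in conclusion 4 for $i=2$ are then immediate, and the incidence bound in conclusion 3 follows from one more application of Lemma \ref{ping-pong averaging}, now with $A_s=\mathcal{L}_2(\Omega)[m]\setminus\mathcal{L}^0_2$ and $B=\mathcal{P}(\Omega)$.

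The induction step is formally the same with "$2$" replaced by "$i$" and "geodesic" replaced by "$(i-1)$-hyperplane": average $\mathcal{L}_{i-1}(\Omega)[m]$ against $\mathcal{L}_i(\Omega)[m]$ to extract the $i$-hyperplanes that contain many members of $\mathcal{L}^0_{i-1}$ (and, through them, many good geodesics and uniform points), slice such an $S$ by $(i-1)$-hyperplanes, use the inductive flat approximations together with Lemma \ref{how to tell they are on the same flat} to see that the slices sit on a common flat, and assemble from good geodesics a non-degenerate filled $i$-simplex spanning $S$ to which Lemma \ref{simplicies made out of good lines} applies. Finally $\mathcal{P}^0$ is defined as the set of points of $\mathbf{B}(\Omega)$ that, for every $i$, are $\eta$-uniform on, and incident to a $(1-F_i)$-proportion of, the good geodesics and good $i$-hyperplanes through them; a last round of the averaging lemma bounds the measure of its complement, giving $F_0$. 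Each averaging step costs only a bounded power of the previous error times a constant, so all the $F_i$ and $F_0$ are explicit functions of $\eta$ (via $F_1$ and $\tilde\eta$) tending to $0$ as $\eta\to 0$.

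\textbf{Main obstacle.} The delicate point is not the bookkeeping of measures but verifying, for each good hyperplane $S$, that one can simultaneously meet \emph{all} the hypotheses of Lemma \ref{simplicies made out of good lines} on a \emph{single} simplex inside $S$: the simplex must be non-degenerate (directions spread out, controlled via Lemma \ref{lines of big angle}), its edges and all their nested subsegments must be good geodesics, and the geodesic approximations of nested pieces must be $2\eta$-compatible. The non-degeneracy and the nestedness pull in slightly different directions — generic slicing gives good geodesics but not automatically geodesics whose subsegments are good with compatible approximations — and reconciling them is precisely where hypothesis 2 is indispensable, since $\eta$-uniformity of the base point is what guarantees that $\phi$ restricted to a subsegment, viewed from that point, is again in an $\eta$-linear neighborhood of a geodesic. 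Making this selection quantitative, so that the proportion of $S$ one must discard is bounded by a function of $\eta$ alone and uniformly in the dimension $i$, is the part of the argument that requires the most care.
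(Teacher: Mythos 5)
Your plan has the same architecture as the paper's argument: induct on $i$, propagate measure control through the incidence chain with repeated applications of Lemma \ref{ping-pong averaging}, pick out well-spread good $(i{-}1)$-hyperplanes through a point using Lemma \ref{lines of big angle}, and then convert a non-degenerate filled simplex of good geodesics into a hyperplane approximation via Lemma \ref{simplicies made out of good lines}, with the uniform-point hypothesis supplying the nested-approximation compatibility. All of that matches, and the bookkeeping you describe for $\mathcal{L}^0_i$ and $\mathcal{P}^0$ is essentially the paper's.

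However there is a genuine gap in the geometric step, hidden in the phrase ``assemble from good geodesics a non-degenerate filled $i$-simplex spanning $S$.'' A filled $(i{+}1)$-simplex built with a fixed point $p$ as a vertex consists of an outer simplex $\Delta$ together with smaller simplices glued to the faces of $\Delta$, so it cannot extend past $\Delta$; and $\Delta$ is only guaranteed to bound a definite fraction (at least $2^{-i}$) of $S$, not all of it. Thus if $q\in\mathcal{P}^0(S)$ lies outside $\Delta$ — which is unavoidable when both $p$ and $q$ sit near $\partial\mathbf{B}(\Omega)$ at well-separated ends of $S$ — Lemma \ref{simplicies made out of good lines} applied to the filled simplex around $p$ says nothing about $\phi(q)$, and your proposal offers no mechanism to place $\phi(q)$ on the same $(i{+}1)$-hyperplane. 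The paper resolves exactly this (its ``Case II''): it chooses two good $i$-hyperplanes $Q_{q,1},Q_{q,2}$ through $q$ that both meet the filled simplex, and for each root $\Xi$ compares the $\pi_\Xi$-images of their approximations $\hat Q_{q,1},\hat Q_{q,2}$ with $\check\Delta$. Away from $\pi_\Xi(\phi(q))$ those projections ride a common vertical geodesic because they meet $\check\Delta$; near $\pi_\Xi(\phi(q))$ they come together because both pass through $q$; and since two geodesics in a weight hyperbolic space can come together at only one end, each $\pi_\Xi(\hat Q_{q,1})$ and $\pi_\Xi(\hat Q_{q,2})$ must lie on a single vertical geodesic. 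Ranging over roots, $\hat Q_{q,1},\hat Q_{q,2}$ and $\check\Delta$ share a flat, hence a common $(i{+}1)$-hyperplane. This root-wise, one-ended-intersection argument is not an instance of Lemma \ref{how to tell they are on the same flat}, and it is the part of the proof you would still need to supply; you should also note the easier alternative construction (the paper's ``Case I'') available when at least one of $p,q$ is far from $\partial\mathbf{B}(\Omega)$, since the Lemma \ref{lines of big angle} route you describe is really only forced in the boundary case.
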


\begin{proof}
More precisely, we prove the following claims:

For $i=2,3, \cdots dim(\mathbf{A})$, there are subsets $\mathcal{L}^{0}_{i} \subset
\mathcal{L}(\Omega)[m]$, $\mathcal{P}_{i} \subset \hat{\mathcal{P}}_{i}$ of $\mathcal{P}(\Omega)$, all of relative
large measure such that

\begin{enumerate}\renewcommand{\labelenumi}{\alph{enumi}.}
\item elements of $\mathcal{L}^{0}_{i-1}$ admit $i-1$ hyperplane approximations.
$\mathcal{L}^{0}_{1}$ is defined to be $\mathcal{L}^{0}$.

\item if $S \in \mathcal{L}^{0}_{i}$, $p \in \mathcal{P}_{i}$, $p \in
S$, then a large proportion of elements in $L_{i-1}(p) \cap L_{i-1}(S)$ lies in
$\mathcal{L}^{0}_{i-1}$.  Here, `large' means closer to $1$ as $\eta \rightarrow 0$.

\item Elements of $\mathcal{L}^{0}_{i}$ admit $i$-hyperplane approximations.  \end{enumerate}

The proof proceeds in two steps.  First, we construct $\mathcal{L}_{i}^{0}(\Omega)$ and
subsets $\mathcal{P}_{i} \subset \mathcal{P}(\Omega)$ by induction.  Then we use
$\mathcal{P}_{i}$ to show that elements of $\mathcal{L}_{i}^{0}$ satisfies the desired properties.  The set
$\mathcal{P}^{0}$ will be the intersection of those $\mathcal{P}_{i}$ from the first step.

We start with the base case when $i=2$.

The incidence relation between $\mathcal{L}(\Omega)[m]$ and $\mathcal{L}_{2}(\Omega)[m]$ is
symmetrical.  Therefore, by Lemma \ref{ping-pong averaging} we can choose $s_{2}(\eta) \ll
1$ appropriately so that the set

\[ \mathcal{L}_{2}^{0}= \{ S \in \mathcal{L}_{2}(\Omega)[m]: \left|  L_{1}(S) \cap \mathcal{L}(\Omega)[m] \cap \mathcal{L}^{0} \right|
\geq (1-s_{2}) \left| L_{1}(S) \cap \mathcal{L}(\Omega)[m] \right| \} \]

\noindent satisfies

\[ \left| \mathcal{L}_{2}^{0} \right| \geq (1-F_{1}^{1/2}) \left| \mathcal{L}(\Omega)[m]
\right| \]

Fix a $S \in \mathcal{L}_{2}^{0}$.  Let $P(S)^{bad} \subset P(S)$ consisting of those points $p$ such that $p$ fails
to be uniform with respect to at least $s_{b}$ proportion of elements in $L(p) \cap L(S) \cap \mathcal{L}(\Omega)[m]$.
Note that this means if $\zeta$ is not an element of $L(p) \cap L(S) \cap
\mathcal{L}^{0}$, then $p$ is not uniform with respect to $\zeta$.  We obtain a bound on the relative size of $P(S)^{bad}$ as
follows.

Let $\chi$ be the characteristic function of the subset of $\{(p, \zeta): p \in P(S), \zeta \in L(S) \cap \mathcal{L}(\Omega)[m],
p \in \zeta \}$ consisting of pairs $(p, \zeta)$ such that either $\zeta \not \in \mathcal{L}^{0}$ or
$\zeta \in \mathcal{L}^{0}$ but $p$ fails to be a uniform point on it.

Then, starting from

\[ \sum_{x \in P(S)} \mbox{     }\sum_{\gamma \in L_(x) \cap L(S) \cap \mathcal{L}(\Omega)[m]} \chi
=   \sum_{\gamma \in L_(S) \cap \mathcal{L}(\Omega)[m]} \mbox{      } \sum_{p \in P(\gamma) } \chi \]

\noindent we have
\begin{eqnarray*}
\sum_{x \in P(S)} \mbox{      } \sum_{\gamma \in L(x) \cap L(S) \cap \mathcal{L}(\Omega)[m] } \chi & \geq &
\sum_{x \in P(S)^{bad}} s_{b} \left| L(x) \cap L(S) \cap \mathcal{L}(\Omega)[m] \right| \\
& \geq & s_{b} \left| P(S)^{bad} \right| \mbox{    } \left| L(x) \cap L(S) \cap \mathcal{L}(\Omega)[m] \right|_{\min, x \in S}  \end{eqnarray*}

\noindent and
\begin{eqnarray*}
\sum_{\gamma \in L(S) \cap \mathcal{L}(\Omega)[m]} \mbox{          }\sum_{p \in \gamma } \chi & = &
\sum_{\gamma \in L(S) \cap \left(\mathcal{L}(\Omega)[m]-\mathcal{L}^{0} \right) } \mbox{          } \sum_{p \in \gamma } \chi +
\sum_{\gamma \in L(S) \cap \mathcal{L}^{0}} \mbox{     } \sum_{p \in \gamma} \chi \\ & \leq &
\sum_{\zeta \in L(S) \cap \mathcal{L}(\Omega)[m]} F_{1} \left| P(\zeta) \right| +
\sum_{\gamma \in L(S) \cap \mathcal{L}^{0}} F_{1} \left| P(\gamma) \right| \\ & \leq &
\sum_{\zeta \in L(S) \cap \mathcal{L}(\Omega)[m]} F_{1} \left| P(\zeta) \right| +
\sum_{\gamma \in L(S) \cap \mathcal{L}(\Omega)[m]} F_{1} \left| P(\gamma) \right| \\ & \leq &
2F_{1} \sum_{\zeta \in L(S) \cap \mathcal{L}(\Omega)[m]} \left| P(\zeta) \right| \\
& \leq &  2F_{1} \left| L(S) \cap \mathcal{L}(\Omega)[m] \right| \mbox{  } \left| P(\zeta) \cap P(S) \right|_{\max, \zeta \in \mathcal{L}(\Omega)[m]} \end{eqnarray*}

\noindent which yields
\[ \left| P(S)^{bad}  \right|  \leq \frac{2F_{1}}{s} k \left|    P(s)  \right| \] \noindent where $k$ depends only on $G$.
By choosing $s_{b}=2 F_{1}^{1/2}$, we have the measure of $P(S)^{bad}$ is at least
$1-F_{1}^{1/2}$ times that of $P(S)$.

We now apply Lemma \ref{ping-pong averaging} to $P(S)$, $L(S) \cap
\mathcal{L}(\Omega)[m]$, and $P(S)^{bad}$ to conclude that for some $\nu_{2} \ll 1$, the subset

\[ L(S)^{bad}=\{ \zeta \in L(S) \cap \mathcal{L}(\Omega)[m]: \left| P(\zeta) \cap
P(S)^{bad} \right| \geq \nu_{2} \left| P(\zeta) \right| \} \] satisfies

\[ \left| L(S)^{bad} \right| \leq \nu'_{2} \left| L(S) \right| \] \noindent for some
$\nu'_{2} \ll 1$.

Now apply Lemma \ref{ping-pong averaging} again to $P(S)$, $L(S) \cap
\mathcal{L}(\Omega)[m]$, and $L(S)^{good}=(L(S)-L(S)^{bad}) \cap \mathcal{L}^{0}$ to conclude that for some $\hat{\nu}_{2} \ll
1$, the subset

\[ P(S)^{w}= \{ p \in P(S): \left| L(p) \cap \mathcal{L}(\Omega)[m] \cap L(S)^{good}  \right|
\leq (1-\hat{\nu}_{2}) \left| L(p) \cap \mathcal{L}(\Omega)[m] \right| \} \] \noindent satisfies

\[ \left| P(S)^{w} \right| \leq \tilde{\nu}_{2} \left| P(S) \right| \] \noindent for some
$\tilde{\nu}_{2} \ll 1$.  Now set $P(S)^{0}$ as $P(S)-P(S)^{bad} - P(S)^{w}$, and let
$\mathcal{P}_{2}$ as the union of $P(S)^{0}$ as $S$ ranges over $\mathcal{L}^{0}_{2}$.

Now run the same argument for $i=3$, replacing `uniform points' of an element of
$\mathcal{L}^{0}$ by $P(S)^{0}$, where $S \in \mathcal{L}_{2}^{0}$.  Repeat this
procedure inductively, to arrive at subsets $\mathcal{L}^{0}_{i}$, and $P(S)^{0} \subset
P(S)$ for every $S \in \mathcal{L}^{0}_{i}$, all of relative large measure.

For a $S \in \mathcal{L}^{0}_{i+1}$, we now show that $\phi(P(S)^{0})$ is close to a
$i+1$-dimensional hyperplane.  We will do this by induction.  The hypothesis furnishes the base
step.

Take $p, q \in P^{good}(S)$.  By construction, for $\hat{\nu}_{i}, \mu_{i} \ll 1$, the $\phi$ images of at least $1-\hat{\nu}_{i}$
proportion of elements in $L_{i}(p)$ and $L_{i}(q)$ have the properties that 1) spend at least $1-\mu_{i}$ proportion of their area/measure in
$P^{0}(S)$, and 2) belong to $\mathcal{L}_{i}^{0}$, so admit $i$-hyperplane approximations by inductive hypothesis.

There are two cases to consider.

\bold{Case I.} At least one of $p,q$ is at least $\eta diam(\mathbf{B}(\Omega))$ away from $\partial \mathbf{B}(\Omega)$, in which case
we can do one of the followings: (see also Figure~\ref{fig:Case1} below.)
\begin{itemize}
\item find $i$ many points $r_{\iota} \in P^{0}(S)$, $\iota=1,2, \cdots i$
and $Q_{p} \in L_{i}(p) \cap L_{i}(S) \cap \mathcal{L}_{i}^{0}$, $Q_{r_{\iota}} \in L_{i}(r_{\iota}) \cap L_{i}(S) \cap \mathcal{L}_{i}^{0}$
such that they intersect to form a $i+1$-simplex $\Delta$ with $p,q$ and $r_{\iota}$'s lying on its faces.

\item or pick an element $Q_{p} \in L_{i}(p) \cap L_{i}(S) \cap \mathcal{L}_{i}^{0}$.  Since $Q_{p}$ has codimension 1 in $S$,
most elements of $L_{i}(q) \cap L_{i}(S) \cap \mathcal{L}^{0}_{i}$ will intersect it and we can find $i$ many elements
$Q_{q,\iota} \in  L_{i}(q) \cap L_{i}(S) \cap \mathcal{L}^{0}_{i}$, $\iota=1,2,\cdots i$ such that they intersect $Q_{p}$ to make a
$i+1$-simplex $\Delta$ with $q$ being one of its vertices and $p$ lying on the face opposite to $q$. \end{itemize}

\begin{figure}[htbp]
\begin{center}

\input{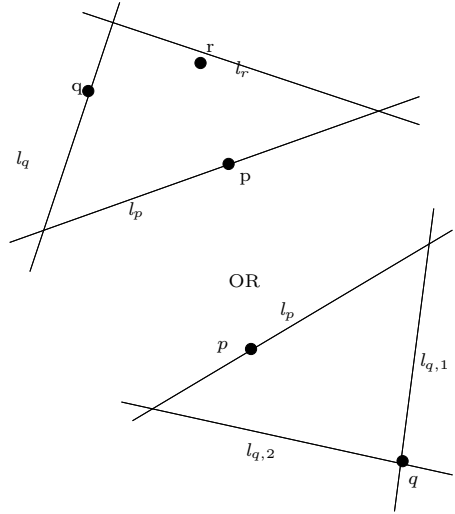}
\caption{Case I: two ways of making a simplex when $p,q$ are far from the boundary of the box.}
\label{fig:Case1}
\end{center} \end{figure}

\noindent We now apply Lemma \ref{simplicies made out of good lines} to conclude that the $\phi$ images of $\Delta$ are within $\eta
\mbox{diam}(\mathbf{B}(\Omega))$ neighborhood of another $i+1$-simplex on a $i+1$-dimensional hyperplane.

\bold{Case II.}  Both $p$ and $q$ within $\eta diam(\mathbf{B}(\Omega))$ of $\partial \mathbf{B}(\Omega)$.


In this case we make a $i+1$-simplex with $p$ as one of its vertex as follows. (see also Figure~\ref{fig:Case2} below)
Apply Lemma \ref{lines of big angle} to subsets $L_{i}(p) \cap L_{i}(S) \cap \mathcal{L}_{i}^{0}$, which allows us to pick out $i+1$
elements $Q_{p,\iota} \in L_{i}(p) \cap L_{i}^{00}(S)$ that are almost equally spaced apart
(up to an error of $W(\eta)$ by Lemma \ref{lines of big angle}).

Since each $Q_{p,\iota}$ spends at least $1-\mu_{i}$ proportion of its measure in the set $P^{0}(S)$, we can certainly find
$x \in Q_{p,1} \cap P^{0}(S)$.  Furthermore we can assume $x$ is at most $O(\eta diam(\mathbf{B}(\Omega))$ away from
$\partial \mathbf{B}(\Omega)$.

The subset of $L_{i}(x)$ that intersect all of $Q_{p,\iota}$'s, for $\iota=1,2,\cdots i$ has large positive measure because elements
of $L_{i}(x)$ has codimension 1 in $S$.  So we can find $Q_{x} \in L_{i}(x) \cap L_{i}^{0}(S)$ that intersects all of $Q_{p,\iota}$'s,
thus making a $i+1$-simplex $\Delta$.  By choice, faces of $\Delta$: $Q_{p,1}, Q_{p,2} \cdots Q_{p,i},Q_{x}$, when considered as points
in $\mathbb{O}(i+1)/\mathbb{O}(i)$, have pair-wise distance at least $M_{i}-W(\eta)$, which means the volume of the set bounded by
$\Delta$ is at least $\frac{1}{2^{i}}$ proportion of the volume of $S$.

\begin{figure}[htbp]
\begin{center}

\input{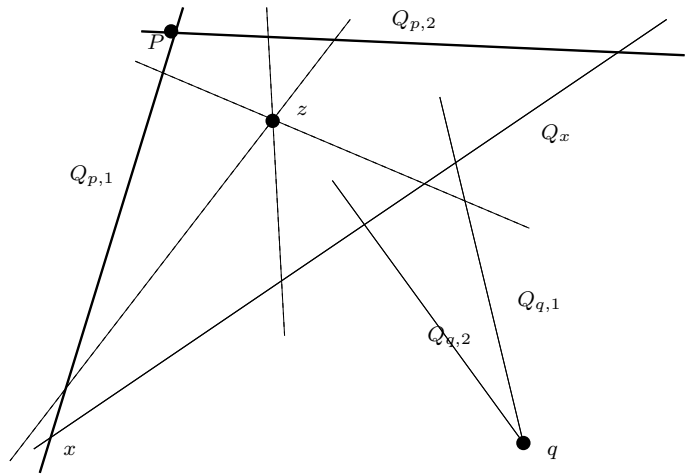}
\caption{Case II: when either $p$ or $q$ is too close to the boundary of the box, we first make a filled simplex using $p$ and look
at the intersection between $i$-hyperplanes in $L_{i}(q)$ that intersect this filled simplex.}

\label{fig:Case2}
\end{center}
\end{figure}

Let $z \in P^{0}(S)$ be a point that lies in the interior of the set bounded by $\Delta$.  Then most elements of
$L_{i}(z) \cap L_{i}(S) \cap \mathcal{L}_{i}^{0}$ are going to intersect $i+1$ faces of $\Delta$ thus making a smaller
$i+1$-simplicies, $i$ of its faces are subsets of faces of $\Delta$. We construct such $i+1$-simplicies $\delta_{i}$ for all points in
$P^{0}(S)$, and the collection of them together with $\Delta$ gives us $\tilde{\Delta}=\{ \Delta \} \cap \{ \delta_{i} \}$  a filled
$i+1$-simplex. \smallskip

By Lemma \ref{simplicies made out of good lines}, $\phi$ image $\tilde{\Delta}$ is within $\eta \mbox{diam}(\mathbf{B}(\Omega))$ Hausdorff
neighborhood from another filled $i+1$-simplex $\check{\Delta}$ on $i+1$ hyperplane, We are done if in addition, $q \in
\tilde{\Delta}$.  If not, then we can find two elements $Q_{q,1}, Q_{q,2} \in L_{i}(q) \cap L_{i}(S) \cap \mathcal{L}_{i}^{0}$ such that they both have no empty
intersection with $\tilde{\Delta}$, because the area of the set bounded by $\Delta$ to that of $S$ is at least $1/2^{i}$. \smallskip

Let $\hat{Q}_{q,1}$, $\hat{Q}_{q,2}$ be $i$-hyperplane approximations to $\phi(Q_{q,1})$ and $\phi(Q_{q,2})$.   Then for
any root $\Xi$, $\pi_{\Xi}$ images of $\hat{Q}_{q,1}$ and $\hat{Q}_{q,2}$ on the ends away from $\pi_{\Xi}(\phi(q))$ lie on a
common vertical geodesic segment because they both intersect $\check{\Delta}$ which lie on a $i+1$-hyperplane, and on the
$\pi_{\Xi}(\phi(q))$ end, the vertical geodesic segment containing them come together because both $Q_{q,i}$'s contain $q$.   Since any
two geodesic segments in a hyperbolic space come together at most one end, this means $\pi_{\Xi}(\hat{Q}_{q,1})$ and
$\pi_{\Xi}(\hat{Q}_{q,2})$ lie on a common vertical geodesic segment. As $\Xi$ ranges over all roots, this means that
$\hat{Q}_{q,1}$ and $\hat{Q}_{q,2}$ lie on the same flat as $\check{\Delta}$.  Lastly, as $\check{\Delta}$ lie on a $i+1$-hyperplane and each of $\hat{Q}_{q,i}$'s is a $i$-hyperplane, this gives us $\check{\Delta}
\cup \{ \hat{Q}_{q,i} \}$ lie on a common $i+1$-hyperplane within a flat.  \end{proof}


\subsection{Proof of Theorem \ref{exisence of standard maps in small boxes}}

\begin{proof} \textit{of Theorem \ref{exisence of standard maps in small boxes} }

Apply Theorem \ref{existence of good boxes} to $\mathbf{B}(\Omega)$.  Take a $\mathbf{B}(\Omega_{j})$, $j \in \mathbf{J}_{0}$
and apply Lemma \ref{bare minimum for flats to flats} to obtain subsets
$\mathcal{L}^{0} \subset \mathcal{L}(\Omega_{j})[m]$, $\mathcal{L}^{0}_{\iota}  \subset \mathcal{L}_{\iota}(\Omega_{j})[m]$ for
$\iota=2,3, \cdots rank(G)$, and $\mathcal{P}^{0} \subset \mathcal{P}(\Omega_{j})$, all
with relative measures approaching 1 as $\eta$, $\delta$ approach zero, such that if
$\zeta \in \mathcal{L}^{0}$, then $\phi(\zeta)$ is within $2\kappa \eta|\zeta|$ Hausdorff
neighborhood of a geodesic segment that makes an angle at most $\sin^{-1}(\tilde{\eta})$
with root angles. While when $S$ is an element of $\mathcal{L}^{0}_{\iota}$ for some $\iota = 2,3,
\cdots rank(G)$, $\phi$ images of the subset of $\mathcal{P}^{0}$ lying in $S$ are within
$\eta diam(\mathbf{B}(\Omega_{j}))$ of a hyperplane of appropriate dimension.  This means
that the restriction of $\phi|_{\mathbf{B}(\Omega_{j})}$ to the subset $\mathcal{P}^{0}$
sends left cosets of $\mathbf{A}$ to left cosets of $\mathbf{A}'$ up to an error of $\eta diam(\mathbf{B}(\Omega_{j}))$.
\smallskip

From now on we drop the subscript $j$. Let $\mu=(\tilde{\eta})^{1/2}$ and tile $\mathbf{B}(\Omega)$ by
$\mathbf{B}(\mu \Omega)$:
\[ \mathbf{B}(\Omega)=\bigsqcup_{i \in \mathbf{I}} \mathbf{B}(\omega_{i}) \cup \Upsilon
\]
\noindent By Lemma \ref{boxes are folner}, we can assume each of the tiles $\mathbf{B}(\omega_{i})$ is at least
$\mu diam(\mathbf{B}(\Omega))$ away from the boundary of $\mathbf{B}(\Omega)$, and the measure of $\Upsilon$ is at most $O(\tilde{\eta})$ times
that of $\mathbf{B}(\Omega)$.

By Chebyshev inequality and Lemma \ref{ping-pong averaging} we can obtain a subset $\mathbf{I}_{0} \subset \mathbf{I}$ with
$|\mathbf{I}_{0}| \geq (1-\varsigma) |\mathbf{I}|$ such that for every $i \in \mathbf{I}_{0}$, there are subsets $\mathcal{L}^{0}(\omega_{i})$,
$\mathcal{L}^{0}_{rank(G)}(\omega_{i})$ and $\mathcal{P}^{0}(\omega_{i})$ of $\mathcal{L}(\omega_{i})$, $\mathcal{L}_{rank(G)}(\omega_{i})$,
and $\mathcal{P}(\omega_{i})$, all of relative measure at least $1-\upsilon$ whose elements are restriction of $\mathcal{L}^{0}$, $\mathcal{L}^{0}_{rank(G)}$
and $\mathcal{P}^{0}$ to $\mathbf{B}(\omega_{i})$.  Here, $\varsigma$ and $\upsilon$ both go to zero as $\tilde{\eta} \rightarrow 0$.

Take a $\mathbf{B}(\omega_{i})$, $i \in \mathbf{I}_{0}$.  Then the restriction of
$\phi|_{\mathbf{B}(\omega_{i})}$ to $\mathcal{P}^{0}(\omega_{i})$ sends flats to within $\frac{\eta}{\mu} diam(\mathbf{B}(\omega_{i}))$
Hausdorff distance of a flat.  Note that $\eta < \tilde{\eta}< 1$, so $\frac{\eta}{\mu}
\ll 1$ and approaches zero when $\tilde{\eta} \rightarrow 0$.  Since two flats come
together at a convex set whose boundary is a union of hyperplanes parallel to root
kernels.

To obtain a product structure on $\mathcal{P}^{0}$, we proceed to show that $\phi|_{f}$ and $\phi|_{f'}$ for $f, f' \in
\mathcal{L}^{0}_{rank(G)}$ are identical up to a translational error of $\eta diam(\mathbf{B}(\Omega_{j}))$.  In the process of doing so,
we will also show that left cosets of $\mathbf{H}$ are sent to left cosets of $\mathbf{H}'$ up to
an error of the same order. \smallskip

First we show that the claim is true for two flats $f,f' \in \mathcal{L}^{0}_{rank(G)}(\omega_{i})$
that are at least $8 \frac{\eta}{\mu} diam(\mathbf{B}(\omega_{i}))$ units apart and
contains points $p \in f \cap \mathcal{P}^{0}(\omega_{i})$, $p' \in f' \cap
\mathcal{P}^{0}(\omega_{i})$ such that $p,p'$ lie on a common root class horocycle.
\smallskip

Since  $p, p' \in \mathcal{P}^{0}(\omega_{i}) \subset \mathcal{P}^{0}$, we can find geodesic
segments $\mathit{l}_{p,1}, \mathit{l}_{p,2} \in \mathcal{L}^{0}(\Omega)$ containing $p$,
$\mathit{l}_{q,1}, \mathit{l}_{q,2} \in \mathcal{L}^{0}(\Omega)$ containing $q$ such that for some subsegments
$\hat{\mathit{l}}_{*,\iota} \subset \mathit{l}_{*,\iota}$, $* =p,q$, $\iota=1,2$,
$Q=\{\hat{\mathit{l}}_{p,\iota}, \hat{\mathit{l}}_{q,\iota} \}_{\iota=1,2}$ is a $0$
quadrilateral. \medskip

As $d(p,p) \geq 8 \frac{\eta}{\mu} diam(\mathbf{B}(\omega_{i}))$, by Lemma \ref{close to a quadrilateral}, there is a $\eta$ quadrilateral
$\hat{Q}$ within $\eta diam(\mathbf{B}(\Omega_{j}))$ (i.e.$\frac{\eta}{\mu} diam(\mathbf{B}(\omega_{i}))$) Hausdorff distance away from
$\phi(Q)$.  Applying Lemma \ref{structure of a quadrilateral} to $\hat{Q}$, we see that $\phi(p)$ and $\phi(p')$ are within $\frac{\eta}{\mu} diam(\mathbf{B}(\omega_{i}))$ neighborhood of a left translate of
$W^{+}_{\vec{v}}$ or $W^{-}_{\vec{v}}$ where $\vec{v}$ is the direction of edges of $\hat{Q}$.  Since $p, p' \in \mathcal{P}^{0}(\omega_{i})$, we can
build quadrilaterals $Q_{1}, Q_{2}, \cdots Q_{k}$ for $k \leq n+2$, the edges of each are elements of $\mathcal{L}^{0}(\Omega)$ such that
their respective approximating quadrilaterals $\hat{Q}_{1}, \hat{Q}_{2}, \cdots \hat{Q}_{k}$, with edge directions
$\vec{v}_{1}, \vec{v}_{2}, \cdots \vec{v}_{k}$ satisfies $\cap_{\iota=1}^{k}
W^{\sigma(\iota)}_{\vec{v}_{\iota}}$ with $\sigma(\iota) \in \{+,- \}$, is $V_{[\alpha]}$ for some root class $[\alpha]$.  Argue as before,
we see that $\phi(p)$ and $\phi(q)$ lie within $\frac{\eta}{\mu} diam(\mathbf{B}(\omega_{i}))$ Hausdorff neighborhood of a
translate $W^{\sigma(\iota)}$ for $\iota=1,2, \cdots k$, therefore $\phi(p)$ and $\phi(q)$ lie within
$\frac{\eta}{\mu} diam(\mathbf{B}(\omega_{i}))$ Hausdorff neighborhood of a translate of $V_{[\alpha]}$.

By using more quadrilaterals, the argument above also shows that $\phi|_{f \cap
\mathcal{P}^{0}(\omega_{i})}$ are the same as $\phi|_{f' \cap \mathcal{P}^{0}(\omega_{i})}$ up to an
error of $\frac{\eta}{\mu} diam(\mathbf{B}(\omega_{i}))$.

In general, for two arbitrary points $p,p' \in \mathcal{P}^{0}_{i}$ in the same left coset of $\mathbf{H}$, we can find at most
$|\triangle|$ number of points $p_{0}=p, p_{1}, p_{2}, \cdots p_{l}=p'$, such that each pair of successive points lie on a common
root class horocycle.  The quadrilateral argument above then shows that $\phi(p)$ are $\phi(q)$ within
$|\triangle| \frac{\eta}{\mu} diam(\mathbf{B}(\omega_{i}))$ Hausdorff neighborhood of a translate
of $\mathbf{H}'$. \end{proof}



\section*{Appendix}

\begin{proof} \textit{of Lemma \ref{can't move far in H2} }
We will use the notations from equation (\ref{distance}).
Write $p=(x,t)$, $q=(x',t')$. By assumption, $|t - t'| \leq s$.  If $U(|x-x'|) \leq \min\{t,t'\}$, then assume $t \geq t'$
\begin{equation*} d((x,t),(x',t')) \leq d((x,t),(x',t)) +
d((x',t),(x',t')) \leq 2(t-t')+1 \leq 3s \end{equation*} and we are done. \smallskip

Now suppose $U |x-x'| \geq t,t'$, but $U |x-x'| \leq 4s$, then
\begin{eqnarray*}
d((x,t),(x',t')) &\leq & d((x,t),(x, U(|x-x'|))) + d((x, U(|x-x'|)), (x',U(|x-x'|))) \\
& & + d((x',U(|x-x'|)),(x',t')) \leq 2 U(|x-x'|) - (t+ t')+1 \\
   & & \leq 8s+1 \leq 12 \kappa s \end{eqnarray*} and we are done. \smallskip

Finally suppose $U(|x-x'|) \geq t,t'$, and $U(|x-x'|) \geq 4s$.  Since $\eta$ is continuous, we can find
$i_{0} \leq i_{1} \leq i_{2} \leq i_{3}..... i_{n} \in [a,b]$ and therefore points $\{p_{j}=\eta(i_{j})\}_{j=1}^{n}$ such
that $p=(x,t)=\eta(i_{0})=p_{0}$, $p_{n}=\eta(i_{n})=q=(x',t')$, and $U(|x_{j} - x_{j+1}|) =4s$, for all $j$ except maybe
the last one, where $U(|x_{n-1}-x_{n}|) \leq 4s$.

Then by equation (\ref{distance}) \begin{equation*}
\frac{ \sum_{j=0}^{n-1} \left(U(|x_{j}-x_{j+1}|)-(t_{j}+t_{j+1})\right)}{\left( U(|x_{0}-x_{n}|)-(t_{0}+t_{n}) \right)} \leq
\frac{ \sum_{j=0}^{n-1} d(p_{j},p_{j+1})}{d(p_{0},p_{n})} \leq 2 \kappa \end{equation*}

Simplifying using equation (\ref{property of U}) yields
\begin{equation*} \frac{(n-1) 2s}{2\ln(ne^{4s})} \leq 2 \kappa \end{equation*}

\noindent which means \begin{eqnarray*}
(n-1)s  &\leq& 2 \kappa \left( \ln(n) + 4s \right) \\
ns-2 \kappa \ln(n) &\leq& s  + 8 \kappa s \\
\frac{1}{2}n s \leq n s- 2s\ln(n) \leq n s- 2\kappa \ln(n)
&\leq& s + 8 \kappa s \leq 9 \kappa s \\
n &\leq& 20 \kappa \end{eqnarray*} So
\begin{eqnarray*}d(p_{0},q_{0}) \leq \sum_{j=0}^{n-1} d(p_{j},p_{j+1})
&\leq& \sum_{j=0}^{n-1} \left(U(|x_{j}-x_{j+1}|) -(t_{j}+t_{j+1}) \right) \\
&\leq & 20 \kappa s = 80 \kappa s  \end{eqnarray*}\end{proof}

\begin{proof} \textit{of Lemma \ref{mixing}  }
The claim is clear if $c_{\alpha}=1$. Otherwise we know
\begin{equation*} \frac{c_{\alpha}}{c_{\beta}} = \frac{\left| \frac{b}{B}
- \frac{a+b}{A+B} \right|}{\left| \frac{a}{A} - \frac{a+b}{A+B}
\right|} \end{equation*} $c_{\alpha} \geq c_{\beta}$ therefore gives
us that
\begin{equation*} \left| \frac{a}{A} - \frac{a+b}{A+B} \right| \leq \left| \frac{b}{B} - \frac{a+b}{A+B} \right|
\end{equation*}
\begin{itemize}
\item Suppose $\frac{b}{B} < \frac{a}{A}$. Writing $b=c_{1}a$, $B=c_{2}A$, we have
\begin{eqnarray*}
1- \frac{1+c_{1}}{1+c_{2}} &<& \frac{1+c_{1}}{1+c_{2}}
-\frac{c_{1}}{c_{2}} \\
1+ \frac{c_{1}}{c_{2}} & < & 2 \left( \frac{1+c_{1}}{1+c_{2}}
\right) \\
c_{2}(1+c_{2}) + c_{1}(1+c_{2}) &<& 2(1+c_{1})c_{2} \\
c_{2} + c_{2}^{2} + c_{1} + c_{1}c_{2} &\leq& 2c_{2} + 2c_{1}c_{2}
\\ c_{2}(c_{2}-1) &< & c_{1}(c_{2}-1) \end{eqnarray*} So if $A <
B=c_{2}A$, then $1< c_{2}$, and this gives us $c_{2} < c_{1}$, which
means $1 < \frac{c_{1}}{c_{2}}$. Multiplying both sides by
$\frac{a}{A}$ this means $\frac{a}{A} < \frac{b}{B}$, contradiction.
So $A \geq B$.

\item now suppose $\frac{a}{A} < \frac{b}{B}$. Then again, that
$\frac{a}{A}$ is closer to $\frac{a+b}{A+B}$ then $\frac{b}{B}$
means
\begin{eqnarray*}
\frac{a+b}{A+B} - \frac{a}{A} &<& \frac{b}{B} - \frac{a+b}{A+B} \\
\frac{1+c_{1}}{1+c_{2}} - 1 &<& \frac{c_{1}}{c_{2}} -
\frac{1+c_{1}}{1+c_{2}} \\
2 \left( \frac{1+c_{1}}{1+c_{2}} \right) &<& 1 + \frac{c_{1}}{c_{2}}
\\
2c_{2}(1+c_{1}) &<& c_{2}(1+c_{2}) + c_{1}(1+c_{2}) \\
2c_{2} + 2c_{1}c_{2} &< & c_{2}+c_{2}^{2} + c_{1} + c_{1}c_{2} \\
c_{1}(c_{2}-1) &<& c_{2}(c_{2}-1) \end{eqnarray*} If $A<B$, then
$c_{2}>1$, and this gives us $c_{1} < c_{2}$, which means
$\frac{c_{1}}{c_{2}}< 1$. Multiplying by $\frac{a}{A}$ this says
$\frac{b}{B} < \frac{a}{A}$, contradiction. So $A \geq B$.
\end{itemize} \end{proof}

\begin{lemma} \label{triangle in Rn}
Given a triangle in $\mathbb{R}^{2}$ with vertices A,B, C, and
opposites of length a,b,c, satisfying $\frac{a+b}{c} \leq 1+
\epsilon$ for some $\epsilon \in [0,0.5]$, then \begin{itemize}
\item $d(C, \overline{AB}) \leq 1.5 \epsilon^{1/4} \overline{AB}$
\item $\min \{A,B \} \leq \max \{ \pi - \cos^{-1}(-1 +
\sqrt{\frac{\epsilon}{1+\epsilon}}), \sin^{-1}(\frac{\sqrt{\frac{\epsilon}{1+\epsilon}}}{2}) \}$
\end{itemize} \end{lemma}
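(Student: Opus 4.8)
The plan is to reduce everything to elementary plane geometry. Place coordinates so that $A=(0,0)$ and $B=(c,0)$ with $c=|\overline{AB}|$, and write $C=(x,h)$ with $h=d(C,\overline{AB})\ge 0$ (the distance to the line through $A$ and $B$, which is the quantity actually used in Lemma \ref{efficient in Rn}). Then $b=|CA|=\sqrt{x^{2}+h^{2}}$ and $a=|CB|=\sqrt{(c-x)^{2}+h^{2}}$, and the hypothesis $\frac{a+b}{c}\le 1+\epsilon$ reads $a+b\le(1+\epsilon)c$.

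For the first bullet I would use that $g(x)=\sqrt{x^{2}+h^{2}}+\sqrt{(c-x)^{2}+h^{2}}$ is a sum of functions convex in $x$ and is symmetric under $x\mapsto c-x$, hence attains its minimum at $x=c/2$, where $g(c/2)=2\sqrt{(c/2)^{2}+h^{2}}$. Since $g(x)=a+b\le(1+\epsilon)c$ we get $2\sqrt{(c/2)^{2}+h^{2}}\le(1+\epsilon)c$, i.e.
\[ h^{2}\le\frac{c^{2}}{4}\bigl((1+\epsilon)^{2}-1\bigr)=\frac{c^{2}}{4}\,\epsilon(2+\epsilon). \]
Using $0\le\epsilon\le\tfrac12$ this gives $h\le\frac12\sqrt{\tfrac52\,\epsilon}\,c=\frac{\sqrt{10}}{4}\sqrt{\epsilon}\,c$, and since $\sqrt{\epsilon}\le\epsilon^{1/4}$ for $\epsilon\le 1$ and $\frac{\sqrt{10}}{4}<1.5$, we obtain $d(C,\overline{AB})\le 1.5\,\epsilon^{1/4}\,|\overline{AB}|$.

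For the second bullet I would, after reflecting the picture through the line $x=c/2$ if necessary, assume $x\le c/2$; then $a^{2}-b^{2}=c(c-2x)\ge 0$, so $a\ge b$, so the angle at $A$ is at least the angle at $B$, and hence $\min\{A,B\}$ is the angle $B$ at the vertex $B=(c,0)$. With $u:=c-x\ge c/2>0$ one has
\[ \cos B=\frac{c-x}{\sqrt{(c-x)^{2}+h^{2}}}=\frac{1}{\sqrt{1+(h/u)^{2}}} , \]
and since $h/u\le 2h/c\le\sqrt{\epsilon(2+\epsilon)}$ by the bound on $h$, we get $1+(h/u)^{2}\le 1+2\epsilon+\epsilon^{2}=(1+\epsilon)^{2}$, hence $\cos B\ge\frac{1}{1+\epsilon}$ and $\min\{A,B\}=B\le\cos^{-1}\!\frac{1}{1+\epsilon}$.

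Finally I would check this already dominates the claimed bound. Put $t=\sqrt{\epsilon/(1+\epsilon)}\in[0,1)$; then $\pi-\cos^{-1}(-1+t)=\cos^{-1}(1-t)$, and since $\cos^{-1}$ is decreasing, $\cos^{-1}\frac{1}{1+\epsilon}\le\cos^{-1}(1-t)$ is equivalent to $\frac{1}{1+\epsilon}\ge 1-t$, i.e. to $t\ge t^{2}$, which holds because $t^{2}=\epsilon/(1+\epsilon)\in[0,1]$. Thus $\min\{A,B\}\le\pi-\cos^{-1}(-1+\sqrt{\epsilon/(1+\epsilon)})\le\max\{\pi-\cos^{-1}(-1+\sqrt{\epsilon/(1+\epsilon)}),\ \sin^{-1}(\tfrac12\sqrt{\epsilon/(1+\epsilon)})\}$, as required; the second term of the max is not needed in this argument (I expect the paper's own proof reaches it through a cruder case split). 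There is essentially no obstacle here: the only things needing care are the justification that the convex, $x\mapsto c-x$–symmetric function $g$ is minimized at $c/2$, the numerical comparison of the constant $1.5$ with $\frac{\sqrt{10}}{4}$ at $\epsilon=\tfrac12$, and the degenerate case $\epsilon=0$, which forces $a+b=c$, $h=0$ and all angles $0$, so both bullets hold trivially.
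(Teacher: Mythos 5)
Your proof is correct, and it takes a genuinely different route from the paper's. The paper proceeds algebraically from the law of cosines, writing $c^2=(a+b)^2-2ab(1+\cos C)$, extracting $\frac{2ab}{(a+b)^2}(1+\cos C)\le\hat\epsilon$, and then splitting into the two cases "$1+\cos C$ small" and "$\frac{2ab}{(a+b)^2}$ small"; the first case produces the $\cos^{-1}$ term of the $\max$, and the second (via the sine rule) the $\sin^{-1}$ term. You instead put coordinates on the plane and observe that $a+b=g(x)=\sqrt{x^2+h^2}+\sqrt{(c-x)^2+h^2}$ is convex and symmetric in $x\mapsto c-x$, hence minimized at $x=c/2$, which gives $h\le\frac{c}{2}\sqrt{\epsilon(2+\epsilon)}$ in one line. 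This is cleaner and produces a sharper constant ($\frac{\sqrt{10}}{4}\sqrt{\epsilon}$ versus the paper's roughly $\sqrt{2}\,\hat\epsilon^{1/4}$), and for the second bullet your direct estimate $\cos B\ge\frac{1}{1+\epsilon}$ together with the observation $1-\frac{1}{1+\epsilon}=t^2\le t$ (for $t=\sqrt{\epsilon/(1+\epsilon)}$) lands inside the $\cos^{-1}$ branch of the $\max$ alone, unifying the paper's case split. A further advantage is that your route avoids a small slip in the paper's proof: the paper asserts $\frac{c^2}{(a+b)^2}\ge\frac{1}{1+\epsilon}$, whereas the hypothesis only gives $\frac{c^2}{(a+b)^2}\ge\frac{1}{(1+\epsilon)^2}$, so the paper's $\hat\epsilon=\frac{\epsilon}{1+\epsilon}$ should really be $\frac{\epsilon(2+\epsilon)}{(1+\epsilon)^2}$ (this only changes constants, and your argument confirms the lemma as stated is still true). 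One small caveat you already flag: you bound the perpendicular distance to the \emph{line} $AB$, whereas $d(C,\overline{AB})$ could be read as distance to the \emph{segment}. In the application inside Lemma \ref{efficient in Rn} the foot of the perpendicular always lies on the segment, so this is fine; and in fact even in the segment reading your bound survives, since if the foot is beyond, say, $B$, then $b^2>a^2+c^2$ forces $b>c$ and hence $a<\epsilon c\le 1.5\epsilon^{1/4}c$. It would be worth a sentence to close that loop.
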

\begin{proof} the condition on the length means

\begin{equation*} 1 \geq \frac{c^{2}}{(a+b)^{2}} = \frac{(a+b)^{2}-2ab(1+\cos(C))}{(a+b)^{2}} \geq
\frac{1}{1+\epsilon} \end{equation*} Write $\frac{1}{1+\epsilon}=
1-\hat{\epsilon}$,(note that $\hat{\epsilon}=1-\frac{1}{1+\epsilon}
\leq \epsilon$) for some small $\hat{\epsilon}>0$, we have

\begin{equation*} 0 \leq \frac{2ab}{(a+b)^{2}} (1+ \cos(c)) \leq
\hat{\epsilon} \end{equation*} which means EITHER  \begin{itemize}
\item $(1+ \cos(C)) \leq \sqrt{\hat{\epsilon}}$.

In this case, $\cos(C) \leq -(1-\sqrt{\hat{\epsilon}})$, so
$\cos^{-1}(-1+\sqrt{\hat{\epsilon}}) \leq C \leq \pi$, leaving $A, B
< A+ B \leq \pi - \cos^{-1}(-1+\sqrt{\hat{\epsilon}})$ giving
\begin{equation*}
d(C, \overline{AB})= |\overline{AC}| \sin(A)  \leq  |\overline{AB}|
\sin(\pi - \cos^{-1}(-1+\sqrt{\hat{\epsilon}}))= |\overline{AB}|
\sin(\cos^{-1}(-1+\sqrt{\hat{\epsilon}})) \end{equation*} Hence
\begin{equation*}
d(C, \overline{AB}) \leq |\overline{AB}|
\sqrt{1-(1-\sqrt{\hat{\epsilon}})^{2}} \leq
|\overline{AB}|\sqrt{(1-1+\sqrt{\hat{\epsilon}})(1+1-\sqrt{\hat{\epsilon}})}
\leq |\overline{AB}| \sqrt{2\sqrt{\hat{\epsilon}}}
\end{equation*}

OR \item $\frac{2ab}{(a+b)^{2}} \leq \sqrt{\hat{\epsilon}}$. By Sine
rule, this is the same thing as
\begin{equation*} \frac{2 \sin(A) \sin(B)}{(\sin(A)+\sin(B))^{2}}
\leq \sqrt{\hat{\epsilon}} \end{equation*}

Divide top and bottom by $\sin(B)$ (if $\sin(A)=\sin(B)=0$ then we
are done, so assume one of them is not zero) so
\begin{equation*}
2 \sin(A) \leq 2 \frac{\sin(A)}{\sin(B)} \leq \frac{2
\frac{\sin(A)}{\sin(B)}}{\left( 1+\frac{\sin(A)}{\sin(B)}
\right)^{2}} \leq \sqrt{\hat{\epsilon}} \end{equation*} yields $A
\leq \sin^{-1}\left( \frac{\sqrt{\hat{\epsilon}}}{2} \right)$. Since
$\epsilon \leq 0.5$, $\hat{\epsilon}=1-\frac{1}{1+\epsilon} \leq
\frac{1}{3}$. So $\angle A \leq 16.78^{\circ}$. Since $C+B= \pi-A$,
WLOG $C \geq B$, $C \geq \frac{\pi-A}{2} \geq 45^{\circ}$ so
$\tan(C) \geq 1$. Therefore \begin{eqnarray*}
\frac{|\overline{AC}|}{|\overline{AB}|} = \frac{\sin(B)}{\sin(C)}&=&
\frac{\sin(\pi-C-A)}{\sin(C)}=\frac{\sin(\pi-C)\cos(A)}{\sin(C)} -
\frac{\sin(A)\cos(\pi-C)}{\sin(C)} \\
&=& \cos(A) + \frac{\sin(A)}{\tan(C)} \leq \cos(A) + \sin(A) \leq 2
\end{eqnarray*}
Hence \begin{equation*} d(C, \overline{AB})=\sin(A) |\overline{AC}|
\leq \sin(A) 2 |\overline{AB}| \leq \frac{\sqrt{\hat{\epsilon}}}{2}2
|\overline{AB}| = \sqrt{\hat{\epsilon}} |\overline{AB}|
\end{equation*} \end{itemize} \end{proof} \bigskip

\begin{proof} \textit{of Lemma \ref{orientation of a quadrilateral}}
The quadrilateral is the same as the loop below.
\begin{figure}[h]
\centering
\includegraphics{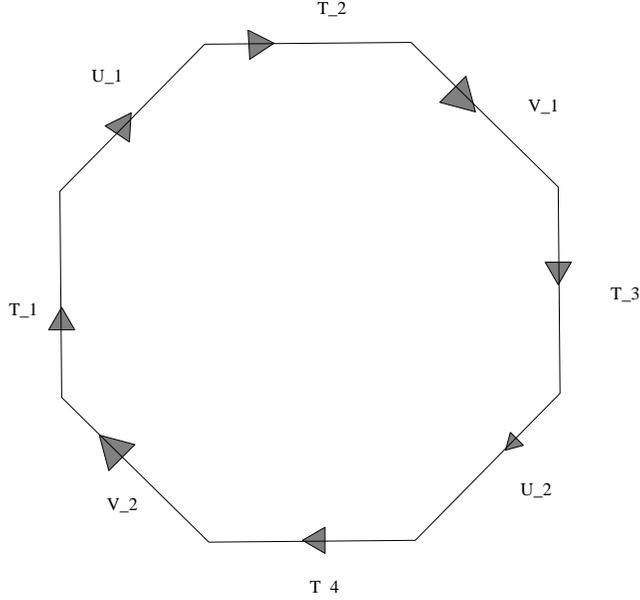}\\
\caption{The loop given by a quadrilateral} \end{figure}

Write $\mathbf{T}_{i}=T_{i}v$.  Since $|U_{1}|$, $|U_{2}|$,
$|V_{1}|$, $|V_{2}|$ are all less than $\eta(\sum |
\mathbf{T}_{i}|)$, the first claim that  $\sum_{i=1}^{4} T_{i} \leq
\eta(\sum_{i=1}^{4} |\mathbf{T}_{i}|)$ follows by walking around the
loop associated to $Q$.\medskip

So it cannot be the case that all the $T_{i}$'s are of the same
sign. WLOG we can assume $T_{2}>0$, and $T_{3} <0$. Furthermore,
regardless of the signs of the remaining $T_{i}$'s, there must be
another pair of adjacent $T_{i}$'s of opposite signs, and either
this pair involves one of $\{ T_{2}, T_{3} \}$, or that it doesn't.
In the latter case, $T_{1} >0$ and $T_{4}<0$, and the projection of
this quadrilateral into $\langle v \rangle \ltimes \mathbb{R}^{m}$
is a quadrilateral with two consecutive upward and two consecutive
downward edges, and such a quadrilaterals doesn't exist. \medskip

So either $T_{2}$ or $T_{3}$ is involved in a pair of oppositely
signed edges.  WLOG, we assume $T_{1} <0$. Then by (iv) in the
definition of a quadrilateral, we have that
$d(e,\Pi_{W^{+}_{v}}(U_{1})) \geq 1$, because $T_{1} <0$ and $T_{2}
>0$; and $d(e,\Pi_{W^{-}_{v}}(V_{1})) \geq 1$, because $T_{2} >0$
and $T_{3} <0$, where $\Pi_{W^{+}_{v}}:(x,t) \mapsto
\pi_{W^{+}_{v}}(x)$, $\pi_{W^{+}_{v}}$ is the usual projection from
$\mathbb{R}^{m}$ to $W^{+}_{v}$. $\Pi_{W^{-}_{v}}$ is defined
similarly. \medskip

Suppose $T_{4}<0$. Then $|T_{2}|=|T_{1}|+|T_{3}|+|T_{4}|$. Writing
the loop as: \begin{eqnarray*}
e &=&  \mathbf{T}_{2}V_{1}\mathbf{T}_{3}U_{2}\mathbf{T}_{4}V_{2}\mathbf{T}_{1}U_{1}  \\
& = &
(\mathbf{T}_{2}V_{1}\mathbf{T}_{2}^{-1})(\mathbf{T}_{2}\mathbf{T}_{3}U_{2}\mathbf{T}_{3}^{-1}\mathbf{T}_{2}^{-1})
(\mathbf{T}_{2}\mathbf{T}_{3}\mathbf{T}_{4}V_{2}\mathbf{T}_{1})U_{1}
\end{eqnarray*}

\noindent we see that only in the first bracket do we have a
coordinate of size $e^{|T_{2}|}$. So $T_{4}>0$, and again by (iv) in
the definition of quadrilateral, we conclude that for $i=1, 2$,
$d(e,\Pi_{W^{+}_{v}}(U_{i})) \geq 1$, $d(e,\Pi_{W^{-}_{v}}(V_{i}))
\geq 1$. \end{proof}

\begin{proof} \textit{of Lemma \ref{quadrilateral word in rank 1}}
Summing the $\mathbb{R}$ coordinates we see that
$r_{0}+r_{2}=r_{1}+r_{3}$.  The identity word can be written as
\begin{eqnarray*}
e &=& (r_{0},0)u_{0}(-r_{1},0)u_{1}(r_{2},0)u_{2}(-r_{3},0)u_{3} \\
& = &
((r_{0},0)u_{0}(-r_{0},0))((r_{0}-r_{1},0)u_{1}(r_{1}-r_{0},0))
((r_{3},0)u_{2}(-r_{3},0))u_{3} \end{eqnarray*} we see that
$|r_{0}-r_{3}| \leq d(e,u_{0})+d(e,u_{2})$, and $|r_{0}-r_{1}| \leq
d(e, u_{1})+ d(e, u_{3})$ by comparing the $W^{+}$ and $W^{-}$
coordinates. \smallskip

Similarly by looking at the word starting from $(-r_{1},0)$ we have
\begin{eqnarray*}
e &=& (-r_{1},0)u_{1}(r_{2},0)u_{2}(-r_{3},0)u_{3}(r_{0},0)u_{0} \\
& = &
((-r_{1},0)u_{1}(r_{1},0))((-r_{1}+r_{2},0)u_{2}(-r_{2}+r_{1},0))
((-r_{0},0)u_{3}(r_{0},0))u_{0} \end{eqnarray*} which gives us that
$| r_{1} -r_{0} |\leq d(e,u_{1})+d(e, u_{3})$, and $| r_{2}-r_{1} |
\leq d(e,u_{2})+d(e,u_{0})$.  We obtain the desired claim by writing
the word starting at $(r_{2},0)$ and $(-r_{3},0)$ and argue
similarly as above. \end{proof}

\begin{proof} \text{of Lemma \ref{ping-pong averaging}}
Equip the set $A \times B$ with the product measure $\mu=\mu_{\alpha} \times \mu_{\beta}$.  The measure of the set
$R=\{(a,b): a \sim b\}$ is therefore $\mu(R)=\int_{A} \mu_{\beta}(B_{a}) d \mu_{\alpha}=\int_{B} \mu_{\alpha}(A_{b}) d
\mu_{\beta}$.  Hence
\begin{equation} \label{bounds}
 \frac{1}{M_{B}} \frac{\mu(R)}{\mu_{\beta}(B)} \leq
\mu_{\alpha}(A_{b})_{\min}, \mbox{        }
\mu_{\beta}(B_{a})_{\max} \leq \frac{\mu(R)}{\mu_{\alpha}(A)} M_{A}
\end{equation} \noindent Let $\chi$ be the characteristic function of the set $\{ (a,b):
a \sim b, a \in A_{s} \}$.  Then
\begin{equation*}
\int_{B} \left( \int_{A_{b}} \chi d \mu_{\alpha} \right) d
\mu_{\beta} = \int_{A} \left( \int_{B_{a}} \chi d \mu_{\beta}
\right) d \mu_{\alpha} =\int_{A_{s}} \mu_{\beta}(B_{a}) d
\mu_{\alpha} \leq s \mu_{\alpha}(A) \mbox{
}\mu_{\beta}(B_{a})_{\max}
\end{equation*} \begin{equation*}
\int_{B} \left( \int_{A_{b}} \chi d \mu_{\alpha} \right) d
\mu_{\beta} \geq \int_{B^{s,t}} \left( \int_{A_{b}} \chi d
\mu_{\alpha} \right) d \mu_{\beta} \geq t \int_{B^{s,t}}
\mu_{\alpha}(A_{b}) d\mu_{\beta} \geq t \mu_{\alpha}(A_{b})_{\min}
\mbox{   }  \mu_{\beta}(B^{s,t}) \end{equation*}

\noindent Therefore
\begin{equation*} \mu_{\beta}(B^{s,t}) \leq \frac{s \mu_{\alpha}(A)
\mbox{   } \mu_{\beta}(B_{a})_{\max}}{t \mu_{\alpha}(A_{b})_{\min}}
\leq \frac{s}{t} M_{A}M_{B} \mbox{  } \mu_{\beta}(B) \end{equation*}
where the last inequality comes from quoting equation(\ref{bounds})
\end{proof}


\begin{thebibliography}{9999999}


\bibitem[A]{Auslander} L. Auslander.  An exposition of the structure of solvmanifolds,
Bull. Amer. Math. Soc (1973), 227-285

\bibitem[BH]{BH} M. Bridson, A. Haefliger.  Metric Spaces of non-positive curvature.
Springer-Verlag Berlin Heidelberg 1999


\bibitem[C]{Cornulier} Y. de Cornulier.  Dimension of asymptotic cones of Lie groups.

\bibitem[D]{Dy} T. Dymarz. Large scale geometry of certain solvable groups.  Preprint.

\bibitem[EFW0]{EFW0} A. Eskin, D. Fisher, K. Whyte.  Quasi-isometries and rigidity of solvable groups.
Preprint. Pur. Appl. Math. Q.

\bibitem[EFW1]{EFW1} A. Eskin, D. Fisher, K. Whyte. Coarse differentiation of quasi-isometries I:
spaces not quasi-isometric to Cayley graphs.

\bibitem[EFW2]{EFW2} A. Eskin, D. Fisher, K. Whyte. Coarse differentiation of quasi-isometries II:
Rigidity for Sol and Lamplighter groups, preprint

\bibitem[K]{Knapp} A.W.Knapp. Lie groups beyond an introduction. Birkhauser.

\bibitem[O]{Osin} D. Osin. Exponential radicals of solvalbe Lie groups, J. Algebra 248
(2002), 790-805.


\bibitem[P]{Pg} I. Peng.  Coarse differentiation and quasi-isometries of a class of solvable Lie groups II. Preprint.




\end{thebibliography}
\end{document}